\newcommand{\sshow}[2]{\ifthenelse{\equal{#1}{0}}{#2}{}}
\newcommand{\yes}{{\sf yes}}
\newcommand{\bd}{{\bf bor}}
\newcommand{\ann}{{\sf ann}}
\newcommand{\inter}{{\sf int}}
\newcommand{\remove}[1]{}
\newcommand{\bigmid}{\;\big|\;}
\newcommand{\cupall}{\pmb{\pmb{\bigcup}}}
\newcommand{\seg}{partially ${\rm\Delta}$-embedded graph}
\newcommand{\disk}{{\sf disk}}
\newcommand{\dehe}{{\sf dehe}}
\newcommand{\cae}{{\sf cae}}
\newcommand{\perim}{{\sf perim}}
\newcounter{func}
\newcommand{\newfun}[1]{f_{\refstepcounter{func}\label{#1}\thefunc}}
\newcommand{\funref}[1]{\hyperref[#1]{f_{\ref*{#1}}}} 
\newcounter{con}
\newcommand{\conref}[1]{\hyperref[#1]{c_{\ref*{#1}}}} 
\newtheorem{observation}{Observation}
\newtheorem{proposition}{Proposition}
\newtheorem{lemma}{Lemma}
\newtheorem{corollary}{Corollary}
\newtheorem{theorem}{Theorem}
\newcommand{\tw}{{\mathbf{tw}}}
\newcommand{\hh}{\end{document}}
    \pgfarrowshullpoint{\pgfarrowlength}{0pt}
\else\pgfsetlinewidth{+\pgfarrowlinewidth}\fi
    \pgfarrowshullpoint{\pgfarrowlength}{0pt}
    \pgfarrowshullpoint{\pgfarrowinset}{0pt}
\else\pgfsetlinewidth{+\pgfarrowlinewidth}\fi
\newdimen\ipeminipagewidth
\tikzstyle{ipe import} = [
\tikzset{
  rgb color/.code args={#1=#2}{%
    \definecolor{tempcolor-#1}{rgb}{#2}%
    \tikzset{#1=tempcolor-#1}%
  },
}
\definecolor{Black}{rgb}{0,0, 0}
\definecolor{Blue}{rgb}{0, 0 ,1}
\definecolor{Red}{rgb}{1, 0 ,0}
\definecolor{White}{rgb}{1, 1, 1}
\definecolor{Grey}{rgb}{.6, .6, .6}
\definecolor{Mygreen}{rgb}{.0, .7, .0}
\definecolor{Yellow}{rgb}{.55,.55,0}
\definecolor{mustard}{rgb}{1.0, 0.86, 0.35}
\definecolor{applegreen}{rgb}{0.55, 0.71, 0.0}
\definecolor{darkturquoise}{rgb}{0.0, 0.51, 0.62}
\definecolor{celestialblue}{rgb}{0.29, 0.59, 0.82}
\definecolor{green-yellow}{rgb}{0.68, 1.0, 0.18}
\definecolor{crimsonglory}{rgb}{0.75, 0.0, 0.2}
\definecolor{darkmagenta}{rgb}{0.55, 0.0, 0.55}
\definecolor{lightgreen}{rgb}{0.565,0.933,0.565}
\definecolor{darkred}{rgb}{0.545,0,0}
\definecolor{darkgray}{rgb}{0.663,0.663,0.663}
\definecolor{darkorange}{rgb}{1,0.549,0}
\definecolor{darkgreen}{rgb}{0,0.392,0}
\definecolor{firebrick1}{rgb}{1,0.188,0.188}
\definecolor{deepskyblue}{rgb}{0,0.749,1}
\definecolor{papayawhip}{rgb}{1,0.937,0.835}
\definecolor{palegreen}{rgb}{0.596,0.984,0.596}
\tikzset{red node/.style={draw=red, circle, fill = red, minimum size = 4pt, inner sep = 0pt}}
\tikzset{yellow node/.style={draw=yellow, circle, fill = yellow, minimum size = 4pt, inner sep = 0pt}}
\tikzset{blue node/.style={draw=celestialblue, circle, fill =celestialblue, minimum size = 4pt, inner sep = 0pt}}
\tikzset{triangle/.style = { regular polygon, regular polygon sides=3, rotate=180}}
\tikzset{small red/.style={draw=red, triangle, fill = red, minimum size = 2pt, inner sep = 0pt}}
\tikzset{black node/.style={draw, circle, fill = black, minimum size = 4pt, inner sep = 0pt}}
\tikzset{small black node/.style={draw, circle, fill = black, minimum size = 3pt, inner sep = 0pt}}
\tikzset{tiny black node/.style={draw, circle, fill = black, minimum size = 2pt, inner sep = 0pt}}
\tikzset{tiny white node/.style={draw, circle, fill = white, minimum size = 3pt, inner sep = 0pt}}
\tikzset{model node/.style={draw=celestialblue, circle, fill = celestialblue, minimum size = 5pt, inner sep = 0pt}}
\tikzset{model node small/.style={draw=celestialblue, circle, fill = celestialblue, minimum size = 3pt, inner sep = 0pt}}
\tikzset{rep node/.style={draw=red, circle, fill = red, minimum size = 3pt, inner sep = 0pt}}
\tikzset{track node 1/.style={draw, circle, fill = black, minimum size = 2pt, inner sep = 0pt}}
\tikzset{track node 2/.style={draw=black!30!white, circle, fill = black!30!white, minimum size = 2pt, inner sep = 0pt}}
\tikzset{track node 3/.style={draw=black!10!white, circle, fill = black!10!white, minimum size = 2pt, inner sep = 0pt}}
\tikzstyle{ipe stylesheet} = [
\newcommand{\NP}{{\sf NP}}
\newcommand*\samethanks[1][\value{footnote}]{\footnotemark[#1]}
\begin{document}

\title{\bf\Large  Combing a Linkage in an Annulus}

\author{\bigskip\large Petr A. Golovach\thanks{Department of Informatics, University of Bergen, Norway. Supported by the Research Council of Norway  via the project MULTIVAL.\
Emails:  \texttt{petr.golovach@uib.no}.}\and\large
	Giannos Stamoulis\thanks{LIRMM, Université de Montpellier, CNRS, Montpellier, France. {Supported}  by   the ANR projects ESIGMA (ANR-17-CE23-0010), and the French-German Collaboration ANR/DFG Project UTMA (ANR-20-CE92-0027).\  Emails: \texttt{giannos.stamoulis@lirmm.fr}, \texttt{sedthilk@thilikos.info}.}
	\and\large
	Dimitrios  M. Thilikos\samethanks[2]}

\date{}

\maketitle

\begin{abstract}\noindent 
A {\em linkage} in a graph $G$  of {\em size} $k$ is a subgraph $L$ of $G$ whose connected components are $k$   paths.  The {\em pattern} of a linkage of size $k$ is the set of $k$ pairs formed  by the endpoints of these paths. A consequence of  the \textsl{Unique Linkage Theorem}  is the following:  there exists a function $f:\mathbb{N}\to\mathbb{N}$ such that if a plane graph $G$ contains a sequence $\mathcal{C}$ of at least $f(k)$ nested cycles and  a linkage of size at most $k$ whose pattern vertices  lay \textsl{outside} the outer cycle of $\mathcal{C},$ then $G$ contains a linkage with the same pattern avoiding the inner cycle of $\mathcal{C}$.
In this paper we prove the following  variant of this result: Assume that all the cycles in $\mathcal{C}$ are ``orthogonally'' traversed by a linkage $P$ and $L$ is a linkage whose pattern vertices may lay either outside the outer cycle or inside the inner cycle of $\mathcal{C}:=[C_{1},\ldots,C_{p},\ldots,C_{2p-1}]$. We prove that there are two functions $g,f:\mathbb{N}\to\mathbb{N}$, such that if $L$ has size at most $k$, $P$ has size  at least $f(k),$ and $|\mathcal{C}|\geq g(k)$, then there is 
a linkage with the same pattern as $L$ that is ``internally combed'' by $P$, in the sense that 
$L\cap C_{p}\subseteq P\cap C_{p}$. In fact, we prove this result in the most general version where the linkage $L$ is $s$-scattered: no two vertices of distinct paths of $L$ are within distance at most $s$. We deduce several variants of this result in the cases where  $s=0$ and $s>0$. These variants permit the application of the unique linkage 
theorem on several path routing problems on embedded graphs.
\end{abstract}
\medskip

\noindent{\bf Keywords}: Linkage, Treewidth, Irrelevant vertex technique.

\newpage
\tableofcontents

%

\newpage
\section{Introduction}

\subsection{The Disjoint Paths Problem}
One of the most central problems in algorithmic graph theory and combinatorial optimization is the {\sc Disjoint Paths Problem}.
An instance of the {\sc Disjoint Paths Problem} (in short DPP) is a graph $G$ and 
a collection $\mathcal{T}=\{(s_{1},t_{1}),\ldots,(s_{k},t_{k})\}$ of pairs of terminals.
Given such an instance $(G,\mathcal{T})$, the problem asks whether 
there are $k$ vertex-disjoint  paths $P_1,\ldots,P_k$ in $G$,
where, for every $i\in\{1,\ldots,k\}$, $P_i$ is a path between $s_i$ and $t_i$.
This problem, as well as its directed, edge-disjoint, and half-integral variants, has been extensively studied (see, for example,~\cite{KawarabayashiR09anea,Kleinberg98deci,CyganMPP13thep,Schrijver03comb,NavesS09mult,Frank90pack,RoSe90})
and has numerous
applications in network routing, transportation, and VLSI design.
From the viewpoint of computational complexity, this problem is known to be \NP-complete \cite{Karp72redu}, even on planar graphs~\cite{Lynch75thee} (see~\cite{Vygen95npco,MiddendorfP93onth,KramerL84thec} for \NP-completeness of other variants of the problem).
However, for fixed values of $k$, Robertson and Seymour, in Volume XIII of their seminal Graph Minors series~\cite{RobertsonS95GMXIII}, proved that the problem can be solved in time $\mathcal{O}(n^3)$.
Moreover, the problem is solvable in linear time when the input graph is planar~\cite{Reed95root,ReedRSS91find}, or embeddable on a surface of fixed Euler genus~\cite{Kawarabayashi09algo,Reed95root}.

\subsection{The irrelevant vertex technique}

In order to design their polynomial time algorithm for DPP, Robertson and Seymour~\cite{RobertsonS95GMXIII} introduced the celebrated {\em irrelevant vertex technique}.
This technique focuses on structural characteristics of the input that may permit the detection of a vertex of the graph whose removal does not change the answer to the instance.
More formally,
Robertson and Seymour~\cite{RobertsonS95GMXIII} proved that there is a function $f:\mathbb{N}\to\mathbb{N}$ such that, for every $k\in\mathbb{N}$,
if a given an instance $(G,\mathcal{T})$, where $G$ has treewidth bigger than $f(k)$,
then it is possible,
in linear time, to find in $G$ a (non-terminal) vertex $x$
 that is {\em irrelevant} in the sense that $(G,\mathcal{T})$ is a \yes-instance of DPP if and only if 
 $(G\setminus x,\mathcal{T})$ is. The guiding idea behind the application of this technique is, as long as the treewidth of the graph is bigger than $f(k)$,
 to detect irrelevant vertices and remove them from the graph, in order to obtain an equivalent instance of bounded treewidth. Then, one can design a dynamic programming algorithm to solve the bounded treewidth instance.
 
In fact, the main combinatorial condition that allows to characterize a vertex (or some part of the graph) to be irrelevant is the existence of sufficiently many ``insulation layers'' surrounding the potentially irrelevant part of the graph.
These insulation layers permit ``rerouting'' any possible solution ``away'' from the insulated part, which then can be safely deleted. 
To prove this rerouting 
argument is a quite technical task and the proofs span Volumes XXI and XII of the Graph Minors series~\cite{RobertsonS09XXI,RobertsonS12XXII}.
This result, known as the Unique Linkage Theorem, has been further studied and improved -- see~\cite{KawarabayashiW2010asho,AdlerKKLST17irre,Mazoit13asin}.
Furthermore, in~\cite{KawarabayashiKR12thedis},
Kawarabayashi, Kobayashi, and Reed proved that the insulation layers
 can be found in linear time, implying an (improved) quadratic time algorithm for DPP.
 
Adapting the above arguments for other problems that involve the identification of paths or collections of paths in graphs is a challenging task that has attracted a lot of attention from researchers, elevating the {\sl  irrelevant vertex technique}  to a standard algorithmic paradigm for solving such problems~\cite{FominGT19modif,Reed95root,JansenK021verte,DLindermayrSV20elimi,KawarabayashiMR08asim,MarxS07obta,Kawarabayashi09algo,GroheKMW11find,KawarabayashiK10impr,Kawarabayashi07half,KawarabayashiR09hadw,KawarabayashiLR10reco,GolovachKPT09indu,FominLST12line,TakehiroKPT09para,Thilikos12grap,KawarabayashiKM10link,KaminskiN12find,KaminskiT12cont,FominLRS11sube,HeggernesHLP11obta,KawarabayashiR10oddc,AdlerGK08comp,CattellDDFL00onco,Kawarabayashi09plan,FominLP0Z20hittin,BasteST20acomp,SauST20anftp,SauST21amor,SauST21kapiI,SauST21kapiII,GolovachST20hitti,GolovachKMT17thep,AgrawalKLPRSZ22dele,FominGSST21acomp,FominGST20analgo}.

\paragraph{Linkages.}
In~\cite{RobertsonS95GMXIII}, Robertson and Seymour defined the notion of {\sl linkages}.
A {\em linkage} $L$ of a graph is a collection of pairwise vertex-disjoint  paths, called the {\em paths} of $L$.
The {\em size} of $L$ is the number of paths of $L$.
The endpoints of the paths of a linkage are called {\em terminals}. The {\em pattern} of a linkage $L$ is the set of pairs of endpoints of the paths in $L$. Two linkages of a graph are {\em equivalent} if they have the same pattern.
Using this terminology, to declare a vertex $v\in V(G)$ {\sl irrelevant} for DPP, one has to prove that for every linkage between the terminals in $\mathcal{T}$, there is a linkage $L'$ of $G$ that is equivalent to $L$ and does not contain the vertex $v$.

\paragraph{Insulation of vertices.}
The combinatorial structure in~\cite{RobertsonS95GMXIII} that
allows the ``rerouting'' of linkages away from a vertex is a sequence of  ~``insulation layers''.
In plane graphs, this is interpreted as a sequence of {\sl nested} cycles $\mathcal{C}=[C_1,\ldots,C_p]$, each $C_i$ ``cropping'' an open disk $D_i$ such that, for every $i\in\{1,\ldots, p-1\}$, $D_i$ is contained in $D_{i+1}$. The cycle $C_1$ is called the {\em inner} cycle of $\mathcal{C}$ and the cycle $C_p$ is called the {\em outer} cycle of $\mathcal{C}$.
Under this setting, the consequence of the Unique Linkage Theorem that we are interested in is the following:

\begin{proposition}\label{propo_informal}
There is a function $f:\mathbb{N}\to\mathbb{N}$ such that, for every $k\in\mathbb{N}$, if a plane graph $G$ contains a sequence of at least $f(k)$ nested cycles $\mathcal{C}=[C_1,\ldots,C_p]$, then for every linkage $L$ of size at most $k$ whose terminals are outside $D_p$, there is an equivalent linkage $L'$ that avoids $D_1$.
\end{proposition}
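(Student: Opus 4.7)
The plan is to derive \autoref{propo_informal} from the Unique Linkage Theorem of Robertson and Seymour~\cite{RobertsonS12XXII,RobertsonS09XXI} (as also reformulated in~\cite{KawarabayashiW2010asho,AdlerKKLST17irre}), using the fact that a long sequence of nested cycles in a plane graph furnishes the insulation layers required by that theorem.

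I would first invoke the Unique Linkage Theorem in the following form: there is a function $h:\mathbb{N}\to\mathbb{N}$ such that, for every graph $H$, every linkage $L$ of size at most $k$ in $H$, and every vertex $v\in V(H)$, if $v$ lies inside a subgraph of $H$ of treewidth at least $h(k)$ whose removal from $H$ separates $v$ from all terminals of $L$, then $L$ has an equivalent linkage in $H$ avoiding $v$. The goal is thus to certify that, in our planar setting, the nested cycle structure provides precisely such a high-treewidth insulator around every vertex of $\overline{D_1}$.

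To that end, I would set $f(k)=c\cdot h(k)$ for a suitable constant $c$ and argue as follows. Together with the radial paths between consecutive cycles that can be extracted from $G$ by a standard Menger-type argument inside each annular region of the planar embedding, the cycles $C_1,\ldots,C_p$ (for $p\geq f(k)$) contain a subdivision of a cylindrical wall of height $\Omega(p)$. Consequently, the subgraph of $G$ drawn inside $\overline{D_p}$ has treewidth at least $\Omega(p)\geq h(k)$, and by planarity (Jordan curve theorem) it separates every vertex of $\overline{D_1}$ from the terminals of $L$, which lie outside $D_p$.

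Finally, I would apply the Unique Linkage Theorem iteratively, once for each vertex $v\in V(G)\cap \overline{D_1}$, to obtain after finitely many steps an equivalent linkage $L'$ that avoids $D_1$. The hard part will be controlling this iteration: rerouting around one vertex may consume some of the insulating structure needed to handle the next vertex, so one must keep track of how the nested cycle sequence degrades as the rerouting proceeds. I expect to resolve this either by inflating $f(k)$ enough that sufficiently many cycles of $\mathcal{C}$ remain intact between the outer terminals and the shrinking ``still-forbidden'' region throughout the process, or by appealing to a stronger ``batch'' variant of the Unique Linkage Theorem (as in~\cite{KawarabayashiW2010asho,AdlerKKLST17irre}) that reroutes $L$ away from an entire region of large treewidth in a single application, thereby collapsing the iteration into a single invocation.
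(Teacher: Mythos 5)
Your reduction has two concrete gaps before you even reach the iteration issue you flag at the end. First, the version of the Unique Linkage Theorem you invoke is not a statement you can cite: the results of~\cite{RobertsonS09XXI,KawarabayashiW2010asho} (the paper's \autoref{i94opq}) are purely existential --- treewidth at least $h(k)$ guarantees that \emph{some} vertex is irrelevant, with no control over which one --- while the ``targeted'' results of~\cite{RobertsonS12XXII,AdlerKKLST17irre} make a \emph{prescribed} vertex irrelevant only when it is insulated by many nested cycles (or sits centrally in a flat wall) with all terminals outside, which is exactly the single-vertex case of \autoref{propo_informal} itself. Your hybrid formulation --- $v$ lies behind a subgraph of treewidth at least $h(k)$ whose removal separates $v$ from the terminals --- is false even for plane graphs: take two large grids $W_1,W_2$ joined to each other only through a vertex $v$, with $s$ attached to $W_1$ and $t$ to $W_2$; then $W_1\cup W_2$ has huge treewidth and separates $v$ from $\{s,t\}$, yet every $s$--$t$ path uses $v$. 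Second, the treewidth lower bound you want is not available from the hypotheses: a nested sequence of cycles carries no radial connectivity, so there is no Menger-type argument producing the vertical paths of a cylindrical wall --- $G\cap \overline{D_p}$ may consist of the $p$ pairwise disjoint cycles and nothing else, in which case its treewidth is $2$. (The paper runs the treewidth inequality in the opposite direction: for a linkage $L'$ equivalent to $L$ minimizing the number of edges outside $\cupall\mathcal{C}$, one bounds $\tw(L'\cup\cupall\mathcal{C})$ from \emph{above} by $h(k)$ via \autoref{alo9op}/\autoref{ap43k9s}, and the bramble of \autoref{u4l49rop0r} is built from streams of the linkage itself, which exist precisely when $L'$ penetrates the annulus.)

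If you instead cite the planar insulation theorem of~\cite{AdlerKKLST17irre} (or~\cite{RobertsonS12XXII}) directly, your iteration worry essentially disappears, but so does the need for the wall: every deleted vertex lies strictly inside $C_1$, so neither the cycles $C_1,\ldots,C_p$ nor the terminals are touched and the same hypothesis holds verbatim at the next step; the only residual point is edges of $G$ drawn inside $D_1$ with both endpoints on $C_1$, which a rerouted linkage can be assumed to avoid by replacing them with arcs of $C_1$. At that point, however, the argument is nearly circular, since the cited insulation theorem carries the whole content of \autoref{propo_informal}. Note that the paper states \autoref{propo_informal} without proof, as a known consequence of the Unique Linkage Theorem, but its machinery in Section~\ref{sec_tamelinkage} yields a genuinely different derivation that needs only the existential form \autoref{i94opq}: a $(\mathcal{C},\emptyset,L)$-minimal linkage $L'$ satisfies $\tw(L'\cup\cupall\mathcal{C})\le h(k)$, and if $L'$ reached the innermost disk it would contain a tight mountain of height greater than $\tfrac{3}{2}h(k)$, hence a bramble of order larger than $h(k)$ inside $L'\cup\cupall\mathcal{C}$ (\autoref{ahks5llozn}, \autoref{aop4icl}), a contradiction; so $f(k)=O(h(k))$ suffices. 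Restructuring your proof along these lines would close both gaps.
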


\subsection{Our results}
In this paper, we prove a variant of~\autoref{propo_informal}, that allows to handle linkages in a graph in a more ``disciplined'' way. This variant permits the application of the Unique Linkage Theorem on several routing problems on embedded graphs and has already served as the combinatorial base of~\cite{BasteST20acomp,SauST20anftp,SauST21amor,SauST21kapiI,SauST21kapiII,GolovachST20hitti,FominGSST21acomp}.
In order to present it, we introduce some additional definitions.

\paragraph{Railed annuli.}
In our result, we demand a ``richer'' structure that this of nested cycles.
In fact, we assume that we are given a sequence of nested cycles $\mathcal{C}$ that is ``orthogonally'' traversed by a linkage $P$, meaning that the intersection of every path of $P$ with every cycle of $\mathcal{C}$ is a (possibly trivial) path.
We call this graph a {\em railed annulus}, we denote it by $\mathcal{A}=(\mathcal{C},P)$,
and we refer to $P$ as the {\em rails} of $\mathcal{A}$.

Given a plane graph $G$ and a railed annulus $\mathcal{A}=(\mathcal{C},P)$ of $G$,
where $\mathcal{C} = [C_1,\ldots,C_{2p-1}]$ for some $p\in\mathbb{N}$,
we say that a  linkage $L$ of $G$ is {\em combed} in $\mathcal{A}$, if  $L$ ``crosses'' $C_p$ through the rails of $\mathcal{A}$, i.e., $L\cap C_p \subseteq P\cap C_p$.

\begin{theorem}[Informal]\label{thm_infor}
There are two functions $g,f:\mathbb{N}\to\mathbb{N}$, such that if
$G$ is a plane graph, ${\cal A} =(\mathcal{C},{P})$ is a railed annulus where $|\mathcal{C}|\geq f(k)$ and $P$ has size at least $g(k)$, and $L$ is an linkage of size at most $k$ whose terminals are either outside the outer cycle of $\mathcal{C}$ or inside the inner cycle of $\mathcal{C}$,
then there is a linkage $L'$ equivalent to $L$ that is combed in $\mathcal{A}$.
\end{theorem}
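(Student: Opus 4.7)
The plan is to apply Proposition~\ref{propo_informal} in two sweeps: first to evacuate the non-crossing paths from the annulus, and then to slide each crossing of a crossing path with $C_p$ onto a rail vertex.

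First, partition the paths of $L$ by terminal location: $L_{\mathrm{out}}$ has both terminals outside $C_{2p-1}$, $L_{\mathrm{in}}$ has both inside $C_1$, and $L_{\mathrm{cross}}$ has one terminal on each side. The sub-annulus $[C_p,\ldots,C_{2p-1}]$ is a nested sequence of $p$ cycles with all terminals of $L_{\mathrm{out}}$ outside its outermost cycle. Choosing $p$ large enough in terms of $k$, Proposition~\ref{propo_informal} supplies an equivalent linkage $L'_{\mathrm{out}}$ supported entirely outside the disk bounded by $C_p$; a symmetric argument on $[C_1,\ldots,C_p]$ yields $L'_{\mathrm{in}}$ inside $C_1$. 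After this sweep, only paths of $L_{\mathrm{cross}}$ can still meet $C_p$, so combing is reduced to handling the crossings of $L_{\mathrm{cross}}$ with $C_p$.

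For the second sweep, fix $Q \in L_{\mathrm{cross}}$ and let $v \in Q \cap C_p$ be a crossing not already on a rail. Because $|P| \geq g(k)$ dwarfs the number $|L \cap C_p|$ of crossings (which is bounded in terms of $k$ using the fact that each path contributes only boundedly many crossings after a preliminary application of Proposition~\ref{propo_informal}), there is a rail $R \in P$ whose intersection with $C_p$ is adjacent to $v$ in the cyclic order on $C_p$ induced by $L \cap C_p$: the sub-arc of $C_p$ from $v$ to $R \cap C_p$ contains no other vertex of $L \cap C_p$. The region bounded by this sub-arc, sub-arcs of cycles $C_{p - t}$ and $C_{p + t}$ (for suitable $t$), segments of $R$, and a sub-path of $Q$ is a closed disk inside which Proposition~\ref{propo_informal} reapplies (the nested sub-arcs of $C_{p-t+1},\ldots,C_{p+t-1}$ play the role of nested cycles), relocating the crossing of $Q$ from $v$ to the rail vertex $R \cap C_p$. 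Iterating over the crossings of each $Q$, processed in a consistent cyclic order around $C_p$, produces a linkage $L'_{\mathrm{cross}}$ whose intersections with $C_p$ all lie on rails. The union $L' := L'_{\mathrm{out}} \cup L'_{\mathrm{in}} \cup L'_{\mathrm{cross}}$ is the desired equivalent linkage combed in $\mathcal{A}$.

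\textbf{Main obstacle.} The crux is the second sweep: per-crossing applications of Proposition~\ref{propo_informal} must be arranged so that the re-routing of one crossing does not disturb the combing of a previously handled one, and so that the disk chosen at each step still contains enough insulating cycles. This forces a careful bookkeeping in which each iteration consumes a bounded portion of $\mathcal{C}$ and of $P$, and the overall budget calibrates the functions $f(k)$ (roughly $\Theta(k)$ blocks of cycles, each of size dictated by the Unique Linkage Theorem applied to a pattern of size $k$) and $g(k)$ (enough rails to absorb all $O(k^2)$ potential crossings with distinct, order-preserving targets). Ensuring these choices are simultaneously consistent -- respecting both the cyclic order of the crossings around $C_p$ and the planarity of the evolving linkage -- is where most of the technical work of the proof will concentrate.
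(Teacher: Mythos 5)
There is a genuine gap, and it appears already in your first sweep. Proposition~\ref{propo_informal} produces an equivalent linkage for the linkage it is applied to, with no control whatsoever relative to other prescribed paths of $G$. If you apply it to $L_{\mathrm{out}}$ alone, the rerouted $L'_{\mathrm{out}}$ may run through vertices used by $L_{\mathrm{in}}$ or $L_{\mathrm{cross}}$, so the union is no longer a linkage; and you cannot fix this by first deleting the other paths, because the paths of $L_{\mathrm{cross}}$ traverse the annulus and cut every cycle $C_i$, destroying the nested-cycle structure that Proposition~\ref{propo_informal} requires. The same defect recurs in the second sweep: the claimed bound on $|L\cap C_p|$ ``after a preliminary application of Proposition~\ref{propo_informal}'' is unsupported, since that proposition does not bound crossings and does not even apply to a path with one terminal inside $C_1$ and one outside $C_{2p-1}$; a single crossing path, or a path of $L_{\mathrm{out}}$ that dips deep into the annulus, can a priori meet $C_p$ arbitrarily many times. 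Finally, the per-crossing rerouting invokes Proposition~\ref{propo_informal} inside a region bounded by \emph{arcs} of cycles, a rail segment and a subpath of $Q$, where the hypothesis (a sequence of nested cycles, and a linkage living alone in that region) simply does not hold, and other paths of $L$ may occupy the very disk you want to reroute through. So the ``main obstacle'' you flag is not bookkeeping; it is the place where a different argument is needed.

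The paper avoids all three problems by never rerouting paths one at a time against a frozen background. It takes a linkage $L'$ equivalent to $L$ that is \emph{minimal} with respect to $\mathcal{C}$ (minimizing the number of linkage edges outside $\cupall\mathcal{C}$), deduces from the Unique Linkage Theorem that $\tw(L'\cup\cupall\mathcal{C})$ is bounded in $k$ (\autoref{ap43k9s}), and then uses bramble arguments to show that $L'$ has few rivers (\autoref{fskfsl}) and that all its mountains and valleys are tight and hence of bounded height/depth (\autoref{ao7ui4jkhq0f}, \autoref{ahks5llozn}, \autoref{ato954jgd}); this is what legitimately bounds $|L'\cap C_p|$ and keeps the non-crossing parts away from the central region. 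The few rivers are then combed through the rails in one global step using the grid structure of the railed annulus (\autoref{upside} and \autoref{alo3qx}), which builds the disjoint replacement paths simultaneously rather than crossing-by-crossing. If you want to pursue your decomposition, you would need a substitute for each of these ingredients: a reason why the whole of $L$ (not a sub-linkage) can be assumed to meet $C_p$ boundedly often, and a rerouting lemma that works in the presence of the remaining linkage.
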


In fact, we prove~\autoref{thm_infor} in a more general setting.
The planarity condition is only necessary for the part of the graph bounded by the inner and outer cycle of $\mathcal{C}$.
Therefore, it suffices to demand for $G$ to be {\em partially annulus-embedded}, which intuitively means that, given a closed annulus $\Delta$ (i.e., a set homeomorphic to $\{(x,y)\in \mathbb{R}^2 \mid 1\leq x^2+y^2\leq 2\}$),
there is a subgraph $K$ of $G$ embedded in $\Delta$ and there is neither an edge in $G$ from the interior of $\Delta$ to the part of $G$ outside $\Delta$, nor an edge connecting the two ``parts'' of $G$ ``cropped out'' from $\Delta$.
Analogously, we define $\Delta$-embedded railed annuli ${\cal A} =(\mathcal{C},{P})$, where $\Delta$ is the closed annulus bounded by the inner and the outer cycle of $\mathcal{C}$.

\begin{theorem}[Informal]\label{thm_partiallyinfor}
There are two functions $g,f:\mathbb{N}\to\mathbb{N}$, such that if $\Delta$ is a closed annulus, 
$G$ is a partially $\Delta$-embedded graph, ${\cal A} =(\mathcal{C},{P})$ is a $\Delta$-embedded railed annulus where $|\mathcal{C}|\geq f(k)$ and $P$ has size at least $g(k)$, and $L$ is an linkage of size at most $k$ whose terminals are outside $\Delta$,
then there is a linkage $L'$ equivalent to $L$ that is combed in $\mathcal{A}$.
\end{theorem}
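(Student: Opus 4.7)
The plan is to prove Theorem~\ref{thm_partiallyinfor} by first normalising $L$ so that it crosses the middle cycle $C_p$ only a bounded number of times, and then diverting each remaining crossing onto a rail of $P$. The partial $\Delta$-embedding hypothesis guarantees that the part of $G$ outside $\Delta$ is attached to the annulus only through vertices on $C_1\cup C_{2p-1}$, so any modification of $L\cap \Delta$ preserving its behaviour on $C_1\cup C_{2p-1}$ yields a globally equivalent linkage; thus the problem reduces to reshaping the sub-paths of $L$ that lie inside $\Delta$.

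For the normalisation phase, I would focus on the sub-paths of $L$ in the outer half-annulus (between $C_p$ and $C_{2p-1}$); each of them has its endpoints on $C_p\cup C_{2p-1}$. The nested sequence $[C_p,C_{p+1},\dots,C_{2p-1}]$, together with the outer half-annulus and the exterior of $C_{2p-1}$, is a planar structure to which we would like to apply Proposition~\ref{propo_informal}. Since that proposition requires all terminals to lie strictly outside the outermost cycle, I would build an auxiliary plane graph in which a single ``apex'' vertex placed outside $C_{2p-1}$ is joined to each endpoint of an outer sub-path that currently sits on $C_p$; this extends those sub-paths to paths terminating outside $C_{2p-1}$. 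Proposition~\ref{propo_informal} applied in the auxiliary graph (for which $p\geq f_{\mathrm{Prop}}(k)$ suffices) then reroutes the outer sub-paths so that they avoid $C_p$, and a symmetric argument treats the inner half-annulus. Iteratively eliminating pairs of consecutive $C_p$-crossings from any path whose terminals lie on opposite sides of $\Delta$, one reaches a linkage (still equivalent to $L$) whose intersection with $C_p$ has size at most $k$.

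For the redirection phase, each remaining crossing $x\in L\cap(C_p\setminus P)$ is moved onto a rail as follows. Since $|P|\geq g(k)$, a pigeonhole argument produces two adjacent rails $P_j,P_{j+1}$ and two cycles $C_{p-\ell},C_{p+\ell}$ bounding a topological disc (a ``detour box'') that contains $x$ and is disjoint from every other path of $L$ and from all terminals of $L$ on $C_1\cup C_{2p-1}$. Inside this box I would replace the short piece of the path crossing $C_p$ at $x$ by a detour that descends along $C_{p-\ell}$ to $P_j$, climbs $P_j$ across $C_p$, and returns along $C_{p+\ell}$ to rejoin the original path. Performing this operation for every offending crossing yields a combed equivalent linkage $L'$ with $L'\cap C_p\subseteq P\cap C_p$.

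The principal obstacle is the auxiliary-graph construction in the normalisation phase: one must verify that the apex extensions do not create spurious disjointness conflicts between sub-paths, and that each rerouting produced by Proposition~\ref{propo_informal} in the auxiliary graph pulls back to a valid rerouting in $G$ respecting the endpoints on $C_1\cup C_{2p-1}$. This technical bookkeeping, together with the quantitative pigeonhole in the redirection phase, is what fixes the explicit forms of $f$ and $g$; one expects $f(k)=\mathcal{O}(f_{\mathrm{Prop}}(k))$ and $g(k)$ polynomial in $k$.
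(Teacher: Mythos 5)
Your two-phase plan contains genuine gaps at both phases, and the missing ingredient in each case is the same global tool the paper relies on. In the normalisation phase, the apex construction does not work as described: a single apex joined to all $C_p$-endpoints makes the extended sub-paths share that apex, so they no longer form a linkage; edges from a vertex outside $C_{2p-1}$ to vertices on $C_p$ cannot be added without destroying the plane nested-cycle structure that \autoref{propo_informal} needs (that proposition is a statement about plane graphs, and its hypothesis is exactly the embedded nested sequence you would be puncturing); and even if one invokes the embedding-free Unique Linkage Theorem instead, the rerouted sub-path is only guaranteed to end at \emph{some} neighbour of the apex, so it may reattach to $C_p$ at a different crossing point and no longer match the continuation of the original path on the other side of $C_p$ --- your own ``pull back'' caveat is precisely the step that fails, not mere bookkeeping. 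The paper avoids all of this by never introducing artificial terminals: it takes a linkage that is \emph{minimal} with respect to the cycles (fewest edges outside $\bigcup\mathcal{C}$), uses the Unique Linkage Theorem once, through \autoref{alo9op} and \autoref{ap43k9s}, to bound the treewidth of the union of that linkage with the cycles, and then extracts the quantitative structure (at most $m$ rivers, mountains and valleys of height/depth $O(m)$) by bramble arguments (\autoref{u4l49rop0r}, \autoref{fskfsl}, \autoref{ato954jgd}, \autoref{aop4icl}). Note also that the correct bound on the number of crossings of the middle cycle is not $k$ but the (much larger) Unique Linkage bound $m=f_{\mathcal{G},r}(k)$; nothing forces a single path to cross the annulus only once, and this is why the hypothesis needs $|P|$ of order $m$, not polynomial in $k$.

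The redirection phase has a second gap: from $|P|\geq g(k)$ and pigeonhole alone you cannot conclude that some box between two consecutive rails and two cycles $C_{p-\ell},C_{p+\ell}$ is ``disjoint from every other path of $L$''. Before any structure has been imposed, the other paths (and other excursions of the same path) may intersect every such region near $C_p$, and the detour you describe along $C_{p\pm\ell}$ towards the chosen rail can be blocked by them; controlling these intersections is exactly what the river/mountain bounds of the minimal linkage provide, and you have not established them. Moreover, per-crossing local detours can conflict with one another. The paper instead first guarantees that a whole central disk of the railed annulus (defined via the cycles $\mathcal{C}_{\mathcal{A}}$) is completely avoided by the minimal linkage (\autoref{aop4icl}), and then routes \emph{all} rivers simultaneously through prescribed rails by a grid-routing argument inside the railed annulus (\autoref{upside} and \autoref{alo3qx}), which is what makes the combing globally consistent. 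To repair your proposal you would essentially have to reintroduce the minimality-plus-treewidth machinery, at which point you recover the paper's proof of \autoref{afsfsdfdsdsafsadffasdasfd2}.
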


\paragraph{Linkage reducible graph classes.}
The functions $f$ and $g$ in~\autoref{thm_partiallyinfor} are, in general,  ``immense''.
In fact, the function $g$ is asymptotically equal to the function $f_{\sf ul}$ of the Unique Linkage Theorem~\cite{KawarabayashiW10asho,RobertsonS09XXI} and $f(k)$ is asymptotically equal to $(g(k))^2$.
This function $f_{\sf ul}$ was improved to a single-exponential function on $k$ in the case of planar graphs~\cite{AdlerKKLST17irre}.
In~\cite{Mazoit13asin}, Mazoit improved $f_{\sf ul}$ to a single exponential function on $k+g$ for graphs embedded in a surface of Euler genus at most $g$.

We say that a graph class $\mathcal{G}$ is {\em linkage reducible} if it is hereditary (i.e., if $G\in\mathcal{G}$ then for every $S\subseteq V(G)$, the subgraph of $G$ induced by the vertices in $S$ belongs to $\mathcal{G}$) and
if there is a function $f_{{\cal G}}:\mathbb{N}\to\mathbb{N}$ such that for every $k\in\mathbb{N}$ and every $G\in {\cal G}$,
if the treewidth of $G$ is at least $f_{{\cal G}}(k)$ and $G$ contains a linkage $L$ of size at most $k$, then
there is a vertex $v\in V(G)$ such that $G\setminus v$ contains a linkage $L'$ that is equivalent to $L$.
\autoref{propo_informal} implies that the class of all graphs is linkage reducible.

Our results are modulable for every linkage reducible graph class (see the statement of~\autoref{afsfsdfdsdsafsadffasdasfd2} in~\autoref{subsec_main}).
In this sense, the result of Mazoit~\cite{Mazoit13asin} implies single-exponential upper bounds on $k+g$ for the functions $f$ and $g$ of~\autoref{thm_partiallyinfor}, if we restrict ourselves to graphs embedded in surfaces of Euler genus at most $g$.

\paragraph{Scattered linkages.}
A interesting variant of DPP is the one where we demand the vertex-disjoint paths  $P_1,\ldots, P_k$ between the terminals to be {\em induced}, i.e., for every $i\in[k]$ and every $v\in V(P_i)$, $v$ has no neighbors in $\bigcup_{j\neq i}V(P_j)$.
This problem has been extensively studied~\cite{Kawarabayashi09algo,KawarabayashiK12alin,GolovachKP13,KawarabayashiK08thei} and is \NP-complete even for $k=2$~\cite{KawarabayashiK08thei}.

In general, given an integer $r\geq 0$, we say that a linkage is {\em $r$-scattered} if for every $i\in[k]$ and every $v\in V(P_i)$,  there is no vertex in $\bigcup_{j\neq i}V(P_j)$ that is in distance at most $r$ from $v$.
Therefore, in a $0$-scattered linkage we ask its paths to be vertex-disjont, while in a $1$-scattered linkage we ask its paths to be induced.

We prove our results in terms of $r$-scattered linkages (\autoref{thm_partiallyinfor} is the special case where $r=0$)
for classes that are $r$-linkage reducibile, i.e., defining linkage reducibility by demanding that the equivalent linkages in this definition are also $r$-scattered.

\begin{theorem}\label{thm_informal_moregeneral}
There are two functions $g,f:\mathbb{N}^2\to\mathbb{N}$, such that for every $r,k\in\mathbb{N}$, if
${\cal G}$ is an $r$-linkage reducible graph class,
$\Delta$ is a closed annulus, 
$G$ is a partially $\Delta$-embedded graph that belongs to $\mathcal{G}$,
${\cal A} =(\mathcal{C},{P})$ is a $\Delta$-embedded railed annulus where $|\mathcal{C}|\geq f(k,r)$ and $P$ has size at least $g(k,r)$, and $L$ is an $r$-scattered linkage of size at most $k$ whose terminals are outside $\Delta$,
then there is an $r$-scattered linkage $L'$ equivalent to $L$ that is combed in $\mathcal{A}$.
\end{theorem}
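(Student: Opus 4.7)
The plan is to reduce~\autoref{thm_informal_moregeneral} to a topological rerouting argument whose combinatorial engine is the $r$-linkage reducibility of $\mathcal{G}$. I would choose $f(k,r)$ and $g(k,r)$ so that the railed annulus $\mathcal{A}$ is large enough to support a reservation scheme in which each of the at most $k$ paths of $L$ is assigned a disjoint ``corridor'' of consecutive rails of width at least $2r+1$, with successive corridors separated by at least $r+1$ further rails. This way any rerouting of one path stays at distance at least $r+1$ from any other, and also from the piece of $G$ outside $\Delta$. Consequently, $g(k,r)$ must be of order $(2r+1)\cdot k\cdot f_{\mathcal{G}}((2r+1)k)$ and $f(k,r)$ of order $g(k,r)^{2}$, matching the quadratic relation in \autoref{propo_informal} after $(2r+1)$-thickening.

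The argument then proceeds in three stages. First, I restrict attention to the subgraph $K$ of $G$ consisting of the $\Delta$-embedded part together with the arcs of $L$ that meet $\Delta$; since the terminals of $L$ lie outside $\Delta$, each path contributes sub-paths that enter and exit $K$ through the boundary cycles $C_{1}$ and $C_{2p-1}$, giving at most $O(k)$ through-arcs to reroute. Second, for each through-arc $A$ crossing $C_{p}$ outside $P\cap C_{p}$, I identify the corridor reserved for $A$ and replace $A$ by a rerouting that follows $A$ up to a neighborhood of $C_{p}$, detours along $C_{p}$ to the chosen rail, crosses $C_{p}$ through that rail, and re-emerges to rejoin $A$ on the other side. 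Third, to guarantee that such a rerouting exists as a subgraph of $G$, I iteratively invoke the $r$-linkage reducibility of $\mathcal{G}$ on the subgraph $K_{A}$ formed by the corridor together with its insulating sub-stacks of cycles: while the treewidth of $K_{A}$ exceeds $f_{\mathcal{G}}((2r+1)k)$, delete an irrelevant vertex and replace $L$ by an equivalent $r$-scattered linkage in $G\setminus v$. When the process terminates, $K_{A}$ has treewidth bounded in terms of $k$ and $r$, at which point the planar combinatorics of the corridor force the rerouting to follow the chosen rail across $C_{p}$.

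The main obstacle is that the $r$-linkage reducibility axiom only guarantees the existence of \emph{some} equivalent $r$-scattered linkage in $G\setminus v$, not one that uses any particular rail. To pin the rerouting to the rails of $\mathcal{A}$, I would augment $L$ with auxiliary ``witness'' paths drawn from the rails of $P$ to form a combined $r$-scattered linkage $\widehat{L}$ of size $O(k)$ whose topological pattern, together with the planar embedding of $K$, forces every equivalent linkage to cross $C_{p}$ along $P\cap C_{p}$. Controlling the $r$-scattered condition after this augmentation---in particular ensuring that rails chosen as witnesses are at distance at least $r+1$ from the rest of $L$ and from each other---is the most delicate point, and it is what drives the $(2r+1)$ factor in both $g(k,r)$ and $f(k,r)$. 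Once $\widehat{L}$ is combed, restricting back to the original pattern yields the desired combed linkage of~\autoref{thm_informal_moregeneral}.
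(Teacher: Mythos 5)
Your plan founders at the very first quantitative step: the claim that, because the terminals of $L$ avoid $\Delta$, the linkage contributes only $\mathcal{O}(k)$ through-arcs to reroute. A single path of $L$ may cross the annulus (and in particular the central cycle $C_p$) arbitrarily many times, and it may also form arbitrarily deep ``mountains''/``valleys'' that reach $C_p$ without traversing the annulus; none of this is bounded by $k$ for the given $L$. Bounding these quantities is precisely where the bulk of the paper's proof lives: one first passes to a linkage that is \emph{minimal} in the sense of minimizing the number of linkage edges outside $\bigcup\mathcal{C}$; $r$-linkage reducibility then forces $\tw(L'\cup\bigcup\mathcal{C})\leq f_{\mathcal{G},r}(k)$ (\autoref{alo9op}, \autoref{ap43k9s} --- note that minimality is exactly the decreasing potential your ``while treewidth is large, delete an irrelevant vertex'' loop lacks), and bramble arguments convert this treewidth bound into at most $f_{\mathcal{G},r}(k)$ rivers (\autoref{u4l49rop0r}, \autoref{fskfsl}) and into height/depth at most $\tfrac32 f_{\mathcal{G},r}(k)$ for mountains and valleys (\autoref{ato954jgd}, \autoref{aop4icl}). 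Your iteration is moreover applied to $K_A$, the corridor \emph{together with its insulating stacks of cycles and rails}; that subgraph contains a large grid-like piece and therefore has large treewidth regardless of the linkage, so the loop can never exit with $\tw(K_A)$ bounded in $k$ and $r$, and the vertex whose deletion reducibility guarantees may itself be a rail or cycle vertex your reservation scheme needs. The graph whose treewidth can legitimately be bounded is the union of the (minimal) linkage with the parallel cycles only, and only via the minimality argument.

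The second genuine gap is the ``witness path'' device. Reducibility, like any equivalent-linkage statement, preserves only the pattern (the endpoint pairs); an equivalent linkage of your augmented $\widehat{L}$ is free to reroute the witness paths themselves off the rails, so their presence pins nothing to $P\cap C_p$, and it is unclear how any pattern could topologically force a crossing through the rails. The paper does not force the combing indirectly: after the minimal linkage is shown to avoid a central disk and to meet the middle of the annulus only through few rivers, the crossing is \emph{constructed} explicitly from the railed-annulus infrastructure --- a grid minor of the rails/cycles and a scattered-routing lemma in grids (\autoref{upside}, \autoref{alo3qx}) --- with $(r+1)$-spaced rails chosen from $I$, and the connectors are verified to be $r$-scattered using the mountain/valley bounds. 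Those bounds also answer a problem your corridor scheme ignores: the pieces of $L$ you are \emph{not} rerouting (mountains or valleys of other paths) could hug $C_p$ inside a reserved corridor, so reserving corridors only separates the reroutes from one another, not from the remainder of the linkage; without the tightness/height analysis your detours along $C_p$ may violate disjointness or $r$-scatteredness.
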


For general positive values of $r$, the only known $r$-linkage reducible graph class is the class of planar graphs for $r=1$ and this is implied by the results of Kawarabayashi and Kobayashi~\cite{KawarabayashiK08thei}.
Using the ideas of~\cite{KawarabayashiK08thei} and the version of the Unique Linkage Theorem for surfaces of Mazoit~\cite{Mazoit13asin}, we complement our results by proving the following:

\begin{theorem}\label{thm_informalreducibility}
For every $r\in\mathbb{N}$ and every $g\in\mathbb{N}$,
the class of graphs embeddable on a surface of Euler genus $g$ is $r$-linkage reducible.
\end{theorem}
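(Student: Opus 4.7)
The strategy is to combine two ingredients: (i) the Unique Linkage Theorem of Mazoit~\cite{Mazoit13asin}, which provides an irrelevant-vertex statement for ordinary (vertex-disjoint) linkages on graphs embedded in surfaces of bounded Euler genus with a single-exponential bound $f_{\sf ul}(k,g)$, and (ii) the technique of Kawarabayashi and Kobayashi~\cite{KawarabayashiK08thei} for induced disjoint paths on planar graphs (i.e., the case $r=1$, $g=0$), which shows how to upgrade an ordinary-linkage rerouting to one preserving an inducedness constraint. Our task is to extend both to general $r$ and $g$ simultaneously. Fix $g,r\in\mathbb{N}$ and let $\mathcal{G}_{g}$ be the class of graphs embeddable on a surface of Euler genus at most $g$; this class is clearly hereditary.

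Given $G\in\mathcal{G}_{g}$ with $\tw(G)\geq f_{g,r}(k)$ and an $r$-scattered linkage $L$ of size at most $k$, the first step is to invoke the excluded-grid theorem for bounded-genus graphs (polynomial dependency on $g$), obtaining a flat wall $W$ of height $h=h(k,r,g)$ in $G$ whose outer boundary is disjoint from the terminals of $L$ and whose central region lies at distance more than $r$ from every terminal. Inside $W$ we isolate a family of $m\geq f_{\sf ul}(k,g)+2r$ concentric cycles around a central vertex $v_{0}$, yielding a ``core'' rerouting region of $f_{\sf ul}(k,g)$ layers surrounded by an $r$-thick ``buffer''. The second step applies Mazoit's theorem to the local linkage-subinstance determined by the intersections of $L$ with the boundary of the core: because the core is bounded by a non-contractible-free disk provided by the wall, the subinstance is essentially planar, and we obtain an equivalent ordinary rerouting that avoids $v_{0}$. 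Gluing this with the unchanged portion of $L$ outside the core produces an ordinary linkage $L'$ equivalent to $L$ in $G\setminus v_{0}$.

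The main obstacle, and the heart of the argument, is the third step: verifying that the resulting $L'$ is still $r$-scattered. Mazoit's theorem preserves vertex-disjointness only, so in principle two vertices of distinct paths of $L'$ could become within distance $r$. This is ruled out by the $r$-buffer: any putative short path $Q$ in $G$ of length at most $r$ between distinct paths of $L'$ either (a) is contained in the core, in which case a topological planarity argument on the disk bounded by the $r$-buffer --- mirroring the planar reasoning of~\cite{KawarabayashiK08thei} --- forces the two paths to have been routed through the same rail of the wall near the endpoints of $Q$, contradicting vertex-disjointness, or (b) crosses the $r$-buffer, in which case both endpoints of $Q$ already lie outside the rerouted region, where $L'$ agrees with $L$, contradicting the $r$-scattered property of $L$. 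Formalising case (a) is the delicate part, as it requires carefully tracking, across every concentric cycle, how the ordinary rerouting uses the rails of the wall to separate the paths from one another; here the hereditary property of $\mathcal{G}_{g}$ is used to pass to subgraphs induced by the core, keeping us inside the class. Once this is done, the function $f_{g,r}(k)$ is determined by the treewidth needed in the wall-extraction step, which is polynomial in $h(k,r,g)$ and hence in $r$, $g$, and $f_{\sf ul}(k,g)$.
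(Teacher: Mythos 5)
Your high-level skeleton (extract a wall from large treewidth, take nested cycles around a central vertex, combine Mazoit's surface version of the Unique Linkage Theorem with a Kawarabayashi--Kobayashi-style upgrade) matches the paper's, but the step that actually carries the theorem --- preserving $r$-scatteredness --- has genuine gaps. First, the ``local linkage-subinstance determined by the intersections of $L$ with the boundary of the core'' is not a linkage of size at most $k$: a single path of $L$ may cross the core boundary arbitrarily many times, so the induced sub-linkage can have unboundedly many components, and Mazoit's theorem with $f_{\sf ul}(k,g)$ layers does not apply to it. Bounding the number of these entry/exit pieces is a central difficulty of the proof; the paper handles it by passing to a BC-minimal linkage (minimizing crossings and bridges), grouping the $\Delta$-bridges into at most $3g-2$ homotopy classes (\autoref{rainbow}, via \autoref{jfopdsn}), and using \autoref{japanese} together with \autoref{maz} to show that a BC-minimal linkage has few bridges, after which \autoref{fewbridges} reroutes through the nested cycles. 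Your proposal has no analogue of this bridge-counting machinery, which is precisely where the genus $g$ enters the argument.

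Second, your buffer argument does not establish $r$-scatteredness of the glued linkage. The dichotomy ``(a) $Q$ lies in the core or (b) $Q$ crosses the buffer and hence has both endpoints outside the rerouted region'' is incomplete: the original $L$ may intersect the buffer and the vicinity of the core boundary, so a path $Q$ of length at most $r$ can join a rerouted vertex inside the core to an unchanged vertex of $L'$ sitting in the buffer, and neither case covers this. Moreover, in case (a) there is no contradiction to derive: Mazoit's rerouting guarantees only vertex-disjointness, and two rerouted paths may run along consecutive concentric cycles at distance $1$; being within distance $r$ does not violate disjointness, and no planarity argument forces them ``through the same rail''. The known remedy, which the paper generalizes from the $r=1$ planar case, is structurally different: one contracts a suitable edge set $F\subseteq E(L)\cap\inter(\Delta)$ and applies the $0$-scattered theorem inside the contracted graph $(L\cup H)/F$ with $H\subseteq\cupall\mathcal{C}$, where the cycles of $\mathcal{C}$ are pairwise at distance greater than $r$ ($r$-tightness); then the $F$-expansion of any rerouted linkage is automatically $r$-scattered because it consists of pieces of the old $r$-scattered linkage and pieces of far-apart cycles (\autoref{japanese}, \autoref{reducedlinkage}, \autoref{maintheorem}). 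Without confining the new linkage to $L\cup\cupall\mathcal{C}$ in this way, applying Mazoit in the ambient graph gives no control on distances between the new paths, and thickening the buffer cannot repair your third step.
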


\paragraph{Organization of the paper.}
In~\autoref{sec_overview}, we present an overview of the proofs of~\autoref{thm_informal_moregeneral} and~\autoref{thm_informalreducibility}.
Then, in~\autoref{sec_preliminaries}, we provide formal definitions and statements of our main results.
The proof of~\autoref{thm_informal_moregeneral} is presented in~\autoref{sec_tamelinkage} and the proof of~\autoref{thm_informalreducibility} is presented in~\autoref{sec_irrfriendlybdgenus}.
\section{Overview of the two main proofs}\label{sec_overview}
In this section, we present a high-level description of the proofs of~\autoref{thm_informal_moregeneral} and~\autoref{thm_informalreducibility}.

\subsection{Linkage Combing Lemma}
In~\autoref{thm_informal_moregeneral} we are given an $r$-linkage reducible graph class $\mathcal{G}$,
a graph $G\in\mathcal{G}$ that is partially $\Delta$-embedded, for some closed annulus $\Delta$, and a $\Delta$-embedded railed annulus $\mathcal{A}=(\mathcal{C},P)$, where $\mathcal{C}=[C_1,\ldots, C_{2p-1}]$, and an $r$-scattered linkage $L$ of $G$ of size at most $k$, whose terminals avoid $\Delta$ (from now on, we call such a linkage {\em $\Delta$-avoiding}).
Our objective is to reroute the different paths of $L$ in a way that,
if they intersect the ``central'' cycle $C_p$ of $\mathcal{A}$, then this intersection should be part of the rails $P$.

\paragraph{Minimal linkages.}
A crucial notion in our arguments is the one of {\sl minimal linkages}.
Intuitively, a {\em minimal linkage} with respect to $L$ and $\mathcal{C}$ is a linkage $L'$ that is equivalent to $L$ and the number of edges of $L'$ that are not edges of cycles in $\mathcal{C}$ is minimal among all linkages equivalent to $L$ (see~\autoref{subsec_minimal} for a formal definition).
Such a minimal linkage ``diverges'' the least possible from $\mathcal{C}$.
Its minimality of $L'$ implies that the treewidth of the graph $H_{L',\mathcal{C}}$ obtained by the union of $L'$ and the cycles in $\mathcal{C}$ is bounded by $f_{{\cal G},r}(k)$, where $f_{{\cal G},r}$ is the function bounding the treewidth of the graphs in the definition of $r$-linkage reducibility of $\mathcal{G}$(see~\autoref{ap43k9s}).
\medskip

In the rest of this subsection we distinguish the paths of a linkage into two types. First, we have {\em rivers} and {\em streams} that are parts of the linkage that intersect both the inner and the outer cycle of $\mathcal{C}$. The difference between rivers and streams is that a river is the maximal subpath of a path of $L$ that is inside $\Delta$.
Also, the parts of the paths of the linkage that ``enter'' and ``exit'' the annulus from the same side and never intersect the other side are called {\em mountains} and {\em valleys}, depending whether the terminals of the corresponding path of $L$ are from the ``side'' of the outer cycle or of the inner cycle, respectively.
In what follows, we sketch how to prove that a minimal linkage $L'$ has at most $f_{{\cal G},r}(k)$ rivers and its mountains (resp. valleys) have ``height'' (resp. ``depth'') at most $f_{{\cal G},r}(k)$, using the fact that the treewidth of  $H_{L',\mathcal{C}}$ is at most $f_{{\cal G},r}(k)$
and then how to reroute the (few) rivers of $L'$ in order to comb $L'$ through the central cycle of $\mathcal{A}$.

\paragraph{Minimal linkages have few rivers.}
Assuming that the size of $\mathcal{C}$ is larger than $f_{{\cal G},r}(k)$,
one can observe that if $L'$ has more than $f_{{\cal G},r}(k)$ rivers,
then $H_{L',\mathcal{C}}$ contains a bramble of size larger than $f_{{\cal G},r}(k)$, a structure that forces the treewidth to be larger than $f_{{\cal G},r}(k)$, implying a contradiction to the fact that the treewidth of  $H_{L',\mathcal{C}}$ is at most $f_{{\cal G},r}(k)$ (see~\autoref{subsec_rivers} and in particular~\autoref{u4l49rop0r} and~\autoref{fskfsl}).

\paragraph{Minimal linkages do not have high mountains or deep valleys.}
The {\em height} (resp. {\em depth}) of a mountain (resp. valley) of $L'$ measures the ``intrusion'' of the mountain (resp. valley) in $\mathcal{C}$, i.e., how many cycles it intersects (see~\autoref{subsec_mountains} for formal definitions of mountains and valleys).
To prove that mountains (resp. valleys) have height (resp. depth) at most $f_{{\cal G},r}(k)$,
we first show that all mountains and valleys of $L'$ are {\em tight}, i.e., they cannot be ``compressed'' so as to intersect less cycles of $\mathcal{C}$ (see~\autoref{ao7ui4jkhq0f}).
Then, since the existence of a {\sl tight} mountain (resp. valley) of ``big enough'' height (resp. depth) also implies the existence of a ``large enough'' bramble, we obtain an upper bound  on the height (resp. depth) of mountains (resp. valleys) of $L'$ (see~\autoref{ahks5llozn} and~\autoref{ato954jgd}).
This also implies that $L'$ cannot intersect the inner part of ``sufficiently insulated'' sequence of nested closed disks (\autoref{aop4icl}).

\paragraph{Combing the linkage.}
Having all above tools, the proof of~\autoref{thm_informal_moregeneral} works as follows:
We consider a minimal linkage $L'$ with respect to $L$ and $\mathcal{C}$.
We also consider the sequence of nested cycles obtained from $\mathcal{A}$ as in~\autoref{asdfsdfgdhfgsdhffsdhnsfdhgsa}, which bound a sequence of disks that are subsets of the closed annulus $\Delta$. The size of $\mathcal{C}$ and $P$ allows to take a sufficiently large such sequence so as the inner disk defined in this sequence, denoted by $D$, is not intersected by any mountain or valley of $L'$. The latter follows from the fact that the mountains and the valleys of $L'$ have ``small'' height and depth, respectively.
The disk $D$ is situated in the ``center'' of $\mathcal{A}$, in the sense that it intersects only some ``central'' cycles of $\mathcal{C}$.
These cycles are also intersected by the (few) rivers of $L'$.
Using the railed annulus infrastructure,
we can reroute the rivers of $L'$ inside $D$ and then prove that they can be ``combed'' through the rails of $\mathcal{A}$ (see~\autoref{alo3qx}).

\subsection{Irrelevant vertices for scattered linkages}
After proving~\autoref{thm_informal_moregeneral}, our task is to provide a proof for~\autoref{thm_informalreducibility}.
The proof of~\autoref{thm_informalreducibility} is based on ideas of used in~\cite{KawarabayashiK08thei} to deal with the Induced Disjoint Paths Problem in planar graphs.
We generalize these ideas for $r$-scattered linkages and for graphs embedded in more general surfaces.
For this, we make use of the version of the Unique Linkage Theorem for graphs embedded in surfaces proved in~\cite{Mazoit13asin}.

The proof of~\autoref{thm_informalreducibility} boils down to the proof of the following result:
there is a function $f:\mathbb{N}^3\to\mathbb{N}$ such that for every $r,g,k\in\mathbb{N}$, if $G$ is a graph embedded on some surface $\Sigma$ of Euler genus $g$, $L$ is a $\Delta$-avoiding $r$-scattered linkage of $G$ of size at most $k$, where $\Delta$ is an open disk of $\Sigma$, and $\mathcal{C}$ is a $\Delta$-nested sequence of $f(r,k,g)$ cycles of $G$, and $v$ is a vertex of $G$ that is inside the disk ``cropped'' by the inner cycle of $\mathcal{C}$, then there is an $r$-scattered linkage $L'$ of $G\setminus v$ that is equivalent to $L$. The case where $L$ does not contain $v$ directly implies the result so we focus in the case where $L$ contains $v$.

In order to prove the above result, we proceed to define a notion of minimal $r$-scattered linkages, minimizing its number of bridges and crossings with respect to $\mathcal{C}$ and we show that such a linkage can be rerouted in order to avoid $v$.

\paragraph{Bridges and crossings.}
Given an open disk $\Delta$ of a surface $\Sigma$, a graph $G$ embedded in $\Sigma$ and a $\Delta$-avoiding linkage $L$ of $G$, we define the {\em bridges} of $L$ to be the maximal subpaths of $L$ that do not contain any vertex inside $\Delta$.
A {\em crossing} of  $L$ with a $\Delta$-nested sequence of cycles $\mathcal{C}$ of $G$ is a subpath of $L$ that ``transverses'' a cycle of $\mathcal{C}$.
We consider a $\Delta$-nested sequence of cycles $\mathcal{C}$ of $G$ such that every two consecutive cycles in the sequence have distance at least $r$ such that $v$ is inside the disk ``cropped'' by the inner cycle of $\mathcal{C}$ that is inside $\Delta$.
Also, we consider {\em BC-minimal linkages around $v$},
 that are linkages that are equivalent to $L$ that contain $v$ and have minimum number of bridges and crossings with respect to the $\Delta$-nested sequence of cycles $\mathcal{C}$.
 We observe that, if the number of bridges of a BC-minimal linkage $L'$ is less than $|\mathcal{C}|-\lfloor k/2\rfloor-1$, then $L'$ has at most $|\mathcal{C}-1|$ components in $\Delta$ and therefore $L'$ can be rerouted in the graph obtained from the union of $L'$ and the cycles in $\mathcal{C}$ in order to obtained an $r$-scattered linkage equivalent to $L'$ that also avoids $v'$ (\autoref{fewbridges}).
 
\paragraph{Rainbows.}
We aim to shoe that any given BC-minimal linkage around $v$ has at most $|\mathcal{C}|-\lfloor k/2\rfloor-1$ bridges.
In order to do this, we classify bridges in {\sl rainbows} around $\Delta$.
A {\em $\Delta$-rainbow} of $L$ is a sequence of $\Delta$-bridges of $L$ that are {\sl homotopic}, in the sense that they pairwise bound closed disk of $\Sigma$ (there is not handle of the surface interfering between them). Note that the number of homotopy classes of cycles in a surface is bounded by a (linear) function on the Euler genus (see~\autoref{jfopdsn} and~\autoref{rainbow}).
Suppose, towards a contradiction, that there are ``many enough'' $\Delta$-bridges of $L$.
Then ``sufficiently large'' collection ${\cal B}$ of them should have the following property:
they belong to the same homotopy class and
the two ``marginal'' bridges in this homotopy class bound an open disk that contains the rest and does not contain any terminal of $L'$.
The $\Delta$-bridges in $\mathcal{B}$ together with parts of the cycles of $\mathcal{C}$ can define a sequence of nested cycles around the ``central'' bridge in $\mathcal{B}$.
Then, using the result of~\cite{Mazoit13asin}, we can prove that there is a linkage equivalent to $L'$ that avoids this central bridge, implying a contradiction to the BC-minimality of $L'$.
In fact, to impose $r$-scatteredness, we contract some edges of$L$ and we apply the result of~\cite{Mazoit13asin}, using a trick appeared in~\cite{KawarabayashiK08thei} for the case where $r=1$, which we generalize to arbitrary $r\in\mathbb{N}$.

\section{Preliminaries}\label{sec_preliminaries}
In this section, we provide formal definitions and statements of our results.
In~\autoref{subsec_def}, we provide some basic definitions on graphs, (partial) embeddability of graphs in subsets of the plane, and railed annuli.
Then, in~\autoref{subsec_main}, we introduce some definitions concerning linkages and we state our main result (\autoref{afsfsdfdsdsafsadffasdasfd2}).
In~\autoref{afsfsdfdsdsafsadffasdasfd2}, the size perquisites on the underlying combinatorial structure of the given graph $G$ are governed by a function $f_{{\cal G},r}$, where $\mathcal{G}$ is a class where $\mathcal{G}$ belongs that enjoys an irrelevant-vertex-type reduction of its $r$-scattered linkages if the treewidth of $G$ is at least $f_{{\cal G},r}(|L|)$.
By the Unique Linkage Theorem~\cite{KawarabayashiW10asho,RobertsonS09XXI} (see also~\autoref{i94opq}), when considering linkages that are $0$-scattered, there is such a function $f$ for the class of all graphs.
This function is huge but it can become single exponential for the class of graphs embedded in some fixed surface; and this holds for linkages of any given scatteredness.
We present this in more detail in~\autoref{subsec_implic}, where we state the analogous versions of~\autoref{afsfsdfdsdsafsadffasdasfd2} for the two aforementioned compromises between graph classes, scatteredness, and size dependence.

\subsection{Definitions}\label{subsec_def}

We denote by $\mathbb{N}$ the set of non-negative integers.
Given two integers $p$ and
$q,$ the set $[p,q]$ refers to the set of every integer $r$ such that $p \leq r \leq q.$
For an integer $p\geq 1,$ we set $[p]=[1,p]$ and $\mathbb{N}_{\geq p}=\mathbb{N}\setminus [0,p-1].$

\paragraph{Basic concepts on graphs.}
All graphs in this paper are undirected, finite, and they do not have loops or multiple edges.
If $G_1 = (V_1, E_1)$ and $G_2 = (V_2, E_2)$ are graphs,
then we denote $G_1 \cap G_2 = (V_1 \cap V_2,E_1\cap E_2)$ and
$G_1\cup G_2 = (V_1\cup V_2,E_1 \cup E_2)$.
Also, given a graph $G$ and a set $S\subseteq V(G)$,
we denote by $G\setminus S$ the graph obtained if we remove from $G$ the vertices in $S$,
along with their incident edges.
A {\em path} (resp. {\em cycle}) in a graph $G$ is a
connected subgraph with all vertices of degree at most (resp. exactly) 2.
A path is {\em trivial} if it has only one vertex and it is {\em empty} if it is the empty graph (i.e., the graph with empty vertex set).
Given a graph $G$, an $r\in\mathbb{N}$, and a $v\in V(G)$, we define the {\em $r$-neighborhood} $N_G^{(\leq r)}(v)$ of $v$ in $G$ to be the set of all vertices $u\in V(G)$ such that there is a path of length at most $r$ between $u$ and $v$ in $G$.

\paragraph{Disk and annuli on the plane.}
A {\em cycle} is a set homeomorphic to the set $\{(x,y)\in \mathbb{R}^2 \mid x^2+y^2 = 1\}$.
We define a {\em closed disk} (resp. {\em open disk}) to be a set homeomorphic to the set $\{(x,y)\in \mathbb{R}^2\mid x^2+y^2\leq 1\}$
(resp. $\{(x,y)\in \mathbb{R}^2\mid x^2+y^2< 1\}$) and
a {\em closed annulus} (resp. {\em open annulus}) to be a set homeomorphic to the set $\{(x,y)\in \mathbb{R}^2 \mid 1\leq x^2+y^2\leq 2\}$ (resp. $\{(x,y)\in \mathbb{R}^2 \mid 1< x^2+y^2< 2\}$).
Given a closed disk or a closed annulus $X$, we use $\bd(X)$ to denote the boundary of $X$ (i.e., the set of points of $X$ for which every neighborhood around them contains some point not in $X$).
Notice that if $X$ is a closed disk then $\bd(X)$ is a cycle, while if $X$ is a closed annulus then $\bd(X)=C_{1}\cup C_{2}$ where $C_{1}, C_{2}$ are the two unique connected components of $\bd(X)$ and $C_1,C_2$ are two disjoint cycles.
We call $C_1$ and $C_2$ {\em boundaries} of $X.$
We call $C_1$ the {\em left boundary} of $X$ and $C_2$ the {\em right boundary} of $X$.
Also given a closed disk (resp. closed annulus) $X$, we use $\inter(X)$ to denote the open disk (resp. open annulus) $X\setminus \bd(X)$.
When we embed a graph $G$ in the plane, in a closed disk, or in a closed annulus, we treat G as a set of points.
This permits us to make set operations between graphs and sets of points.

\paragraph{Partially embedded graphs.}
Given a graph $G$, we say that a pair $(L,R)\in 2^{V(G)}\times 2^{V(G)}$ is a {\em separation} of $G$
if $L\cup R=V(G)$ and there is no edge in $G$ between a vertex in $L\setminus R$ and a vertex in $R\setminus L.$
We say that two separations  $(X_1,Y_1)$ and $(X_2,Y_2)$ of a graph $G$ are {\em laminar} if $Y_1\subseteq Y_2$ and $X_2\subseteq X_1$.

Given a closed disk $\Delta$, we say that a graph $G$ is {\em partially $\Delta$-embedded}, 
if there is some subgraph $K$ of $G$ that is embedded in $\Delta$
such that $\bd(\Delta)$ is a cycle of $K$ and $(V(G)\cap \Delta,V(G)\setminus\inter(\Delta))$
is a separation of $G$.
Similarly, given a closed annulus $\Delta$, we say that a graph $G$ is {\em partially $\Delta$-embedded}, 
if there is some subgraph $K$ of $G$ that is embedded in $\Delta$
such that $\bd(\Delta)$ is the disjoint union of two cycles of $K$ and there are two laminar separations $(X_1,Y_1)$ and $(X_2,Y_2)$ of $G$ such that $X_1\cap Y_2 =V(G)\cap \Delta$.
In both above cases, we call the graph $K$
{\em compass}
of the partially $\Delta$-embedded graph $G$ and we always assume that we accompany
a partially $\Delta$-embedded graph $G$ together with an embedding of its compass in $\Delta$ that is the set $G\cap \Delta$.

\paragraph{Parallel cycles.}
Let $\Delta$ be a closed annulus with boundaries $B_1, B_2$, and let $G$ be a partially $\Delta$-embedded graph.
Also, let $\mathcal{C} = [C_1,\ldots, C_p]$, $p\geq 2$ be a collection of vertex disjoint cycles of $G$ that are embedded in $\Delta$.
We say that $\mathcal{C}$ is a {\em $\Delta$-parallel sequence of cycles} of $G$ if
$C_1 = B_1$, $C_p = B_2$ and, for every $i\in [r-1]$, $C_i$ and $C_p$ are the boundaries of a closed annulus, denoted by $D_i$, that is a subset of $\Delta$ such that $\Delta = D_1\supseteq \cdots \supseteq D_{p-1}$.
From now on, each $\Delta$-parallel sequence $\mathcal{C}= [C_1,\ldots, C_p]$, $p\geq 2$ of cycles will be accompanied with the sequence $[D_1, \ldots, D_{p-1}]$ of the corresponding closed annuli.
We call $|{\cal C}|$ the {\em size} of ${\cal C}$.

If $\Delta$ is a closed disk, then a sequence $\mathcal{C}$ of vertex disjoint cycles of a partially $\Delta$-embedded graph $G$ is called {\em $\Delta$-nested}, if every $C_{i}$ is the boundary of a closed  disk $D_{i}$ of $\Delta$ such that  $\Delta\supseteq D_{1}\supseteq\cdots \supseteq D_{r}$.
Each $\Delta$-nested sequence $\mathcal{C}$ will be accompanied 
with the sequence $[D_{1},\ldots,D_{r}]$ of the corresponding closed disks.

In both above cases (i.e., either $\Delta$ is a closed annulus or a closed disk),
given $i,j\in[p-1]$, where $i\leq j$, we call the set $D_i\setminus {\sf int}(D_j)$ {\em $(i,j)$-annulus} of $\mathcal{C}$ and we denote it by $\ann(\mathcal{C},i,j)$. Also, for every $i\in[p-1]$, we set $D_i$ to be the {\em $(i,p)$-annulus} of $\mathcal{C}$ and we also denote it by $\ann(\mathcal{C},i,p)$.


\begin{figure}[ht]
	\centering
	
	\sshow{0}{\begin{tikzpicture}[scale=.5]
	\foreach \x in {4,...,12}{
		\draw[line width =0.6pt, name path=cycle\x] (0,0) circle (0.5*\x cm);
	}
	\begin{scope}[on background layer]
	\fill[black!10!white] (0,0) circle (6 cm);
	\fill[white] (0,0) circle (2 cm);
	\end{scope}
	
\node (P3) at (45:7) {$P_{3}$};

	\node[small black node] (P11) at (45:6) {};
	\node[small black node] (m21) at (40:5.5) {};
	\node[small black node] (P21a) at (30:5) {};
	\node[small black node] (P21b) at (40:5) {};
	\node[small black node] (m31) at (40:4.5) {};
	\node[small black node] (P31a) at (35:4) {};
	\node[small black node] (P31b) at (50:4) {};
	\node[small black node] (m41) at (45:3.5) {};
	\node[small black node] (P41a) at (45:3) {};
	\node[small black node] (P41b) at (25:3) {};
	\node[small black node] (m51) at (25:2.5) {};
	\node[small black node] (P51) at (40:2) {};
	
	\draw[line width=1pt] (P11) -- (m21) -- (P21a) -- (P21b) -- (m31) --(P31a) -- (P31b) -- (m41) -- (P41a) -- (P41b) -- (m51) -- (P51);

\node (P4) at (70:7) {$P_{4}$};
	
	\node[small black node] (P12) at (70:6) {};
	\node[small black node] (m22) at (80:5.5) {};
	\node[small black node] (P22a) at (80:5) {};
	\node[small black node] (P22b) at (75:5) {};
		\node[small black node] (m32) at (80:4.5) {};
	\node[small black node] (P32a) at (90:4) {};
	\node[small black node] (P32b) at (75:4) {};
		\node[small black node] (m42) at (80:3.5) {};
	\node[small black node] (P42a) at (85:3) {};
	\node[small black node] (P42b) at (70:3) {};
		\node[small black node] (m52) at (70:2.5) {};
	\node[small black node] (P52) at (75:2) {};
	
	\draw[line width=1pt] (P12) -- (m22) -- (P22a) -- (P22b) -- (m32)-- (P32a) -- (P32b) -- (m42) -- (P42a) -- (P42b) -- (m52) -- (P52);

\node (P5) at (115:7) {$P_{5}$};
\node[small black node] (P13a) at (120:6) {};
	\node[small black node] (P13b) at (110:6) {};
		\node[small black node] (m23) at (105:5.5) {};
	\node[small black node] (P23a) at (110:5) {};
	\node[small black node] (P23b) at (115:5) {};
		\node[small black node] (m33) at (120:4.5) {};
	\node[small black node] (P33a) at (120:4) {};
	\node[small black node] (P33b) at (130:4) {};
		\node[small black node] (m43) at (130:3.5) {};
	\node[small black node] (P43a) at (135:3) {};
	\node[small black node] (P43b) at (120:3) {};
		\node[small black node] (m53) at (120:2.5) {};
	\node[small black node] (P53) at (125:2) {};
	
	\draw[line width=1pt] (P13a) -- (P13b) -- (m23) -- (P23a) -- (P23b) --  (m33) -- (P33a) -- (P33b) -- (m43) -- (P43a) -- (P43b) -- (m53) -- (P53);

\node (P6) at (165:7) {$P_{6}$};
	\node[small black node] (P14a) at (170:6) {};
	\node[small black node] (P14b) at (160:6) {};
			\node[small black node] (m24) at (160:5.5) {};
	\node[small black node] (P24) at (155:5) {};
			\node[small black node] (m34) at (153:4.5) {};
	\node[small black node] (P34) at (160:4) {};
			\node[small black node] (m44) at (163:3.5) {};
	\node[small black node] (P44a) at (165:3) {};
	\node[small black node] (P44b) at (180:3) {};
			\node[small black node] (m54) at (175:2.5) {};
	\node[small black node] (P54) at (170:2) {};
	
	\draw[line width=1pt] (P14a) -- (P14b) -- (m24) -- (P24) -- (m34) --  (P34) -- (m44) -- (P44a) -- (P44b) -- (m54) -- (P54);
\node (P7) at (190:7) {$P_{7}$};
	\node[small black node] (P18a) at (200:6) {};			
		\node[small black node] (m28) at (197:5.5) {};
	\node[small black node] (P28a) at (195:5) {};
	\node[small black node] (P28b) at (220:5) {};
		\node[small black node] (m38) at (215:4.5) {};
	\node[small black node] (P38a) at (210:4) {};
	\node[small black node] (P38b) at (225:4) {};
		\node[small black node] (m48) at (215:3.5) {};
	\node[small black node] (P48a) at (205:3) {};
	\node[small black node] (P48b) at (220:3) {};
		\node[small black node] (m58) at (210:2.5) {};
	\node[small black node] (P58) at (200:2) {};
	
	\draw[line width=1pt] (P18a) -- (m28) -- (P28a) to [bend right=10]  (P28b) -- (m38) --  (P38a) -- (P38b) -- (m48) -- (P48a) -- (P48b) -- (m58) -- (P58);

\node (P8) at (235:7) {$P_{8}$};	
	\node[small black node] (P15) at (235:6) {};
		\node[small black node] (m25) at (235:5.5) {};
	\node[small black node] (P25a) at (240:5) {};
	\node[small black node] (P25b) at (250:5) {};
		\node[small black node] (m35) at (245:4.5) {};
	\node[small black node] (P35b) at (245:4) {};
		\node[small black node] (m45) at (247:3.5) {};
	\node[small black node] (P45a) at (250:3) {};
	\node[small black node] (P45b) at (240:3) {};
		\node[small black node] (m55) at (236:2.5) {};
	\node[small black node] (P55) at (235:2) {};
	
	\draw[line width=1pt] (P15) -- (m25)-- (P25a)  -- (P25b) -- (m35) --  (P35b) -- (m45) -- (P45a) -- (P45b) -- (m55) -- (P55);

\node (P1) at (295:7) {$P_{1}$};	
	\node[small black node] (P16a) at (290:6) {};
	\node[small black node] (P16b) at (300:6) {};
		\node[small black node] (m26) at (300:5.5) {};
	\node[small black node] (P26a) at (295:5) {};
	\node[small black node] (P26b) at (310:5) {};
		\node[small black node] (m36) at (305:4.5) {};
	\node[small black node] (P36a) at (300:4) {};
	\node[small black node] (P36b) at (290:4) {};
		\node[small black node] (m46) at (300:3.5) {};
	\node[small black node] (P46a) at (310:3) {};
	\node[small black node] (P46b) at (325:3) {};
		\node[small black node] (m56) at (325:2.5) {};
	\node[small black node] (P56) at (320:2) {};
	
	\draw[line width=1pt] (P16a) -- (P16b) -- (m26) -- (P26a) -- (P26b) --  (m36) -- (P36a) -- (P36b) -- (m46) -- (P46a) -- (P46b) -- (m56) -- (P56);

\node (P2) at (0:7) {$P_{2}$};
	\node[small black node] (P17a) at (5:6) {};
	\node[small black node] (P17b) at (-5:6) {};
		\node[small black node] (m27) at (-5:5.5) {};
	\node[small black node] (P27a) at (0:5) {};
	\node[small black node] (P27b) at (-10:5) {};
		\node[small black node] (m37) at (-10:4.5) {};
	\node[small black node] (P37a) at (-15:4) {};
	\node[small black node] (P37b) at (0:4) {};
		\node[small black node] (m47) at (5:3.5) {};
	\node[small black node] (P47a) at (10:3) {};
	\node[small black node] (P47b) at (-5:3) {};
		\node[small black node] (m57) at (0:2.5) {};
	\node[small black node] (P57) at (-5:2) {};
	
	\draw[line width=1pt] (P17a) -- (P17b) -- (m27) --  (P27b)  -- (P27a) -- (m37) --  (P37a) -- (P37b) -- (m47) --  (P47a) -- (P47b) -- (m57) -- (P57);
					
\end{tikzpicture}}
\caption{An example of an annulus $\Delta$ depicted in grey and a $\Delta$-embedded $(9,8)$-railed annulus $\mathcal{A} = (\mathcal{C}, \mathcal{P})$.}
\label{fig_railedann}	
\end{figure}
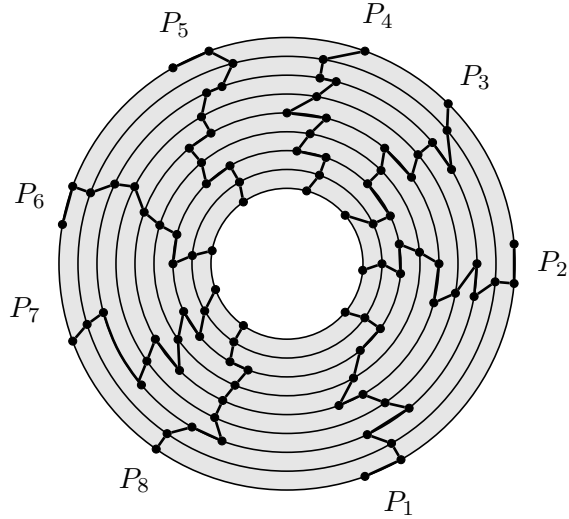

\paragraph{Railed annuli.}
Let $\Delta$ be a closed annulus
and let $G$ be a partially $\Delta$-embedded graph.
Also, let $p\in\mathbb{N}_{\geq 3}$ and $q\in \mathbb{N}_{\geq 3}$ and assume that $p$ is an odd number.
A {\em $\Delta$-embedded $(p,q)$-railed annulus} of $G$ is a pair $\mathcal{A}=(\mathcal{C},\mathcal{P})$ where 
$\mathcal{C}=[C_{1},\ldots,C_{p}]$  is a $\Delta$-parallel sequence of cycles of $G$ and $\mathcal{P}=[P_{1},\ldots,P_{q}]$ is a  collection of pairwise vertex-disjoint 
 paths in $G$ such that 
 \begin{itemize}
\item  
For every $j\in[q],$\ $P_{j}\subseteq \Delta$.
 
\item For every $(i,j)\in[p]\times[q],$   $C_{i}\cap P_{j}$ is  a non-empty path, that we denote $P_{i,j}$.
\end{itemize}
We refer to the paths of $\mathcal{P}$ as the {\em rails} of $\mathcal{A}$ and to the cycles of $\mathcal{C}$ as the {\em cycles} of $\mathcal{A}$. We use $\ann(\mathcal{A})$ to denote $\ann(\mathcal{C},1,p)$. See~\autoref{fig_railedann} for an example.

\paragraph{Treewidth.}
A \emph{tree decomposition} of a graph~$G$
is a pair~$(T,\chi)$ where $T$ is a tree and $\chi: V(T)\to 2^{V(G)}$
such that
\begin{itemize}
	\item $\bigcup_{t \in V(T)} \chi(t) = V(G),$
	\item for every edge~$e$ of~$G$ there is a $t\in V(T)$ such that
	      $\chi(t)$
	      contains both endpoints of~$e,$ and
	\item for every~$v \in V(G),$ the subgraph of~${T}$
	      induced by $\{t \in V(T)\mid {v \in \chi(t)}\}$ is connected.
\end{itemize}
The {\em width} of $(T,\chi)$ is equal to $\max\big\{\left|\chi(t)\right|-1 \bigmid t\in V(T)\big\}$ and the {\em treewidth} of $G$ is the minimum width over all tree decompositions of $G.$

\subsection{Main result}\label{subsec_main}
Our goal is to prove \autoref{afsfsdfdsdsafsadffasdasfd2}.
Intuitively, we show that, given a graph $G$ that is partially embedded on an annulus, and a ``large enough'' railed annulus $\mathcal{A}$, any linkage of $G$ can be ``combed'' through some tracks of $\mathcal{A}$.
We start with some definitions on linkages.

\paragraph{Linkages.} 
A {\em linkage} in a graph $G$ is a subgraph $L$ of $G$ whose connected components are non-trivial paths.
The {\em paths} of a linkage are its connected components and we denote them by $\mathcal{P}(L)$.
We call $|\mathcal{P}|$ the {\em size} of $\mathcal{P}(L)$.
The {\em terminals} of a linkage $L$, denoted by $T(L)$, are the endpoints of the paths of $L$,
and the {\em pattern} of $L$ is the set $\{\{s,t\}\mid \mathcal{P}(L)\mbox{  contains some  }(s,t)\mbox{-path}\}$
(in case $L$ contains a trivial path we may see its unique endpoint as a singleton).
Two linkages $L_{1}, L_{2}$ of $G$ are {\em equivalent} if they have the same pattern and we denote this fact by $L_{1}\equiv L_{2}$.
Let $\Delta$ be a closed annulus or a closed disk,
let $G$ be a partially $\Delta$-embedded graph, $L$ be a linkage of $G$, and $D$ be a subset of $\Delta$.
We say that $L$ is {\em $D$-avoiding} if $T(L)\cap D=\emptyset$ and we say that $L$ is {\em  $D$-free} if 
$D\cap L=\emptyset$ (see \autoref{asdfdsghdfhgdfdfsvdfgdfdsafdsf}). 

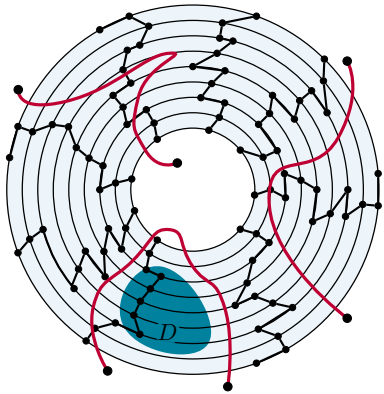
\begin{figure}[H]
	\centering
	\scalebox{0.8}{
	\sshow{0}{\begin{tikzpicture}[scale=.51]

	\fill[darkturquoise] plot [smooth cycle, tension=1.2] coordinates { (-2.2,-4.5) (.5,-5) (-1,-2.5)};
	
	\foreach \x in {2,2.5,...,6}{
		\draw[line width =0.6pt] (0,0) circle (\x cm);
	}
	\node[circle, fill=darkturquoise,opacity=.9,text opacity=1, inner sep=0pt] () at (260:4.7) {$D$};
	
	\begin{scope}[on background layer]
	\fill[celestialblue!10!white] (0,0) circle (6 cm);
	\fill[white] (0,0) circle (2 cm);
	
	\end{scope}

	\node[small black node] (P11) at (45:6) {};
	\node[small black node] (m21) at (40:5.5) {};
	\node[small black node] (P21a) at (30:5) {};
	\node[small black node] (P21b) at (40:5) {};
	\node[small black node] (m31) at (40:4.5) {};
	\node[small black node] (P31a) at (35:4) {};
	\node[small black node] (P31b) at (50:4) {};
	\node[small black node] (m41) at (45:3.5) {};
	\node[small black node] (P41a) at (45:3) {};
	\node[small black node] (P41b) at (25:3) {};
	\node[small black node] (m51) at (25:2.5) {};
	\node[small black node] (P51) at (40:2) {};
	
	\draw[line width=1pt] (P11) -- (m21) -- (P21a) -- (P21b) -- (m31) --(P31a) -- (P31b) -- (m41) -- (P41a) -- (P41b) -- (m51) -- (P51);

	\node[small black node] (P12) at (70:6) {};
	\node[small black node] (m22) at (80:5.5) {};
	\node[small black node] (P22a) at (80:5) {};
	\node[small black node] (P22b) at (75:5) {};
		\node[small black node] (m32) at (80:4.5) {};
	\node[small black node] (P32a) at (90:4) {};
	\node[small black node] (P32b) at (75:4) {};
		\node[small black node] (m42) at (80:3.5) {};
	\node[small black node] (P42a) at (85:3) {};
	\node[small black node] (P42b) at (70:3) {};
		\node[small black node] (m52) at (70:2.5) {};
	\node[small black node] (P52) at (75:2) {};
	
	\draw[line width=1pt] (P12) -- (m22) -- (P22a) -- (P22b) -- (m32)-- (P32a) -- (P32b) -- (m42) -- (P42a) -- (P42b) -- (m52) -- (P52);

\node[small black node] (P13a) at (120:6) {};
	\node[small black node] (P13b) at (110:6) {};
		\node[small black node] (m23) at (105:5.5) {};
	\node[small black node] (P23a) at (110:5) {};
	\node[small black node] (P23b) at (115:5) {};
		\node[small black node] (m33) at (120:4.5) {};
	\node[small black node] (P33a) at (120:4) {};
	\node[small black node] (P33b) at (130:4) {};
		\node[small black node] (m43) at (130:3.5) {};
	\node[small black node] (P43a) at (135:3) {};
	\node[small black node] (P43b) at (120:3) {};
		\node[small black node] (m53) at (120:2.5) {};
	\node[small black node] (P53) at (125:2) {};
	
	\draw[line width=1pt] (P13a) -- (P13b) -- (m23) -- (P23a) -- (P23b) --  (m33) -- (P33a) -- (P33b) -- (m43) -- (P43a) -- (P43b) -- (m53) -- (P53);

	\node[small black node] (P14a) at (170:6) {};
	\node[small black node] (P14b) at (160:6) {};
			\node[small black node] (m24) at (160:5.5) {};
	\node[small black node] (P24) at (155:5) {};
			\node[small black node] (m34) at (153:4.5) {};
	\node[small black node] (P34) at (160:4) {};
			\node[small black node] (m44) at (163:3.5) {};
	\node[small black node] (P44a) at (165:3) {};
	\node[small black node] (P44b) at (180:3) {};
			\node[small black node] (m54) at (175:2.5) {};
	\node[small black node] (P54) at (170:2) {};
	
	\draw[line width=1pt] (P14a) -- (P14b) -- (m24) -- (P24) -- (m34) --  (P34) -- (m44) -- (P44a) -- (P44b) -- (m54) -- (P54);

	\node[small black node] (P18a) at (200:6) {};			
		\node[small black node] (m28) at (197:5.5) {};
	\node[small black node] (P28a) at (195:5) {};
	\node[small black node] (P28b) at (220:5) {};
		\node[small black node] (m38) at (215:4.5) {};
	\node[small black node] (P38a) at (210:4) {};
	\node[small black node] (P38b) at (225:4) {};
		\node[small black node] (m48) at (215:3.5) {};
	\node[small black node] (P48a) at (205:3) {};
	\node[small black node] (P48b) at (220:3) {};
		\node[small black node] (m58) at (210:2.5) {};
	\node[small black node] (P58) at (200:2) {};
	
	\draw[line width=1pt] (P18a) -- (m28) -- (P28a) to [bend right=10]  (P28b) -- (m38) --  (P38a) -- (P38b) -- (m48) -- (P48a) -- (P48b) -- (m58) -- (P58);

	\node[small black node] (P15) at (235:6) {};
		\node[small black node] (m25) at (235:5.5) {};
	\node[small black node] (P25a) at (240:5) {};
	\node[small black node] (P25b) at (250:5) {};
		\node[small black node] (m35) at (245:4.5) {};
	\node[small black node] (P35b) at (245:4) {};
		\node[small black node] (m45) at (247:3.5) {};
	\node[small black node] (P45a) at (250:3) {};
	\node[small black node] (P45b) at (240:3) {};
		\node[small black node] (m55) at (236:2.5) {};
	\node[small black node] (P55) at (235:2) {};
	
	\draw[line width=1pt] (P15) -- (m25)-- (P25a)  -- (P25b) -- (m35) --  (P35b) -- (m45) -- (P45a) -- (P45b) -- (m55) -- (P55);

	\node[small black node] (P16a) at (290:6) {};
	\node[small black node] (P16b) at (300:6) {};
		\node[small black node] (m26) at (300:5.5) {};
	\node[small black node] (P26a) at (295:5) {};
	\node[small black node] (P26b) at (310:5) {};
		\node[small black node] (m36) at (305:4.5) {};
	\node[small black node] (P36a) at (300:4) {};
	\node[small black node] (P36b) at (290:4) {};
		\node[small black node] (m46) at (300:3.5) {};
	\node[small black node] (P46a) at (310:3) {};
	\node[small black node] (P46b) at (325:3) {};
		\node[small black node] (m56) at (325:2.5) {};
	\node[small black node] (P56) at (320:2) {};
	
	\draw[line width=1pt] (P16a) -- (P16b) -- (m26) -- (P26a) -- (P26b) --  (m36) -- (P36a) -- (P36b) -- (m46) -- (P46a) -- (P46b) -- (m56) -- (P56);

	\node[small black node] (P17a) at (5:6) {};
	\node[small black node] (P17b) at (-5:6) {};
		\node[small black node] (m27) at (-5:5.5) {};
	\node[small black node] (P27a) at (0:5) {};
	\node[small black node] (P27b) at (-10:5) {};
		\node[small black node] (m37) at (-10:4.5) {};
	\node[small black node] (P37a) at (-15:4) {};
	\node[small black node] (P37b) at (0:4) {};
		\node[small black node] (m47) at (5:3.5) {};
	\node[small black node] (P47a) at (10:3) {};
	\node[small black node] (P47b) at (-5:3) {};
		\node[small black node] (m57) at (0:2.5) {};
	\node[small black node] (P57) at (-5:2) {};
	
	\draw[line width=1pt] (P17a) -- (P17b) -- (m27) --  (P27b)  -- (P27a) -- (m37) --  (P37a) -- (P37b) -- (m47) --  (P47a) -- (P47b) -- (m57) -- (P57);

	\draw[crimsonglory, line width=1.5pt] plot [smooth, tension=1.2] coordinates { (120:1) (140:2) (120:3.5) (100:4.5) (145:5) (150:6.5)};
	\draw[crimsonglory, line width=1.5pt] plot [smooth, tension=1] coordinates { (245:6.5) (230:5) (230:3) (240:1.5) (270:2.5) (285:4) (280:6.5)};
	\draw[crimsonglory, line width=1.5pt] plot [smooth, tension=1.2] coordinates { (40:6.5) (20:5) (-10:2.5) (-40:6.5)};
	\node[black node]  () at (120:1) {};
	\node[black node]  () at (150:6.5) {};
	\node[black node]  () at (245:6.5) {};
	\node[black node]  () at (280:6.5) {};
	\node[black node]  () at (40:6.5) {};
	\node[black node]  () at (-40:6.5) {};

	\end{tikzpicture}}}
	\caption{An example of a railed annulus $\mathcal{A}$, a closed disk $D$ (depicted in blue)
		and a linkage $L$ (depicted in red) that is $D$-free and $\ann({\cal A})$-avoiding.}
	\label{asdfdsghdfhgdfdfsvdfgdfdsafdsf}
\end{figure}

Given an $r\in\mathbb{N}$,
we say that a linkage $L$ of $G$ is {\em $r$-scattered} if for every $v\in V(L)$ it holds that
$N_G^{(\leq r)}(v)\cap (V(L)\setminus V(C_v)) = \emptyset,$
where $C_v$ is the connected component of $L$ that contains $v$.
Observe that, since $N_G^{(\leq 0)}(v)=\{v\}$, every linkage is $0$-scattered.

\paragraph{Linkages confined in annuli.}
Let $t\in\mathbb{N}_{\geq 1}$, let  $p=2t+1$, and let $s\in[p]$ where $s=2t'+1$.
Also, let $\Delta$ be a closed annulus and $\mathcal{A}=(\mathcal{C},\mathcal{P})$ be a  $\Delta$-embedded $(p,q)$-railed annulus of a partially $\Delta$-embedded graph $G$.
Given some $I\subseteq [q]$, we say that a linkage  $L$ of $G$ is {\em $(s,I)$-confined in $\mathcal{A}$} if 
$$L\cap \ann(\mathcal{C},t+1-t',t+1+t')\subseteq \bigcup_{i\in I}P_{i}.$$

\paragraph{Linkage reducible graph classes.}
Let ${\cal G}$ be a graph class and $r\in\mathbb{N}$.
We say that ${\cal G}$ is {\em $r$-linkage reducible} if it is hereditary (i.e., if $G\in\mathcal{G}$ then for every $S\subseteq V(G)$, $G[S]\in\mathcal{G}$) and
if there is a function $f_{{\cal G},r}:\mathbb{N}\to\mathbb{N}$ such that for every $k\in\mathbb{N}$ and every $G\in {\cal G}$,
if $\tw(G)\geq f_{{\cal G},r}(k)$ and $G$ contains an $r$-scattered linkage $L$ of size at most $k$, then
there is a vertex $v\in V(G)$ such that $G\setminus v$ contains an $r$-scattered linkage $L'$ that is equivalent to $L$.
We say that $f_{{\cal G},r}$ {\em certifies} that $\mathcal{G}$ is $r$-linkage reducible.

We are now ready to state the main result of our paper.

\begin{theorem}\label{afsfsdfdsdsafsadffasdasfd2}
There exists a function  $\newfun{axfsfsd}:\mathbb{N}^2\to\mathbb{N}$,
 where
the images of  $\funref{axfsfsd}$ are even,
 such that 
for every odd $s\in \mathbb{N}_{\geq  1}$ and every $r,k\in\mathbb{N}$,
if 
\begin{itemize}
\item ${\cal G}$ is an $r$-linkage reducible graph class certified by a function $f_{{\cal G},r}$,
\item $\Delta$ is a closed annulus,
\item $G$ is a partially $\Delta$-embedded graph that belongs to ${\cal G}$,
\item  ${\cal A}=({\cal C},{\cal P})$ is a $\Delta$-embedded $(p,q)$-railed annulus of $G$, where  $p= \funref{axfsfsd}(m,r)+s$ and $q\geq  \frac{2r+5}{2}\cdot m$, where $m=f_{{\cal G},r}(k)$,
\item $L$ is a $\Delta$-avoiding $r$-scattered linkage of size at most $k$, and
\item $I\subseteq [q]$, where $|I|> m\cdot (r+1)$,
\end{itemize}
then
$G$ contains an $r$-scattered linkage
$\tilde{L}$ where $\tilde{L}\equiv L$, $\tilde{L}\setminus \Delta \subseteq L \setminus \Delta$, and  $\tilde{L}$ is $(s,I)$-confined in ${\cal A}$.
Moreover, $\funref{axfsfsd}(m,r) = {\cal O}(m^2+mr+r)$.
\end{theorem}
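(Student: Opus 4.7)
My plan follows the high-level strategy outlined in~\autoref{sec_overview}. I would first pick, among all $r$-scattered linkages $L'$ of $G$ with $L'\equiv L$ and $L'\setminus\Delta\subseteq L\setminus\Delta$, one that is \emph{minimal} with respect to $\mathcal{C}$, meaning that the number of edges of $L'$ not lying on any cycle of $\mathcal{C}$ is as small as possible. Such an $L'$ exists because $L$ itself is admissible. The first step is to show, using that $\mathcal{G}$ is $r$-linkage reducible and hereditary, that the graph $H := L'\cup\bigcup_{i\in[p]}C_i$ satisfies $\tw(H)\leq m := f_{\mathcal{G},r}(k)$. Indeed, if $\tw(H)>m$, applying reducibility to the induced subgraph of $G$ on $V(H)$ (which lies in $\mathcal{G}$) produces an $r$-scattered linkage $L''$ equivalent to $L'$ that avoids some vertex of $H$; substituting $L''$ for $L'$ inside $\Delta$ and keeping $L'$ untouched outside yields an admissible linkage strictly smaller in the minimality measure, a contradiction.

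Next, I would classify each maximal subpath of $L'$ lying in $\ann(\mathcal{A})$ into one of three types according to how it meets the two boundary cycles $C_1, C_p$ of $\Delta$: a \emph{river} meets both $C_1$ and $C_p$; a \emph{mountain} meets only $C_1$; a \emph{valley} meets only $C_p$. Using $\tw(H)\leq m$ together with standard bramble constructions in $H$, I would establish: (a) $L'$ has at most $m$ rivers, since more than $m$ pairwise vertex-disjoint rivers combined with sub-arcs of the cycles in $\mathcal{C}$ form a bramble of order exceeding $m$ in $H$; and (b) after possibly modifying $L'$ so that its mountains and valleys are \emph{tight} (cannot be shortcut through the cycles of $\mathcal{C}$) without changing the minimality measure, every mountain has height at most $m$ and every valley has depth at most $m$, via an analogous bramble argument.

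In a third step, I would extract from $\mathcal{A}$ a sub-railed-annulus whose ``central'' band $D$ lies inside $\ann(\mathcal{C},t+1-t',t+1+t')$ and is separated from each of $C_1$ and $C_p$ by strictly more than $m$ cycles of $\mathcal{C}$; the size lower bound $p=\funref{axfsfsd}(m,r)+s$ leaves enough cycles on either side of the central band for this to be possible. By (b), no mountain or valley of $L'$ meets $D$, so every component of $L'\cap D$ is a fragment of one of the at most $m$ rivers provided by (a). Using the rails of $\mathcal{A}$ together with the assumptions $|I|>m(r+1)$ and $q\geq\tfrac{2r+5}{2}m$, I would then reroute each river along a distinct rail $P_i$ with $i\in I$, using the cycles of $\mathcal{C}$ inside $D$ to shift rivers between rails. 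The output $\tilde{L}$ is equivalent to $L$, coincides with $L$ outside $\Delta$, and satisfies $\tilde{L}\cap\ann(\mathcal{C},t+1-t',t+1+t')\subseteq\bigcup_{i\in I}P_i$, i.e., $\tilde{L}$ is $(s,I)$-confined in $\mathcal{A}$.

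The hard part will be preserving $r$-scatteredness throughout the final rerouting: two rivers routed along rails that are too close can contain vertices at distance $\leq r$ of one another, and the same pitfall arises between rerouted rivers and the undisturbed portions of $L'$ remaining in $\ann(\mathcal{A})\setminus D$. This forces each river to reserve a rail together with an $r$-wide buffer of unused rails on either side, which is precisely what the hypotheses $|I|>m(r+1)$ and $q\geq\tfrac{2r+5}{2}m$ guarantee. This buffering produces the additive $\mathcal{O}(mr+r)$ term in $\funref{axfsfsd}(m,r)$, while the number of insulation cycles required by the bramble arguments in (a) and (b) contributes the $\mathcal{O}(m^2)$ term; together these give $\funref{axfsfsd}(m,r)=\mathcal{O}(m^2+mr+r)$.
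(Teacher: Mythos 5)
Your overall strategy is the paper's (a minimal linkage, the treewidth bound via $r$-linkage reducibility, bramble arguments bounding the number of rivers and the height/depth of mountains/valleys, and a final rerouting along rails with $r$-spacing), but the decisive step---actually carrying out the rerouting---is missing, and as sketched it would fail. Your central band $D$ is only guaranteed to be free of mountains and valleys; the (up to $m$) rivers still cross it, at arbitrary lateral positions and possibly winding around the annulus. ``Shifting rivers between rails using the cycles of $\mathcal{C}$ inside $D$'' therefore requires lateral connector arcs that may cross other rivers and may come within distance $r$ of the retained portions of the linkage, and nothing in your argument shows these connectors can be chosen vertex-disjoint, non-crossing (so that the pattern is preserved), and $r$-scattered. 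The paper resolves exactly this point with an extra idea you do not have: it builds a \emph{second}, disk-nested sequence $\mathcal{C}_{\mathcal{A}}$ out of pieces of the cycles \emph{and} the rails, and by a first round of minimality (\autoref{ap43k9s} together with \autoref{aop4icl}) obtains a linkage avoiding the large central disk $D=\Delta_{b+1,p-b,b+1,q-b}$; hence in the middle of the annulus all rivers are confined to a narrow corridor in a well-defined $D$-ordering, and the rerouting takes place inside this linkage-free disk via the nested peel-off paths $Q_i,Y_i$ placed on cycles spaced $b+r$ apart and the explicit confined routing of \autoref{alo3qx} (based on \autoref{upside}). This two-stage minimization---once against $\mathcal{C}_{\mathcal{A}}$ to clear the workspace, once against $\mathcal{C}$ relative to $D$ to control rivers and mountains---is the missing ingredient.

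Two further points. First, your quantitative accounting is off: the $\mathcal{O}(m^2)$ term in $\funref{axfsfsd}$ does not come from ``insulation cycles required by the bramble arguments'' but from peeling the up to $m$ rivers onto the rails at $m$ distinct depths spaced $\Theta(b+r)=\Theta(m+r)$ apart, so that each connector stays at distance more than $r$ from the retained parts of the other rivers (using the bound $b=\tfrac{3}{2}m$ on mountain height from \autoref{ato954jgd}); your proposed separation of ``strictly more than $m$ cycles'' between the band and $C_1,C_p$ is not enough to run this construction. Second, in the treewidth step you apply reducibility to $G[V(H)]$ and then ``substitute inside $\Delta$''; the exchange argument only works if the rerouted linkage is a subgraph of $L'\cup\cupall\mathcal{C}$, as in \autoref{alo9op}: a linkage using chords of $G[V(H)]$ need not decrease your minimality measure, may violate $\tilde{L}\setminus\Delta\subseteq L\setminus\Delta$, and splicing it with $L'$ outside $\Delta$ is not well defined. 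You should minimize, as the paper does, over equivalent $r$-scattered linkages contained in $L\cup(\cupall\mathcal{C}\setminus D)$.
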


The proof of~\autoref{afsfsdfdsdsafsadffasdasfd2} is presented in~\autoref{sec_tamelinkage}.

\subsection{Implications of~\autoref{afsfsdfdsdsafsadffasdasfd2}}
\label{subsec_implic}

As~\autoref{afsfsdfdsdsafsadffasdasfd2} is stated for graphs that belong to $r$-linkage reducible graph classes.
Therefore, our principal goal is to inspect which graph classes are $r$-linkage reducible and what is the function  $f_{{\cal G},r}$ certifying this.

We say that a function is {\em even}
if its images are even numbers. We state the following result.

\begin{proposition}[\cite{KawarabayashiW10asho,RobertsonS09XXI}]
	\label{i94opq}
	There exists an even function  $\newfun{fun_ulinka}:\mathbb{N}\to\mathbb{N}$ such that for every $k\in \mathbb{N}$
	if $G$ is a graph and $L$ is a linkage of $G$ of size at most $k$ and $\tw(G)\geq \funref{fun_ulinka}(k)$,
	then there is a vertex $v\in V(G)$ such that $G\setminus v$ contains a linkage $L'$ that is equivalent to $L$.
\end{proposition}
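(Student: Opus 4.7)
The plan is to apply the standard irrelevant vertex strategy, combining the Excluded Grid Theorem with the Flat Wall Theorem and a linkage rerouting argument on a planar piece of the graph. First, I would convert the treewidth hypothesis into concrete combinatorial structure via the Excluded Grid Theorem: if $\tw(G)$ exceeds a sufficiently large threshold depending on $k$, then $G$ contains a wall $W$ of height $h(k)$, where $h$ grows with $k$ and is chosen generously so as to accommodate the rerouting argument described below.

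Next, I would apply the Flat Wall Theorem to $W$ to obtain the familiar dichotomy: either $G$ contains a $K_t$-minor for some $t=t(k)$ that exceeds $k$ by a comfortable margin, or there exists a small apex set $A\subseteq V(G)$, of size bounded by a function of $k$, such that in $G\setminus A$ a large subwall $W'$ of $W$ is \emph{flat}, meaning it is drawn together with all its bridges inside a closed disk of the plane. In the clique-minor case, a $K_t$-minor for $t$ large enough already gives sufficient rerouting capacity: any vertex in the interior of the minor model that avoids the terminals of $L$ can be declared irrelevant, since the paths of $L$ can be rerouted through the remaining model of $K_t$ to realize the same pattern.

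In the flat wall case, which is the core of the argument, I would locate the central vertex $v$ of $W'$ at wall-distance far from both the apex set $A$ and the terminals of $L$. The flatness of $W'$ then furnishes, around $v$, a collection of many concentric nested cycles, each bounding a closed disk in the plane, which fully insulate $v$ from the rest of the graph. This is precisely the configuration on which the Unique Linkage Theorem of Robertson--Seymour (in the improved formulation of Kawarabayashi--Wollan) operates: whenever enough insulating layers surround a disk and all terminals lie outside the outermost layer, any linkage with the pattern of $L$ can be rerouted so as to avoid the innermost disk, in particular $v$. This yields an equivalent linkage $L'$ contained in $G\setminus v$.

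The hard part is exactly the last rerouting step: one has to argue that the presence of many concentric planar layers really forces every equivalent linkage to be deformable away from the central disk. This is the combinatorial heart of the Unique Linkage Theorem and is carried out via an analysis of ``societies'' and of ``vital'' linkages, where one inductively peels off layers and exploits the essentially unique arrangement that survives. Finally, to meet the parity requirement, I would round the obtained bound upward to the next even integer, so that $\funref{fun_ulinka}$ is even.
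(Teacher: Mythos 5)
The paper does not actually prove \autoref{i94opq}: it imports the statement wholesale from \cite{RobertsonS09XXI,KawarabayashiW10asho}, so there is no internal proof for your argument to be compared against. What you wrote is, in substance, the standard derivation found in that literature: excluded-grid theorem to extract a large wall, flat wall theorem to split into a clique-minor case and a flat case, and then rerouting the linkage across many concentric cycles around a central vertex. Two caveats. First, the decisive step --- that sufficiently many nested cycles with all terminals outside the outer one force an equivalent linkage avoiding the inner disk --- is precisely the Unique Linkage/irrelevant-vertex machinery of \cite{RobertsonS09XXI,RobertsonS12XXII,KawarabayashiW10asho} that the proposition is quoting; you invoke it as a black box and only allude to its proof, so your text reconstructs how the cited works establish the statement rather than giving an independent proof. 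That is acceptable here, since the paper treats the result exactly the same way, but it should be presented as a citation, not as a proof. Second, your clique-minor branch is too quick: ``any vertex in the interior of the minor model that avoids the terminals of $L$ can be declared irrelevant'' requires an argument that the (at most $2k$ per branch set, and possibly many overall) segments of $L$ meeting the model can be re-linked through the remaining branch sets, which needs $t$ large as a function of $k$ and a routing lemma inside the model; this is a genuine lemma of Robertson--Seymour, not an observation. A simplification you missed and which defuses much of this: if some vertex of $G$ lies outside $V(L)$, the statement is trivial by taking $L'=L$, so one may assume $V(L)=V(G)$ from the outset, which is exactly what brings the statement within reach of the spanning (``vital'') linkage formulation of the cited theorem. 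Rounding the bound up to the next even integer is of course harmless.
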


In terms of linkage reducible graph classes, the above proposition can be interpreted as:
\begin{observation}
The class $\mathcal{G}_{\sf all}$ of all graphs is $0$-linkage reducible, certified by the function $\funref{fun_ulinka}$.
\end{observation}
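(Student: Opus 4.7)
The plan is to verify the two requirements in the definition of $r$-linkage reducibility for the parameters $\mathcal{G}=\mathcal{G}_{\sf all}$ and $r=0$, with $f_{\mathcal{G}_{\sf all},0}:=\funref{fun_ulinka}$. Since the definition has only a hereditarity clause and a treewidth/rerouting clause, the argument splits cleanly into these two parts, and each one is essentially immediate from material already in the paper.

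For hereditarity, we simply observe that $\mathcal{G}_{\sf all}$ is the class of all (finite, simple, undirected) graphs, and by our standing conventions the induced subgraph $G[S]$ of a graph $G$ on a subset $S\subseteq V(G)$ is again a finite simple undirected graph. Hence $G[S]\in \mathcal{G}_{\sf all}$ whenever $G\in \mathcal{G}_{\sf all}$, so $\mathcal{G}_{\sf all}$ is hereditary. No further work is required here.

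For the second requirement, we need to exhibit a function $f_{\mathcal{G}_{\sf all},0}:\mathbb{N}\to\mathbb{N}$ such that for every $k\in\mathbb{N}$ and every graph $G$, if $\tw(G)\geq f_{\mathcal{G}_{\sf all},0}(k)$ and $G$ contains a $0$-scattered linkage $L$ of size at most $k$, then some vertex $v\in V(G)$ admits a $0$-scattered linkage $L'\subseteq G\setminus v$ equivalent to $L$. The key observation is the remark already made in the excerpt: since $N_G^{(\leq 0)}(v)=\{v\}$ for every $v$, the condition defining a $0$-scattered linkage is satisfied vacuously, so \emph{$0$-scattered linkages are exactly linkages}. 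With this identification, the required rerouting statement is literally the Unique Linkage Theorem as stated in Proposition~\ref{i94opq}, applied with the function $\funref{fun_ulinka}$. Setting $f_{\mathcal{G}_{\sf all},0}(k):=\funref{fun_ulinka}(k)$ therefore certifies $0$-linkage reducibility.

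There is no meaningful obstacle: the proof consists in unpacking the definitions and observing that, once $0$-scatteredness is unwrapped as the trivial condition, the rerouting clause is the very statement of Proposition~\ref{i94opq}. The only thing to record carefully is the explicit choice of certifying function $f_{\mathcal{G}_{\sf all},0}=\funref{fun_ulinka}$, so that later invocations of $r$-linkage reducibility (in particular within~\autoref{afsfsdfdsdsafsadffasdasfd2} for the case $r=0$) can substitute $m=\funref{fun_ulinka}(k)$ and inherit the quantitative bounds coming from the Unique Linkage Theorem.
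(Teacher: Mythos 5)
Your proposal is correct and follows essentially the same route as the paper: the observation is, in the paper, an immediate reinterpretation of the Unique Linkage Theorem (\autoref{i94opq}), using that $0$-scattered linkages are just linkages and that the class of all graphs is trivially hereditary, with certifying function $f_{\mathcal{G}_{\sf all},0}=\funref{fun_ulinka}$. Nothing further is needed.
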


Therefore, from~\autoref{afsfsdfdsdsafsadffasdasfd2} and~\autoref{i94opq}, we derive the following:

\begin{corollary}\label{main1}
There exist two functions  $\funref{fun_ulinka}, \newfun{fun_somelinkage}:\mathbb{N}\to\mathbb{N}$ such that 
for every odd $s\in \mathbb{N}_{\geq  1}$ and every $k\in\mathbb{N}$, if
\begin{itemize}
\item $\Delta$ is a closed annulus,
\item $G$ is a graph that is partially $\Delta$-embedded,
\item $\mathcal{A}=(\mathcal{C},\mathcal{P})$ is a  $\Delta$-embedded $(p,q)$-railed 
annulus of $G$, where  $p\geq \funref{fun_somelinkage}(k)+s$ and $q\geq  5/2\cdot\funref{fun_ulinka}(k)$,
\item $L$ is a $\Delta$-avoiding linkage of size at most $k$, and
\item $I\subseteq [q]$, where $|I|> \funref{fun_ulinka}(k)$,
\end{itemize}
then
$G$ contains a linkage
$\tilde{L}$ where $\tilde{L}\equiv L$, $\tilde{L}\setminus \Delta \subseteq L \setminus \Delta$, and  $\tilde{L}$ is $(s,I)$-confined in $\mathcal{A}$.
Moreover, $ \funref{fun_somelinkage}(k)= {\cal O}((\funref{fun_ulinka}(k))^2)$.
\end{corollary}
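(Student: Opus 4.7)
The plan is to derive Corollary \ref{main1} as an immediate specialization of \autoref{afsfsdfdsdsafsadffasdasfd2} to the case $r=0$, using the Unique Linkage Theorem (\autoref{i94opq}) to supply the required linkage reducible graph class. First, I observe that for $r=0$ the notion of $r$-scattered linkage coincides with the ordinary notion of linkage, since $N_G^{(\leq 0)}(v) = \{v\}$ for every vertex $v$. Moreover, the class $\mathcal{G}_{\sf all}$ of all graphs is trivially hereditary, and \autoref{i94opq} asserts exactly that $\mathcal{G}_{\sf all}$ is $0$-linkage reducible, certified by the function $\funref{fun_ulinka}$.

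Given this, I would set $m := \funref{fun_ulinka}(k)$ and define $\funref{fun_somelinkage}(k) := \funref{axfsfsd}(m,0) = \funref{axfsfsd}(\funref{fun_ulinka}(k),0)$. Under the hypotheses of Corollary \ref{main1}, the lower bound $p \geq \funref{fun_somelinkage}(k) + s$ translates directly to $p \geq \funref{axfsfsd}(m,0) + s$, which is the size requirement on $\mathcal{C}$ in \autoref{afsfsdfdsdsafsadffasdasfd2} with $r=0$. Similarly, with $r=0$ the quantity $\frac{2r+5}{2}\cdot m$ becomes $\frac{5}{2}\cdot\funref{fun_ulinka}(k)$, matching the hypothesis $q \geq \frac{5}{2}\funref{fun_ulinka}(k)$, and the condition $|I| > m\cdot(r+1)$ becomes $|I| > \funref{fun_ulinka}(k)$, exactly as assumed.

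With all hypotheses of \autoref{afsfsdfdsdsafsadffasdasfd2} met for the choice $\mathcal{G} = \mathcal{G}_{\sf all}$, $r=0$, and $G \in \mathcal{G}_{\sf all}$, the theorem yields a $0$-scattered (i.e., ordinary) linkage $\tilde{L}$ of $G$ with $\tilde{L} \equiv L$, $\tilde{L}\setminus\Delta \subseteq L\setminus\Delta$, and $\tilde{L}$ is $(s,I)$-confined in $\mathcal{A}$. The asymptotic bound $\funref{fun_somelinkage}(k) = \mathcal{O}((\funref{fun_ulinka}(k))^2)$ follows directly from the claim $\funref{axfsfsd}(m,r) = \mathcal{O}(m^2+mr+r)$ in \autoref{afsfsdfdsdsafsadffasdasfd2} by substituting $r=0$. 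There is no real obstacle here; the only substantive content is recognising that $\mathcal{G}_{\sf all}$ qualifies as a $0$-linkage reducible class under our definition, which is precisely what the Unique Linkage Theorem as recorded in \autoref{i94opq} provides.
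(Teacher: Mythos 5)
Your proposal is correct and follows exactly the paper's intended derivation: instantiate \autoref{afsfsdfdsdsafsadffasdasfd2} with $r=0$ and $\mathcal{G}=\mathcal{G}_{\sf all}$, using \autoref{i94opq} to certify $0$-linkage reducibility via $\funref{fun_ulinka}$, and set $\funref{fun_somelinkage}(k)=\funref{axfsfsd}(\funref{fun_ulinka}(k),0)$. The verification that the bounds on $p$, $q$, and $|I|$ specialize as claimed matches the paper's (implicit) argument, so there is nothing to add.
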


In~\cite{KawarabayashiK12alin}, Kawarabayashi and Kobayashi proved that planar graphs are $1$-linkage reducible.
The function that certifies that planar graphs are $1$-linkage reducible of planar graphs can be made single-exponential using the results of~\cite{AdlerKKLST17irre}.
Based on the techniques of~\cite{KawarabayashiK12alin} and the single-exponential bound of~\cite{Mazoit13asin}, we also prove that the class of all graphs embedded on a surface of genus $g$ is $r$-linkage reducible, certified by a single-exponential function.

\begin{theorem}\label{amultheah}
There is a function $\newfun{wtfun}:\mathbb{N}^{3}\to \mathbb{N}$ such that for every $r,k,g\in \mathbb{N}$ if $G$ is a graph embedded on a surface $\Sigma$ of genus $g$,
$L$ is an $r$-scattered linkage of $G$ of size at most $k$,
and $\tw(G)\geq \funref{wtfun}(r,k,g)$,
then there is a vertex $v\in V(G)$ such that
$G\setminus v$ contains a linkage $L'$ that is equivalent to $L$.
Moreover, it holds that $\funref{wtfun}(r,k,g)=r\cdot 2^{{\cal O}(k+g)}$.
\end{theorem}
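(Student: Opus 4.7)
The plan is to derive \autoref{amultheah} from an auxiliary ``local'' lemma described in~\autoref{sec_overview}: there is a function $\newfun{auxwt}:\mathbb{N}^3\to\mathbb{N}$ with $\funref{auxwt}(r,k,g)=r\cdot 2^{\mathcal{O}(k+g)}$ such that, if $G$ is embedded in a surface $\Sigma$ of Euler genus $g$, $\Delta\subseteq\Sigma$ is an open disk, $L$ is a $\Delta$-avoiding $r$-scattered linkage of size at most $k$, $\mathcal{C}$ is a $\Delta$-nested sequence of at least $\funref{auxwt}(r,k,g)$ cycles of $G$, and $v$ is a vertex inside the innermost cycle of $\mathcal{C}$, then $G\setminus v$ contains an $r$-scattered linkage equivalent to $L$. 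Once this lemma is in place, \autoref{amultheah} follows by combining it with a flat-disk theorem for bounded-genus graphs: high enough treewidth (single-exponential in $k+g$) forces the existence of a flat open disk $\Delta$ of $\Sigma$ containing a $\Delta$-nested sequence $\mathcal{C}$ of the required size and an interior vertex $v$; shifting $\Delta$ away from the at most $2k$ terminals of $L$ (by pigeonhole on disjoint candidate disks) makes $L$ a $\Delta$-avoiding linkage, and the auxiliary lemma then supplies the desired irrelevant vertex~$v$.

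To prove the auxiliary lemma, I would first replace $\mathcal{C}$ by a $\Delta$-nested subsequence $\mathcal{C}^\ast$ whose consecutive cycles are at pairwise graph-distance at least $r$; this costs only a factor of $r+1$ in the size requirement, which is consistent with the target dependency $r\cdot 2^{\mathcal{O}(k+g)}$. If $v\notin V(L)$ we are done, so assume $v\in V(L)$ and fix a \emph{BC-minimal linkage around $v$}, i.e., an $r$-scattered linkage $L'\equiv L$ containing $v$ that minimizes the number of bridges plus crossings with respect to $\mathcal{C}^\ast$. Two cases then arise, corresponding to the two sub-sketches in~\autoref{sec_overview}. If $L'$ has fewer than $|\mathcal{C}^\ast|-\lfloor k/2\rfloor-1$ bridges, the observation~\autoref{fewbridges} lets me reroute $L'$ along the cycles of $\mathcal{C}^\ast$ to an equivalent linkage avoiding $v$; the pairwise $r$-distance between these cycles guarantees that the new linkage is still $r$-scattered.

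The harder case is when $L'$ has at least $|\mathcal{C}^\ast|-\lfloor k/2\rfloor-1$ bridges. I would classify the $\Delta$-bridges of $L'$ according to their homotopy class in $\Sigma$ relative to a representative point in $\Delta$; since the number of such homotopy classes on a surface of Euler genus $g$ is $\mathcal{O}(g)$ (cf.~\autoref{jfopdsn}), a pigeonhole argument produces a \emph{$\Delta$-rainbow}~$\mathcal{B}$: a collection of pairwise homotopic bridges, two of which bound a closed disk $D\subseteq\Sigma$ enclosing all the others and no terminal of $L'$. Concatenating the bridges of $\mathcal{B}$ with suitable arcs of the cycles in $\mathcal{C}^\ast$ builds a long $D$-nested sequence of cycles around some ``central'' bridge $B$ of $\mathcal{B}$; the length of this sequence is exponential in $k+g$, matching Mazoit's single-exponential bound of~\cite{Mazoit13asin}. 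Applying Mazoit's theorem to the surface $\Sigma$ then produces a linkage equivalent to $L'$ that avoids $B$, contradicting BC-minimality and so ruling out this case.

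The main obstacle, as in~\cite{KawarabayashiK08thei}, is that Mazoit's theorem only guarantees a vertex-disjoint (i.e.\ $0$-scattered) equivalent linkage, whereas we need $r$-scatteredness. I would handle this via the generalization to arbitrary $r$ of the Kawarabayashi--Kobayashi contraction trick: build an auxiliary graph $G^{(r)}$ by contracting, for each vertex $u\in V(G)\setminus V(L')$, a tree of edges that encodes the set of paths of $L'$ at graph-distance at most $r$ from $u$, so that any linkage in $G^{(r)}$ equivalent to $L'$ pulls back to an $r$-scattered linkage in $G$ equivalent to $L'$; crucially, this contraction can be performed inside $D$ and preserves both the embedding in $\Sigma$ and (up to a single-exponential factor in $r$) the nested cycle family used above. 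Verifying that these contractions are compatible with the surface embedding and with the ``rainbow'' structure, and that the pull-back of the linkage produced by Mazoit's theorem indeed avoids $B$ and remains $r$-scattered, is the technically most delicate part of the argument; everything else is bookkeeping to assemble the final bound $\funref{wtfun}(r,k,g)=r\cdot 2^{\mathcal{O}(k+g)}$.
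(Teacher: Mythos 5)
Your proposal follows essentially the same route as the paper: high treewidth yields a wall/flat disk, pigeonhole over disjoint subwalls avoids the at most $2k$ terminals, and the local lemma is proved via an $r$-tight nested sequence, BC-minimal linkages, the few-bridges rerouting of \autoref{fewbridges}, and, in the many-bridges case, a clear $\Delta$-rainbow obtained from the genus bound of \autoref{jfopdsn} combined with Mazoit's theorem applied after a Kawarabayashi--Kobayashi-style contraction to recover $r$-scatteredness, contradicting BC-minimality. Minor deviations (minimizing the sum of bridges and crossings rather than both simultaneously, and the precise edge set contracted, which in the paper is a subset of $E(L)\cap\inter(\Delta)$ as in \autoref{japanese}) do not change the argument.
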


The proof of~\autoref{amultheah} is presented in~\autoref{sec_irrfriendlybdgenus}. Using~\autoref{afsfsdfdsdsafsadffasdasfd2}
and~\autoref{amultheah}, we can derive the following.

\begin{corollary}\label{cor_bdg}
There exist two functions  $\newfun{fun_bdg}, \funref{wtfun}:\mathbb{N}^3\to\mathbb{N}$ such that 
for every odd $s\in \mathbb{N}_{\geq  1}$ and every $r,k,g\in\mathbb{N}$,
if 
\begin{itemize}
\item $\Sigma$ is a surface of Euler genus $g$,
\item $\Delta$ is a closed annulus of $\Sigma$,
\item $G$ is a graph embedded in $\Sigma$
\item  ${\cal A}=({\cal C},{\cal P})$ is a $\Delta$-embedded $(p,q)$-railed annulus of $G$, where
$p\geq \funref{fun_bdg}(r,g,k)+s$ and $q\geq  \frac{2r+5}{2}\cdot\funref{wtfun}(r,k,g)$
\item $L$ is a $\Delta$-avoiding $r$-scattered linkage of size at most $k$, and
\item $I\subseteq [q]$, where $|I|> \funref{wtfun}(r,k,g)\cdot (r+1)$,
\end{itemize}
then
$G$ contains an $r$-scattered linkage
$\tilde{L}$ where $\tilde{L}\equiv L$, $\tilde{L}\setminus \Delta \subseteq L \setminus \Delta$, and  $\tilde{L}$ is $(s,I)$-confined in ${\cal A}$.
Moreover, $ \funref{fun_bdg}(r,k,g)= {\cal O}((\funref{wtfun}(r,k,g))^2)$.
\end{corollary}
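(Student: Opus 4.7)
The plan is to derive Corollary~\ref{cor_bdg} by directly plugging Theorem~\ref{amultheah} into Theorem~\ref{afsfsdfdsdsafsadffasdasfd2}. The key intermediate observation is that the class $\mathcal{G}_g$ of all graphs embeddable on a surface of Euler genus at most $g$ is hereditary, and by Theorem~\ref{amultheah} it is $r$-linkage reducible, certified by the function $f_{\mathcal{G}_g,r}(k) := \funref{wtfun}(r,k,g) = r\cdot 2^{\mathcal{O}(k+g)}$. Under the hypotheses of Corollary~\ref{cor_bdg}, $G$ belongs to $\mathcal{G}_g$, so this fits exactly into the framework of Theorem~\ref{afsfsdfdsdsafsadffasdasfd2}.

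Before invoking the theorem, I would first verify the partial $\Delta$-embeddability of $G$: since $G$ is fully embedded in $\Sigma$, the two components $C_1,C_p$ of $\bd(\Delta)$ lie in $G$ as the extremal cycles of the railed annulus $\mathcal{A}$, and by the no-crossing property of embeddings, no edge of $G$ crosses $\bd(\Delta)$ outside its vertices. Hence $V(C_1)\cup V(C_p)$ is a graph-theoretic separator between $V(G)\cap\inter(\Delta)$ and $V(G)\setminus\Delta$, and this can be packaged as the two laminar separations required by the definition (one built from each boundary cycle). The compass is then simply $K = G\cap\Delta$, and this bookkeeping goes through regardless of whether $\Delta$ is separating in $\Sigma$.

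Next, I set $m := \funref{wtfun}(r,k,g)$ and $\funref{fun_bdg}(r,g,k) := \funref{axfsfsd}(m,r)$. The size hypotheses of Corollary~\ref{cor_bdg}, namely $p \geq \funref{fun_bdg}(r,g,k)+s$, $q \geq \frac{2r+5}{2}\cdot m$, and $|I| > m\cdot(r+1)$, coincide (after restricting $\mathcal{A}$ to its central sub-railed annulus of the exact required size) with those of Theorem~\ref{afsfsdfdsdsafsadffasdasfd2} applied with $\mathcal{G} = \mathcal{G}_g$ and $k$ as given. Its conclusion directly produces the required $r$-scattered linkage $\tilde{L}\equiv L$ with $\tilde{L}\setminus\Delta\subseteq L\setminus\Delta$ and $\tilde{L}$ being $(s,I)$-confined in $\mathcal{A}$. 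Parity is preserved because $\funref{axfsfsd}$ has even images, so $\funref{fun_bdg}$ is even and $p = \funref{fun_bdg}+s$ stays odd as the railed annulus definition demands. Finally, the bound $\funref{axfsfsd}(m,r)=\mathcal{O}(m^2+mr+r)$ yields $\funref{fun_bdg}(r,g,k)=\mathcal{O}((\funref{wtfun}(r,k,g))^2)$, as claimed.

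The entire argument is a straightforward combination of prior results; the only mildly delicate point is the partial-embeddability verification, but this is a standard consequence of the no-crossing property of surface embeddings and presents no real combinatorial obstacle. The substantive content of the corollary sits inside Theorem~\ref{amultheah} (whose proof is the subject of~\autoref{sec_irrfriendlybdgenus}) and Theorem~\ref{afsfsdfdsdsafsadffasdasfd2} (proved in~\autoref{sec_tamelinkage}); Corollary~\ref{cor_bdg} is essentially a dictionary entry that couples them.
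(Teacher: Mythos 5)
Your derivation is correct and is exactly how the paper obtains this corollary: the paper gives no separate proof, stating it as an immediate consequence of \autoref{afsfsdfdsdsafsadffasdasfd2} applied to the $r$-linkage reducible class certified by $\funref{wtfun}$ via \autoref{amultheah}, with $\funref{fun_bdg}(r,g,k)=\funref{axfsfsd}(\funref{wtfun}(r,k,g),r)$. Your additional bookkeeping (partial $\Delta$-embeddability of a $\Sigma$-embedded graph and trimming to a centered sub-railed annulus of the exact odd size) is the right, routine verification and matches the paper's implicit intent.
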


\section{How to comb a linkage}
\label{sec_tamelinkage}
This section is dedicated to the proof of~\autoref{afsfsdfdsdsafsadffasdasfd2}.
We develop a series of definitions that allow us to ``comb'' a given linkage through the tracks of a railed annulus.
First, in~\autoref{subsec_minimal}, we define a notion of minimal linkages.
Then, in \autoref{subsec_rivers}, we prove that minimal linkages have few rivers, that means that they traverse the annulus few times and in~\autoref{subsec_mountains} we prove that minimal linkages do not have high mountains or deep valleys, which means that there are not ``deep enough'' same-side laminar intrusions of the linkage in the annulus.
Also, in~\autoref{subsec_rerouting} we present how to route an $r$-scattered linkage through a railed annulus in a ``combed'' way, given that the terminals of the linkage are vertices in the intersection of the rails and the cycles of the railed annulus and are scattered enough. This constitutes the core of the rerouting arguments that we develop, which are all tied together in~\autoref{subsec_prooftame}, in order to prove~\autoref{afsfsdfdsdsafsadffasdasfd2}.

\subsection{Minimal linkages}\label{subsec_minimal}
In this subsection, we aim to define the notion of minimal linkages  with respect to a given linkage $L$ and a collection of cycles ${\cal C}$, that intuitively corresponds to linkages that
are equivalent to $L$ and are ``diverging'' from ${\cal C}$ in the minimum (in number of edges) possible way.
Then, we prove that, intuitively, given a graph $G$, a linkage $L$ of $G$, and a collection of cycles ${\cal C}$ of $G$, if $L$ is a minimal linkage with respect to $L$ and ${\cal C}$, then the union of $L'$ and ${\cal C}$ is a graph of bounded treewidth.
The bound on the treewidth is given by the function $f_{\mathcal{G},r}$ of the $r$-linkage reducible class ${\cal G}$ in which $G$ belongs.

\paragraph{LB-pairs.}
Given a graph $G$,  a {\em LB-pair} of $G$ is a pair $(L,B)$ where $B$ is a  subgraph of $G$ with maximum degree 2 and  $L$ is a  linkage of $G$. We define $\cae(L,B)=|E(L)\setminus E(B)|$ (i.e., the number of linkage edges that are not edges of $B$).

%
\begin{lemma}\label{alo9op}
Let $r\in\mathbb{N}$, let $\mathcal{G}$ be an $r$-linkage reducible graph class, certified by a function $f_{\mathcal{G},r}$, let $G\in \mathcal{G}$, and let $(L,B)$ be an LB-pair of $G$, 
where $L$ is $r$-scattered in $G$.
If $\tw(L\cup B)>f_{\mathcal{G},r}(|L|)$,
then $G$ contains a linkage $L'$ where 
	\begin{enumerate}
		\item  $\cae(L',B)<\cae(L,B)$,
		\item $L'\equiv L$,
		\item $L'\subseteq L\cup B$.
	\end{enumerate}
\end{lemma}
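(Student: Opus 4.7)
The plan is to apply the $r$-linkage reducibility of $\mathcal{G}$ directly to the graph $H := L \cup B \subseteq G$. Two preliminary checks are in order. First, $H \in \mathcal{G}$: since $\mathcal{G}$ is hereditary and (as is implicit in the paper's use of the definition) closed under the kind of subgraph extraction performed here, $H$ inherits membership in $\mathcal{G}$ from $G$. Second, $L$ remains $r$-scattered in $H$: distances in the subgraph $H$ are no smaller than in $G$, so no new pair of vertices from distinct paths of $L$ becomes within distance $r$. Since $\tw(H) > f_{\mathcal{G},r}(|L|)$ by hypothesis, the definition of $r$-linkage reducibility then supplies a vertex $v \in V(H)$ and an $r$-scattered linkage $L^* \equiv L$ with $L^* \subseteq H \setminus v$.

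Properties 2 and 3 of the conclusion are immediate from this output: $L^* \equiv L$ by the definition of reducibility, and $L^* \subseteq H \setminus v \subseteq L \cup B$. For the inequality in property 1, the inclusion $E(L^*) \subseteq E(H) = E(L) \cup E(B)$ yields at once the weak bound $\cae(L^*,B) = |E(L^*)\setminus E(B)| \leq |E(L)\setminus E(B)| = \cae(L,B)$.

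The essential difficulty, which I expect to be the main obstacle, is promoting this to the \emph{strict} inequality required by property 1. To do this I plan to combine the above with a minimality argument: choose $L^\circ$ minimizing $\cae(\cdot,B)$ among $r$-scattered linkages equivalent to $L$ and contained in $L\cup B$ (the family is nonempty, as $L$ itself lies in it). If $\cae(L^\circ,B) < \cae(L,B)$, set $L' := L^\circ$ and we are done. Otherwise $L^\circ$ and $L$ share their edge sets outside $B$, so $L^\circ \cup B$ and $L \cup B$ have the same edge set and therefore the same treewidth $> f_{\mathcal{G},r}(|L^\circ|)$. Re-applying reducibility to $L^\circ \cup B$ produces a vertex $v$ and a linkage $L^\bullet \equiv L^\circ$ avoiding $v$. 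If $v$ is incident to an edge of $E(L^\circ) \setminus E(B)$, then $L^\bullet$ omits that edge, forcing $\cae(L^\bullet,B) < \cae(L^\circ,B)$ and contradicting the minimality of $L^\circ$; this contradiction rules out the assumption $\cae(L^\circ,B)=\cae(L,B)$.

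The remaining subtlety, which is exactly the main obstacle, is that the vertex $v$ supplied by reducibility need not be incident to any edge of $E(L^\circ) \setminus E(B)$: $v$ could lie outside $V(L^\circ)$ entirely, or lie on $V(L^\circ)$ but have all its incident $L^\circ$-edges already in $B$. I plan to handle these ``useless'' outcomes by an iterative peeling: at each step I strip off such a $v$ from $H$, noting that the resulting graph still lies in $\mathcal{G}$ (by heredity), still contains $L^\circ$ as an $r$-scattered linkage of the same size, and leaves $\cae(L^\circ,B)$ unchanged. Since $V(H)$ is finite and the sequence of graphs keeps treewidth above $f_{\mathcal{G},r}(|L^\circ|)$ (any peeling step that lowered the treewidth to at most $f_{\mathcal{G},r}(|L^\circ|)$ would itself be the final one, because reducibility would then refuse to be invoked and the peeling would have already isolated a useful $v$ by virtue of the edge-set equality $E(L^\circ \cup B) = E(L \cup B)$), the process must at some stage return a $v$ of the ``useful'' type. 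A last check shows such a $v$ is not a terminal of $L^\circ$ (else $L^\bullet$ would have to contain it), and the corresponding $L^\bullet$ provides the linkage $L'$ witnessing all of properties 1--3.
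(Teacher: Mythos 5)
Your opening step coincides with the paper's: apply $r$-linkage reducibility once to $H=L\cup B$ (with the same subgraph-closure reading of heredity that the paper itself uses), note that $L$ remains $r$-scattered in the subgraph $H$, and obtain a vertex $v$ and a linkage $L'\equiv L$ with $L'\subseteq H\setminus v$ (your $L^*$), which already yields conclusions 2 and 3 and the weak inequality. The paper then finishes \emph{inside this single application}: it picks a pattern pair $\{x,y\}$ whose paths $P\subseteq L$ and $P'\subseteq L'$ differ, and argues that the first edge of $P$ beyond the common initial segment of $P$ and $P'$ lies in $E(L)\setminus E(B)$ but not in $E(L')$, so the inclusion $E(L')\setminus E(B)\subseteq E(L)\setminus E(B)$ is proper and the inequality is strict --- no minimality, no iteration. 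You replace this by an extremal choice (a $\cae$-minimizer $L^\circ$ over the $r$-scattered equivalent linkages inside $L\cup B$) combined with repeated invocations of reducibility and a ``peeling'' loop, and that is where your proof breaks.

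The peeling loop has no valid guarantee of success. Deleting a ``useless'' vertex can lower the treewidth of the current graph, and nothing prevents the oracle from returning only useless vertices until the treewidth has dropped to $f_{\mathcal{G},r}(|L^\circ|)$, at which point reducibility can no longer be invoked and no useful vertex has ever appeared; the parenthetical claim that such a step ``would itself be the final one \ldots by virtue of the edge-set equality $E(L^\circ\cup B)=E(L\cup B)$'' is not an argument, since that equality was fixed at the outset and says nothing about which vertices get peeled or when. Two further problems: if the useless $v$ lies in $V(L^\circ)$ with both incident $L^\circ$-edges in $B$, then deleting $v$ destroys $L^\circ$, so the peeled graph does not ``still contain $L^\circ$'' as you assert (you would have to pass to $L^\bullet$); and to contradict the minimality of $L^\circ$ you need $L^\bullet$ to belong to your family, i.e.\ to be $r$-scattered with respect to the ambient graph, whereas the reducibility definition only yields scatteredness in the smaller graph it is applied to, and for $r\geq 1$ scatteredness does not transfer from a subgraph to a supergraph. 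The paper's proof avoids all three difficulties because its strictness argument uses only the one linkage produced by the single application and imposes no scatteredness requirement on it; to repair your version you would need either the paper's divergence-edge argument or some other mechanism that forces a vertex incident to an edge of $E(L^\circ)\setminus E(B)$ to be returned before the treewidth budget is exhausted.
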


\begin{proof}
Let $H=L\cup B$.
Since  $G\in \mathcal{G}$ and  $\mathcal{G}$ is $r$-linkage reducible,
there is a vertex $v\in V(H)$ such that
$H\setminus \{v\}$ contains a linkage $L'$ that is equivalent to $L$.
Notice that $E(L')\setminus E(B)\subseteq E(L)\setminus E(B)$. It remains to 
prove that this inclusion is proper.
Let $\{x,y\}$ be a member of the common pattern of  ${L}$ and ${L}'$ such 
that the  $(x,y)$-path $P$ of $L$ is different than the $(x,y)$-path $P'$ of ${L}'$.
Clearly, $P$ and $P'$, when oriented from $x$ to $y$, 
have a common part $P^*$. Formally, this is the connected component 
of $P\cap P'$ that contains $x$. Let $e$ be the $(m+1)$-th edge of $P$, starting from $x$, where $m$ is the length of $P^*$.  Notice that $e\in E({L})\setminus E(B)$, while $e\not\in E({L}')\setminus E(B)$.
We conclude that $E(L')\setminus E(B)\subsetneq E(L)\setminus E(B)$, therefore 
$|E(L')\setminus E(B)|<|E(L)\setminus E(B)|$, as required. 
\end{proof}

\smallskip\noindent {\bf Minimal linkages.}
Let $r\in\mathbb{N}$,
let $G$ be a \seg,  $\mathcal{C}$ be a $\Delta$-parallel sequence of cycles of $G$, 
$D\subseteq \Delta$, $L$ be a  
$\Delta$-avoiding and $D$-free $r$-scattered linkage 
of $G$. We say that an $r$-scattered linkage $L'$ of $G$
is {\em $(\mathcal{C},D,L)$-minimal} if, among all the $\Delta$-avoiding $r$-scattered linkages  of $G$ that are equivalent to $L$
and are subgraphs of $L\cup(\cupall\mathcal{C}\setminus D)$,
$L'$ is one where the quantity $\cae(L',\cupall\mathcal{C}\setminus D)$ is minimized.

\begin{lemma}
\label{ap43k9s}
Let $r\in\mathbb{N}$ and let $\mathcal{G}$ be an $r$-linkage reducible graph class, certified by a function $f_{\mathcal{G},r}$.
Let $\Delta$ be a closed annulus,
let $G$ be a \seg that also belongs to $\mathcal{G}$,
let ${\cal C}$ be a $\Delta$-parallel sequence of cycles of $G$,
let $D\subseteq \Delta$,
and let $L$ be  a  $\Delta$-avoiding and $D$-free $r$-scattered linkage of $G$.
If $L'$ is a $({\cal C},D,L)$-minimal 
linkage of $G$, then
$\tw(L'\cup (\cupall {\cal C}\setminus D))\leq f_{\mathcal{G},r}(|L'|)$.
\end{lemma}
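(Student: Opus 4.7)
\textbf{Proof plan for Lemma~\ref{ap43k9s}.}

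The plan is to argue by contradiction using \autoref{alo9op} and the minimality of $L'$. Set $B := \cupall \mathcal{C} \setminus D$. Since $\mathcal{C}$ is a $\Delta$-parallel sequence of vertex-disjoint cycles, every connected component of $B$ is a cycle or a subpath of a cycle; in particular, $B$ has maximum degree at most $2$, so $(L', B)$ is a legitimate LB-pair of $G$. By the definition of $(\mathcal{C}, D, L)$-minimality, $L'$ is an $r$-scattered linkage of $G$ that is $\Delta$-avoiding, $D$-free, equivalent to $L$, and contained in $L \cup (\cupall \mathcal{C} \setminus D) = L \cup B$.

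Suppose, toward a contradiction, that $\tw(L' \cup (\cupall \mathcal{C} \setminus D)) > f_{\mathcal{G},r}(|L'|)$, that is, $\tw(L' \cup B) > f_{\mathcal{G},r}(|L'|)$. First I would apply \autoref{alo9op} to the LB-pair $(L', B)$ in $G \in \mathcal{G}$: this yields a linkage $L''$ of $G$ with $\cae(L'', B) < \cae(L', B)$, $L'' \equiv L'$, and $L'' \subseteq L' \cup B$. By tracking the construction of $L''$ inside the proof of \autoref{alo9op}, which appeals to the $r$-linkage reducibility of $\mathcal{G}$, the linkage $L''$ is also $r$-scattered.

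Next I would verify that $L''$ is an admissible candidate in the minimization defining $(\mathcal{C}, D, L)$-minimality. Equivalence to $L$ is immediate from $L'' \equiv L' \equiv L$, so $T(L'') = T(L)$ and hence $L''$ is $\Delta$-avoiding. Since $L' \subseteq L \cup B$ by the minimality definition, we get $L'' \subseteq L' \cup B \subseteq L \cup B$. Moreover, $L'$ is $D$-free (being a candidate in the minimization) and $B = \cupall \mathcal{C} \setminus D$ is $D$-free by construction, so $L'' \subseteq L' \cup B$ is $D$-free as well. Therefore $L''$ qualifies as a competitor in the class over which $\cae(\cdot, B)$ is minimized, yet $\cae(L'', B) < \cae(L', B)$, contradicting the $(\mathcal{C}, D, L)$-minimality of $L'$.

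The only genuinely delicate point is ensuring that the linkage produced by \autoref{alo9op} inherits the $r$-scattered property; this is the main obstacle and the reason the proof invokes \autoref{alo9op} rather than a generic treewidth-reduction statement. Since the construction in \autoref{alo9op} hands off to the $r$-linkage reducibility of $\mathcal{G}$ (applied to the induced subgraph of $G$ on $V(L' \cup B)$, which lies in $\mathcal{G}$ by heredity), the equivalent linkage it produces is automatically $r$-scattered, closing the argument.
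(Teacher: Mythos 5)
Your proposal is correct and follows essentially the same route as the paper's proof: set $B=\cupall\mathcal{C}\setminus D$, note $(L',B)$ is an LB-pair, apply \autoref{alo9op} to obtain $L''$ with $\cae(L'',B)<\cae(L',B)$ and $L''\subseteq L'\cup B$, and contradict the $(\mathcal{C},D,L)$-minimality of $L'$. Your extra verifications that $L''$ is a legitimate competitor in the minimization ($\Delta$-avoiding, $D$-free, contained in $L\cup B$, and $r$-scattered via the reducibility step inside \autoref{alo9op}) are points the paper leaves implicit, so the proposal is, if anything, slightly more careful than the published argument.
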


\begin{proof} Let $B=\cupall {\cal C}\setminus D$ and note that $(L',B)$ is an LB-pair of $G$.
If $\tw(L'\cup (\cupall {\cal C}\setminus D))> f_{\mathcal{G},r}(|L'|)$, then by~\autoref{alo9op},
$G$ contains a linkage $L''$ that is equivalent to $L'$ where 
$\cae(L'',B)<\cae(L',B)$
and $L''\subseteq  L'\cup B$. This contradicts 
the choice of  $L'$ as a $({\cal C},D,L)$-minimal linkage of $G$. 
\end{proof}

\subsection{Minimal linkages have few rivers}
\label{subsec_rivers}
In this subsection we deal with {\em streams} and {\em rivers} of annuli-avoiding linkages.
Intuitively, a stream is a minimal part of a given linkage $L$, that traverses a given annulus that $L$-avoids.
We show that, given a linkage $L$ and a collection of cycles ${\cal C}$,
the minimum between the size of ${\cal C}$ and the number of different streams of $L$
is a lower bound to the treewidth of the graph obtained by the union of $L$ and ${\cal C}$ (\autoref{u4l49rop0r}).
This implies an upper bound on the number of rivers of a minimal linkage (\autoref{fskfsl}).

\paragraph{Streams and rivers.}
Let $\Delta$ be a closed annulus.
Let $G$ be a \seg, $\mathcal{C}=[C_{1},\ldots,C_{p}]$ be $\Delta$-parallel sequence of cycles
of $G$, and $L$ be a $\Delta$-avoiding linkage of $G$. 
A {\em $(\mathcal{C},L)$-stream} of $G$ is a subpath of $L$ that is a 
subset $P$ of $\Delta$ and such that $V(P\cap C_{1})$  
consists of the one endpoint of $P$ and $V(P\cap C_{p})$ 
consists of the other. 
A {\em disjoint collection of $(\mathcal{C},L)$-streams} of $G$ is a 
collection $\mathcal{R}$ of $(\mathcal{C},L)$-streams
such that $\cupall\mathcal{R}$ is a linkage of $G$.
A {\em $(\mathcal{C},L)$-river} of $G$  is a $(\mathcal{C},L)$-stream that is a subpath of  
a connected component of $L\cap \Delta$ that has  one of its endpoints 
in $C_{1}$ and the other in $C_{p}$. Notice that not each $(\mathcal{C},L)$-stream of $G$ is 
a $(\mathcal{C},L)$-river and any collection of $(\mathcal{C},L)$-rivers is a disjoint collection of $(\mathcal{C},L)$-streams (see \autoref{asdfgsdfdsffdsgdfshfgfsaf}).

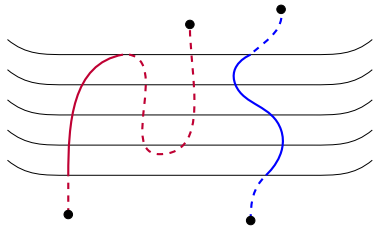
\begin{figure}[H]
	\centering\scalebox{0.8}{
	\sshow{0}{\begin{tikzpicture}[scale=0.5]
	
	\foreach \y in {0,...,4}{
		\draw[-] (-2,\y+ 0.5) to [out=-40, in= 180] (0,\y) to  (8,\y)  to [out=0, in= -140] (10,\y+0.5);
	}

	\begin{scope}
	\draw[dashed, crimsonglory, line width=1pt] (0,-1.3) to (0,0);
	\draw[crimsonglory, line width=1pt] (0,0) to [out=90, in=190] (1.8,4);
	\draw[dashed,crimsonglory, line width=1pt] (2,4) to [out=-10, in=180] (3,0.7) to [out=0, in =270] (4,5);
	\node[black node]  (s1) at (0,-1.3) {};
	\node[black node]  (t1) at (4,5) {};
	\end{scope}

	\begin{scope}
	\draw[blue, line width=1pt] plot [smooth, tension=1] coordinates {(6.5,0) (7,1.5) (5.5,3) (6,4)};
	\draw[dashed,blue, line width=1pt] (6,-1.5) to [out=90, in=225] (6.5,0);
	\draw[dashed, blue, line width=1pt] (6.1,4.1) to [out=40, in=280] (7,5.5);
	
	\node[black node]  (s2) at (6,-1.5) {};
	\node[black node]  (t2) at (7,5.5) {};
	\end{scope}
	
	\end{tikzpicture}}}
	\caption{An example of a 
		$(\mathcal{C}, L)$-stream (depicted in solid red) and a $(\mathcal{C}, L)$-river (depicted in solid blue).}
		\label{asdfgsdfdsffdsgdfshfgfsaf}
\end{figure}

We now introduce a notion of {\sl ordering} of streams that will be used to consider certain collections of consecutive streams of a linkage.
 \paragraph{Orderings of streams.}
If $\mathcal{Z}$ is  a disjoint collection of $(\mathcal{C},L)$-streams of $G$ we  define 
its  {\em $D$-ordering}  as follows:
Consider the sequence $[Z_{1},\ldots,Z_{d}]$ 
such that  for each $i\in[d]$, 
one, say $D_{i}$, of the two connected components of $\Delta\setminus (Z_{i}\cup Z_{i+1})$  does not intersect $\cupall\mathcal{Z}$ (here $Z_{d+1}$ denotes $Z_{1}$). Among 
all $(d-1!)$ such sequences we insist that  $[Z_{1},\ldots,Z_{d}]$ is the unique one where $D\subseteq D_{q}$ and that the order 
of $\mathcal{Z}$ is the counter-clockwise order that its 
elements appear  around  $\Delta$ (see \autoref{Dordering}). 
We call  $[Z_{1},\ldots,Z_{d}]$ the {\em $D$-ordering} of $\mathcal{Z}$.


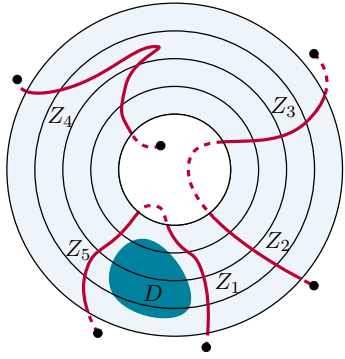
\begin{figure}[H]
	\centering
	\scalebox{0.8}{
	\sshow{0}{\begin{tikzpicture}[scale=.46]
	
	\begin{scope}
	\fill[celestialblue!10!white] (0,0) circle (6 cm);
	\fill[white] (0,0) circle (2 cm);
	\end{scope}

	\fill[darkturquoise] plot [smooth cycle, tension=1.2] coordinates { (-2.2,-4.5) (.5,-5) (-1,-2.5)};
	\node () at (260:4.5) {$D$};
	\foreach \x in {2,...,6}{
		\draw[line width =0.6pt] (0,0) circle (\x cm);
	}

	\begin{scope}[on background layer]
	\draw[crimsonglory, dashed, line width=1.5pt] plot [smooth, tension=1.2] coordinates { (120:1) (140:2) (120:3.5) (100:4.5) (145:5) (150:6.5)};
	\draw[crimsonglory, dashed, line width=1.5pt] plot [smooth, tension=1] coordinates { (245:6.5) (230:5) (230:3) (240:1.5) (270:2.5) (285:4) (280:6.5)};
	\draw[crimsonglory, dashed, line width=1.5pt] plot [smooth, tension=1.2] coordinates { (40:6.5) (20:5) (-10:0.5) (-40:6.5)};
	\node[black node]  () at (120:1) {};
	\node[black node]  () at (150:6.5) {};
	\node[black node]  () at (245:6.5) {};
	\node[black node]  () at (280:6.5) {};
	\node[black node]  () at (40:6.5) {};
	\node[black node]  () at (-40:6.5) {};
	
	\end{scope}
	
	\begin{scope}
	\clip  (0,0) circle (6 cm);
	
		\draw[crimsonglory, line width=1.5pt] plot [smooth, tension=1.2] coordinates { (120:1) (140:2) (120:3.5) (100:4.5) (145:5) (150:6.5)};
	\draw[crimsonglory, line width=1.5pt] plot [smooth, tension=1] coordinates { (245:6.5) (230:5) (230:3) (240:1.5) (270:2.5) (285:4) (280:6.5)};
	\draw[crimsonglory, line width=1.5pt] plot [smooth, tension=1.2] coordinates { (40:6.5) (20:5) (-10:0.5) (-40:6.5)};
	\node[black node]  () at (120:1) {};
	\node[black node]  () at (150:6.5) {};
	\node[black node]  () at (245:6.5) {};
	\node[black node]  () at (280:6.5) {};
	\node[black node]  () at (40:6.5) {};
	\node[black node]  () at (-40:6.5) {};
	\fill[white] (0,0) circle (2 cm);
	\draw[line width =0.6pt] (0,0) circle (2 cm);
	\end{scope}

	\begin{scope}
	\clip (0,0) circle (2 cm);
	
		\draw[crimsonglory, dashed, line width=1.5pt] plot [smooth, tension=1.2] coordinates { (120:1) (140:2) (120:3.5) (100:4.5) (145:5) (150:6.5)};
	\draw[crimsonglory,dashed, line width=1.5pt] plot [smooth, tension=1] coordinates { (245:6.5) (230:5) (230:3) (240:1.5) (270:2.5) (285:4) (280:6.5)};
	\draw[crimsonglory,dashed, line width=1.5pt] plot [smooth, tension=1.2] coordinates { (40:6.5) (20:5) (-10:0.5) (-40:6.5)};
	\node[black node]  () at (120:1) {};
	\node[black node]  () at (150:6.5) {};
	\node[black node]  () at (245:6.5) {};
	\node[black node]  () at (280:6.5) {};
	\node[black node]  () at (40:6.5) {};
	\node[black node]  () at (-40:6.5) {};
	\end{scope}
	
	\node (Z1) at (295:4.5) {$Z_{1}$};
	\node (Z2) at (325:4.5) {$Z_{2}$};
	\node (Z3) at (30:4.5) {$Z_{3}$};
	\node (Z4) at (155:4.5) {$Z_{4}$};
	\node (Z5) at (220:4.5) {$Z_{5}$};
	
	\end{tikzpicture}}}
	\caption{An example of a $\Delta$-parallel sequence $\mathcal{C}$ of cycles, an open disk $D\subseteq\Delta$ (depicted in blue), a linkage $L$ (depicted in red) that is $D$-free and $\ann(\mathcal{C})$-avoiding, a disjoint collection $\mathcal{Z}$ of $(\mathcal{C}, L)$-streams, and the $D$-ordering $[Z_{1},\ldots, Z_{5}]$ of $\mathcal{Z}$.}
	\label{Dordering}
\end{figure}

In order to prove our next result (\autoref{u4l49rop0r}), we will use the equivalent definition of treewidth in terms of brambles.

\paragraph{Brambles.}
Given a graph $G$, we say that 
a subset $S$ of $V(G)$ is {\em connected} if $G[S]$ is connected. Given $S_{1},S_2\subseteq V(G)$,
we say that $S_1$ and $S_{2}$ touch if either $S_{1}\cap S_{2}\neq\emptyset$
or there is an edge $e\in E(G)$ where $e\cap S_{1}\neq\emptyset$ and $e\cap S_{2}\neq\emptyset$.
A {\em bramble} in $G$ is a collection $\mathcal{B}$ is pairwise touching connected 
subsets of $V(G)$. The {\em order} of a bramble $\mathcal{B}$ is the minimum
number of vertices that intersect all of its elements.

\begin{proposition}[~\cite{SeymourT93grap}]
	\label{brlps3}
	Let $k\in\mathbb{N}$. 
	A graph $G$ has a bramble of order $k+1$ if and only if $\tw(G)\geq k $.
\end{proposition}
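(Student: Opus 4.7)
The plan is to prove the two directions separately, with the easy direction being a direct application of the Helly property for subtrees of a tree, and the hard direction passing through the intermediate notion of a \emph{haven}.

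For the forward direction (a bramble of order $k+1$ implies $\tw(G)\geq k$), I would let $\mathcal{B}$ be a bramble of order at least $k+1$ and let $(T,\chi)$ be any tree decomposition of $G$. For each $B\in\mathcal{B}$, set $T_B := \{t\in V(T)\mid \chi(t)\cap B\neq\emptyset\}$; using connectivity of $G[B]$ together with the tree-decomposition axioms, $T_B$ is a subtree of $T$. I would then verify pairwise intersection of these subtrees: if $B_1\cap B_2\neq\emptyset$, any bag containing a shared vertex lies in $T_{B_1}\cap T_{B_2}$; otherwise an edge $uv$ with $u\in B_1$ and $v\in B_2$ belongs to some common bag, which again lies in both. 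The Helly property for subtrees of a tree then yields a common node $t^{\star}$, and $\chi(t^{\star})$ hits every element of $\mathcal{B}$, so $|\chi(t^{\star})|\geq k+1$ and hence $\tw(G)\geq k$.

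For the reverse direction, I would pass through the notion of a haven: a \emph{haven} of order $k+1$ is a function $\beta$ assigning to each $X\subseteq V(G)$ with $|X|\leq k$ a connected component $\beta(X)$ of $G\setminus X$, satisfying $\beta(X)\supseteq \beta(X')$ whenever $X\subseteq X'$. I would first show by contrapositive that if $G$ admits no haven of order $k+1$, then $G$ admits a tree decomposition of width less than $k$: recursively, the obstruction to extending a partial haven on a connected piece witnesses a separator of size at most $k$, which one uses as the bag at a node and then recurses on the resulting smaller pieces, stitching the local decompositions into a global one while preserving the axioms. I would then convert any haven of order $k+1$ into a bramble of the same order by collecting appropriate sets of the form $\beta(X)\cup X$, verifying pairwise touching from the monotonicity of $\beta$ and verifying that no set of size at most $k$ meets all members, since for such an $X$ the component $\beta(X)$ itself would be disjoint from $X$ and contained in every larger $\beta(X')\cup X'$.

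The main obstacle is the contrapositive step in the reverse direction: the recursive construction of a tree decomposition of width less than $k$ from the absence of a haven requires a delicate inductive argument, essentially game-theoretic in nature, in which one must carefully select separators and verify that the resulting decomposition simultaneously covers all vertices and edges, preserves connectivity of the bag-subtrees for each vertex, and keeps each bag of size at most $k$. The forward direction and the haven-to-bramble conversion are routine by comparison, and the duality follows by combining the two implications.
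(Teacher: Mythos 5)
Your forward direction (Helly property for the subtrees $T_B$) is correct, and since the paper only cites this proposition from Seymour--Thomas there is no in-paper proof to compare against; the issue is your reverse direction. The genuine gap is the step you call routine: converting a monotone haven of order $k+1$ into a bramble of order $k+1$. Pairwise touching does \emph{not} follow from monotonicity, because monotonicity constrains $\beta$ only on nested sets and says nothing about incomparable ones. Concretely, take $G$ to be a path $v_1v_2\cdots v_n$ with $n\geq 5$ and $k=1$: setting $\beta(\emptyset)=G$, $\beta(\{v_2\})=\{v_1\}$, $\beta(\{v_{n-1}\})=\{v_n\}$, and choosing arbitrary components for the remaining singletons gives a perfectly valid haven of order $2$, yet $\beta(\{v_2\})$ and $\beta(\{v_{n-1}\})$ do not touch, and neither do the augmented sets $\beta(\{v_2\})\cup\{v_2\}=\{v_1,v_2\}$ and $\beta(\{v_{n-1}\})\cup\{v_{n-1}\}=\{v_{n-1},v_n\}$. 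For larger $k$ the sets $\beta(X)\cup X$ need not even be connected (vertices of $X$ may lie in other components of $G\setminus X$). Your order verification is also reversed: monotonicity gives $\beta(X')\subseteq\beta(X)$ for $X\subseteq X'$, not a containment of $\beta(X)$ in larger $\beta(X')\cup X'$, and the member associated with $X$ itself meets $X$, so it does not witness that $X$ fails to cover the family.

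This is not a repairable detail within your framework: a bramble of order $k+1$ witnessing $\tw(G)\geq k$ genuinely cannot be read off from the images of a monotone haven (in the path example the required bramble, e.g.\ $\{\{v_1\},\{v_2\},\{v_1,v_2\}\}$, is unrelated to the haven), and in the literature the implication ``haven of order $k+1$ implies bramble of order $k+1$'' is obtained \emph{via} the treewidth duality theorem, which would make your route circular. The standard fix is to drop havens and prove the hard direction directly by the Seymour--Thomas argument or the short Bellenbaum--Diestel induction: one shows that if $G$ has no bramble of order greater than $k$, then for every bramble $\mathcal{B}$ there is a $\mathcal{B}$-admissible tree-decomposition of width less than $k$, the induction being on brambles and using a minimum-order hitting set of $\mathcal{B}$ as a separator. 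Your step ``no haven of order $k+1$ implies $\tw(G)<k$'' is a true statement and your sketch of it is plausible, but by itself it only yields havens, not brambles, so as written the proposal does not prove the proposition.
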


We are now ready to prove the following result.

\begin{lemma}
\label{u4l49rop0r}
Let $\Delta$ be a closed annulus.
Let $G$ be a \seg, $\mathcal{C}$ be a $\Delta$-parallel sequence of cycles of $G$,
$D$ be an open disk where $D\subseteq \Delta$, 
$L$ be a $\Delta$-avoiding and $D$-free linkage of $G$,
and $\mathcal{Z}$  be a disjoint collection of $(\mathcal{C},L)$-streams of $G$.
	Then $\tw(L\cup(\cupall \mathcal{C}\setminus D))\geq \min\{|\mathcal{C}|,|\mathcal{Z}|\}$.
\end{lemma}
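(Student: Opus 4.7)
The plan is to invoke~\autoref{brlps3} via a bramble of order $m+1$ in $H := L \cup (\cupall \mathcal{C} \setminus D)$, where $m := \min\{|\mathcal{C}|, |\mathcal{Z}|\}$; the bramble will be produced by lifting, through a natural $m\times m$ grid-minor embedding, a standard order-$(m+1)$ bramble of $G_{m\times m}$. (The case $m\leq 1$ is trivial, so I assume $m\geq 2$ throughout.) Write $\mathcal{C} = [C_1,\ldots, C_p]$ and let $[Z_1,\ldots, Z_d]$ be the $D$-ordering of $\mathcal{Z}$. Assume without loss of generality that $|\mathcal{C}|, |\mathcal{Z}|\geq m$ and select $m$ cycles $C_{i_1},\ldots, C_{i_m}$ (in increasing index order along $\mathcal{C}$) together with the first $m$ streams $Z_1,\ldots, Z_m$ of the $D$-ordering. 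By definition of the $D$-ordering, $D\subseteq D_d$, so $\Delta\setminus(Z_1\cup Z_m)$ has a connected component $R_{\mathrm{in}}$ containing $Z_2,\ldots, Z_{m-1}$ with $D\cap R_{\mathrm{in}} = \emptyset$; since each $Z_j$ is a subpath of $L$ from $C_1$ to $C_p$ inside $\Delta$, a Jordan-curve argument forces $Z_j\cap C_{i_l}\neq\emptyset$ for every $(l,j)\in[m]^2$.

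The crucial topological step -- which I expect to be the main obstacle -- is to extract, for each $l\in[m]$, a subpath $\pi_l\subseteq C_{i_l}\setminus D$ that visits a vertex $v_{l,j}\in Z_j\cap C_{i_l}$ for every $j\in[m]$, in the order $v_{l,1}, v_{l,2},\ldots, v_{l,m}$ along $\pi_l$. Since $C_{i_l}$ is $\Delta$-parallel it winds once around $\Delta$ and hence crosses both $Z_1$ and $Z_m$; pick one of its two arcs between a crossing with $Z_1$ and a crossing with $Z_m$ that lies in $\overline{R_{\mathrm{in}}}$. This arc necessarily crosses every $Z_j$ with $j\in\{2,\ldots,m-1\}$, since such a $Z_j$ separates $\overline{R_{\mathrm{in}}}$ into two parts containing the two endpoints of the arc. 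The inclusion $\pi_l\subseteq C_{i_l}\setminus D$ is automatic from $\pi_l\subseteq\overline{R_{\mathrm{in}}}$ and $D\cap R_{\mathrm{in}}=\emptyset$, and the $\pi_l$'s are pairwise vertex-disjoint as subpaths of pairwise disjoint cycles.

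With $\pi_1,\ldots,\pi_m$ and $Z_1,\ldots, Z_m$ in hand, define a grid-minor embedding $\mu$ of $G_{m\times m}$ into $H$: for $(l,j)\in[m]^2$, $\mu(l,j)$ is the union of the subpath of $\pi_l$ from $v_{l,j}$ up to (but not including) $v_{l,j+1}$ and the subpath of $Z_j$ from $v_{l,j}$ up to (but not including) $v_{l+1,j}$, where last-row and last-column cells absorb the leftover tails. Each $\mu(l,j)$ is connected through $v_{l,j}$; the family is pairwise vertex-disjoint; and consecutive cells are joined by an edge along the appropriate $\pi_l$ or $Z_j$. Lifting through $\mu$ a standard order-$(m+1)$ bramble of $G_{m\times m}$ (which exists by $\tw(G_{m\times m})=m$ and~\autoref{brlps3}) yields a bramble $\widetilde{\mathcal{B}}$ in $H$: any hitting set $S$ of $\widetilde{\mathcal{B}}$ projects through $\mu$ to a hitting set of the original bramble in $G_{m\times m}$ of size at most $|S|$, so the order of $\widetilde{\mathcal{B}}$ is at least $m+1$. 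By~\autoref{brlps3}, $\tw(H)\geq m$, as required.
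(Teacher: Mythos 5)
Your overall strategy---use the $D$-ordering to isolate $m$ streams inside a disk $\overline{R_{\mathrm{in}}}$, extract from each chosen cycle an arc inside $\overline{R_{\mathrm{in}}}$ that avoids $D$ and meets every stream, and then certify treewidth via \autoref{brlps3}---has the same skeleton as the paper's proof, and your topological preliminaries are essentially sound, though stated loosely: a cycle $C_{i_l}$ may meet $Z_1$ and $Z_m$ in many vertices, so it does not have just ``two arcs'' between them; one still gets some arc of $C_{i_l}$ inside $\overline{R_{\mathrm{in}}}$ running from $Z_1$ to $Z_m$ (e.g.\ by a first-crossing argument after cutting the annulus), and your separation argument for the intermediate streams is then fine.

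The genuine gap is the grid-minor step. You never verify that the branch sets $\mu(l,j)$ are pairwise vertex-disjoint, and as defined they need not be: an arc $\pi_l$ may intersect a stream $Z_j$ in many vertices, and between its first visits to $Z_j$ and $Z_{j+1}$ it may re-enter streams $Z_{j''}$ with $j''\le j$; such vertices then lie both in the $\pi_l$-part of $\mu(l,j)$ and in the $Z_{j''}$-part of some $\mu(l',j'')$. Symmetrically, the markers $v_{1,j},\ldots,v_{m,j}$ need not appear in this order along $Z_j$ when they are chosen by order of appearance along the $\pi_l$'s (and vice versa), so the ``subpath of $Z_j$ from $v_{l,j}$ to $v_{l+1,j}$'' segments can overlap. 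Disjointness of the branch sets is exactly what your bramble-lifting argument uses (a hitting vertex must lie in at most one branch set), so the proof does not go through as written, and repairing it needs a real uncrossing/pruning argument; note also that any constant-factor loss in the grid size would no longer give the bound $\min\{|\mathcal{C}|,|\mathcal{Z}|\}$ claimed in \autoref{u4l49rop0r}. This is precisely why the paper builds no minor at all: it takes the crosses $(B_i\cup Z'_j)$ themselves (minus the outer cycle) as bramble elements---touching does not require disjointness, so multiple crossings are harmless---and adds three extra sets to push the order to $r+1$, giving $\tw\ge r$ directly. Either carry out the uncrossing carefully so that each $\pi_l$ meets each $Z_j$ in a single chosen segment with orderings consistent along both families, or replace the grid minor by the cross bramble.
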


\begin{proof} Let $[Z_{1},\ldots,Z_{d}]$ be the $D$-ordering of $\mathcal{Z}$ and let $D'$ be the connected component of $\ann(\mathcal{C})\setminus (Z_{d}\cup Z_{1})$  that contains $D$.
	Let $r= \min\{|\mathcal{C}|,|\mathcal{Z}|\}$, and let $[Z_1,\ldots,Z_{r}]$ 
	be the sequence consisting of the first $r$ elements of the $D$-ordering of $\mathcal{Z}$.
	Let also 
	$\mathcal{C}'$ be the  
	sequence consisting of the first $r$ elements of $\mathcal{C}$. Notice that there is a disjoint collection
	$\mathcal{Z}'=[Z_1',\ldots,Z_{r}']$ of $(\mathcal{C}',L)$-streams of $G$
	such that for each $i\in[r]$, $Z_{i}'\subseteq Z_{i}$.

	We now set $\mathcal{B}=\mathcal{C}'\setminus D'$, denote $\mathcal{B}=[B_{1},\ldots,B_{r}]$, and notice that both $\mathcal{B}$ and $\mathcal{Z}'$ are sequences 
	of paths in $G$, such that both $\cupall\mathcal{B}$ 
	and $\cupall\mathcal{Z}'$ are linkages of $G$.
	Consider now the graph  $Q=\cupall\mathcal{B}\cup\cupall\mathcal{Z}'$ and 
	notice that  $C=B_{1}\cup Z_{1}'\cup B_{r}\cup Z_{r}'$ is a cycle of $G$.

	As $Q\subseteq L\cup(\cupall \mathcal{C}\setminus D)$, it remains to prove that $\tw(Q)\geq  r$. 
	For this, because of \autoref{brlps3}, it suffices to 
	give a bramble of $Q$ of order $r+1$.
	For each $(i,j)\in[2,r-1]^2$ we define $X^{(i,j)}=(B_{i}\cup Z_{j}')\setminus V(C)$.
	It is easy to check that $\mathcal{X}=\{X^{(i,j)}\mid (i,j)\in[2, r-1]^2\}$  is a bramble of $Q$ of order $\geq r-2$.
	Let also $X^{(1)}=Z_{1}\setminus B_{1}$, $X^{(2)}=B_{1}$, and $X^{(3)}=Z_{r}'\cup B_{r}$.  Notice that $\mathcal{X}\cup\{X^{(1)},X^{(2)},X^{(3)}\}$ is also a bramble 
	of $Q$ and its order is the order of $\mathcal{X}$ incremented by  3. Therefore $Q$ contains a bramble of order at least $r+1$, as required (see \autoref{sadgfsfgdhfhhj}).

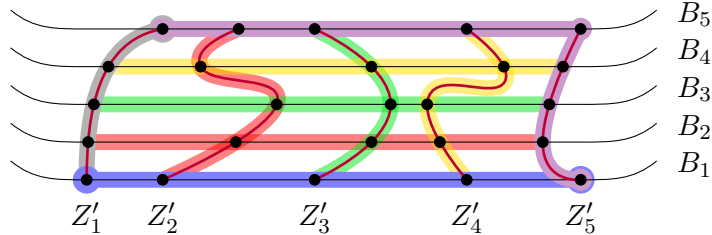
\begin{figure}[H]
\centering
\scalebox{1}{
	\begin{tikzpicture}
	\foreach \y in {0,...,4}{
	\draw[-] (-1,\y/2+ 0.25) to [out=-40, in= 180] (0,\y/2) to  (6.5,\y/2)  to [out=0, in= -140] (7.5,\y/2+0.25);}

		\draw[crimsonglory, line width=1pt,name path= stream1] (0,0) to [out=90, in=190] (1,2);
		\node () at (0,-.5) {$Z_{1}'$};
			
		\draw[crimsonglory, line width=1pt,  name path=stream2] plot [smooth, tension=1] coordinates {(1,0) (2.5,1) (1.5,1.5) (2,2)};
		\node () at (1,-.5) {$Z_{2}'$};
					
		\draw[crimsonglory, line width=1pt,  name path=stream3] plot [smooth, tension=1] coordinates {(3,0) (4,1) (3,2)};
		\node () at (3,-.5) {$Z_{3}'$};
					
		\draw[crimsonglory, line width=1pt,  name path=stream4] plot [smooth, tension=1] coordinates {(5,0) (4.5,1.1) (5.5,1.3) (5,2)};
		\node () at (5,-.5) {$Z_{4}'$};
		
		\draw[crimsonglory, line width=1pt,  name path=stream5] plot [smooth, tension=1] coordinates {(6.5,0) (6,0.5) (6.5,2)};
		\node () at (6.5,-.5) {$Z_{5}'$};
		
			\path[name path=cycle0]  (-1,0) to  (7.5,0);
			\node () at (8,0.2) {$B_{1}$};
			\path[name path=cycle1]  (-1,0.5) to  (7.5,0.5);
			\node () at (8,0.7) {$B_{2}$};
			\path[name path=cycle2]  (0,1) to  (7.5,1);
			\node () at (8,1.2) {$B_{3}$};
			\path[name path=cycle3]  (0,1.5) to  (7.5,1.5);
			\node () at (8,1.7) {$B_{4}$};
			\path[name path=cycle4]  (0,2) to  (7.5,2);
			\node () at (8,2.2) {$B_{5}$};
			
			{
				\node[black node, name intersections = {of = cycle0 and stream1}] (A1) at
				(intersection-1) {};
				\node[black node, name intersections = {of = cycle1 and stream1}] (A2) at
				(intersection-1) {};
				\node[black node, name intersections = {of = cycle2 and stream1}] (A3) at
				(intersection-1) {};
				\node[black node, name intersections = {of = cycle3 and stream1}] (A4) at
				(intersection-1) {};
				\node[black node, name intersections = {of = cycle4 and stream1}] (A5) at
				(intersection-1) {};
			}
			
			{
				\node[black node, name intersections = {of = cycle0 and stream2}] (B1) at
				(intersection-1) {};
				\node[black node, name intersections = {of = cycle1 and stream2}] (B2) at
				(intersection-1) {};
				\node[black node, name intersections = {of = cycle2 and stream2}] (B3) at
				(intersection-1) {};
				\node[black node, name intersections = {of = cycle3 and stream2}] (B4) at
				(intersection-1) {};
				\node[black node, name intersections = {of = cycle4 and stream2}] (B5) at
				(intersection-1) {};
			}
			
			{
				\node[black node, name intersections = {of = cycle0 and stream3}] (C1) at
				(intersection-1) {};
				\node[black node, name intersections = {of = cycle1 and stream3}] (C2) at
				(intersection-1) {};
				\node[black node, name intersections = {of = cycle2 and stream3}] (C3) at
				(intersection-1) {};
				\node[black node, name intersections = {of = cycle3 and stream3}] (C4) at
				(intersection-1) {};
				\node[black node, name intersections = {of = cycle4 and stream3}] (C5) at
				(intersection-1) {};
			}
			
			{
				\node[black node, name intersections = {of = cycle0 and stream4}] (D1) at
				(intersection-1) {};
				\node[black node, name intersections = {of = cycle1 and stream4}] (D2) at
				(intersection-1) {};
				\node[black node, name intersections = {of = cycle2 and stream4}] (D3) at
				(intersection-1) {};
				\node[black node, name intersections = {of = cycle3 and stream4}] (D4) at
				(intersection-1) {};
				\node[black node, name intersections = {of = cycle4 and stream4}] (D5) at
				(intersection-1) {};
			}
			{
				\node[black node, name intersections = {of = cycle0 and stream5}] (E1) at
				(intersection-1) {};
				\node[black node, name intersections = {of = cycle1 and stream5}] (E2) at
				(intersection-1) {};
				\node[black node, name intersections = {of = cycle2 and stream5}] (E3) at
				(intersection-1) {};
				\node[black node, name intersections = {of = cycle3 and stream5}] (E4) at
				(intersection-1) {};
				\node[black node, name intersections = {of = cycle4 and stream5}] (E5) at
				(intersection-1) {};
			}
			
			\begin{scope}[on background layer]

			\begin{scope}
			\draw[red, opacity=0.5, line width=6pt] plot [smooth, tension=1] coordinates {(1,0) (2.5,1) (1.5,1.5) (2,2)};
			\end{scope}
			\draw[red, opacity=0.5, line width=6pt] (A2) --  (E2);
			
			\begin{scope}
			\draw[green!90!blue, opacity=0.5, line width=6pt]  plot [smooth, tension=1] coordinates {(3,0) (4,1) (3,2)};
			\end{scope}
			\draw[green!90!blue, opacity=0.5, line width=6pt]  (A3) --  (E3);
			
			\begin{scope}
			\draw[yellow!90!red, opacity=0.5, line width=6pt] plot [smooth, tension=1] coordinates {(5,0) (4.5,1.1) (5.5,1.3) (5,2)};
			\end{scope}
			\draw[yellow!90!red, opacity=0.5, line width=6pt]  (A4) --  (E4);

			\begin{scope}
			\draw[black!30!white, line width=6pt] (0,.1) to [out=90, in=190] (1,2);
			\node[draw=black!30!white, fill=black!30!white, circle, minimum size=10pt, inner sep=0pt] ()  at (1,2) {};
			\end{scope}

			\node[draw=blue!50!white, fill=blue!50!white, circle, minimum size=10pt, inner sep=0pt] ()  at (0,0) {};
			\node[draw=blue!50!white, fill=blue!50!white, circle, minimum size=10pt, inner sep=0pt] ()  at (6.5,0) {};
			\draw[blue!50!white, line width=6pt] (A1.center) to (E1.center);
			
			\draw[violet!40!white, line width=6pt]  plot [smooth, tension=1] coordinates {(6.5,0) (6,0.5) (6.5,2)};
			\draw[violet!40!white, line width=6pt]  (1,2) --  (E5.center);
			\node[draw=violet!40!white, fill=violet!40!white, circle, minimum size=8pt, inner sep=0pt] ()  at (6.5,2) {};
			\node[draw=violet!40!white, fill=violet!40!white, circle, minimum size=8pt, inner sep=0pt] ()  at (6.5,0) {};
			\node[draw=violet!40!white, fill=violet!40!white, circle, minimum size=8pt, inner sep=0pt] ()  at (1,2) {};
			
	\end{scope}
	\end{tikzpicture}
}
\caption{An example of the construction of a bramble of $Q$, where $|\mathcal{B}|=5$ and $|\mathcal{Z}'|=5$. Here, $X^{(2,2)}, X^{(3,3)}, X^{(4,4)}$ are depicted in red, green, and yellow, respectively, while $X^{(1)}, X^{(2)}, X^{(3)}$ are depicted in grey, blue, and violet, respectively.}	\label{sadgfsfgdhfhhj}
\end{figure}
\end{proof}

Using~\autoref{u4l49rop0r}, we can prove that minimal linkages have few rivers.
\begin{lemma}
\label{fskfsl}
Let $\Delta$ be a closed annulus and let $r\in\mathbb{N}_{\geq 0}$.
Let  $G$ be a \seg,  ${\cal C}$ be a $\Delta$-parallel sequence of cycles 
of $G$,  $D\subseteq \Delta$, and $L$ be a $\Delta$-avoiding  and $D$-free $r$-scattered linkage of $G$.
If $L'$ is a $({\cal C},D,L)$-minimal linkage and $\tw(L'\cup (\cupall {\cal C}\setminus D))< |{\cal C}|$, then 
 $L'$ has at most $\tw(L'\cup (\cupall {\cal C}\setminus D))$ ${\cal A}$-rivers.
\end{lemma}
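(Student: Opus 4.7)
The plan is to obtain the bound as a direct consequence of \autoref{u4l49rop0r}, using the observation already recorded in \autoref{subsec_rivers} that any collection of $(\mathcal{C},L')$-rivers forms a disjoint collection of $(\mathcal{C},L')$-streams. Set $w=\tw(L'\cup(\cupall\mathcal{C}\setminus D))$, let $n$ be the number of $(\mathcal{C},L')$-rivers of $G$, and let $\mathcal{Z}$ be the set of all these rivers. Each river is by definition a $(\mathcal{C},L')$-stream, and since distinct rivers belong to distinct connected components of $L'\cap\Delta$, the union $\cupall\mathcal{Z}$ is a linkage. Hence $\mathcal{Z}$ is a disjoint collection of $(\mathcal{C},L')$-streams of size $n$.

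Next, I would check that $L'$ itself satisfies the hypotheses under which \autoref{u4l49rop0r} is stated. Since $L'\equiv L$ the two linkages share the same terminal set, which lies outside $\Delta$; so $L'$ is $\Delta$-avoiding. Moreover, $L'\subseteq L\cup(\cupall\mathcal{C}\setminus D)$ by the definition of $(\mathcal{C},D,L)$-minimality, and both $L$ and $\cupall\mathcal{C}\setminus D$ are disjoint from $D$; therefore $L'$ is $D$-free.

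With these conditions in place, \autoref{u4l49rop0r} applied to $L'$ and $\mathcal{Z}$ yields
$$w \;=\; \tw\bigl(L'\cup(\cupall\mathcal{C}\setminus D)\bigr) \;\geq\; \min\{|\mathcal{C}|,|\mathcal{Z}|\} \;=\; \min\{|\mathcal{C}|,n\}.$$
The assumption of the lemma is that $w<|\mathcal{C}|$, so the minimum on the right cannot be attained by $|\mathcal{C}|$; therefore $n\leq w$, which is exactly the claimed bound on the number of $(\mathcal{C},L')$-rivers.

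There is no real obstacle here: the combinatorial work has been done in \autoref{u4l49rop0r}, and minimality of $L'$ is not used beyond ensuring that $L'$ inherits the $\Delta$-avoiding and $D$-free conditions of $L$. The only bookkeeping points worth stating carefully are that all rivers of $L'$ can be taken simultaneously as a disjoint collection of streams, and that the strict inequality $w<|\mathcal{C}|$ is what lets us transfer the lower bound from \autoref{u4l49rop0r} directly onto $|\mathcal{Z}|$.
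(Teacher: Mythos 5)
Your proposal is correct and takes essentially the same route as the paper: all $(\mathcal{C},L')$-rivers are collected into a disjoint collection of $(\mathcal{C},L')$-streams, \autoref{u4l49rop0r} gives $\tw(L'\cup(\cupall\mathcal{C}\setminus D))\geq\min\{|\mathcal{C}|,|\mathcal{Z}|\}$, and the hypothesis $\tw(L'\cup(\cupall\mathcal{C}\setminus D))<|\mathcal{C}|$ forces the bound on the number of rivers (the paper phrases this as a contradiction rather than directly). Your explicit check that $L'$ inherits the $\Delta$-avoiding and $D$-free properties from the definition of $(\mathcal{C},D,L)$-minimality is a harmless addition that the paper leaves implicit.
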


\begin{proof}
Let $m=\tw(L'\cup (\cupall {\cal C}\setminus D)).$
Assume
that  $G$ contains 
a collection ${\cal Z}$ of  $({\cal C},L')$-rivers
where $|{\cal Z}|>m$.
Recall that ${\cal Z}$ is a disjoint collection of $({\cal C},L')$-streams of $G$.
From \autoref{u4l49rop0r}, $\tw(L'\cup(\cupall {\cal C}\setminus D))\geq \min\{|{\cal C}|,|{\cal Z}|\}>m$, a contradiction.
\end{proof}

\subsection{Minimal linkages do not have high mountains or deep valleys}\label{subsec_mountains}
In this subsection, we introduce another type of structure in linkages, that are {\sl mountains} and {\sl valleys}.
Intuitively, given a linkage $L$ and a collection ${\cal C}$ of cycles,
a mountain (resp. valley) of $L$ is a subpath of a path of $L$ that crosses twice a cycle $C_i$ in ${\cal C}$ while possibly crossing only cycles of larger (resp. smaller) indices.
We then define {\sl tight} mountains and valleys, that are mountains (resp. valleys) that cannot be ``pushed away'' towards their bases, due to the existence of a sequence of laminar  mountains (resp. valleys) below (resp. above) them.
We prove that all mountains and valleys of minimal linkages are tight (\autoref{ao7ui4jkhq0f})
and that tight mountains (resp. valleys) have small height (resp. depth) (\autoref{ahks5llozn}).
This implies that minimal linkages have mountains and valleys of small height and depth, respectively (\autoref{ato954jgd}).
The latter implies that if all terminals of a linkage are outside of a disk that contains many nested cycles,
then a minimal linkage would be disjoint of an ``inner area'' of this disk (\autoref{aop4icl}).

\paragraph{Mountains and valleys.}
Let $\Delta$ be a closed annulus.
Let $G$ be a \seg,  $\mathcal{C}$ be a $\Delta$-parallel sequence of cycles of $G$ of size $p\in\mathbb{N}$,
$D$ be an open disk where $D\subseteq \Delta$,
$L$ be a $\Delta$-avoiding and $D$-free linkage of $G$. 
Let  $i\in[p]$.
An {\em $(\mathcal{C},D,L)$-mountain (resp.~{\em $(\mathcal{C},D,L)$-valley}) of $G$ based on $C_{i}$}
is a non-trivial subpath $P$ of some path of $L$ where 
\begin{enumerate}
	\item $P\subseteq \overline{D}_{i}$ (resp. $P\subseteq \Delta\setminus D_{i}$),
	\item $P\cap D_{p}=\emptyset$  (resp. $P\cap(\Delta\setminus \overline{D}_{1})=\emptyset$),
	\item $P\cap C_{i}$ has two connected components, each containing exactly one of the endpoints of $P$,
	\item if $D'$ is the closure  of the connected component of $D_{i}\setminus P$ (resp. $(\Delta\setminus\overline{D}_{i})\setminus P$) that does not contain $D_{r}$ (resp. $\Delta\setminus\overline{D}_{1}$), then 
	$D'\cap T(L)=\emptyset$ and $D' \cap D=\emptyset$. 
\end{enumerate}
Clearly, in (4), $D'$ is a closed disk. We call it,  the {\em disk} of the $(\mathcal{C},D,L)$-mountain (resp. valley) $P$ and we denote it by ${\disk}(P)$.
Notice that there is no $(\mathcal{C},D,L)$-mountain based on $C_{p}$ and there 
is no $(\mathcal{C},D,L)$-valley based on $C_{1}$.

A {\em $(\mathcal{C},D,L)$-mountain} (resp.~{\em $(\mathcal{C},D,L)$-valley}) of $G$ is any 
$(\mathcal{C},D,L)$-mountain (resp. {$(\mathcal{C},D,L)$-valley}) of $G$ based on some of the cycles of $\mathcal{C}$.

The {\em height} (resp. {\em depth}) of  a {\em $(\mathcal{C},D,L)$-mountain} (resp.~{\em $(\mathcal{C},D,L)$-valley}) $P$ that is based on $C_{i}$ is the maximum $j$ such that $C_{i+j-1}$ (resp. $C_{i-j+1}$) intersects $P$ and, in both cases, we denote it by $\dehe(P)$. 
Moreover,  
the  {height} (resp. {depth}) of $P$ is at least $1$ and at most $p$.

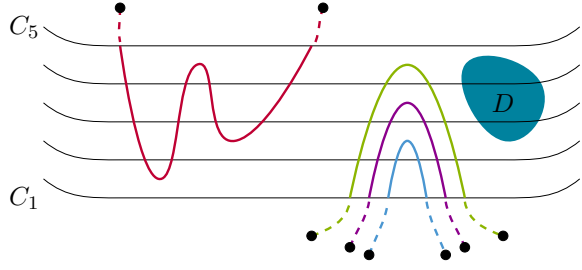
\begin{figure}
	\centering\scalebox{0.9}{
	\sshow{0}{\begin{tikzpicture}[scale=0.56]
	
	\foreach \y in {0,...,4}{
		\draw[-] (-2,\y+ 0.5) to [out=-40, in= 180] (0,\y) to  (10,\y)  to [out=0, in= -140] (12,\y+0.5);
	}
	\node (C1) at (-2.5, 0) {$C_1$};
	
	\node (C5) at (-2.5, 4.5) {$C_5$};
	
	\begin{scope}[on background layer]
	\fill[darkturquoise] plot [smooth cycle, tension=1.2] coordinates { (9,3.5) (11,3) (10,1.5)};
	\node () at (10,2.5) {$D$};
	\end{scope}

	\begin{scope}
	\draw[crimsonglory, line width=1pt] plot [smooth, tension=1] coordinates {(0,4) (1,.5) (2,3.5) (3,1.5) (5,4)};
	\draw[dashed, crimsonglory, line width=1pt] (0,5) to [out=270, in=100] (0,4.1);
	\draw[dashed, crimsonglory, line width=1pt] (5.1,4.1) to [out=70, in=250] (5.3,5);
	
	\node[black node] (s1) at (0,5) {};
	\node[black node] (t1) at (5.3,5) {};
	\end{scope}

	\begin{scope}[xshift=6cm]
	\draw[applegreen, line width=1pt] plot [smooth, tension=1] coordinates {(0,0) (1.5,3.5) (3,0)};
	\draw[dashed, applegreen, line width=1pt] (-1,-1) to [out=20, in=260] (0,-.1);
	\draw[dashed, applegreen, line width=1pt] (3,-.1) to [out=280, in=160] (4,-1);
	
	\node[black node] (s3) at (-1,-1) {};
	\node[black node] (t3) at (4,-1) {};

	\draw[darkmagenta, line width=1pt] plot [smooth, tension=1] coordinates {(0.5,0) (1.5,2.5) (2.5,0)};
	\draw[dashed, darkmagenta, line width=1pt] (0,-1.3) to [out=70, in=260] (0.5,-.1);
	\draw[dashed, darkmagenta, line width=1pt] (2.5,-.1) to [out=280, in=110] (3,-1.3);
	
	\node[black node] (s4) at (0,-1.3) {};
	\node[black node] (t4) at (3,-1.3) {};

	\draw[celestialblue, line width=1pt] plot [smooth, tension=1] coordinates {(1,0) (1.5,1.5) (2,0)};
	\draw[dashed, celestialblue, line width=1pt] (0.5,-1.5) to [out=70, in=260] (1,-.1);
	\draw[dashed, celestialblue, line width=1pt] (2,-.1) to [out=280, in=110] (2.5,-1.5);
	
	\node[black node] (s4) at (0.5,-1.5) {};
	\node[black node] (t4) at (2.5,-1.5) {};
	\end{scope}
	\end{tikzpicture}}}
	\caption{An example of a $({\cal C}, D, L)$-valley (depicted in solid red), and some $({\cal C}, D, L)$-mountains (depicted in solid colors). Notice that the $({\cal C}, D, L)$-mountain depicted in green is tight.}
	\label{asdfsdgfsfgsfdgdghdgsfgfjhjsdg}
\end{figure}

Notice that if a $(\mathcal{C},L)$-stream $P$ of $G$ is a subpath of a $(\mathcal{C},D,L)$-mountain $P'$ or 
a $(\mathcal{C},D,L)$-valley $P'$ of $G$ then $\dehe(P')=p$. Moreover, if a $(\mathcal{C},L)$-stream $P$ of $G$ is not a subpath of some  $(\mathcal{C},D,L)$-mountain or some  $(\mathcal{C},D,L)$-valley of $G$, then $P$ is a
$(\mathcal{C},L)$-river of $G$.

\paragraph{Tight mountains and valleys.}
Let $r\in\mathbb{N}_{\geq 0}$.
Let $\Delta$ be a closed annulus.
Let $G$ be a \seg, ${\cal C}=[C_{1},\ldots,C_{p}]$ be $\Delta$-parallel sequence of cycles
of $G$, and $L$ be a $\Delta$-avoiding $r$-scattered linkage of $G$. 
Also, let $D\subseteq \Delta$.
We say that a $({\cal C},D,L)$-mountain (resp. $({\cal C},D,L)$-valley) $P$ based on $C_{i}$, is {\em tight} if  there is a $d\in\mathbb{N}_{\geq 0}$ such that $\dehe(P)=d \cdot (r+1)+2$  and there is a sequence $[P_0,\ldots,P_{d}]$
of  $({\cal C},D,L)$-mountains (resp. $({\cal C},D,L)$-valleys) based on $C_{i}$
such that 
\begin{itemize}
	\item $P=P_{d}$,
	\item $\forall j\in[0,d],\ \dehe(P_{j})=j\cdot (r+1)+2$, and 
	\item $\forall j\in[0,d-1], P_{j}\subseteq {\disk}(P_{j+1})$.
\end{itemize}


\begin{lemma}
\label{ao7ui4jkhq0f}
Let $\Delta$ be a closed annulus and let $r\in\mathbb{N}_{\geq 0}$.
Let $G$ be a \seg,  ${\cal C}$ be a $\Delta$-parallel sequence of cycles of $G$, $D\subseteq \Delta$, $L$ be a  $\Delta$-avoiding and $D$-free $r$-scattered linkage of $G$.  Let also $L'$ be a  $({\cal C},D,L)$-minimal $r$-scattered linkage of $G$.
Then all $({\cal C},D,L')$-mountains (resp.  $({\cal C},D,L')$-valleys) of $G$ are tight.
\end{lemma}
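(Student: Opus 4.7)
The plan is a proof by contradiction via a rerouting argument exploiting the $(\mathcal{C},D,L)$-minimality of $L'$. Suppose some $(\mathcal{C},D,L')$-mountain $P$ of $L'$ based on $C_i$ is not tight; the argument for valleys is symmetric. The goal is to build an $r$-scattered linkage $L''\equiv L'$ with $L''\subseteq L\cup(\cupall\mathcal{C}\setminus D)$ and $\cae(L'',\cupall\mathcal{C}\setminus D)<\cae(L',\cupall\mathcal{C}\setminus D)$, contradicting minimality.

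First, I would select $P$ with inclusion-minimal disk $\disk(P)$ among all non-tight $(\mathcal{C},D,L')$-mountains, and write $h=\dehe(P)$. Non-tightness means that either $h\neq d(r+1)+2$ for any $d\in\mathbb{N}$, or such a $d$ exists but the required nested sequence $[P_0,\dots,P_d=P]$ of mountains based on $C_i$ with $\dehe(P_j)=j(r+1)+2$ and $P_j\subseteq\disk(P_{j+1})$ cannot be found. In either case I would locate the smallest index $j$ with $i<j\le i+h-1$ where the failure manifests, take the sub-mountain $P^{\star}=P[a,b]$ defined by the first and last vertices $a,b$ of $P$ on $C_j$, and reroute $L'$ by swapping $P^{\star}$ for the arc $\alpha^{\star}$ of $C_j$ that bounds the closed disk $\disk(P^{\star})$ together with $P^{\star}$. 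Any sub-mountains of other components of $L'$ that cross into $\disk(P^{\star})$ are absorbed by the same rerouting applied inductively, and by the disk-minimality of $P$ each such sub-mountain is tight, so the clean-up terminates.

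The resulting $L''$ satisfies $L''\equiv L'$ because endpoints are preserved, and the containment $L''\subseteq L\cup(\cupall\mathcal{C}\setminus D)$ is immediate since $\alpha^{\star}\subseteq C_j\setminus D$ by the mountain axioms. The strict decrease of $\cae$ follows because $P^{\star}$ must contain at least one edge outside $\cupall\mathcal{C}\setminus D$ — otherwise the nested sequence certifying tightness could have been extended at level $j$, contradicting the choice of $j$. The step I expect to be the main obstacle is verifying that $L''$ remains $r$-scattered: although $\alpha^{\star}$ lies on $C_j$, shortest paths in $G$ could in principle shortcut through the compass or through edges of other cycles and reach a vertex of $L'\setminus P^{\star}$ within distance $r$. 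Resolving this requires exploiting the $r+1$ cycle-gap imposed by tightness to argue that any such $G$-path must traverse at least one of the $r+1$ intermediate cycles and, via $\Delta$-avoidance together with the inductive clean-up inside $\disk(P^{\star})$, these intermediate cycles are free of obstructions coming from outside $\disk(P)$. Getting the cycle indexing right so that this $r+1$-vertex buffer around $\alpha^{\star}$ survives the absorption of all interior sub-mountains is the most delicate piece of bookkeeping.
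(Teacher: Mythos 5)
There is a genuine gap, and it sits exactly where you flagged it: the $r$-scatteredness of the rerouted linkage. The paper's proof does not pick a ``first failing level'' of a non-tight mountain; it proves a claim that simultaneously builds the tower and supplies the buffer you are missing. Namely, for a mountain $P_j$ based on $C_i$ with $\dehe(P_j)=j(r+1)+2$, either there is a mountain $P'$ based on $C_i$ with $\dehe(P')=(j-1)(r+1)+2$ and $P'\subseteq\disk(P_j)$ (and then the tower is extended downward, recursively), or there is no such $P'$ --- and in that second case nothing of $L'$ other than $P_j$ reaches cycle $C_{i+(j-1)(r+1)+1}$ inside $\disk(P_j)$, because any piece of $L'$ entering the disk is forced (terminals and $D$ being excluded, $P_j$ being uncrossable) to be a mountain based on $C_i$ contained in $\disk(P_j)$. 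The rerouting then replaces only the top of $P_j$ by $(P_j\setminus D_{i+j(r+1)})\cup(C_{i+j(r+1)}\cap\disk(P_j))$: the new vertices sit on a cycle that is $r+1$ levels above the highest level provably occupied by any other part of $L'$ inside the disk, which is precisely what certifies that the modified linkage is still $r$-scattered, while $\cae$ strictly drops, contradicting $(\mathcal{C},D,L)$-minimality. In your plan this quantitative placement is absent: the cut level $j$ is chosen by an unspecified ``failure manifests'' criterion, and mountains of \emph{other} components of $L'$ may legitimately sit inside $\disk(P)$ at levels arbitrarily close to $C_j$, so the arc $\alpha^{\star}$ has no guaranteed $r$-neighborhood clearance.

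Your proposed repair --- ``absorb'' interfering sub-mountains of other components by applying the rerouting to them inductively --- does not close this gap and is internally inconsistent. First, the contradiction must be witnessed by a \emph{single} equivalent linkage with strictly smaller $\cae$ that is still $r$-scattered; simultaneously rerouting several components and keeping them pairwise $r$-scattered (and scattered from the untouched remainder of $L'$) is exactly the difficulty, not a step you may defer, and pushing other components down onto lower cycles can create new proximity violations among themselves or with the lower part of $P$. Second, you invoke disk-minimality of $P$ to conclude those interior sub-mountains are tight, but tightness does not make them reroutable or distant from $C_j$ --- if anything, a tight mountain is one that \emph{cannot} be compressed, so ``absorbed by the same rerouting'' contradicts the very property you use to justify termination. (Two smaller points: replacing the whole segment of $P$ between its first and last visit of $C_j$ by a single arc of $C_j$ is not always available, since $C_j\cap\disk(P)$ is in general a union of arcs and the substitute path must alternate between pieces of $C_j$ and pieces of $P$ below it, as in the paper's definition of $P_j^{\star}$; and note that the paper's recursion only treats mountains whose height is of the form $d(r+1)+2$, so your extra case $h\neq d(r+1)+2$ is not something the intended argument resolves either --- but the decisive missing ingredient is the dichotomy above.)
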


\begin{proof}
Let $B= \cupall {\cal C}\setminus D$. We present the proof for the case of $({\cal C},D,L')$-mountains
as the case of $({\cal C},D,L')$-valleys is symmetric.
\medskip

\noindent{\em Claim:}
Let $i\in\mathbb{N}_{\geq 1}, j\in\mathbb{N}_{\geq 1}$. If $P_{j}$ is a $({\cal C},D,L')$-mountain of $G$ based on $C_{i}$ such that $\dehe(P_{j})=j\cdot (r+1) +2$, then there exists a $({\cal C},D,L')$-mountain $P'$ based on $C_{i}$ such that $\dehe(P') = (j-1)\cdot (r+1) + 2$ and $P'\subseteq {\disk}(P_{j})$.
\medskip

\noindent{\em Proof of Claim:} Suppose to the contrary that there does not exist  a $({\cal C},D,L')$-mountain $P'$ based on $C_{i}$ such that $\dehe(P') = (j-1)\cdot (r+1)+2$ and $P'\subseteq {\disk}(P_{j})$.
Let $P_{j}^{\star}= (P_{j}\setminus D_{i+j\cdot (r+1)}) \cup (C_{i+j\cdot (r+1)}\cap {\disk}(P_{j}))$ and notice that $\dehe(P_{j}^{\star})=j\cdot (r+1) + 1$ (see \autoref{asadfdgfdhfsfdasdggsfdngfsdnhfssb}).

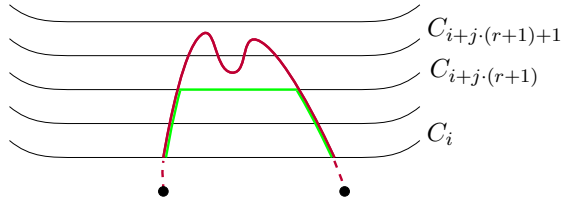
\begin{figure}[H]
	\centering\scalebox{0.9}{
	\sshow{0}{\begin{tikzpicture}[scale=0.5]
	
	\foreach \y in {0,...,4}{
		\draw[-] (-2,\y+ 0.5) to [out=-40, in= 180] (0,\y) to  (8,\y)  to [out=0, in= -140] (10,\y+0.5);
	}
	\node () at (10.6,0.7) {$C_{i}$};
	\node () at (11.9, 2.5) {$C_{i+j\cdot (r+1)}$};
	\node () at (12.2, 3.7) {$C_{i+j\cdot (r+1)+1}$};

%
		
		\begin{scope}[xshift = 2.5cm]
		\draw[crimsonglory, line width=1pt, name path=P1] plot [smooth, tension=1] coordinates {(0,0) (1,3.5) (2,2.5) (3,3.3) (5,0)};
		\draw[dashed, crimsonglory, line width=1pt] (0,-1) to [out=90, in=260] (0,-0.1);
		\draw[dashed, crimsonglory, line width=1pt] (5,-.1) to [out=290, in=100] (5.3,-1);
		
		\path[name path=P2] (0,2) to (5,2);
		
		\path [name intersections={of=P1 and P2, by={A,B}}];
		
		\begin{scope}[scale=0.97]
		\clip (0,0) rectangle + (6,2);
		\begin{scope}[xshift=-0.02cm]
		\draw[green, line width=1pt]  plot [smooth, tension=1] coordinates {(0.1,0) (1.1,3.5) (2.1,2.5) (3.1,3.3) (5.1,0)};
		\end{scope}
		\end{scope}
				\draw[green, line width =1pt] (A.east) to (B.west);
		\draw[crimsonglory, line width=1pt] plot [smooth, tension=1] coordinates {(0,0) (1,3.5) (2,2.5) (3,3.3) (5,0)};
		
		\node[black node] (s1) at (0,-1) {};
		\node[black node] (t1) at (5.3,-1) {};
		\end{scope}
		
		\end{tikzpicture}}}
		\caption{An example of a $({\cal C},D,L')$-mountain  $P_{j}$ of $G$ based on $C_{i}$ such that $\dehe(P_{j})=j\cdot (r+1) +2$ (depicted in red) and the $({\cal C},D,L')$-mountain $P_{j}^{\star}$ (depicted in green).}
\label{asadfdgfdhfsfdasdggsfdngfsdnhfssb}
	\end{figure}

Observe that the linkage $L''= (L'\setminus P_{j})\cup P_{j}^{\star}$ is equivalent to $L$.
To see why $L''$ is also $r$-scattered, first keep in mind that $N_G^{(\leq r)} (V(C_{i+j\cdot (r+1)})) \cap V(C_{i+(j-1)\cdot (r+1)+1}))=\emptyset$.
Also, recall that, by assumption, there is no $({\cal C},D,L')$-mountain $P'$ based on $C_{i}$ such that $\dehe(P') = (j-1)\cdot (r+1)+2$ and $P'\subseteq {\disk}(P_{j})$, or, equivalently,
there is no $({\cal C},D,L')$-mountain $P'$ based on $C_{i}$
that intersects the cycle $C_{i+(j-1)\cdot (r+1)+1}$ and $P'\subseteq {\disk}(P_{j})$.
Therefore, taking into account that $P_{j}^{\star}\subseteq {\disk}(P_{j})$,
we have that, for every path $\tilde{P}$ in $L'\setminus P_{j}$,  
$N_G^{(\leq r)} (V(P_{j}^{\star}))\cap V(\tilde{P})=\emptyset$.
This implies that $L''$ is an $r$-scattered linkage.
Moreover, notice that $\cae(L'',B)<\cae(L',B)$
and $L''\subseteq  L'\cup B$.
This contradicts the choice of $L'$ as a $({\cal C},D,L)$-minimal $r$-scattered linkage of $G$. The claim follows.
\hfill$\diamond$\medskip
	
Let $P$ be a $({\cal C},D,L')$-mountain of $G$ based on $C_{i}$
such that $\dehe(P)=d\cdot (r+1) +2$, for some $d\in\mathbb{N}_{\geq 0}$.
The fact that $P$ is tight follows by recursively applying the Claim above.
\end{proof}

We now prove that the height (resp. depth) of a tight mountain (resp. valley) is ``small''.
\begin{lemma}
\label{ahks5llozn}
Let $\Delta$ be a closed annulus and let $r\in \mathbb{N}_{\geq 0}$.
Let $G$ be a \seg,  ${\cal C}$ be a $\Delta$-parallel sequence of cycles of $G$,
$D$ be a connected subset of $\Delta$, and  ${L}$ be a  $\Delta$-avoiding
and $D$-free $r$-scattered linkage of $G$. 
If $P$ is a tight $({\cal C},D,L)$-mountain (resp.  $({\cal C},D,L)$-valley) of $G$, then $\dehe(P)\leq \frac{3}{2}\cdot \tw(L\cup (\cupall {\cal C}\setminus D))$.
\end{lemma}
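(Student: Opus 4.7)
The valley case is symmetric, so I only treat mountains. Write $H=L\cup(\cupall\mathcal{C}\setminus D)$, $m=\tw(H)$, and $h=\dehe(P)=d(r+1)+2$; the target $h\leq \tfrac{3}{2}m$ is equivalent, via \autoref{brlps3}, to exhibiting a bramble of $H$ of order at least $\lceil \tfrac{2}{3}h\rceil+1$.

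Tightness of $P$ supplies a sequence $[P_0,\ldots,P_d=P]$ of nested $(\mathcal{C},D,L)$-mountains based on $C_i$, with $\dehe(P_j)=j(r+1)+2$ and $\disk(P_{j-1})\subseteq\disk(P_j)$. Consequently the cycles $C_{i+1},\ldots,C_{i+d(r+1)+1}$ of $\mathcal{C}$ all lie in $H$, and the intersection pattern $P_j\cap C_{i+k}\neq\emptyset$ iff $k\leq j(r+1)+1$ exhibits a triangular grid structure inside $\disk(P_d)$, with $d+1$ mountain-``columns'' and $d(r+1)+1$ cycle-``rows''.

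The bramble is built in direct analogy with the grid bramble used in the proof of \autoref{u4l49rop0r}. To each admissible cell $(j,k)$ of the triangular grid I associate a connected subset $X^{(j,k)}$ of $V(H)$, obtained by joining a suitably chosen subpath of $P_j$ (between two consecutive cycles of $\mathcal{C}$) with a suitably chosen arc of $C_{i+k}$ lying in $\disk(P_d)\setminus\disk(P_{j-1})$; as in \autoref{u4l49rop0r}, a small number of additional ``boundary'' elements are appended so that every two chosen sets touch, via either a common mountain-arc or a common cycle-arc (together with the boundary elements, which play the role of the three extra sets $X^{(1)},X^{(2)},X^{(3)}$ used for $Q$ in the proof of \autoref{u4l49rop0r}). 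The order of this bramble is then bounded below by the observation that any single vertex of $H$ lies in at most one mountain $P_j$ and at most one cycle of $\mathcal{C}$ (the paths of $L$ are pairwise vertex-disjoint, and so are the cycles of $\mathcal{C}$), so it appears in only a bounded number of cells. Combining this local bound with the total cell count in the triangular region yields the desired lower bound $\lceil \tfrac{2}{3}h\rceil+1$ on any hitting set.

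The main obstacle is the precise bookkeeping for the triangular (rather than rectangular) geometry and the careful selection of cells and boundary elements so as to simultaneously secure the pairwise touching property and the order lower bound. In particular one must balance the count along the diagonal $k=j(r+1)+1$, where cells near the boundary of the triangle can be hit by a single vertex of the outermost mountain $P_d$ or of the innermost cycle $C_i$; the constant $\tfrac{3}{2}$ in the final bound arises precisely from this counting after the boundary adjustments, in the same spirit as the ``$+3$'' boundary correction in the proof of \autoref{u4l49rop0r}.
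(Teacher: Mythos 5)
Your plan diverges from the paper's argument in a way that matters. The paper does not build a new bramble on the mountain structure at all: it converts tightness into \emph{streams} and then reuses \autoref{u4l49rop0r}. Concretely, for each $j\in[0,d-1]$ it looks at the sub-annulus $\Delta^{(j)}$ bounded by $C_i$ and $C_{i+j(r+1)+1}$, observes that every higher mountain $P_h$ ($h>j$) of the tight sequence contributes exactly two $({\cal C}^{(j)},L)$-streams and that $P_j$ contributes one more, so one gets a disjoint collection of $2(d-j)+1$ streams; choosing $j'=\lfloor(2d+1)/3\rfloor$ balances the two quantities entering $\min\{|{\cal C}^{(j')}|,|{\cal Z}_{j'}|\}$ in \autoref{u4l49rop0r}, and the constant $\tfrac32$ comes from this optimization, not from boundary corrections of a triangular grid. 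Your proposal, by contrast, leaves the actual proof undone: the bramble elements $X^{(j,k)}$ are never defined, the pairwise-touching property is not verified, and the order lower bound is only gestured at via a cell-counting heuristic that is not how bramble order is certified (in the cross-bramble of \autoref{u4l49rop0r} the order bound comes from the ``uncovered row and uncovered column'' argument, not from dividing the number of cells by a per-vertex multiplicity). You yourself flag this as ``the main obstacle,'' but it is precisely the content of the lemma.

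There is also a quantitative reason the construction, as aimed, cannot succeed. Tightness only provides $d+1$ nested mountains, i.e.\ at most $2(d+1)$ disjoint strands crossing between consecutive cycles of ${\cal C}$; in the worst case (e.g.\ when $L$ consists of a single path snaking into these nested mountains, so $r$-scatteredness imposes no constraint between them) the union $L\cup(\cupall{\cal C}\setminus D)$ admits at most $O(d)$ vertex-disjoint paths from $C_i$ to the top level, and any cross-type bramble supported on your triangular structure has order bounded by roughly $\min(\#\text{columns},\#\text{rows})=O(d)$. Hence a bramble of order $\lceil\tfrac23\dehe(P)\rceil+1=\lceil\tfrac23(d(r+1)+2)\rceil+1$ simply is not available from this structure once $r\geq 1$; what the structure genuinely yields — and what the paper's balancing argument extracts — is a treewidth lower bound of about $\tfrac23 d$, i.e.\ a bound governed by the tightness parameter $d$ rather than by the raw height $d(r+1)+2$. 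So the route you propose would have to prove strictly more than the nested-mountain configuration can support, and the missing ``bookkeeping'' cannot be filled in as stated; the viable argument is the stream-extraction and balancing reduction to \autoref{u4l49rop0r}.
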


\begin{proof}
Let $d\in\mathbb{N}_{\geq 0}$ such that $\dehe(P) = d\cdot (r+1)+2$.
We examine the non-trivial case where $d\geq 1$.
We present the proof for the case where $P$ is a $({\cal C},D,L)$-mountain
as the case where  $P$ is a $({\cal C},D,L)$-valley is symmetric.

Let ${\cal C} = [C_1, \ldots, C_p]$.
We assume that $P$ is based on $C_{i}$, for some $i\in[p]$.
By the definition of tightness, 
there is a sequence ${\cal P}=[P_{0},\ldots,P_{d}=P]$
of  $({\cal C},D,L)$-mountains (resp. $({\cal C},D,L)$-valleys) based on $C_{i}$
such that 
\begin{itemize}
	\item $P=P_{d}$,
	\item $\forall j\in[0,d],\ \dehe(P_{j})=j\cdot (r+1)+2$, and 
	\item $\forall j\in[0,d-1], P_{j}\subseteq {\disk}(P_{j+1})$.
\end{itemize}
For every $j\in[0,d]$, we denote ${\cal C}^{(j)}=[C_{i},\ldots,C_{i+j\cdot (r+1)+1}]$ and by $\Delta^{(j)}$, the closure of the connected component of $\Delta\setminus (C_{i} \cup C_{i+j\cdot (r+1)+1})$
that is a subset of $\Delta$.
Notice that $\Delta^{(j)}$ is a closed annulus and ${\cal C}^{(j)}$ is a $\Delta^{(j)}$-parallel sequence of cycles of $G$.
Notice that for every $j\in[0,d]$,  $L$ is 
an $\Delta^{(j)}$-avoiding 
and $D$-free ($r$-scattered) linkage of $G$.\medskip

\noindent{\em Claim:} For every $j\in[0,d-1]$ there exists a disjoint 
collection ${\cal Z}_{j}$ of $({\cal C}^{(j)}, L)$-streams of $G$ where $|{\cal Z}_{j}|\geq 2(d-j)+1$.
\smallskip

\noindent{\em Proof of Claim:}	
Let  $j\in[0,d-1]$.
Observe that for each $h\in[j+1, d]$ exactly two of the 
connected components of $\Delta^{(j)}\cap P_{h}$ are $({\cal C}^{(j)}, L)$-rivers in $G$.
This implies that there is a collection ${\cal R}_{j}$
of at least 
$2(d-j)$ many $({\cal C}^{(j)}, L)$-rivers in $G$.
Recall that  ${\cal R}_{j}$ is 
a disjoint 
collection of $({\cal C}^{(j)}, L)$-streams of $G$.
Observe also that we can pick some subpath of 
$\Delta^{(j)}\cap P_{j}$ that has one endpoint 
in $C_{i}$ and 
the other in $C_{i+j\cdot(r+1)+1}$. As this path does not share 
vertices with any of the paths in ${\cal R}_{j}$ 
we can add it in ${\cal R}_{j}$ and obtain a disjoint 
collection ${\cal Z}_{j}$ of $({\cal C}^{(j)}, L)$-streams of $G$ where $|{\cal Z}_{j}|\geq 2(d-j)+1$.  Claim follows (see \autoref{asadfsdgfsdghgdhregergt4tew4rgs}).
	
\begin{figure}[H]
		\centering\scalebox{0.87}{
		\sshow{0}{\begin{tikzpicture}[scale=0.5]
		
		
		\fill[celestialblue!30!white] (-2,0.5) to [out=-40, in= 180] (0,0) to  (10,0)  to [out=0, in= -140] (12,0.5) to (12,3.5) to [out=-140, in= 0] (10,3) to (0,3) to  [out=180, in=-40]  (-2,3.5) to  (-2,0.5);
		\foreach \y in {0,1,2,3, 4,5,6}{
			\draw[-] (-2,\y+ 0.5) to [out=-40, in= 180] (0,\y) to  (10,\y)  to [out=0, in= -140] (12,\y+0.5);
		}
		\node[anchor=west] () at (12, 0.5) {$C_i$};
		\node[anchor=west] () at (12, 3.5) {$C_{i+j\cdot(r+1)+1}$};
		\node[anchor=east] () at (-2, 2) {${\cal C}^{(j)}$};
		\node[anchor=west] () at (12, 5.5) {$C_{i+d\cdot(r+1)+1}$};

		\begin{scope}[xshift=5cm]
		
		\draw[crimsonglory, line width=1pt] plot [smooth, tension=1] coordinates {(-2,0) (1.5,4.5) (4,0)};
		\draw[dashed, crimsonglory, line width=1pt] (-2.2,-1) to [out=90, in=260] (-2,-.1);
		\draw[dashed, crimsonglory, line width=1pt] (4,-.1) to [out=280, in=90] (4.2,-1);
		\node () at (-1,5.5) {$P$};
		
		\node[black node] (s5) at (-2.2,-1) {};
		\node[black node] (t5) at (4.2,-1) {};
		
		\draw[crimsonglory, line width=1pt] plot [smooth, tension=1] coordinates {(-3,0) (1,5.5) (5,0)};
		\draw[dashed, crimsonglory, line width=1pt] (-3.2,-1) to [out=90, in=260] (-3,-.1);
		\draw[dashed, crimsonglory, line width=1pt] (5,-.1) to [out=280, in=90] (5.2,-1);
		
		\node[black node] (s4) at (-3.2,-1) {};
		\node[black node] (t4) at (5.2,-1) {};
		
		\draw[crimsonglory, line width=1pt] plot [smooth, tension=1] coordinates {(-.7,0) (1.5,3.5) (3.2,0)};
		\draw[dashed, crimsonglory, line width=1pt] (-1,-1) to [out=90, in=260] (-.7,-.1);
		\draw[dashed, crimsonglory, line width=1pt] (3.2,-.1) to [out=280, in=120] (3.5,-1);
		
		\node[black node] (s3) at (-1,-1) {};
		\node[black node] (t3) at (3.5,-1) {};

		\draw[crimsonglory, line width=1pt] plot [smooth, tension=1] coordinates {(0,0) (1.5,2.5) (2.5,0)};
		\draw[dashed, crimsonglory, line width=1pt] (0,-1) to [out=70, in=260] (0,-.1);
		\draw[dashed, crimsonglory, line width=1pt] (2.5,-.1) to [out=280, in=110] (2.7,-1);
		
		\node[black node] (s4) at (0,-1) {};
		\node[black node] (t4) at (2.7,-1) {};

		\draw[crimsonglory, line width=1pt] plot [smooth, tension=1] coordinates {(1,0) (1.5,1.5) (2,0)};
		\draw[dashed, crimsonglory, line width=1pt] (1,-1) to [out=70, in=260] (1,-.1);
		\draw[dashed, crimsonglory, line width=1pt] (2,-.1) to [out=280, in=110] (2,-1);
		
		\node[black node] (s4) at (1,-1) {};
		\node[black node] (t4) at (2,-1) {};
		\end{scope}

		\begin{scope}[xshift=5cm]
		\begin{scope}
		\clip (-5,0) rectangle (10,3);
		
		\draw[blue, line width=1.3pt] plot [smooth, tension=1] coordinates {(-2,0) (1.5,4.5) (4,0)};
		\draw[dashed, blue, line width=1pt] (-2.2,-1) to [out=90, in=260] (-2,-.1);
		\draw[dashed, blue, line width=1pt] (4,-.1) to [out=280, in=90] (4.2,-1);
		
		\node[black node] (s5) at (-2.2,-1) {};
		\node[black node] (t5) at (4.2,-1) {};

\draw[blue, line width=1.3pt] plot [smooth, tension=1] coordinates {(-3,0) (1,5.5) (5,0)};
\draw[dashed, blue, line width=1.3pt] (5,-.1) to [out=280, in=90] (5.2,-1);
		
		\node[black node] (s4) at (-3.2,-1) {};
		\node[black node] (t4) at (5.2,-1) {};
		
		\end{scope}
		
		\begin{scope}
		\clip (-5,0) rectangle (1.5,3);

		\draw[blue, line width=1.3pt] plot [smooth, tension=1] coordinates {(-.7,0) (1.5,3.5) (3.2,0)};
		\draw[dashed, crimsonglory, line width=1pt] (3.2,-.1) to [out=280, in=120] (3.5,-1);
		
		\node[black node] (s3) at (-1,-1) {};
		\node[black node] (t3) at (3.5,-1) {};
		
		\end{scope}
		\end{scope}
		
		\end{tikzpicture}}}
		\caption{An example of a tight $({\cal C},D,L)$-mountain $P$ based on $C_{i}$ of height $d$ and the respective sequence of $({\cal C},D,L)$-mountains based on $C_{i}$ (depicted in red), an annulus ${\cal C}^{(j)}$ (depicted in cyan), for some $j\in [2,d]$,
and a disjoint 
collection ${\cal Z}_{j}$ (depicted in blue) of $({\cal C}^{(j)}, L)$-streams of $G$. }
			\label{asadfsdgfsdghgdhregergt4tew4rgs}
\end{figure}
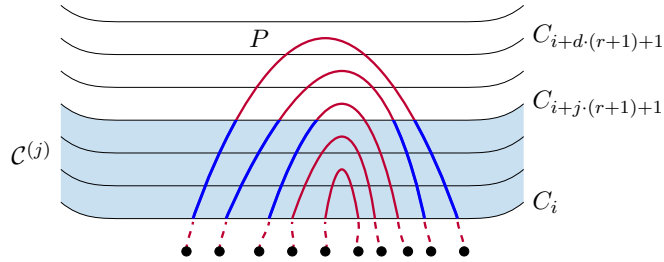

We now set $j'=\lfloor (2d+1)/3\rfloor$ and observe that $0\leq j'\leq d-1$.
The above claim implies that there exists a disjoint 
collection ${\cal Z}_{j'}$ of $({\cal C}^{(j')}, L)$-streams of $G$ such 
that $|{\cal Z}_{j'}|\geq 2(d-j')+1\geq j'=|{\cal C}^{(j')}|$. Therefore, we can apply 
\autoref{u4l49rop0r} on ${\cal C}^{(j')}$ and deduce that 
$\tw(L\cup(\cupall {\cal C}^{(j')}\setminus D))\geq j'$. 
The Lemma follows 
as $L\cup(\cupall {\cal C}^{(j')}\setminus D)\subseteq L\cup (\cupall {\cal C}\setminus D)$ and $\lfloor (2d+1)/3\rfloor\geq 2d/3$. 
\end{proof}

Using~\autoref{ao7ui4jkhq0f} and~\autoref{ahks5llozn}, we prove that the mountains (resp. valleys) of minimal linkages have ``small'' height (resp. depth).

\begin{lemma}
\label{ato954jgd}
Let $\Delta$ be a closed annulus and $r\in \mathbb{N}_{\geq 0}$.
Let $G$ be a \seg,  
${\cal C}$ be a $\Delta$-parallel sequence of cycles 
of $G$, $D$ be a connected subset of $\Delta$, $L$ be 
a  $\Delta$-avoiding and $D$-free $r$-scattered linkage of $G$, and $L'$ be a  $({\cal C},D,L)$-minimal $r$-scattered linkage of $G$. Then all $({\cal C},D,L')$-mountains (resp.  $({\cal C},D,L')$-valleys) of $G$ have height (resp. depth) at most $ \frac{3}{2}\cdot \tw(L'\cup (\cupall {\cal C}\setminus D))$.
\end{lemma}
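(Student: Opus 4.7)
The plan is to derive this lemma as an immediate corollary of the two preceding results, \autoref{ao7ui4jkhq0f} and \autoref{ahks5llozn}, by composing them with $L'$ playing the role of the base linkage in the hypothesis of \autoref{ahks5llozn}.

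First I would check that $L'$ itself satisfies the standing hypotheses needed to apply both lemmas with $L'$ in the role of ``$L$''. By the definition of a $(\mathcal{C},D,L)$-minimal linkage, $L'$ is an $r$-scattered linkage of $G$ that is equivalent to $L$ and is a subgraph of $L\cup(\cupall\mathcal{C}\setminus D)$. Since $T(L')=T(L)$ (equivalent linkages share their pattern) and $T(L)\cap\Delta=\emptyset$, the linkage $L'$ is $\Delta$-avoiding. Moreover, since $L$ is $D$-free and $\cupall\mathcal{C}\setminus D$ is trivially disjoint from $D$, the inclusion $L'\subseteq L\cup(\cupall\mathcal{C}\setminus D)$ forces $L'\cap D=\emptyset$, so $L'$ is $D$-free as well. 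Hence $L'$ is itself a $\Delta$-avoiding, $D$-free, $r$-scattered linkage of $G$, and both earlier lemmas apply to $L'$.

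Next I would invoke \autoref{ao7ui4jkhq0f} directly to conclude that every $(\mathcal{C},D,L')$-mountain and every $(\mathcal{C},D,L')$-valley of $G$ is tight. Having done that, I would apply \autoref{ahks5llozn} with the base linkage taken to be $L'$: for each such tight mountain (resp. valley) $P$, the lemma yields
\[
\dehe(P)\leq \tfrac{3}{2}\cdot\tw\bigl(L'\cup(\cupall\mathcal{C}\setminus D)\bigr),
\]
which is exactly the bound claimed by the statement. Since the conclusion is obtained uniformly for every mountain and every valley, the proof is complete.

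I do not expect any real obstacle: the argument is a short composition of two already-established lemmas, and the only mildly subtle point is verifying that $L'$ inherits the $\Delta$-avoiding and $D$-free properties required to feed it into \autoref{ahks5llozn}, which is immediate from the definition of $(\mathcal{C},D,L)$-minimality and the containment $L'\subseteq L\cup(\cupall\mathcal{C}\setminus D)$.
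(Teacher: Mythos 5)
Your proposal is correct and follows essentially the same route as the paper: the paper's proof also just invokes \autoref{ao7ui4jkhq0f} to get tightness of every $(\mathcal{C},D,L')$-mountain/valley and then applies \autoref{ahks5llozn} (with $L'$ in the role of the linkage) to obtain the bound $\dehe(P)\leq \frac{3}{2}\cdot\tw(L'\cup(\cupall\mathcal{C}\setminus D))$. Your explicit check that $L'$ inherits the $\Delta$-avoiding, $D$-free, and $r$-scattered properties is a small extra verification the paper leaves implicit, but it does not change the argument.
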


\begin{proof} 
We set $B=\cupall {\cal C}\setminus D$.
Let ${\cal C} = [C_1, \ldots, C_p]$.
Let $P$ be a  $({\cal C},D,L')$-mountain (resp. $({\cal C},D,L')$-valley) of $G$ based on $C_{i},$
for some $i\in[p-1]$ (resp. $i\in[2,p]$). 
From \autoref{ao7ui4jkhq0f}, $P$ should be tight and, from \autoref{ahks5llozn},  
$\tw(L'\cup B)\geq \frac{2}{3}\cdot \dehe(P)$. Therefore, 
$\dehe(P)\leq \frac{3}{2}\cdot \tw(L'\cup B)$.
\end{proof}

Before concluding this section, we show that, given a closed {\sl disk} $\Delta$, a graph $G$ partially $\Delta$-embedded, a {\sl nested} sequence of cycles $\mathcal{C}$ (i.e., sequence of cycles that crop nested disks of $\Delta$), and a linkage $L$ whose terminals are outside $\Delta$,
every minimal linkage with respect to $L$ and $\mathcal{C}$ does not intersect any ``deep enough'' insulation layer of ${\cal C}$.
%

\begin{lemma}
\label{aop4icl}
Let $\Delta$ be a closed disk and let $r\in\mathbb{N}_{\geq 0}$.
Let $G$ be  a \seg, ${\cal C}=[C_{1},\ldots,C_{p}],$  where $p\geq 3m/2+1$, be  a $\Delta$-nested 
collection of cycles
of $G$, and $L$ be  a $\Delta$-avoiding $r$-scattered linkage. Every $r$-scattered $({\cal C},\emptyset, L)$-minimal linkage $L'$ of $G$ is $\overline{D}_{ 3m/2+1}$-free, where $m=\tw(L'\cup (\cupall {\cal C}\setminus D))$.
\end{lemma}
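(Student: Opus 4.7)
The plan is to cast the $\Delta$-nested setup as an annular one and invoke~\autoref{ato954jgd}. Let $\tilde{\Delta}=D_{1}\setminus\inter(D_{p})$ be the closed annulus whose two boundary cycles are $C_{1}$ and $C_{p}$; then $\mathcal{C}=[C_{1},\ldots,C_{p}]$ is a $\tilde{\Delta}$-parallel sequence of cycles of $G$. Since $\tilde{\Delta}\subseteq\Delta$, the graph $G$ is partially $\tilde{\Delta}$-embedded and $L$ stays $\tilde{\Delta}$-avoiding. The first step is to verify that $L'$ remains $(\mathcal{C},\emptyset,L)$-minimal under this re-interpretation: the pool of candidate linkages in the two minimality definitions is identical, because any $r$-scattered linkage $L''$ equivalent to $L$ inherits the terminal set $T(L)\subseteq V(G)\setminus\Delta$ and is therefore simultaneously $\Delta$- and $\tilde{\Delta}$-avoiding, while the quantity $\cae(L'',\cupall\mathcal{C})$ to be minimized depends only on $L''$ and $\mathcal{C}$. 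Consequently \autoref{ato954jgd} applies in $\tilde{\Delta}$ and bounds the height of every $(\mathcal{C},\emptyset,L')$-mountain by $3m/2$.

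Set $d=3m/2+1$ and suppose, towards a contradiction, that $L'\cap\overline{D}_{d}\neq\emptyset$. Let $h$ be the largest index in $[p]$ with $L'\cap C_{h}\neq\emptyset$; since $L'$ has no terminals inside $\Delta\supseteq\overline{D}_{d}$ yet some path of $L'$ enters $\overline{D}_{d}$, it must cross $C_{d}$, whence $h\geq d$. In the main case $h<p$, we have $L'\cap C_{h+1}=\emptyset$, so $L'\cap\inter(D_{h+1})=\emptyset$ as well (entering the interior would force a crossing of $C_{h+1}$), and hence $L'\cap D_{1}\subseteq\Delta^{\star}:=D_{1}\setminus\inter(D_{h+1})$. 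Every connected component of $L'\cap D_{1}$ is then a subpath of some path of $L'$ whose both endpoints lie on $C_{1}$ (again as $T(L')\cap\Delta=\emptyset$); in particular, the component $R$ that visits $\overline{D}_{d}$ reaches $C_{h}$ while avoiding $C_{h+1}$. Viewing $[C_{1},\ldots,C_{h+1}]$ as a $\Delta^{\star}$-parallel sequence and repeating the minimality-transfer argument (which is valid because $L'$ contains no vertex on $C_{j}$ for $j>h$, so every competitor $L''$ witnessing a smaller $\cae$ with respect to $[C_{1},\ldots,C_{h+1}]$ would also do so with respect to $\mathcal{C}$; and the relevant treewidth is still at most $m$ by subgraph monotonicity), $R$ becomes a $([C_{1},\ldots,C_{h+1}],\emptyset,L')$-mountain based on $C_{1}$ of height at least $h\geq d$, contradicting the upper bound $3m/2<d$ of \autoref{ato954jgd}.

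The hard part will be to also exclude the case $h=p$, i.e., $L'\cap C_{p}\neq\emptyset$. Here I would exploit the minimality of $L'$ directly: any excursion of a path of $L'$ into $\overline{D}_{p}$ is a subpath with both endpoints on $C_{p}$ whose interior edges lie in $\inter(D_p)$ and therefore belong to no cycle of $\mathcal{C}$; replacing such an excursion by a suitably chosen arc of $C_{p}$ preserves equivalence with $L$, keeps the linkage inside $L\cup\cupall\mathcal{C}$, and strictly reduces $\cae(\cdot,\cupall\mathcal{C})$. The $r$-scatteredness of the resulting linkage is preserved by the same ``slack of $r+1$ cycles'' idea that underlies the proof of \autoref{ao7ui4jkhq0f}, which is available thanks to the hypothesis $p\geq 3m/2+1$ yielding ample room near $C_{p}$. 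This contradicts the $(\mathcal{C},\emptyset,L)$-minimality of $L'$, ruling out $h=p$ and completing the proof.
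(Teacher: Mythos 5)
Your main case ($h<p$) is essentially the paper's own argument carried out with more care: the paper also passes to the annulus bounded by the outer and the innermost cycle, extracts from the intersection $L'\cap \overline{D}_{3m/2+1}$ a $({\cal C},\emptyset,L')$-mountain based on $C_{1}$ of height larger than $3m/2$, and contradicts \autoref{ato954jgd}. Your truncation to $[C_{1},\ldots,C_{h+1}]$ and the accompanying minimality transfer are sound: since $L'$ uses no vertex or edge of $C_{j}$ for $j>h$, the value $\cae(L',\cdot)$ is the same for the truncated and the full cycle union, while for any competitor the $\cae$-value can only drop when passing to the larger union, so a violation of minimality for the truncated sequence yields one for ${\cal C}$; together with subgraph monotonicity of treewidth this legitimately puts you in the setting of \autoref{ato954jgd}.

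The genuine gap is the case you yourself call hard, $h=p$, i.e.\ when $L'$ meets $C_{p}$ or enters $\inter(D_{p})$; this case is not proved. (The paper does not case-split here: since $\Delta$ is a disk and $T(L')\cap\Delta=\emptyset$, every excursion of $L'$ into $D_{1}$ returns to $C_{1}$ and its pocket contains no terminal, so it is treated as a mountain based on $C_{1}$ no matter how deep it goes, and \autoref{ato954jgd} is applied once.) Your proposed fix—replace an excursion into $\inter(D_{p})$ by ``a suitably chosen arc of $C_{p}$''—is an intention rather than an argument, and both of its crucial claims are unsupported. First, nothing guarantees that such an arc is vertex-disjoint from the rest of $L'$: other excursions of $L'$ attach to $C_{p}$, and other paths (or other pieces of the same path) may touch or run along $C_{p}$; since $({\cal C},\emptyset,L)$-minimality quantifies only over linkages, producing a non-disjoint object gives no contradiction. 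Second, the appeal to ``the same slack of $r+1$ cycles idea'' does not transfer: in \autoref{ao7ui4jkhq0f} the $r$-scatteredness of the rerouted linkage rests on the height spacing $j\cdot(r+1)+2$ and on the assumed absence of a lower mountain inside ${\disk}(P_{j})$, i.e.\ on a certificate that the rerouting region is $r$-remote from the rest of the linkage. Near $C_{p}$ you have no such certificate, and the hypothesis $p\geq 3m/2+1$ gives no distance information whatsoever between $C_{p}$ and the other paths of $L'$; for $r\geq 1$ even a disjoint replacement arc could violate scatteredness. As it stands, the case $h=p$ would need either the disk-setting reading of mountains (no far-side restriction), which collapses it into your first case, or a genuinely new rerouting argument; your sketch provides neither.
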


\begin{proof}
Let $L'$ be a $({\cal C},\emptyset, L)$-minimal linkage of $G$.
Assume to the contrary that 
$L'$ is a linkage of $G$ that is intersecting $\overline{D}_{3m/2+1}$. 
As $L'$ is a $\Delta$-avoiding linkage of $G$
we obtain that 
$G$ contains some $({\cal C},\emptyset,L')$-mountain $P$, based on $C_{1}$
where $\dehe(P)> 3m/2$. 
We set $\Delta'$ to be the closed annulus $\Delta\setminus D_{p}$.
As $L$ is a $\Delta$-avoiding
linkage of $G$, 
it is also a $\Delta'$-avoiding linkage of $G$.
Threfore, we can apply
\autoref{ato954jgd}, on $G$, ${\cal C}$, $\emptyset$, $L$, and $L'$ and obtain 
that  $\dehe(P)\leq  3m/2$, a contradiction.
\end{proof}

\subsection{Routing confined paths}
\label{subsec_rerouting}
In this subsection, we aim to prove~\autoref{alo3qx}, that intuitively shows how to use the infrastructure of a railed annulus of a given graph $G$, in order to obtain an $r$-scattered linkage of $G$ whose endpoints lie in particular parts of the railed annulus and it is confined in ${\cal A}$.
The obtained routing will appear in the core of the proof of~\autoref{afsfsdfdsdsafsadffasdasfd2}.

We start by the following proposition, that can be derived from the proof of \cite[Lemma 7]{AdlerKKLST17irre}.

\begin{proposition}\label{upside}
Let $r\in\mathbb{N}$ and let $k,k',d\in\mathbb{N}$
such that  $1\leq d\cdot (r+1)\leq k'\leq k$.
Let $\Gamma$ be a $(k\times k')$-grid
and let $\{p_{1}^{\rm up},\ldots,p_{d}^{\rm up}\}$
(resp. $\{p_{1}^{\rm down},\ldots,p_{d}^{\rm down}\}$)  be vertices of the higher  (resp. lower) horizontal line arranged as they appear in it from left to right.
If for every $i\in[d]$ $N_\Gamma^{(\leq r)}(p_{i}^{\rm down})\cap \{p_{i-1}^{\rm down}, p_{i+1}^{\rm down}\} = \emptyset$ and $N_\Gamma^{(\leq r)}(p_{i}^{\rm up})\cap \{p_{i-1}^{\rm up}, p_{i+1}^{\rm up}\} = \emptyset$, then the grid $\Gamma$ contains $d$ paths $P_{1},\ldots,P_{d}$
such that, for every $i\in[d]$, the endpoints of $P_{i}$ are $p_{i}^{\rm up}$ 
and $p_{i}^{\rm down}$ and for every $i,j\in[d]$, $i\neq j$, $N_\Gamma^{(\leq r)}(V(P_{i}))\cap V(P_{j}) = \emptyset$.
\end{proposition}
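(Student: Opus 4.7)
The plan is to construct the paths $P_1,\ldots,P_d$ in $\Gamma$ explicitly as polygonal staircase-type routes, exploiting the spacing of the endpoints on the two extremal rows together with the room granted by $d(r+1)\le k'\le k$. Parameterize $\Gamma$ with rows $1,\ldots,k$ (top to bottom) and columns $1,\ldots,k'$ (left to right), so that $p_i^{\rm up}=(1,a_i)$ and $p_i^{\rm down}=(k,b_i)$ with $a_1<\cdots<a_d$ and $b_1<\cdots<b_d$. The hypothesis on $r$-neighborhoods translates to $a_{i+1}-a_i\ge r+1$ and $b_{i+1}-b_i\ge r+1$, giving the desired horizontal spacing for free among the endpoints of distinct paths.

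I would then allocate, for each $i\in[d]$, an ``upper channel row'' $h_i$, a ``lower channel row'' $h_i'$, and a ``middle column'' $c_i$, chosen so that the $h_i$'s, the $h_i'$'s, and the $c_i$'s each form a family at pairwise spacing at least $r+1$ and so that every $h_i$ lies strictly above every $h_j'$. The hypothesis $d(r+1)\le\min\{k,k'\}$ makes such allocations fit in $\Gamma$ after suitable arithmetic (with the rows used by the upper and lower channel families carefully interleaved across indices so that the total row budget stays within $k$). The path $P_i$ is then routed as the five-segment staircase that descends column $a_i$ from $p_i^{\rm up}$ to $(h_i,a_i)$, traverses row $h_i$ to $(h_i,c_i)$, descends column $c_i$ to $(h_i',c_i)$, traverses row $h_i'$ to $(h_i',b_i)$, and descends column $b_i$ to $p_i^{\rm down}$.

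The verification of the $r$-scattered property then reduces to a case analysis over pairs of segments from two distinct paths $P_i,P_j$. Two vertical segments in different columns (any combination from $\{a_i,c_i,b_i\}$ and $\{a_j,c_j,b_j\}$) are separated by the column gap, which is $\ge r+1$ by either the endpoint-spacing hypothesis or the explicit $(r+1)$-spacing of the $c_i$'s. Two horizontal segments at distinct channel rows are separated by the row gap, which is $\ge r+1$. A vertical and a horizontal from different paths either lie in disjoint row bands (hence separated by the row gap between top and bottom channels, which is $\ge r+1$) or in disjoint column bands (hence separated by the column gap, which is $\ge r+1$); the pivot through $c_i$ is what guarantees this dichotomy by confining the top and bottom ``L-moves'' of each path to disjoint row bands.

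The main obstacle is precisely this vertical-horizontal pairing: a naive single-L routing (descend, cross, descend) can fail the $r$-scattered condition whenever the column intervals $[\min(a_i,b_i),\max(a_i,b_i)]$ of two paths interleave, because then a vertical segment of one path runs through a row on which a horizontal segment of the other lies. The double-staircase via the pivot column $c_i$ decouples the top and bottom ``L-moves'' of each path into disjoint row bands, eliminating all such crossings. The twofold use of the slack $d(r+1)\le\min\{k,k'\}$ — once for allocating the $d$ middle columns and once for fitting the $2d$ channel rows inside the grid (with a careful interleaving across indices to respect the budget) — is what permits the construction to simultaneously satisfy path-disjointness and the full $r$-scattered condition.
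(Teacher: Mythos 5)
The paper does not actually prove this proposition; it invokes it as a consequence of the proof of Lemma~7 in~\cite{AdlerKKLST17irre}, so there is no in-paper argument to compare with. Judged on its own, your construction has a genuine gap, concentrated exactly where the difficulty of the statement lies. Your verification asserts that any two vertical segments of distinct paths are separated by a column gap of at least $r+1$ ``by either the endpoint-spacing hypothesis or the explicit $(r+1)$-spacing of the $c_i$'s''. The hypothesis only controls $|a_i-a_j|$ and $|b_i-b_j|$; it says nothing about $|a_i-b_j|$ (these can be equal), nor about $|a_i-c_j|$ or $|b_i-c_j|$. The $a$--$b$ pairs are saved only by the row-band separation, which you do not invoke for them; and the $a$--$c$ and $b$--$c$ pairs are not saved by anything, because the middle segment of $P_j$ in column $c_j$ runs from row $h_j$ down to row $h_j'$ and hence shares rows with the top segment of every $P_i$ with $h_i\ge h_j$ (and with bottom segments on the other side). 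For the same reason your ``dichotomy'' for vertical--horizontal pairs fails: the pivot column $c_j$ crosses the upper channel row $h_i$ of every path turning below it, so one needs $c_j$ to be at column distance $\ge r+1$ from the horizontal span of those specific paths. Demanding that the $c_j$'s simply avoid the $r$-neighbourhoods of all $a_i,b_i$ is impossible in general: when $k'=d(r+1)$ and the $a_i$ sit at exact spacing $r+1$, every column of $\Gamma$ is within distance $r$ of some $a_i$. Finally, the row budget does not support your two bands: $d$ upper rows pairwise $\ge r+1$ apart, $d$ lower rows pairwise $\ge r+1$ apart, and all upper rows above all lower rows require more than $d(r+1)$ rows as soon as $d\ge 2$, whereas the hypothesis only guarantees $k\ge d(r+1)$; the ``interleaving'' you propose to fit the budget destroys the very premise (all $h_i$ above all $h_j'$) that your case analysis uses.

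The missing idea is that the auxiliary choices cannot be generic: the assignment of channel rows (and, if used, pivot columns) must be correlated with the left--right order of the terminals. For instance, one can dispense with pivots altogether: split the indices into ``rightward'' paths ($b_i\ge a_i$) and ``leftward'' paths ($b_i\le a_i$), note that spans of paths from different classes are automatically at column distance $\ge r+1$, route each $P_i$ as a single L (down column $a_i$, across one channel row, down column $b_i$), and assign the channel rows, spaced by $r+1$, in \emph{decreasing} depth order of the index within the rightward class and increasing order within the leftward class (the two classes may reuse the same rows). With that order-dependent assignment every vertical--horizontal and vertical--vertical pair is separated either by a row gap or by a column gap of $\ge r+1$, and only about $(d-1)(r+1)+1\le k$ rows are needed. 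As written, your proposal neither specifies such an order-sensitive assignment nor fits inside the guaranteed grid dimensions, so the claimed separations do not follow.
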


Given two vertex disjoint paths $P_{1}$ and $P_{2}$ of $G$, we say that an $(P_{1},P_{2})$-path of $G$ is a path that 
whose one endpoint is a vertex of $P_{1}$ the other endpoint is a vertex of $P_{2}$ and contains all edges of $P_{1}\cup P_{2}$.
We now prove the following:

\begin{lemma}
\label{alo3qx}
Let $r\in\mathbb{N}$, $p,q,s\in\mathbb{N}_{\geq 3}$, $b,d\in \mathbb{N}_{\geq 1}$, such that $p\geq s+2b$ and $q\geq b+d\cdot (r+1)$,  where $p$ and $s$ are odd numbers.
Also, let $\Delta$ be a closed annulus.
If $G$ is a \seg, ${\cal A}$ is a $\Delta$-embedded $(p,q)$-railed annulus of $G$, $I\subseteq [q]$ where $|I|\geq d\cdot(r+1)$, then there is an $r$-scattered linkage $K$ of $G$ such that, 
\begin{itemize}
\item[(a)]  there is an ordering $[K_{1},\ldots,K_{d}]$ of ${\cal P}(K)$, where for every $i\in[d]$,
$$\text{$K_{i}$  is a $(P_{1,b+(i-1)\cdot (r+1)+1},P_{p,b+(i-1)\cdot(r+1)+1})$-path of $G$ and}$$
		\item[(b)] $K$ is $(s,I)$-confined in ${\cal A}$.
	\end{itemize}
\end{lemma}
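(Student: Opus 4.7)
The plan is to assemble each path $K_i$ from three stretches: a rail-aligned terminal piece at each end living on the boundary rail $P_{c_i}$ (where $c_i := b + (i-1)(r+1)+1$), a transition inside each of the outer slabs $\ann({\cal C},1,b+1)$ and $\ann({\cal C},p-b,p)$ that switches from rail $P_{c_i}$ to a chosen rail $P_{j_i}$ with $j_i\in I$, and a central segment along $P_{j_i}$ that traverses the remaining cycles. Since $|I|\ge d(r+1)$, I enumerate $I=\{i_1<i_2<\cdots\}$ and set $j_\ell := i_{(\ell-1)(r+1)+1}$ for $\ell\in[d]$; this yields $d$ indices $j_1<\cdots<j_d$ in $I$ with $j_{\ell+1}-j_\ell\ge r+1$, matching the rail-index gap $c_{\ell+1}-c_\ell=r+1$ of the prescribed terminals.

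The transitions are produced via Proposition~\ref{upside}. The outer slab $\ann({\cal C},1,b+1)$, together with $C_1,\dots,C_{b+1}$ and the rails restricted to it, contains (as a subdivision) a cylindrical grid of height $b+1$ and circumference $q$; cutting along a column chosen outside the terminal set $\{c_1,\dots,c_d,j_1,\dots,j_d\}$ exhibits it as a planar grid $\Gamma$. In $\Gamma$ the top row carries the terminals $P_{1,c_i}$ at column $c_i$ and the bottom row carries $P_{b+1,j_\ell}$ at column $j_\ell$; both sequences are increasing and have consecutive-index gap at least $r+1$, so the scattering hypothesis of Proposition~\ref{upside} is met and yields $d$ mutually $r$-scattered paths $T_i^{\text{up}}$ inside the slab, each joining an endpoint of $P_{1,c_i}$ to an endpoint of $P_{b+1,j_i}$. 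A mirror construction in $\ann({\cal C},p-b,p)$ produces $T_i^{\text{down}}$. Letting $M_i$ be the subpath of $P_{j_i}$ between $P_{b+1,j_i}$ and $P_{p-b,j_i}$, set
\[
K_i := P_{1,c_i}\cup T_i^{\text{up}}\cup M_i\cup T_i^{\text{down}}\cup P_{p,c_i},
\]
so $K_i$ is a $(P_{1,c_i},P_{p,c_i})$-path of $G$, giving (a).

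For (b), write $p=2t+1$ and $s=2t'+1$: the hypothesis $p\ge s+2b$ forces $t-t'\ge b$, so the central annulus $\ann({\cal C},t+1-t',t+1+t')$ is contained in $\ann({\cal C},b+1,p-b)$, and on that zone each $K_i$ coincides with a subpath of $M_i\subseteq P_{j_i}$ with $j_i\in I$; hence $K$ is $(s,I)$-confined. The $r$-scatteredness of $K$ inside the two transition slabs is supplied by Proposition~\ref{upside}; in the central slab distinct rails $P_{j_\ell}$ differ in index by at least $r+1$, which translates into graph-distance at least $r+1$ within the railed annulus, and since the relevant vertices lie in $\inter(\Delta)$, the partial-$\Delta$-embedding ensures their $r$-balls in $G$ do not exit the compass, so scatteredness transfers from the railed annulus to $G$. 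The main obstacle is the faithful representation of the cylindrical outer slab as a planar grid for Proposition~\ref{upside}: one must choose the cut column carefully (its existence follows from $q\ge b+d(r+1)$ together with the spread of the terminal indices), and the subdivisions arising from each $P_{i,j}$ being a path rather than a single vertex must be verified not to collapse the $r+1$ slack used in the scattering condition.
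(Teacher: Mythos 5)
Your proposal follows the same route as the paper's proof of \autoref{alo3qx}: choose $d$ indices of $I$ that are pairwise at least $r+1$ apart, build each $K_i$ from two transition pieces routed through the outer slabs of the railed annulus via \autoref{upside} plus a middle segment along the chosen rail, and deduce (b) from $p\geq s+2b$. However, two steps are not actually carried out. First, the object you want to apply \autoref{upside} to is not a grid, and not a subdivision of one either: each crossing $P_{i,j}=C_i\cap P_j$ is a path shared by a cycle and a rail, so the union of the slab cycles and rails is a grid in which every degree-four vertex has been split into a path. You flag this (``must be verified'') but leave it open; note also that the worry about subdivisions ``collapsing the $r+1$ slack'' is misplaced, since subdividing can only increase distances. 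The paper resolves this point by contracting every $P_{i,j}$ to a single vertex, obtaining an exact $(q\times b)$-grid minor $H$, applying \autoref{upside} inside $H$, and then expanding the contracted vertices back; since contraction only shrinks distances, the $r$-scattering obtained in $H$ survives the expansion. This contraction/expansion argument is exactly what is missing from your write-up.

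Second, your slabs are $\ann(\mathcal{C},1,b+1)$ and $\ann(\mathcal{C},p-b,p)$, whereas the hypothesis only guarantees $p\geq s+2b$. In the extremal case $p=s+2b$ one has $t+1-t'=b+1$, so $C_{b+1}$ is a boundary cycle of the central annulus $\ann(\mathcal{C},t+1-t',t+1+t')$. Nothing in \autoref{upside} prevents the transition path $T_i^{\rm up}$ from running horizontally along the bottom row of the slab, i.e.\ along $C_{b+1}$, through crossings and cycle segments that lie on no rail indexed by $I$; then $K\cap \ann(\mathcal{C},t+1-t',t+1+t')\not\subseteq\bigcup_{i\in I}P_i$ and (b) fails, so your claim that on the central zone each $K_i$ ``coincides with a subpath of $M_i$'' is unjustified. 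The paper avoids this by keeping the transitions inside $C_1,\ldots,C_b$ (resp.\ $C_{p-b+1},\ldots,C_p$) and letting the rail segment already start at $C_b$ (resp.\ $C_{p-b+1}$), which makes the transitions disjoint from the central annulus. Both issues are local and repairable (the second by shrinking your slab by one cycle), but as written the proof is incomplete; your closing transfer of scatteredness from the railed-annulus subgraph to $G$ is also only asserted, though the paper's own proof makes the same assertion at that point.
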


\begin{proof}
Let ${\cal A} = ({\cal C}, {\cal P})$, let $t =\lfloor p/2 \rfloor$ and $t'=\lfloor s/2 \rfloor$.
Also, let $\{i_{1}, \ldots, i_{d\cdot(r+1)}\}\subseteq I$ such that $\forall j\in[d\cdot (r+1)-1], i_{j}< i_{j+1}$.
\medskip

\noindent{\em Claim:} There is a collection of paths ${\cal P}^{\sf down} = \{P_{1}^{\sf down},\ldots, P_{d}^{\sf down}\}$  such that, for every $h\in [d]$, $P_{h}^{\sf down}$ is a $(P_{1, b+(h-1)\cdot(r+1) +1}, P_{b, i_{h\cdot (r+1)}})$-path and for every $h,j\in[d]$ where $h\neq j$,
it holds that $N_G^{(\leq r)}(V(P_{i}^{\sf down}))\cap V(P_{j}^{\sf down}) = \emptyset$.
\medskip

\noindent{\em Proof of Claim:}
For $i\in[b], j\in[q]$ let $p_{i,j}$ be the vertex obtained after contracting all edges in $P_{i,j}$. We also define  $E_{\sf horizontal} =\bigcup_{(i,j)\in[b]\times[q-1]} \{p_{i,j},p_{i,j+1}\}$ and $E_{\sf vertical} =\bigcup_{(i,j)\in[b-1]\times[q]} \{p_{i,j},p_{i+1,j}\}$.
%

Let $H$ be the graph where $V(H)=\{p_{i,j}\mid  (i,j)\in[b]\times[q]\}$ and $E(H) = E_{\sf horizontal}\cup E_{\sf vertical}$. Observe that $H$
is a minor of $G$ that is isomorphic to a $(q \times b)$-grid (see \autoref{asdgfsdsdfds}). 
For $h\in [d]$, let $p_{h}^{\sf low}$ (resp. $p_{h}^{\sf high}$ ) be the vertex $p_{1, b+(h-1)\cdot(r+1)+1}$ (resp. $p_{b, i_{h\cdot(r+1)}}$).

\begin{figure}[H]
	\centering\scalebox{1}{
	\sshow{0}{\begin{tikzpicture}[yscale=-1,scale=0.5]
	\draw[black] (0,4.0) to
	 node[tiny white node, pos=0.05] (A1) {}
	 node[tiny white node, pos=0.08] (A2) {}
	 node[tiny black node, pos=0.13] (A3) {}
	 node[tiny black node, pos=0.17] (A4) {}
	 node[tiny black node, pos=0.23] (A5) {}
	 node[tiny white node, pos=0.26] (A6) {}
	 node[tiny black node, pos=0.35] (A7) {}
	 node[tiny black node, pos=0.40] (A8) {}
	 node[tiny white node, pos=0.45] (A9) {}
	 node[tiny white node, pos=0.49] (A10) {}
	 node[tiny black node, pos=0.57] (A11) {}
	 node[tiny black node, pos=0.62] (A12) {}
	 node[tiny black node, pos=0.68] (A13) {}
	 node[tiny white node, pos=0.74] (A14) {}
	 node[tiny black node, pos=0.80] (A15) {}
	 node[tiny black node, pos=0.85] (A16) {}
	 node[tiny black node, pos=0.93] (A17) {}
	 node[tiny black node, pos=0.97] (A18) {}
	   (12,4.0);
	\draw[black] (0,3.5) to 	
	 node[tiny black node, pos=0.10] (B1) {}
	 node[tiny black node, pos=0.18] (B2) {}
	 node[tiny black node, pos=0.30] (B3) {}
	 node[tiny black node, pos=0.37] (B4) {}
	 node[tiny black node, pos=0.43] (B5) {}
	 node[tiny black node, pos=0.47] (B6) {}
	 node[tiny black node, pos=0.60] (B7) {}
	 node[tiny black node, pos=0.65] (B8) {}
	 node[tiny black node, pos=0.73] (B9) {}
	 node[tiny black node, pos=0.77] (B10) {}
	 node[tiny black node, pos=0.86] (B11) {}
	 node[tiny black node, pos=0.92] (B12) {}
	 node[tiny black node, pos=0.98] (B13) {}
	(12,3.5);
	\draw[black] (0,3.0) to 
	 node[tiny black node, pos=0.05] (C1) {}
	 node[tiny black node, pos=0.13] (C2) {}
	 node[tiny black node, pos=0.25] (C3) {}
	 node[tiny black node, pos=0.30] (C4) {}
	 node[tiny black node, pos=0.45] (C5) {}
	 node[tiny black node, pos=0.50] (C6) {}
	 node[tiny black node, pos=0.57] (C7) {}
	 node[tiny black node, pos=0.63] (C8) {}
	 node[tiny black node, pos=0.68] (C9) {}
	 node[tiny black node, pos=0.72] (C10) {}
	 node[tiny black node, pos=0.80] (C11) {}
	 node[tiny black node, pos=0.84] (C12) {}
	 node[tiny black node, pos=0.95] (C13) {}
(12,3.0);
	\draw[black] (0,2.5) to 
	 node[tiny black node, pos=0.05] (D1) {}
	 node[tiny black node, pos=0.08] (D2) {}
	 node[tiny black node, pos=0.13] (D3) {}
	 node[tiny black node, pos=0.17] (D4) {}
	 node[tiny black node, pos=0.23] (D5) {}
	 node[tiny black node, pos=0.26] (D6) {}
	 node[tiny black node, pos=0.35] (D7) {}
	 node[tiny black node, pos=0.40] (D8) {}
	 node[tiny black node, pos=0.45] (D9) {}
	 node[tiny black node, pos=0.49] (D10) {}
	 node[tiny black node, pos=0.57] (D11) {}
	 node[tiny black node, pos=0.62] (D12) {}
	 node[tiny black node, pos=0.68] (D13) {}
	 node[tiny black node, pos=0.74] (D14) {}
	 node[tiny black node, pos=0.80] (D15) {}
	 node[tiny black node, pos=0.85] (D16) {}
	 node[tiny black node, pos=0.90] (D17) {}
	 node[tiny black node, pos=0.97] (D18) {}
	  (12,2.5);
	\draw[black] (0,2.0) to 
	 node[tiny black node, pos=0.05] (E1) {}
	 node[tiny black node, pos=0.13] (E2) {}
	 node[tiny black node, pos=0.25] (E3) {}
	 node[tiny black node, pos=0.36] (E4) {}
	 node[tiny black node, pos=0.42] (E5) {}
	 node[tiny black node, pos=0.50] (E6) {}
	 node[tiny black node, pos=0.57] (E7) {}
	 node[tiny black node, pos=0.63] (E8) {}
	 node[tiny black node, pos=0.68] (E9) {}
	 node[tiny black node, pos=0.72] (E10) {}
	 node[tiny black node, pos=0.80] (E11) {}
	 node[tiny black node, pos=0.84] (E12) {}
	 node[tiny black node, pos=0.93] (E13) {}
	 (12,2.0);
	\draw[black] (0,1.5) to
	 node[tiny black node, pos=0.10] (F1) {}
	 node[tiny black node, pos=0.18] (F2) {}
	 node[tiny black node, pos=0.30] (F3) {}
	 node[tiny black node, pos=0.37] (F4) {}
	 node[tiny black node, pos=0.43] (F5) {}
	 node[tiny black node, pos=0.47] (F6) {}
	 node[tiny black node, pos=0.55] (F7) {}
	 node[tiny black node, pos=0.65] (F8) {}
	 node[tiny black node, pos=0.73] (F9) {}
	 node[tiny black node, pos=0.77] (F10) {}
	 node[tiny black node, pos=0.86] (F11) {}
	 node[tiny black node, pos=0.94] (F12) {}
	 node[tiny black node, pos=0.98] (F13) {}
	 (12,1.5);
	\draw[black] (0,1.0) to 
	 node[tiny black node, pos=0.05] (G1) {}
	 node[tiny black node, pos=0.08] (G2) {}
	 node[tiny black node, pos=0.13] (G3) {}
	 node[tiny black node, pos=0.17] (G4) {}
	 node[tiny black node, pos=0.23] (G5) {}
	 node[tiny black node, pos=0.26] (G6) {}
	 node[tiny black node, pos=0.35] (G7) {}
	 node[tiny black node, pos=0.40] (G8) {}
	 node[tiny black node, pos=0.45] (G9) {}
	 node[tiny black node, pos=0.49] (G10) {}
	 node[tiny black node, pos=0.55] (G11) {}
	 node[tiny black node, pos=0.62] (G12) {}
	 node[tiny black node, pos=0.68] (G13) {}
	 node[tiny black node, pos=0.74] (G14) {}
	 node[tiny black node, pos=0.80] (G15) {}
	 node[tiny black node, pos=0.85] (G16) {}
	 node[tiny black node, pos=0.93] (G17) {}
	 node[tiny black node, pos=0.97] (G18) {}
	(12,1.0);
	\draw[black] (0,0.5) to
	 node[tiny black node, pos=0.10] (H1) {}
	 node[tiny black node, pos=0.18] (H2) {}
	 node[tiny black node, pos=0.27] (H3) {}
	 node[tiny black node, pos=0.37] (H4) {}
	 node[tiny black node, pos=0.43] (H5) {}
	 node[tiny black node, pos=0.47] (H6) {}
	 node[tiny black node, pos=0.60] (H7) {}
	 node[tiny black node, pos=0.65] (H8) {}
	 node[tiny black node, pos=0.73] (H9) {}
	 node[tiny black node, pos=0.77] (H10) {}
	 node[tiny black node, pos=0.86] (H11) {}
	 node[tiny black node, pos=0.91] (H12) {}
	 node[tiny black node, pos=0.98] (H13) {}
	(12,0.5);
	\draw[black] (0,0.0) to 
	 node[tiny black node, pos=0.05] (I1) {}
	 node[tiny white node, pos=0.19] (I2) {}
	 node[tiny black node, pos=0.25] (I3) {}
	 node[tiny white node, pos=0.34] (I4) {}
	 node[tiny white node, pos=0.45] (I5) {}
	 node[tiny white node, pos=0.50] (I6) {}
	 node[tiny black node, pos=0.57] (I7) {}
	 node[tiny black node, pos=0.63] (I8) {}
	 node[tiny black node, pos=0.68] (I9) {}
	 node[tiny black node, pos=0.72] (I10) {}
	 node[tiny black node, pos=0.80] (I11) {}
	 node[tiny black node, pos=0.84] (I12) {}
	 node[tiny black node, pos=0.93] (I13) {}
	 (12,0.0);
	
	\draw[red, line width = 1pt]
	(A1) -- (A2) -- (B1) -- (C2) -- (D3) -- (D4) -- (E2) -- (F1) -- (G3) -- (H1) -- (I1);
	\draw[red, line width = 1pt]
	(A4) -- (B2) -- (C3) -- (D6) -- (E3) -- (F2) -- (G5) -- (G6)-- (H3) -- (H2) -- (I2);
	\draw[red, line width = 1pt]
	(A6) -- (B3) -- (C4) -- (D7) -- (E4) -- (F4) -- (G7) -- (H4) -- (I4);
	\draw[red, line width = 1pt]
	(A8) -- (B4) -- (B5) -- (C5) -- (D8) -- (E5) -- (F5) -- (G8) -- (H5) -- (I5);
	\draw[red, line width = 1pt]
	(A9) -- (A10)-- (B6) -- (C6) -- (D9) -- (D10) -- (E6) -- (F7) -- (G11) -- (H6) -- (I6);
	\draw[red, line width = 1pt]
	(A11) -- (A12) -- (B7) -- (C8) -- (D11) -- (E7) -- (F8) -- (G12) -- (H8) -- (I7);
	\draw[red, line width = 1pt]
	(A14) -- (B9) -- (B10) -- (C10) -- (D14) -- (E11) -- (F9) -- (F10) -- (G14) -- (H9) -- (I10);
	\draw[red, line width = 1pt]
	(A16) -- (B11) -- (C12) -- (D16) -- (E12) -- (F11) -- (G16) -- (H11) -- (I12) -- (I11);
	\draw[red, line width = 1pt]
	(A18) -- (A17) -- (B12) -- (C13) -- (D17) -- (E13) -- (F12) -- (G17) -- (H12) -- (I13);

	\draw[->, thick] (13.5,2) to (14,2);
	
	\begin{scope}[xshift=15.5cm]
	\foreach \i in {0,...,8}{		
			\draw[-] (0,\i/2) to (4,\i/2);
			\draw[red,line width =1.5pt] (\i/2,0) to (\i/2,4);
		\foreach \j in {0,...,8}{		
			\node[tiny black node] () at (\i/2,\j/2) {};}
		}
	\node[tiny white node] () at (0.5,0){};
	\node[tiny white node] () at (1.0,0){};
	\node[tiny white node] () at (1.5,0){};
	\node[tiny white node] () at (2.0,0){};
	\node[tiny white node] () at (0.0,4){};
	\node[tiny white node] () at (1.0,4){};
	\node[tiny white node] () at (2.0,4){};
	\node[tiny white node] () at (3.0,4){};

	\end{scope}
	\end{tikzpicture}}}
	\caption{An example showing the construction of the graph $H$. For every $h\in [d]$, the resulting vertices $p_{h}^{\sf low}$ and $p_{h}^{\sf high}$ (corresponding to the vertices of the paths $P_{1,b+(h-1)\cdot(r+1) +1}$ and $P_{b,i_{h}}$, respectively) are depicted in white.}
\label{asdgfsdsdfds}
\end{figure}
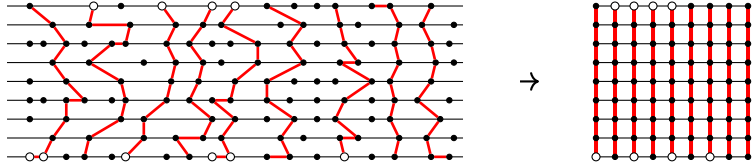

Due to \autoref{upside}, $H$ contains $d$ paths $P_{1},\ldots,P_{d}$
such that, for every $h\in[d]$, the endpoints of $P_{h}$ are $p_{h}^{\rm low}$ 
and $p_{h}^{\rm high}$ and for every $i,j\in[d]$, $i\neq j$, $N_H^{(\leq r)}(V(P_{i}))\cap V(P_{j}) = \emptyset$.
Therefore, if we substitute every vertex of each $P_{i}$ with the
edges that where contracted in $G$ in order to obtain it in $H$,
we obtain the claimed result.\hfill$\diamond$\medskip
	
By applying the previous claim symmetrically, we can find a collection
of paths ${\cal P}^{\sf up} = \{P_{1}^{\sf up},\ldots, P_{d}^{\sf up}\}$
such that, for every $h\in [d]$, $P_{h}^{\sf up}$ is a
$(P_{p-b+1, i_{h\cdot(r+1)}}, P_{p, b+(h-1)\cdot(r+1)+1})$-path and for every $h,j\in[d]$ where $h\neq j$,
it holds that $N_G^{(\leq r)}(V(P_{i}^{\sf up}))\cap V(P_{j}^{\sf up}) = \emptyset$.
	
For every $h\in[d]$, let 
$P_{h}^{\sf mid}=\ann{({\cal C}, b,p-b+1)}\cap P_{i_{h\cdot(r+1)}}$
and let $K_{h} = P_{h}^{\sf down}\cup P_{h}^{\sf mid} \cup P_{h}^{\sf up}$.
Observe that $K=\bigcup_{i\in[d]} K_i$ is an $r$-scattered linkage of $G$ and for every $i\in[d]$, $K_i$ is a $(P_{1,b+(i-1)\cdot (r+1)+1},P_{p,b+(i-1)\cdot(r+1)+1})$-path of $G$.
Since $p\geq s+2b$, $t =\lfloor p/2 \rfloor$, and $t'=\lfloor s/2 \rfloor$, $\ann{({\cal C}, t+1-t',t+1+t')}\subseteq \ann{({\cal C}, b,p-b+1)}$ and therefore $K$ is $(s,I)$-confined in $\mathcal{A}$.
\end{proof}

\subsection{Combing the linkage  - Proof of \autoref{afsfsdfdsdsafsadffasdasfd2}}
\label{subsec_prooftame}

Before we proceed with the proof of \autoref{afsfsdfdsdsafsadffasdasfd2} we need some more definitions. 
Let $\Delta$ be a closed annulus and let ${\cal A}=({\cal C},{\cal P})$ be a $\Delta$-embedded $(p,q)$-railed annulus of a partially $\Delta$-embedded graph $G$. 
\begin{figure}[H]
\centering
	\sshow{0}{\begin{tikzpicture}[scale=.4]
		\foreach \x in {2,...,6}{
			\draw[line width =0.6pt] (0,0) circle (\x cm);
		}
		\node (P3) at (45:7) {$P_{3}$};
		\node[black node] (P11) at (45:6) {};
		\node[black node] (P21a) at (30:5) {};
		\node[black node] (P21b) at (40:5) {};
		\node[black node] (P31a) at (35:4) {};
		\node[black node] (P31b) at (50:4) {};
		\node[black node] (P41a) at (45:3) {};
		\node[black node] (P41b) at (25:3) {};
		\node[black node] (P51) at (40:2) {};
		\draw[line width=1pt] (P11) -- (P21a) -- (P21b) -- (P31a)  (P31b) -- (P41a) -- (P41b) -- (P51);
		
		\node (P4) at (70:7) {$P_{4}$};
		\node[black node] (P12) at (70:6) {};
		\node[black node] (P22a) at (80:5) {};
		\node[black node] (P22b) at (75:5) {};
		\node[black node] (P32a) at (90:4) {};
		\node[black node] (P32b) at (75:4) {};
		\node[black node] (P42a) at (85:3) {};
		\node[black node] (P42b) at (70:3) {};
		\node[black node] (P52) at (75:2) {};
		\draw[line width=1pt] (P12) -- (P22a) -- (P22b) -- (P32a) (P32b) -- (P42a) -- (P42b) -- (P52);
		
		\node (P5) at (115:7) {$P_{5}$};
		\node[black node] (P13a) at (120:6) {};
		\node[black node] (P13b) at (110:6) {};
		\node[black node] (P23a) at (110:5) {};
		\node[black node] (P23b) at (115:5) {};
		\node[black node] (P33a) at (120:4) {};
		\node[black node] (P33b) at (130:4) {};
		\node[black node] (P43a) at (135:3) {};
		\node[black node] (P43b) at (120:3) {};
		\node[black node] (P53) at (125:2) {};
		\draw[line width=1pt] (P13a) -- (P13b) -- (P23a) -- (P23b) -- (P33a) -- (P33b) -- (P43a) -- (P43b) -- (P53);
		
		\node (P6) at (165:7) {$P_{6}$};
		\node[black node] (P14a) at (170:6) {};
		\node[black node] (P14b) at (160:6) {};
		\node[black node] (P24) at (155:5) {};
		\node[black node] (P34) at (160:4) {};
		\node[black node] (P44a) at (165:3) {};
		\node[black node] (P44b) at (180:3) {};
		\node[black node] (P54) at (170:2) {};
		\draw[line width=1pt] (P14a) -- (P14b) -- (P24) -- (P34) -- (P44a) -- (P44b) -- (P54);
		
		\node (P7) at (190:7) {$P_{7}$};
		\node[black node] (P18a) at (190:6) {};
		\node[black node] (P28a) at (185:5) {};
		\node[black node] (P28b) at (220:5) {};
		\node[black node] (P38a) at (210:4) {};
		\node[black node] (P38b) at (225:4) {};
		\node[black node] (P48a) at (205:3) {};
		\node[black node] (P48b) at (220:3) {};
		\node[black node] (P58) at (200:2) {};
		\draw[line width=1pt] (P18a) -- (P28a) to [bend right=15]  (P28b);
		\draw[line width=1pt] (P28b) --  (P38a) -- (P38b) -- (P48a) -- (P48b) -- (P58);
		
		\node (P8) at (235:7) {$P_{8}$};
		\node[black node] (P15) at (235:6) {};
		\node[black node] (P25a) at (240:5) {};
		\node[black node] (P25b) at (250:5) {};
		\node[black node] (P35b) at (245:4) {};
		\node[black node] (P45a) at (250:3) {};
		\node[black node] (P45b) at (240:3) {};
		\node[black node] (P55) at (235:2) {};
		\draw[line width=1pt] (P15) -- (P25a)  -- (P25b) --  (P35b) -- (P45a) -- (P45b) -- (P55);
		
		\node (P1) at (295:7) {$P_{1}$};
		\node[black node] (P16a) at (290:6) {};
		\node[black node] (P16b) at (300:6) {};
		\node[black node] (P26a) at (295:5) {};
		\node[black node] (P26b) at (310:5) {};
		\node[black node] (P36a) at (300:4) {};
		\node[black node] (P36b) at (290:4) {};
		\node[black node] (P46a) at (310:3) {};
		\node[black node] (P46b) at (325:3) {};
		\node[black node] (P56) at (320:2) {};
		\draw[line width=1pt] (P16a) -- (P16b) -- (P26a) -- (P26b) -- (P36a) -- (P36b) -- (P46a) -- (P46b) -- (P56);
		
		\node (P2) at (0:7) {$P_{2}$};
		\node[black node] (P17a) at (5:6) {};
		\node[black node] (P17b) at (-5:6) {};
		\node[black node] (P27a) at (0:5) {};
		\node[black node] (P27b) at (-10:5) {};
		\node[black node] (P37a) at (-15:4) {};
		\node[black node] (P37b) at (0:4) {};
		\node[black node] (P47a) at (10:3) {};
		\node[black node] (P47b) at (-5:3) {};
		\node[black node] (P57) at (-5:2) {};
		\draw[line width=1pt] (P17a) -- (P17b) -- (P27b)  -- (P27a) -- (P37a) -- (P37b) -- (P47a) -- (P47b) -- (P57);
		
		\node[red node] (A1) at (115:5) {};
		\node[red node] () at (155:5) {};
		\node[red node] (A2) at (185:5) {};
		\draw[red, line width=1pt] (A1) to [bend right=30] (A2);
		
		\node[yellow node] (B1) at (310:5) {};
		\node[yellow node] (B2) at (300:4) {};
		\node[yellow node] (B3) at (290:4) {};
		\node[yellow node] (B4) at (310:3) {};
		\draw[yellow, line width=1.5pt] (B1) -- (B2) -- (B3) -- (B4);

		\node[blue node] (C1) at (0:4) {};
		\node[blue node] (C2) at (10:3) {};
		\node[blue node] (C3) at (-5:3) {};
		\node[blue node] (C4) at (-5:2) {};
		
		\node[blue node] (D1) at (35:4) {};
		\node[blue node] (D2) at (50:4) {};
		\node[blue node] (D3) at (40:2) {};
		
		\node[blue node] (E1) at (90:4) {};
		\node[blue node] (E2) at (75:4) {};
		\node[blue node] (E3) at (75:2) {};
		
		\node[blue node] (F1) at (120:4) {};
		\node[blue node] (F2) at (130:4) {};
		\node[blue node] (F3) at (135:3) {};
		\node[blue node] (F4) at (120:3) {};
		\node[blue node] (F5) at (125:2) {};
		
		\draw[celestialblue, line width=1.5pt] (C1) -- (C2) -- (C3) -- (C4);
		\draw[celestialblue, line width=1.5pt] (F1) -- (F2) -- (F3) -- (F4) -- (F5);
		\begin{scope}
		\clip (125:2) -- (120:4) -- (60:9) -- (0:4) -- (-5:2) -- cycle;
		\draw[celestialblue, line width =1.5pt] (0,0) circle (2 cm);
		\draw[celestialblue, line width =1.5pt] (0,0) circle (4 cm);
		\end{scope}
		
		\begin{scope}[on background layer]
		\clip (60:9) -- (0:4) -- (10:3) -- (-5:3) -- (-5:2) -- (40:2)-- (75:2) -- (125:2) -- (120:3) -- (135:3) -- (130:4) -- (120:4) -- cycle;
		\filldraw[draw= celestialblue, fill= celestialblue!50!white, line width =1.5pt] (0,0) circle (4 cm);
		\filldraw[draw=white,fill=white, line width =1.5pt] (0,0) circle (2 cm);
		\end{scope}

		\begin{scope}
		\clip  (320:2)  -- (310:3) -- (290:4) -- (295:5) -- (290:6) -- (270:9) -- (235:6) -- (250:5) -- (245:4) -- (250:3) --  (235:2) -- (270:1);
		
		\foreach \x in {2,...,6}{
			\draw[green, line width =1pt] (0,0) circle (\x cm);
		}

		\node[black node] (P16a) at (290:6) {};
		\node[black node] (P16b) at (300:6) {};
		\node[black node] (P26a) at (295:5) {};
		\node[yellow node] (P36a) at (300:4) {};
		\node[yellow node] (P36b) at (290:4) {};
		\node[yellow node] (P46a) at (310:3) {};
		\node[black node] (P56) at (320:2) {};
		\node[black node] (P15) at (235:6) {};
		\node[black node] (P25a) at (240:5) {};
		\node[black node] (P25b) at (250:5) {};
		\node[black node] (P35b) at (245:4) {};
		\node[black node] (P45a) at (250:3) {};
		\node[black node] (P45b) at (240:3) {};
		\node[black node] (P55) at (235:2) {};
		\end{scope}
		\end{tikzpicture}}
	
	\caption{An example of a $(5,8)$-railed annulus ${\cal A}$, the set $F_{{\cal A}}$ (depicted in green), and the graphs $L_{2, 5\to 7}$ (depicted in red), $R_{2\to 4, 1}$ (depicted in yellow), and $\Delta_{3,5,2,5}$ (depicted in blue).}
\label{sdfadgdfgagdbgbfb}
\end{figure}
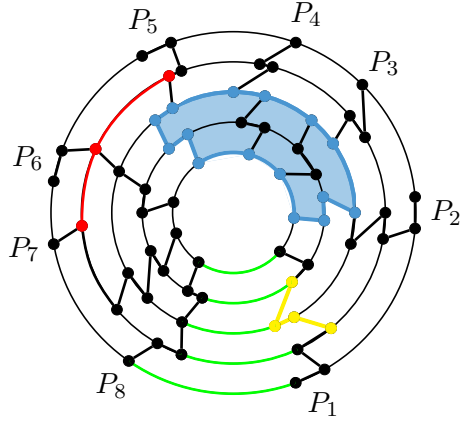

We refer the reader to~\autoref{sdfadgdfgagdbgbfb} for an illustration of the following definitions.
For every $i\in[p]$, we define  $F^{(i)}_{\cal A}$ as the edge set of the unique  
$(P_{i,q},P_{i,1})$-path  that does not contain 
any vertex from $P_{2}$. We also set $F_{\cal A}=\bigcup_{i\in[p]}F^{(i)}$. 
%
Let $(i,j,j')\in[p]\times[q]^2$ where $j\neq j'$. We denote by $L_{i,j\rightarrow j'}$
the shortest  path in $C_{i}$ starting from a vertex of $P_{i,j}$
and finishing to a vertex of $P_{i,j'}$ and that  does not contain any edge from $F_{\cal A}$.
Let $(i,i',j)\in[p]^2\times[q]$ where $i\neq i'$. We denote by $R_{i\rightarrow i',j}$
the shortest  path in $P_{j}$ starting from a vertex of $P_{i,j}$
and finishing to a vertex of $P_{i',j}$.
Let $(i,i',j,j')\in [p]^2\times[q]^2$ such that $i< i'$ and $j< j'$.
We define $\Delta_{i,i',j,j'}$ as the closed disk bounded by the unique cycle in the graph 
%
\begin{eqnarray*}
& P_{i,j}\cup L_{i,j\to j'}\cup P_{i,j'}\cup R_{i\to i',j'}\cup P_{i',j'}\cup L_{i',j'\to j}\cup P_{i',j}\cup R_{i'\to i,j}.
\end{eqnarray*}

%
%
%
%


Let $\Delta$ be a closed annulus.
Let $\mathcal{A}=(\mathcal{C},\mathcal{P})$ be a $(p,q)$-railed annulus of a partially $\Delta$-embedded graph $G$.
We set $z=\lfloor\min\{p,q\}/2\rfloor$.
For each $i\in[z]$, we define $${C}^\mathcal{A}_{i}=\bd(\Delta_{i,p-i+1,i,q-i+1}).$$

We set $\mathcal{C}_\mathcal{A} = [C^\mathcal{A}_{1},\ldots,C^\mathcal{A}_{z}]$.
If $p,q\geq 5$, we use $\Delta_{\mathcal{C}_\mathcal{A}}$ to denote the closed disk $\ann(\mathcal{C}^\mathcal{A},1,z) = \Delta_{1,p,1,q}$.
Notice that if $p,q\geq 5$, then $\mathcal{C}_\mathcal{A}$ is a $\Delta_{\mathcal{C}_\mathcal{A}}$-nested collection of cycles of $G$ (see \autoref{asdfsdfgdhfgsdhffsdhnsfdhgsa}).

\begin{figure}[H]
	\centering\scalebox{1}{
	\sshow{0}{\begin{tikzpicture}[scale=0.6]
	\draw[black] (0,4.0) to
	 node[tiny black node, pos=0.05] (A1) {}
	 node[tiny black node, pos=0.08] (A2) {}
	 node[tiny black node, pos=0.13] (A3) {}
	 node[tiny black node, pos=0.17] (A4) {}
	 node[tiny black node, pos=0.23] (A5) {}
	 node[tiny black node, pos=0.26] (A6) {}
	 node[tiny black node, pos=0.35] (A7) {}
	 node[tiny black node, pos=0.40] (A8) {}
	 node[tiny black node, pos=0.45] (A9) {}
	 node[tiny black node, pos=0.49] (A10) {}
	 node[tiny black node, pos=0.57] (A11) {}
	 node[tiny black node, pos=0.62] (A12) {}
	 node[tiny black node, pos=0.68] (A13) {}
	 node[tiny black node, pos=0.74] (A14) {}
	 node[tiny black node, pos=0.80] (A15) {}
	 node[tiny black node, pos=0.85] (A16) {}
	 node[tiny black node, pos=0.93] (A17) {}
	 node[tiny black node, pos=0.97] (A18) {}
	   (12,4.0);
	\draw[black] (0,3.5) to 	
	 node[tiny black node, pos=0.10] (B1) {}
	 node[tiny black node, pos=0.18] (B2) {}
	 node[tiny black node, pos=0.30] (B3) {}
	 node[tiny black node, pos=0.37] (B4) {}
	 node[tiny black node, pos=0.43] (B5) {}
	 node[tiny black node, pos=0.47] (B6) {}
	 node[tiny black node, pos=0.60] (B7) {}
	 node[tiny black node, pos=0.65] (B8) {}
	 node[tiny black node, pos=0.73] (B9) {}
	 node[tiny black node, pos=0.77] (B10) {}
	 node[tiny black node, pos=0.86] (B11) {}
	 node[tiny black node, pos=0.92] (B12) {}
	 node[tiny black node, pos=0.98] (B13) {}
	(12,3.5);
	\draw[black] (0,3.0) to 
	 node[tiny black node, pos=0.05] (C1) {}
	 node[tiny black node, pos=0.13] (C2) {}
	 node[tiny black node, pos=0.25] (C3) {}
	 node[tiny black node, pos=0.30] (C4) {}
	 node[tiny black node, pos=0.45] (C5) {}
	 node[tiny black node, pos=0.50] (C6) {}
	 node[tiny black node, pos=0.57] (C7) {}
	 node[tiny black node, pos=0.63] (C8) {}
	 node[tiny black node, pos=0.68] (C9) {}
	 node[tiny black node, pos=0.72] (C10) {}
	 node[tiny black node, pos=0.80] (C11) {}
	 node[tiny black node, pos=0.84] (C12) {}
	 node[tiny black node, pos=0.95] (C13) {}
(12,3.0);
	\draw[black] (0,2.5) to 
	 node[tiny black node, pos=0.05] (D1) {}
	 node[tiny black node, pos=0.08] (D2) {}
	 node[tiny black node, pos=0.13] (D3) {}
	 node[tiny black node, pos=0.17] (D4) {}
	 node[tiny black node, pos=0.23] (D5) {}
	 node[tiny black node, pos=0.26] (D6) {}
	 node[tiny black node, pos=0.35] (D7) {}
	 node[tiny black node, pos=0.40] (D8) {}
	 node[tiny black node, pos=0.45] (D9) {}
	 node[tiny black node, pos=0.49] (D10) {}
	 node[tiny black node, pos=0.57] (D11) {}
	 node[tiny black node, pos=0.62] (D12) {}
	 node[tiny black node, pos=0.68] (D13) {}
	 node[tiny black node, pos=0.74] (D14) {}
	 node[tiny black node, pos=0.80] (D15) {}
	 node[tiny black node, pos=0.85] (D16) {}
	 node[tiny black node, pos=0.90] (D17) {}
	 node[tiny black node, pos=0.97] (D18) {}
	  (12,2.5);
	\draw[black] (0,2.0) to 
	 node[tiny black node, pos=0.05] (E1) {}
	 node[tiny black node, pos=0.13] (E2) {}
	 node[tiny black node, pos=0.25] (E3) {}
	 node[tiny black node, pos=0.36] (E4) {}
	 node[tiny black node, pos=0.42] (E5) {}
	 node[tiny black node, pos=0.50] (E6) {}
	 node[tiny black node, pos=0.57] (E7) {}
	 node[tiny black node, pos=0.63] (E8) {}
	 node[tiny black node, pos=0.68] (E9) {}
	 node[tiny black node, pos=0.72] (E10) {}
	 node[tiny black node, pos=0.80] (E11) {}
	 node[tiny black node, pos=0.84] (E12) {}
	 node[tiny black node, pos=0.93] (E13) {}
	 (12,2.0);
	\draw[black] (0,1.5) to
	 node[tiny black node, pos=0.10] (F1) {}
	 node[tiny black node, pos=0.18] (F2) {}
	 node[tiny black node, pos=0.30] (F3) {}
	 node[tiny black node, pos=0.37] (F4) {}
	 node[tiny black node, pos=0.43] (F5) {}
	 node[tiny black node, pos=0.47] (F6) {}
	 node[tiny black node, pos=0.55] (F7) {}
	 node[tiny black node, pos=0.65] (F8) {}
	 node[tiny black node, pos=0.73] (F9) {}
	 node[tiny black node, pos=0.77] (F10) {}
	 node[tiny black node, pos=0.86] (F11) {}
	 node[tiny black node, pos=0.94] (F12) {}
	 node[tiny black node, pos=0.98] (F13) {}
	 (12,1.5);
	\draw[black] (0,1.0) to 
	 node[tiny black node, pos=0.05] (G1) {}
	 node[tiny black node, pos=0.08] (G2) {}
	 node[tiny black node, pos=0.13] (G3) {}
	 node[tiny black node, pos=0.17] (G4) {}
	 node[tiny black node, pos=0.23] (G5) {}
	 node[tiny black node, pos=0.26] (G6) {}
	 node[tiny black node, pos=0.35] (G7) {}
	 node[tiny black node, pos=0.40] (G8) {}
	 node[tiny black node, pos=0.45] (G9) {}
	 node[tiny black node, pos=0.49] (G10) {}
	 node[tiny black node, pos=0.55] (G11) {}
	 node[tiny black node, pos=0.62] (G12) {}
	 node[tiny black node, pos=0.68] (G13) {}
	 node[tiny black node, pos=0.74] (G14) {}
	 node[tiny black node, pos=0.80] (G15) {}
	 node[tiny black node, pos=0.85] (G16) {}
	 node[tiny black node, pos=0.93] (G17) {}
	 node[tiny black node, pos=0.97] (G18) {}
	(12,1.0);
	\draw[black] (0,0.5) to
	 node[tiny black node, pos=0.10] (H1) {}
	 node[tiny black node, pos=0.18] (H2) {}
	 node[tiny black node, pos=0.27] (H3) {}
	 node[tiny black node, pos=0.37] (H4) {}
	 node[tiny black node, pos=0.43] (H5) {}
	 node[tiny black node, pos=0.47] (H6) {}
	 node[tiny black node, pos=0.60] (H7) {}
	 node[tiny black node, pos=0.65] (H8) {}
	 node[tiny black node, pos=0.73] (H9) {}
	 node[tiny black node, pos=0.77] (H10) {}
	 node[tiny black node, pos=0.86] (H11) {}
	 node[tiny black node, pos=0.91] (H12) {}
	 node[tiny black node, pos=0.98] (H13) {}
	(12,0.5);
	\draw[black] (0,0.0) to 
	 node[tiny black node, pos=0.05] (I1) {}
	 node[tiny black node, pos=0.19] (I2) {}
	 node[tiny black node, pos=0.25] (I3) {}
	 node[tiny black node, pos=0.34] (I4) {}
	 node[tiny black node, pos=0.45] (I5) {}
	 node[tiny black node, pos=0.50] (I6) {}
	 node[tiny black node, pos=0.57] (I7) {}
	 node[tiny black node, pos=0.63] (I8) {}
	 node[tiny black node, pos=0.68] (I9) {}
	 node[tiny black node, pos=0.72] (I10) {}
	 node[tiny black node, pos=0.80] (I11) {}
	 node[tiny black node, pos=0.84] (I12) {}
	 node[tiny black node, pos=0.93] (I13) {}
	 (12,0.0);
	 
	 \draw[red,line width =1pt]
	 (A2) -- (B1) -- (C2) -- (D3) -- (D4) -- (E2) -- (F1) -- (G3) -- (H1) -- (I1);
	\draw[-]
	(A4) -- (B2) -- (C3) -- (D6) -- (E3) -- (F2) -- (G5) -- (G6)-- (H3) -- (H2) -- (I2);
	\draw[red,line width =1pt]
	(B2) -- (C3) -- (D6) -- (E3) -- (F2) -- (G5) -- (G6)-- (H3);
	\draw[-]
	(A6) -- (B3) -- (C4) -- (D7) -- (E4) -- (F4) -- (G7) -- (H4) -- (I4);
	\draw[red,line width =1pt]
	(C4) -- (D7) -- (E4) -- (F4) -- (G7);
	\draw[-]
	(A8) -- (B4) -- (B5) -- (C5) -- (D8) -- (E5) -- (F5) -- (G8) -- (H5) -- (I5);
	\draw[red,line width =1pt]
	(D8) -- (E5) -- (F5);
	\draw[-]
	(A9) -- (A10)-- (B6) -- (C6) -- (D9) -- (D10) -- (E6) -- (F7) -- (G11) -- (H6) -- (I6);
	\draw[-]
	(A11) -- (A12) -- (B7) -- (C8) -- (D11) -- (E7) -- (F8) -- (G12) -- (H8) -- (I7);
	\draw[red,line width =1pt]
	(D11) -- (E7) -- (F8);
	\draw[-]
	(A14) -- (B9) -- (B10) -- (C10) -- (D14) -- (E11) -- (F9) -- (F10) -- (G14) -- (H9) -- (I10);
	\draw[red,line width =1pt]
	(C10) -- (D14) -- (E11) -- (F9) -- (F10) -- (G14);
	\draw[-]
	(A16) -- (B11) -- (C12) -- (D16) -- (E12) -- (F11) -- (G16) -- (H11) -- (I12) -- (I11);
	\draw[red,line width =1pt]
	(B11) -- (C12) -- (D16) -- (E12) -- (F11) -- (G16) -- (H11);
	\draw[red,line width =1pt]
	(A17) -- (B12) -- (C13) -- (D17) -- (E13) -- (F12) -- (G17) -- (H12) -- (I13);
	
	\foreach \i/\j in {2/3,3/4,4/5,5/6,6/7,7/8,8/9,9/10,10/11,11/12,12/13,13/14,14/15,15/16,16/17}{
		\draw[red,line width =1pt] (A\i) to (A\j);
	}
	
	\foreach \i/\j in {2/3,3/4,4/5,5/6,6/7,7/8,8/9,9/10,10/11}{
		\draw[red,line width =1pt] (B\i) to (B\j);
	}
	
	\foreach \i/\j in {4/5,5/6,6/7,7/8,8/9,9/10}{
		\draw[red,line width =1pt] (C\i) to (C\j);
	}
	\foreach \i/\j in {8/9,9/10,10/11}{
		\draw[red,line width =1pt] (D\i) to (D\j);
	}
	
	\foreach \i/\j in {5/6,6/7,7/8}{
		\draw[red,line width =1pt] (F\i) to (F\j);
	}
	
	\foreach \i/\j in {7/8,8/9,9/10,10/11,11/12,12/13,13/14}{
		\draw[red,line width =1pt] (G\i) to (G\j);
	}
	
	\foreach \i/\j in {3/4,4/5,5/6,6/7,7/8,8/9,9/10,10/11}{
		\draw[red,line width =1pt] (H\i) to (H\j);
	}
	
	\foreach \i/\j in {1/2,2/3,3/4,4/5,5/6,6/7,7/8,8/9,9/10,10/11,11/12,12/13}{
		\draw[red,line width =1pt] (I\i) to (I\j);
	}
\end{tikzpicture}}}
\caption{A $(9,9)$-railed annulus $\mathcal{A} = (\mathcal{C}, \mathcal{P})$ and the sequence $C_\mathcal{A}$ (depicted in red).}
\label{asdfsdfgdhfgsdhffsdhnsfdhgsa}
\end{figure}
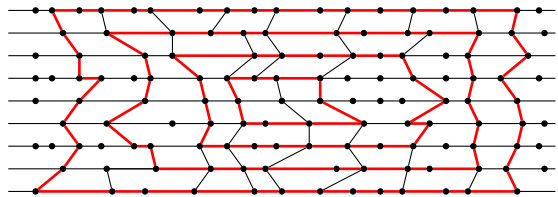


\begin{proof}[Proof of \autoref{afsfsdfdsdsafsadffasdasfd2}]
We set $m=f_{{\cal G},r}(k)$.
We define $\funref{fun_somelinkage}(m,r) := 3 m^2+6m + 2rm+ 2r+ 2.$
Let also $b=3m/2$, and keep in mind that
$$p\geq \funref{fun_somelinkage}(m,r) +s = 3 m^2+6m + 2rm+ 2r+ 2 + s=2\cdot(m+1)\cdot (b+r)+2+s+2b$$
and that $|I|\geq m\cdot (r+1)$. 

Recall that  
${\cal C}_{\cal A}=[C_{1}',\ldots,C_{z}']$, where $z=\lfloor \min\{p,q\}/2\rfloor$,
is a $\Delta_{{\cal C}_{\cal A}}$-nested collection of cycles of $G$.
For each $i\in[z]$, we denote by ${D}_{i}'$ the closed disk corresponding to $C_{i}'$.
Let also  $D:={D}_{b+1}'$.
Keep in mind that  ${D}_{1}' = \Delta_{1,p,1,q} = \Delta_{{\cal C}_{\cal A}}$
and $D = \Delta_{b+1,p-b,b+1, q-b}$.

Observe now that $L$ is a $\Delta_{{\cal C}_{\cal A}}$-avoiding linkage.
Let $L'$ be a $({\cal C}_{\cal A},\Delta_{{\cal C}_{\cal A}},L)$-minimal $r$-scattered linkage.
Given that, by definition,  ${\cal G}$ is hereditary, we have that since $G\in{\cal G}$ and $L'\cup(\cupall{\cal C}_{\cal A}\setminus \Delta_{{\cal C}_{\cal A}})$
 is a subgraph of $G$, $L'\cup(\cupall{\cal C}_{\cal A}\setminus \Delta_{{\cal C}_{\cal A}})\in {\cal G}$.
 Therefore, by definition of ${\cal G}$ and~\autoref{ap43k9s}, $\tw(L'\cup(\cupall{\cal C}_{\cal A}\setminus \Delta_{{\cal C}_{\cal A}}))\leq m$.
By applying \autoref{aop4icl} on $G$, ${\cal C}_{\cal A}$, $L$, and $ \Delta_{{\cal C}_{\cal A}}$,
we obtain that $L'$ is a $D$-free linkage of $G$.

It is easy to verify that $L'$ is $\Delta$-avoiding,
$p>m$, $D\subseteq\Delta$, and $|L'|=|L|\leq k$.
Consider a $({\cal C},D,L')$-minimal $r$-scattered linkage $L''$ of $G$.
Again, heredity of ${\cal G}$ implies that $L''\cup(\cupall{\cal C}\setminus D)\in {\cal G}$.
 Therefore, by definition of ${\cal G}$ and~\autoref{ap43k9s}, $\tw(L''\cup(\cupall{\cal C}\setminus D))\leq m$.
We may now apply \autoref{ato954jgd} and~\autoref{fskfsl} on $k$, $G$, ${\cal A}$, $D$, and $L'$ and deduce that  
\begin{itemize}
	\item[(i.)] All $({\cal C},D,L'')$-mountains/valleys of $G$ have height/depth at most   $b$.
	\item[(ii.)] $L''$ has at most $m$ ${\cal A}$-rivers of $G$,
\end{itemize}
Let ${\cal Z}=[Z_1,\ldots,Z_d]$  be the $D$-ordering of the  ${\cal A}$-rivers of $L''$ in $G$ and
keep in mind that, from (ii.), $d\leq m$.
Also, since for every $i\in[d]$, $Z_i$ is a subgraph of $L''$ and $L''$ is an $r$-scattered linkage, it holds that for every $i,j\in[d]$ where $i\neq j$, $N_G^{(\leq r)}(V(Z_i))\cap V(Z_j)=\emptyset$.

{Recall that $m\geq r$.}
For every $i\in[d]$, we define $x_{i}^{\rm down}$ (resp. $x_{i}^{\rm up}$) as the vertex 
in the path $C_{(i+1)\cdot b+1}\setminus \inter(D)$ (resp. $C_{p-(i+1)\cdot b}\setminus \inter(D)$)
that belongs in $Z_{i}$ and has the minimum possible distance 
to the vertices of the path $P_{(i+1)\cdot b+1,q-b}$ (resp. $P_{p-(i+1)\cdot b,q-b}$).
We also denote by $Q_{i}^{\rm down}$ (resp. $Q_{i}^{\rm up}$) the path certifying this minimum distance.
Since $m\geq r$ and $b=3m/2$, we have that the union of $Q_{1}^{\rm down}, \ldots,Q_{d}^{\rm down},Q_{1}^{\rm up}, \ldots, Q_{d}^{\rm up}$ is an $r$-scattered linkage.

	
	%
		
For $i\in[d]$, let $Z_{i}^{\rm down}$ and  $Z_{i}^{\rm up}$ be the two connected
components of the graph obtained from $Z_{i}$ if we remove 
the edges of its $(x_{i}^{\rm down},x_{i}^{\rm up})$-subpath (see \autoref{asadfdsfdsfsdafsdf}).
We choose  $Z_{i}^{\rm down}$ (resp. $Z_{i}^{\rm up}$) so that 
it intersects $C_{1}$ (resp. $C_{p}$).
	
	\begin{figure}[H]
		\centering\scalebox{0.9}{
		\sshow{0}{\begin{tikzpicture}[scale=0.55]
		\foreach \y in {0,0.5, 2,3,5,6, 7.5,8}{		
			\draw[-] (2,\y) to (19,\y);
		}
	
		\foreach \y in {0.7, 1.8,2.2,2.8,3.2,4.8,5.2,5.8,6.2, 7.3}{		
		\draw[-, opacity=0.1] (2,\y) to (19,\y);
	}
		
%
		
		\node[track node 1] (2P1) at (5,8) {};
		\node[track node 1] (2P2) at (6,8) {};
		\node[track node 2] (2P3) at (6,7.5) {};
		\node[track node 2] (2P4) at (5.5,6) {};
		\node[track node 2] (2P5) at (5,5) {};
		\node[track node 2] (2P6) at (4.5,3) {};
		\node[track node 2] (2P7) at (5,2) {};
		\node[track node 2] (2P8) at (5.5,0.5) {};
		\node[track node 2] (2P9) at (6,0.5) {};
		\node[track node 1] (2P10) at (6.5,0) {};
		
		\foreach \i/\j in {1/2,2/3,3/4,4/5,5/6,6/7,7/8,8/9,9/10}{
			\draw[opacity=0.3] (2P\i) to (2P\j);
		}
		
		\node[track node 1] (3P1) at (8,8) {};
		\node[track node 2] (3P2) at (7.5,7.5) {};
		\node[track node 2] (3P3) at (7.5,6) {};
		\node[track node 2] (3P4) at (7,6) {};
		\node[track node 2] (3P5) at (7.5,5) {};
		\node[track node 2] (3P6) at (8,3) {};
		\node[track node 2] (3P7) at (8.5,2) {};
		\node[track node 2] (3P8) at (8,2) {};
		\node[track node 2] (3P9) at (9,0.5) {};
		\node[track node 1] (3P10) at (9,0) {};
		
		\foreach \i/\j in {1/2,2/3,3/4,4/5,5/6,6/7,7/8,8/9,9/10}{
			\draw[opacity=0.3] (3P\i) to (3P\j);
		}
		
		\node[track node 1] (4P1) at (17,8) {};
		\node[track node 2] (4P2) at (17.5,7.5) {};
		\node[track node 2] (4P3) at (18,6) {};
		\node[track node 2] (4P4) at (18,5) {};
		\node[track node 2] (4P5) at (18.5,5) {};
		\node[track node 2] (4P6) at (18,3) {};
		\node[track node 2] (4P7) at (17.5,2) {};
		\node[track node 2] (4P8) at (17,0.5) {};
		\node[track node 1] (4P9) at (18,0) {};
		\node[track node 1] (4P10) at (18.5,0) {};
		
		\foreach \i/\j in {1/2,2/3,3/4,4/5,5/6,6/7,7/8,8/9,9/10}{
			\draw[opacity=0.3] (4P\i) to (4P\j);
		}
		\draw[blue, line width=1pt] plot [smooth, tension=1] coordinates {(11,8) (11.5,7) (10.5,6.5) (11,6)};
		\draw[blue, line width=1pt] plot [smooth, tension=1] coordinates {(11,2) (11.5,1) (12,2.5) (12.5,1.5) (12,1) (12,0)};
		\draw[blue!40!white, line width=1pt] plot [smooth, tension=1] coordinates {(11,2) (11.5,3) (11,4) (12,5) (11.5, 5.3) (11,6)};
		
		\draw[blue, line width=1pt] plot [smooth, tension=1] coordinates {(14,8) (15,7.2) (13.5,6.8) (14.5,6.3) (14,6) (14.5,5.5) (14,5) };
		\draw[blue, line width=1pt] plot [smooth, tension=1] coordinates {(15,3) (15.7,2.5) (14.5,1) (15,0.5) (14.5,0)};
		\draw[blue!40!white, line width=1pt] plot [smooth, tension=1] coordinates {(14,5) (13.8,4) (14.5,4.5) (15,4) (14.5,3.5) (15,3)};

		\node[small black node] (1T1) at (11,8) {};
		\node[small black node] (1T2) at (11,6) {};
		\node[small black node] (1T3) at (11,2) {};
		\node[small black node] (1T4) at (12,0) {};
		
		\node[small black node] (2T1) at (14,8) {};
		\node[small black node] (2T2) at (14,5) {};
		\node[small black node] (2T3) at (15,3) {};
		\node[small black node] (2T4) at (14.5,0) {};

		\draw[crimsonglory, line width=1pt] (1T2) -- (3P3);
		\draw[crimsonglory, line width=1pt] (1T3) -- (3P7);
		
		\draw[crimsonglory, line width=1pt] (2T2) -- (3P5);
		\draw[crimsonglory, line width=1pt] (2T3) -- (3P6);
		
		\begin{scope}[on background layer]
		\fill[applegreen!70!white] (5.5,6) -- (5,5) -- (4.5,3) -- (5,2) -- (8.5,2) -- (8,3) -- (7.5,5) -- (7,6) -- cycle;
		\end{scope}

		\node () at (6,4) {$D$};
		\node () at (9,6.5) {$Q_{1}^{\rm up}$};
		\node () at (9,4.5) {$Q_{2}^{\rm up}$};
		\node () at (9.5,3.5) {$Q_{2}^{\rm down}$};
		\node () at (9.6,1.5) {$Q_{1}^{\rm down}$};
		
		\node () at (11.7,6.4) {$x_{1}^{\rm up}$};
		\node () at (10.5,2.5) {$x_{1}^{\rm down}$};
		
		\node () at (13.5,5.5) {$x_{2}^{\rm up}$};
		\node () at (15.7,3.5) {$x_{2}^{\rm down}$};
		
		\node () at (10.5,7) {$Z_{1}^{\rm up}$};
		\node () at (13.5,1) {$Z_{1}^{\rm down}$};
		\node () at (15.5,6.5) {$Z_{2}^{\rm up}$};
		\node () at (16,1) {$Z_{2}^{\rm down}$};
		
		\draw[<->] (4,6) to (4,5);
		\node[anchor=east] () at (4.2,5.5) {$b+r$};
		\draw[<->] (4,3) to (4,2);
		\node[anchor=east] () at (4.2,2.5) {$b+r$};
		
		\node[anchor=north east] () at (2,0) {$C_{1}$};
		\node[anchor=east] () at (2,0.5) {$C_{2}$};
		\node[anchor=east] () at (2,2) {$C_{b+r+1}$};
		\node[anchor=east] () at (2,3) {$C_{2(b+r)+1}$};
		\node[anchor=east] () at (2,5) {$C_{p-2(b+r)}$};
		\node[anchor=east] () at (2,6) {$C_{p-(b+r)}$};
		\node[anchor=east] () at (2,7.5) {$C_{p-1}$};
		\node[anchor=south east] () at (2,8) {$C_{p}$};
		\draw[<->] (4,0.5) to (4,2);
		\node[anchor=east] () at (4.2,1) {$b+r$};
		\draw[<->] (4,6) to (4,7.5);
		\node[anchor=east] () at (4.2,6.5) {$b+r$};
		
		\node[anchor=north] () at (6.5,0) {$P_{b+1}$};
		\node[anchor=north] () at (9,0) {$P_{q-b}$};
		\node[anchor=north] () at (18.5,0) {$P_{q}$};
		
		\node[anchor=north] () at (12,0) {$Z_{1}$};
		\node[anchor=north] () at (15,0) {$Z_{2}$};
		
		\end{tikzpicture}}}
		\caption{Visualization of an $(p,q)$-railed annulus and the notations introduced above.}
		\label{asadfdsfdsfsdafsdf}
	\end{figure}
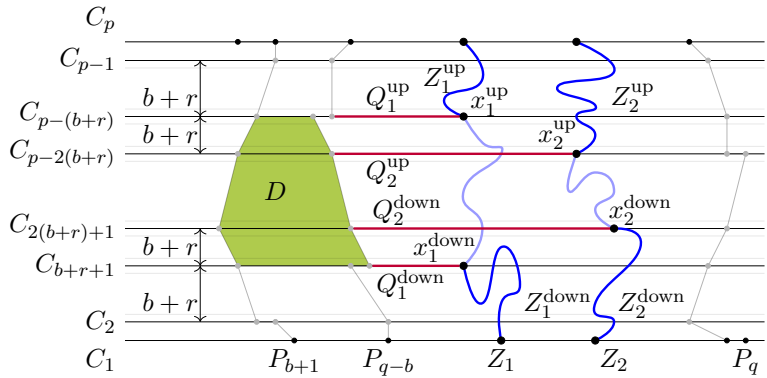

\noindent{\em Claim:} For every $i\in[d]$,
$V(Z^{\rm down}_{i-1})\cap N_G^{(\leq r)}(V(Q_i^{\rm down}))=\emptyset$
and
$V(Z^{\rm up}_{i-1})\cap N_G^{(\leq r)}(V(Q_i^{\rm up}))=\emptyset$
--- where $Z_{0}^{\rm down}$ (resp. $Z_{0}^{\rm up}$)
denotes $Z_{q}$.\smallskip

\noindent{\em Proof of  claim:} 
If $V(Z^{\rm down}_{i-1})\cap N_G^{(\leq r)}(V(Q_i^{\rm down}))\neq\emptyset$
for some $i\in[d]$, then there is a connected component $F$
of $Z_{i-1}^{\rm down}\cap \overline{D}_{(i-1)\cdot (b+r)+1}$ that is a path with endpoints in $V(C_{(i-1)\cdot (b+r)+1})$ and intersects $N_G^{(\leq r)}(V(Q_i^{\rm down}))$.
Observe that $F$ is a $({\cal C},D,L'')$-mountain of $G$ based on $C_{(i-1)\cdot (b+r)+1}$.
Since $V(F)\cap N_G^{(\leq r)}(V(Q_i^{\rm down}))\neq \emptyset$ and $V(Q_i^{\rm down})\subseteq V(C_{i\cdot (b+r)+1})$,
we have that $V(F)\cap V(C_{(i-1)\cdot (b+r)+b+1}))\neq\emptyset$ and therefore $F$ is of height $>b$, a contradiction to (a).
Thus,  for every $i\in[d]$,
$V(Z^{\rm down}_{i-1})\cap N_G^{(\leq r)}(V(Q_i^{\rm down}))=\emptyset$.
Similarly, suppose, towards a contradiction, that $V(Z^{\rm up}_{i-1})\cap N_G^{(\leq r)}(V(Q_i^{\rm up}))\neq\emptyset$
for some $i\in[d]$.
Then there is a connected component $F'$ of $Z_{i-1}^{\rm up}\cap (\Delta\setminus {D}_{p-(i-1)\cdot (b+r)})$ that is a path with endpoints in
$V(C_{p-(i-1)\cdot (b+r)})$ and intersects $N_G^{(\leq r)}(V(Q_i^{\rm up}))$.
Observe that $F'$ is a $({\cal C},D,L'')$-valley of $G$ based on $C_{p-(i-1)\cdot (b+r)}$.
Since $V(F')\cap N_G^{(\leq r)}(V(Q_i^{\rm up}))\neq \emptyset$ and $V(Q_i^{\rm up})\subseteq V(C_{p-i\cdot (b+r)})$,
we have that $V(F')\cap V(C_{p-((i-1)\cdot (b+r)-b)}))\neq\emptyset$ and therefore $F'$ is of depth $>b$, a contradiction to (i.).
Claim follows.\hfill$\diamond$
\medskip
	
Because of the above claim, it follows that the 
paths $Q_{i}^{\rm down}\cup Z_{i}^{\rm down}$ (resp. $Q_{i}^{\rm up}\cup Z_{i}^{\rm up}$),   $i\in[d]$ are $(Z_{i}\cap C_{1},P_{i\cdot (b+r)+1,q-b})$-paths (resp. $(Z_{i}\cap C_{p},P_{p-i\cdot (b+r),q-b})$-paths) in $G$ that do not intersect the open disk $\inter(D)$ and the graph
$\bigcup_{i\in [d]} Q_{i}^{\rm down}\cup Z_{i}^{\rm down}\cup Q_{i}^{\rm up}\cup Z_{i}^{\rm up}$ is an $r$-scattered linkage of $G$.

	Let $w=(m+1)\cdot (b+r)+2$ and $w'=p-(m+1)\cdot(b+r)-1$.
	For $i\in[d]$, we now define $c_i = b+(i-1)\cdot (r+1)+1$ and (see \autoref{aasdfdsfsfdfdsfsfsdfgfdhffjh}) 
	\begin{eqnarray*}
	Y_{i}^{\rm down}& =&\mbox{the}~(P_{q-b}, P_{c_i})\mbox{-path  in}~ L_{i\cdot (b+r)+1,q-b\rightarrow c_i}\cup P_{i\cdot (b+r)+1,c_i}\cup
	  R_{i\cdot (b+r)+1\rightarrow  w,c_i},\\
	Y_{i}^{\rm up} &=&\mbox{the}~(P_{q-b}, P_{c_i})\mbox{-path in}~ L_{p-i\cdot (b+r),q-b\rightarrow c_i}\cup P_{p-i\cdot (b+r),c_i}\cup R_{p-i\cdot (b+r)\rightarrow w',c_i} .
	\end{eqnarray*}

	\begin{figure}[H]
	\centering\scalebox{0.9}{
	\sshow{0}{\begin{tikzpicture}[scale=0.5]
	\foreach \y in {0,1,3,3.5, 5.5, 6, 8,9}{		
		\draw[-] (2,\y) to (20,\y);
	}
	
	\foreach \y in {0.2,0.6, 0.8, 1.2, 1.4, 2.6,2.8,3.7,3.9,5.1,5.3,6.2,6.4,7.6,7.8,8.2,8.4,8.8}{		
		\draw[-, opacity=0.1] (2,\y) to (20,\y);
	}
	
	\node[track node 1] (1P1) at (3,9) {};
	\node[track node 2] (1P2) at (3.5,8) {};
	\node[track node 2] (1P3) at (4.5,8) {};
	\node[track node 2] (1P4) at (3,6) {};
	\node[track node 2] (1P5) at (3.5,5.5) {};
	\node[track node 2] (1P6) at (4,3.5) {};
	\node[track node 2] (1P7) at (3.5,3) {};
	\node[track node 2] (1P8) at (3,3) {};
	\node[track node 2] (1P9) at (3.5,1) {};
	\node[track node 1] (1P10) at (3,0) {};
	
	\foreach \i/\j in {1/2,2/3,3/4,4/5,5/6,6/7,7/8,8/9,9/10}{
		\draw[opacity=0.3] (1P\i) to (1P\j);
	}
	
	\node[track node 1] (2P1) at (6,9) {};
	\node[track node 1] (2P2) at (7,9) {};
	\node[track node 2] (2P3) at (6,8) {};
	\node[track node 2] (2P4) at (5.5,6) {};
	\node[track node 2] (2P5) at (5,5.5) {};
	\node[track node 2] (2P6) at (5.5,3.5) {};
	\node[track node 2] (2P7) at (6,3.5) {};
	\node[track node 2] (2P8) at (5.5,3) {};
	\node[track node 2] (2P9) at (4.5,1) {};
	\node[track node 1] (2P10) at (5,0) {};
	
	\foreach \i/\j in {1/2,2/3,3/4,4/5,5/6,6/7,7/8,8/9,9/10}{
		\draw[opacity=0.3] (2P\i) to (2P\j);
	}
	
	\node[track node 1] (3P1) at (14,9) {};
	\node[track node 2] (3P2) at (14.5,8) {};
	\node[track node 2] (3P3) at (14.5,6) {};
	\node[track node 2] (3P4) at (15,6) {};
	\node[track node 2] (3P5) at (14,5.5) {};
	\node[track node 2] (3P6) at (13,3.5) {};
	\node[track node 2] (3P7) at (13.5,3) {};
	\node[track node 2] (3P8) at (14.5,3) {};
	\node[track node 2] (3P9) at (14,1) {};
	\node[track node 1] (3P10) at (13.5,0) {};
	
	\foreach \i/\j in {1/2,2/3,3/4,4/5,5/6,6/7,7/8,8/9,9/10}{
		\draw[opacity=0.3] (3P\i) to (3P\j);
	}
	
	\node[track node 1] (4P1) at (17,9) {};
	\node[track node 2] (4P2) at (17.5,8) {};
	\node[track node 2] (4P3) at (18,6) {};
	\node[track node 2] (4P4) at (18,5.5) {};
	\node[track node 2] (4P5) at (18.5,5.5) {};
	\node[track node 2] (4P6) at (18,3.5) {};
	\node[track node 2] (4P7) at (17.5,3) {};
	\node[track node 2] (4P8) at (17,1) {};
	\node[track node 1] (4P9) at (18,0) {};
	\node[track node 1] (4P10) at (18.5,0) {};
	
	\foreach \i/\j in {1/2,2/3,3/4,4/5,5/6,6/7,7/8,8/9,9/10}{
		\draw[opacity=0.3] (4P\i) to (4P\j);
	}
	
	\draw[crimsonglory, line width=1pt] (4P10)-- (3P10)--(2P10) -- (1P10) -- (1P9) -- (1P8) -- (1P7)-- (1P6);
	\draw[crimsonglory, line width=1pt] (4P8) -- (3P9)-- (2P9) -- (2P8) -- (2P7);
	\draw[crimsonglory, line width=1pt] (4P7) -- (3P8) -- (3P7) -- (3P6);

	\draw[crimsonglory, line width=1pt] (4P1) -- (3P1) -- (2P2)--(2P1) -- (1P1) -- (1P2) -- (1P3) -- (1P4) -- (1P5);
	\draw[crimsonglory, line width=1pt] (4P2) -- (3P2) -- (2P3) -- (2P4) -- (2P5);
	\draw[crimsonglory, line width=1pt] (4P3) -- (3P4) -- (3P5);
	
	\node[anchor=east] () at (2,0) {$C_{b+r+1}$};
	\node[anchor=east] () at (2,1) {$C_{2(b+r)+1}$};
	\node[anchor=east] () at (2,3) {$C_{d\cdot (b+r)+1}$};
	\node[anchor=east] () at (2,3.5) {$C_{w}$};
	\node[anchor=north east] () at (2,5.4) {$C_{w'}$};
	\node[anchor=east] () at (2,6.1) {$C_{p-d\cdot (b+r)}$};
	\node[anchor=east] () at (2,8) {$C_{p-2(b+r)}$};
	\node[anchor=east] () at (2,9) {$C_{p-(b+r)}$};

	\node[anchor=north] () at (1P10) {$P_{c_1}$};
	\node[anchor=north] () at (2P10) {$P_{c_2}$};
	\node[anchor=north] () at (3P10) {$P_{c_d}$};
	\node[anchor=north] () at (4P10) {$P_{q-b}$};
	
	\node[anchor=north] () at (10,0) {$Y_{1}^{\rm down}$};
	\node[anchor=south] () at (10,1) {$Y_{2}^{\rm down}$};
	
	\node[anchor=south] () at (10,9) {$Y_{1}^{\rm up}$};
	\node[anchor=north] () at (10,8) {$Y_{2}^{\rm up}$};
	
	\node[anchor=south] () at (16,6) {$Y_{d}^{\rm up}$};
	\node[anchor=north] () at (16,3) {$Y_{d}^{\rm down}$};
	\end{tikzpicture}}}
	\caption{Visualization of the definition of $Y_{i}^{\rm up}$ and $Y_{i}^{\rm down}$, $i\in[d]$.}
	\label{aasdfdsfsfdfdsfsfsdfgfdhffjh}
\end{figure}
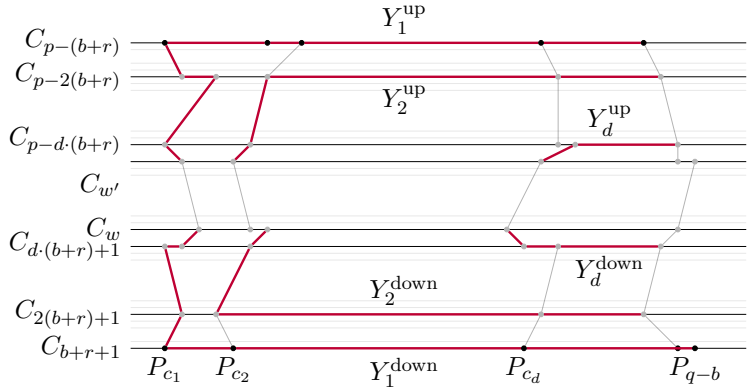
By the definition of  $Y_{i}^{\rm down} $ and $Y_{i}^{\rm up}$, the graphs $X_{i}^{\rm down}=Z_{i}^{\rm down}\cup Q_{i}^{\rm down}\cup  Y_{i}^{\rm down}$ and 
$X_{i}^{\rm up}=Z_{i}^{\rm up}\cup Q_{i}^{\rm up}\cup  Y_{i}^{\rm up},  i\in[d]$, are paths and $\bigcup_{i\in [d]} X_i^{\rm down}\cup X_i^{\rm up}$ is an $r$-scattered linkage.
In particular, we have that 
\begin{eqnarray}
{X}_{i}^{\rm down}  \mbox{ is a  } & \hspace{-3mm}(Z_{i}\cap C_{1},P_{w,c_i}){\rm\mbox{-}path }  &  \mbox{and}\label{ay7}\\
{X}_{i}^{\rm up}  \mbox{ is a  }  & \hspace{-3mm} (Z_{i}\cap C_{p},P_{w',c_i}){\rm\mbox{-}path} & \label{ay9}
\end{eqnarray}
Let $\Omega=\ann({\cal C},w,w')$ and $K'=\bigcup_{i\in [d]} X_i^{\rm down}\cup X_i^{\rm up}$.
Observe  that 
\begin{eqnarray}
K'\cap \Omega & = & \bigcup_{i\in[d]}(V(P_{w,c_i})\cup V(P_{w',c_i})). \label{acongts}
\end{eqnarray}

Let $\bar{\cal A}=(\bar{\cal C},\bar{\cal P})$, where $\bar{\cal C}=[C_{w},\ldots,C_{w'}]$ and $\bar{\cal P}={\cal P}\cap \Omega$. 
Notice that $|\bar{\cal C}|=w'-w+1= p - 2(m+1)\cdot (b+r)-2\geq s+2b$. 
Notice also that $d\leq |I|$ and $I\subseteq[q]$. Finally, 
$b=3/2m$ and $d\leq m$ imply that  $b+d\cdot (r+1)\leq \frac{2r+5}{2}\cdot m\leq q$.
We can now apply \autoref{alo3qx} for $p,q,s,b,d$, $\bar{\cal A}$, and $I$
and obtain a linkage $K$ of $\bar{\cal A}$ satisfying properties (a) and (b) of  \autoref{alo3qx}.
	
From Property (a)  we can write ${\cal P}(K)=[K_{1},\ldots,K_{d}]$ and, using~\eqref{acongts}, we deduce that,
for $i\in[d]$, $K_{i}$ is a $(P_{w,c_i},P_{w',c_i})$-path of $G$. This, together with~\eqref{ay7},~\eqref{ay9}, and~\eqref{acongts}, implies that 
$K\cup K'$ is a linkage of $G$ where $K\cup K'\subseteq\ann({\cal C})$. 
From Property (b), $K$ is $(s,I)$-confined in $\bar{\cal A}$, therefore, from~\eqref{acongts}, we get that $K\cup K'$
is  $(s,I)$-confined in ${\cal A}$. Observe also that each of the $d$ paths of ${\cal P}(K\cup K')$ is a $(Z_{i}\cap C_{1},Z_{i}\cap C_{p})$-path of $G$ 
for some $i\in[d]$. We define
$$\tilde{L}=(L\setminus A')\cup K\cup K'$$
where $A'={\sf int}(\ann({\cal C}))$. By definition $\tilde{L}$ is an $r$-scattered linkage of $G$ where  $\tilde{L}\equiv L$ and $\tilde{L}\setminus \ann({\cal C}) \subseteq L \setminus \ann({\cal C})$.
Finally, as $K\cup K'$ is $(s,I)$-confined in ${\cal A}$, then $\tilde{L}$ $(s,I)$-confined in ${\cal A}$ as well.
\end{proof}

\section{Linkage reducibility of surface embeddable graphs}
\label{sec_irrfriendlybdgenus}

In this section, our goal is to prove~\autoref{amultheah}.
Our approach is based on the technique developed by Kawarabayashi and Kobayashi for solving the induced paths problem on planar graphs~\cite{KawarabayashiK12alin}.
We generalize this technique to two directions.
The first (and rather straightforward) task is to adapt their results for general $r$-scattered linkages (keep in mind that induced linkages are $1$-scattered linkages).
The second, and more intricate, task is to lift their technique from graphs embedded on the plane to graphs embedded to a surface of fixed genus.
The main difference is that the number of homotopic loops in a graph embedded in a surface is a linear function of the genus of the surface (see~\autoref{jfopdsn}).
Making use of this, we can enhance the arguments of~\cite{KawarabayashiK12alin} to deal with the presence of crosscups and handles outside a fixed disk of the surface.
To obtain a single-exponential dependency on $k$, we use the result of~\cite{Mazoit13asin} (see~\autoref{maz}) to route linkages in graphs embedded in surfaces. To ease readability, in~\autoref{subsec_intermediate} we start with some defintions, we state an intermediate result (\autoref{reducedlinkage}), and we show how this result implies~\autoref{amultheah}.
Then, in~\autoref{subsec_proofofreducedlinkage}, we present the proof of~\autoref{reducedlinkage}.

\subsection{An intermediate step}\label{subsec_intermediate}

To prove~\autoref{amultheah}, we will show~\autoref{maintheorem}, that intuitively states that given a graph $G$ embedded on a surface and some ``large enough'' collection ${\cal C}$ of nested cycles of $G$ embedded in a disk,
every (scattered) linkage of $G$ can be rerouted ``away'' from vertices inside the disk bounded by the innermost cycle of ${\cal C}$.
We start with some additional definitions.

\paragraph{Surfaces.}
A {\em surface} is a compact connected $2$-manifold without boundary.
It is known
(see e.g.,~\cite{MoharT01grap}) that any surface $Σ$ can be obtained, up to homeomorphism, by adding
${\bf eg}(Σ)$ crosscaps to the sphere, where ${\bf eg}(Σ)$ is called the Euler genus of $Σ$.

\paragraph{Isolated vertices.}
Let $\Delta$ be a closed disk, let $G$ be a graph, and let ${\cal C}$ be a $\Delta$-nested sequence of cycles of $G$.
Given a vertex set $S\subseteq V(G)$, we say that ${\cal C}$ {\em isolates} $S$ if $S\subseteq \inter(D_{1})$.
Also, given an $\ell\in \mathbb{N}$, an open disk $\Delta$ of $\Sigma$, and a set $S\subseteq V(G)$, we say that $S$ is {\em $(\ell,\Delta)$-isolated} in $G$ if there is a $\Delta$-nested sequence of cycles of $G$ of size $\ell$ that isolates $S$.

Let $t\in\mathbb{N}_{\geq 2}.$
Given a vertex $v\in V(G)$ and an $r\in\mathbb{N}$,
we say that a $\Delta$-nested sequence of cycles ${\cal C}=[C_{1},\ldots, C_{t}]$ of $G$ is {\em $r$-tight around $v$}
if ${\cal C}$ isolates $\{v\}$ and for every $i\in[t]$,
there is no cycle $C_{i}'\neq C_{i}$ of $G$ contained in $D_{i}\setminus D_{i-1}$ such that
$N_G^{(\leq r)}(V(C_{i-1}))\cap V(C_i')=\emptyset$, where $C_{0}=\{v\}$.

\begin{figure}[ht]
\centering
\scalebox{0.9}{
\begin{tikzpicture}[ipe stylesheet]
  \draw[draw=darkgray]
    (280, 604)
     -- (292, 608);
  \draw[draw=darkgray]
    (280, 604)
     -- (284, 596);
  \draw[draw=darkgray]
    (272, 608)
     -- (280, 604);
  \draw[draw=darkgray]
    (228, 616)
     -- (240, 616);
  \draw[draw=darkgray]
    (228, 616)
     -- (240, 616);
  \draw[draw=darkgray]
    (224, 604)
     -- (240, 616);
  \draw[ipe pen fat]
    (184, 656)
     -- (172, 620);
  \draw[darkgray]
    (172, 620)
     -- (192, 588);
  \draw[darkgray]
    (192, 588)
     -- (232, 592);
  \draw[darkgray]
    (232, 592)
     -- (260, 624);
  \draw[darkgray]
    (260, 624)
     -- (244, 660);
  \draw[ipe pen fat]
    (244, 660)
     -- (220, 672);
  \draw[ipe pen fat]
    (220, 672)
     -- (184, 656);
  \draw[darkgray]
    (172, 620)
     -- (168, 644);
  \draw[darkgray]
    (168, 644)
     -- (184, 656);
  \draw[darkgray]
    (184, 656)
     -- (220, 680);
  \draw[darkgray]
    (220, 680)
     -- (220, 672);
  \draw[darkgray]
    (220, 680)
     -- (244, 660);
  \draw[darkgray]
    (172, 620)
     -- (164, 604);
  \draw[darkgray]
    (232, 592)
     -- (256, 604);
  \draw[darkgray]
    (260, 624)
     -- (264, 616);
  \draw[darkgray]
    (244, 660)
     -- (260, 652);
  \draw[darkgray]
    (260, 652)
     -- (268, 652);
  \draw[darkgray]
    (260, 624)
     -- (264, 640);
  \draw[darkgray]
    (264, 640)
     -- (268, 644);
  \draw[ipe pen fat]
    (268, 652)
     -- (256, 676);
  \draw[darkgray]
    (244, 660)
     -- (244, 668);
  \draw[darkgray]
    (244, 668)
     -- (256, 676);
  \draw[darkgray]
    (220, 680)
     -- (216, 684);
  \draw[ipe pen fat]
    (216, 684)
     -- (188, 676);
  \draw[ipe pen fat]
    (188, 676)
     -- (176, 668);
  \draw[darkgray]
    (188, 676)
     -- (188, 668);
  \draw[darkgray]
    (188, 668)
     -- (184, 656);
  \draw[darkgray]
    (184, 656)
     -- (176, 660);
  \draw[darkgray]
    (176, 660)
     -- (176, 668);
  \draw[darkgray]
    (176, 660)
     -- (168, 656);
  \draw[ipe pen fat]
    (168, 656)
     -- (164, 648);
  \draw[darkgray]
    (164, 648)
     -- (168, 644);
  \draw[ipe pen fat]
    (168, 656)
     -- (176, 668);
  \draw[darkgray]
    (164, 648)
     -- (164, 620);
  \draw[darkgray]
    (164, 620)
     -- (164, 604);
  \draw[ipe pen fat]
    (164, 648)
     -- (156, 616);
  \draw[darkgray]
    (156, 616)
     -- (164, 604);
  \draw[ipe pen fat]
    (156, 616)
     -- (164, 584);
  \draw[ipe pen fat]
    (164, 584)
     -- (184, 576);
  \draw[darkgray]
    (164, 584)
     -- (164, 604);
  \draw[darkgray]
    (164, 584)
     -- (180, 588);
  \draw[darkgray]
    (180, 588)
     -- (192, 588);
  \draw[darkgray]
    (180, 588)
     -- (172, 620);
  \draw[ipe pen fat]
    (184, 576)
     -- (216, 576);
  \draw[ipe pen fat]
    (172, 620)
     -- (232, 592);
  \draw[darkgray]
    (184, 576)
     -- (192, 588);
  \draw[darkgray]
    (216, 576)
     -- (240, 584);
  \draw[ipe pen fat]
    (240, 584)
     -- (260, 596);
  \draw[darkgray]
    (256, 604)
     -- (260, 596);
  \draw[darkgray]
    (256, 604)
     -- (264, 616);
  \draw[ipe pen fat]
    (260, 596)
     -- (272, 608);
  \draw[ipe pen fat]
    (216, 576)
     -- (224, 584);
  \draw[ipe pen fat]
    (224, 584)
     -- (240, 584);
  \draw[ipe pen fat]
    (272, 608)
     -- (264, 616);
  \draw[ipe pen fat]
    (264, 616)
     -- (268, 644);
  \draw[darkgray]
    (272, 608)
     -- (268, 644);
  \draw[darkgray]
    (260, 652)
     -- (264, 640);
  \draw[ipe pen fat]
    (268, 644)
     -- (268, 652);
  \draw[ipe pen fat]
    (256, 676)
     -- (216, 684);
  \draw[darkgray]
    (220, 680)
     -- (244, 668);
  \draw[darkgray]
    (164, 584)
     -- (152, 584);
  \draw[darkgray]
    (152, 584)
     -- (148, 592);
  \draw[darkgray]
    (152, 584)
     -- (156, 616);
  \draw[darkgray]
    (184, 576)
     -- (196, 568);
  \draw[darkgray]
    (196, 568)
     -- (188, 564);
  \draw[darkgray]
    (196, 568)
     -- (232, 568);
  \draw[darkgray]
    (232, 568)
     -- (240, 584);
  \draw[darkgray]
    (232, 568)
     -- (244, 568);
  \draw[ipe pen fat]
    (188, 564)
     -- (232, 560);
  \draw[ipe pen fat]
    (232, 560)
     -- (244, 568);
  \draw[darkgray]
    (244, 568)
     -- (268, 592);
  \draw[darkgray]
    (268, 592)
     -- (240, 584)
     -- (240, 584);
  \draw[ipe pen fat]
    (244, 568)
     -- (284, 596);
  \draw[ipe pen fat]
    (292, 608)
     -- (288, 648);
  \draw[ipe pen fat]
    (288, 648)
     -- (272, 688);
  \draw[ipe pen fat]
    (272, 688)
     -- (220, 700);
  \draw[ipe pen fat]
    (220, 700)
     -- (156, 692);
  \draw[ipe pen fat]
    (156, 692)
     -- (148, 636);
  \draw[darkgray]
    (148, 636)
     -- (136, 588);
  \draw[ipe pen fat]
    (136, 588)
     -- (172, 560);
  \draw[ipe pen fat]
    (172, 560)
     -- (188, 564);
  \draw[darkgray]
    (188, 564)
     -- (168, 572);
  \draw[darkgray]
    (168, 572)
     -- (152, 584);
  \draw[ipe pen fat]
    (148, 592)
     -- (136, 588);
  \draw[ipe pen fat]
    (148, 592)
     -- (148, 636);
  \draw[darkgray]
    (164, 648)
     -- (160, 664);
  \draw[darkgray]
    (160, 664)
     -- (156, 692);
  \draw[darkgray]
    (156, 692)
     -- (168, 676);
  \draw[darkgray]
    (168, 676)
     -- (168, 656);
  \draw[darkgray]
    (168, 676)
     -- (188, 676);
  \draw[darkgray]
    (188, 676)
     -- (204, 692);
  \draw[darkgray]
    (204, 692)
     -- (216, 684);
  \draw[darkgray]
    (216, 684)
     -- (232, 688);
  \draw[darkgray]
    (232, 688)
     -- (220, 700);
  \draw[darkgray]
    (204, 692)
     -- (220, 700);
  \draw[darkgray]
    (232, 688)
     -- (204, 692);
  \draw[darkgray]
    (232, 688)
     -- (256, 676);
  \draw[darkgray]
    (168, 676)
     -- (204, 692);
  \draw[darkgray]
    (268, 592)
     -- (284, 596);
  \draw[darkgray]
    (268, 644)
     -- (276, 644);
  \draw[darkgray]
    (276, 644)
     -- (288, 648);
  \draw[darkgray]
    (256, 676)
     -- (268, 672);
  \draw[darkgray]
    (268, 672)
     -- (276, 644);
  \draw[darkgray]
    (268, 672)
     -- (272, 688);
  \draw[darkgray]
    (184, 656)
     -- (188, 636);
  \draw[darkgray]
    (188, 636)
     -- (188, 624);
  \draw[ipe pen fat]
    (188, 624)
     -- (212, 612);
  \draw[ipe pen fat]
    (212, 612)
     -- (220, 616);
  \draw[darkgray]
    (220, 616)
     -- (224, 604);
  \draw[darkgray]
    (224, 604)
     -- (232, 592);
  \draw[darkgray]
    (220, 616)
     -- (228, 616);
  \draw[darkgray]
    (240, 632)
     -- (244, 660)
     -- (244, 660);
  \draw[darkgray]
    (240, 632)
     -- (232, 656);
  \draw[darkgray]
    (232, 656)
     -- (220, 672);
  \draw[ipe pen fat]
    (220, 616)
     -- (224, 656);
  \draw[darkgray]
    (224, 656)
     -- (232, 656);
  \draw[ipe pen fat]
    (224, 656)
     -- (212, 664);
  \draw[ipe pen fat]
    (212, 664)
     -- (200, 652);
  \draw[ipe pen fat]
    (200, 652)
     -- (188, 624);
  \draw[darkgray]
    (188, 636)
     -- (200, 652);
  \draw[darkgray]
    (172, 620)
     -- (180, 628);
  \draw[darkgray]
    (180, 628)
     -- (188, 624);
  \draw[darkgray]
    (172, 620)
     -- (188, 616);
  \draw[darkgray]
    (188, 616)
     -- (188, 624);
  \draw[darkgray]
    (200, 652)
     -- (208, 640);
  \draw[darkgray]
    (208, 640)
     -- (212, 632);
  \draw[darkgray]
    (212, 632)
     -- (208, 628);
  \draw[darkgray]
    (208, 628)
     -- (188, 624);
  \draw[darkgray]
    (208, 628)
     -- (220, 616);
  \draw[darkgray]
    (208, 640)
     -- (224, 656);
  \pic[red]
     at (212, 632) {ipe disk};
  \pic
     at (136, 588) {ipe disk};
  \pic
     at (148, 592) {ipe disk};
  \pic
     at (152, 584) {ipe disk};
  \pic
     at (156, 616) {ipe disk};
  \pic
     at (164, 604) {ipe disk};
  \pic
     at (164, 584) {ipe disk};
  \pic
     at (148, 636) {ipe disk};
  \pic
     at (156, 692) {ipe disk};
  \pic
     at (168, 676) {ipe disk};
  \pic
     at (168, 656) {ipe disk};
  \pic
     at (176, 668) {ipe disk};
  \pic
     at (176, 660) {ipe disk};
  \pic
     at (164, 648) {ipe disk};
  \pic
     at (168, 644) {ipe disk};
  \pic
     at (184, 656) {ipe disk};
  \pic
     at (188, 676) {ipe disk};
  \pic
     at (188, 668) {ipe disk};
  \pic
     at (204, 692) {ipe disk};
  \pic
     at (216, 684) {ipe disk};
  \pic
     at (232, 688) {ipe disk};
  \pic
     at (220, 700) {ipe disk};
  \pic
     at (272, 688) {ipe disk};
  \pic
     at (268, 672) {ipe disk};
  \pic
     at (256, 676) {ipe disk};
  \pic
     at (244, 668) {ipe disk};
  \pic
     at (220, 680) {ipe disk};
  \pic
     at (220, 672) {ipe disk};
  \pic
     at (212, 664) {ipe disk};
  \pic
     at (200, 652) {ipe disk};
  \pic
     at (188, 636) {ipe disk};
  \pic
     at (188, 624) {ipe disk};
  \pic
     at (180, 628) {ipe disk};
  \pic
     at (188, 616) {ipe disk};
  \pic
     at (212, 612) {ipe disk};
  \pic
     at (220, 616) {ipe disk};
  \pic
     at (224, 656) {ipe disk};
  \pic
     at (232, 656) {ipe disk};
  \pic
     at (228, 616) {ipe disk};
  \pic
     at (244, 660) {ipe disk};
  \pic
     at (260, 652) {ipe disk};
  \pic
     at (264, 640) {ipe disk};
  \pic
     at (268, 644) {ipe disk};
  \pic
     at (268, 652) {ipe disk};
  \pic
     at (276, 644) {ipe disk};
  \pic
     at (288, 648) {ipe disk};
  \pic
     at (272, 608) {ipe disk};
  \pic
     at (284, 596) {ipe disk};
  \pic
     at (268, 592) {ipe disk};
  \pic
     at (260, 596) {ipe disk};
  \pic
     at (256, 604) {ipe disk};
  \pic
     at (264, 616) {ipe disk};
  \pic
     at (260, 624) {ipe disk};
  \pic
     at (232, 592) {ipe disk};
  \pic
     at (224, 604) {ipe disk};
  \pic
     at (192, 588) {ipe disk};
  \pic
     at (180, 588) {ipe disk};
  \pic
     at (172, 620) {ipe disk};
  \pic
     at (168, 572) {ipe disk};
  \pic
     at (172, 560) {ipe disk};
  \pic
     at (188, 564) {ipe disk};
  \pic
     at (196, 568) {ipe disk};
  \pic
     at (184, 576) {ipe disk};
  \pic
     at (216, 576) {ipe disk};
  \pic
     at (224, 584) {ipe disk};
  \pic
     at (240, 584) {ipe disk};
  \pic
     at (232, 568) {ipe disk};
  \pic
     at (244, 568) {ipe disk};
  \pic
     at (232, 560) {ipe disk};
  \pic
     at (208, 628) {ipe disk};
  \pic
     at (208, 640) {ipe disk};
  \pic
     at (160, 664) {ipe disk};
  \pic
     at (164, 620) {ipe disk};
  \node[ipe node]
     at (212, 636) {$v$};
  \node[ipe node]
     at (292, 636) {$C_4$};
  \node[ipe node]
     at (224, 624) {$C_1$};
  \node[ipe node]
     at (244, 628) {$C_2$};
  \node[ipe node]
     at (272, 632) {$C_3$};
  \filldraw[ipe pen fat]
    (280, 628)
     arc[start angle=90, end angle=90, radius=4];
  \pic
     at (292, 608) {ipe disk};
  \pic
     at (240, 632) {ipe disk};
  \pic
     at (240, 616) {ipe disk};
  \filldraw[ipe pen fat]
    (240, 632)
     -- (240, 616)
     -- cycle;
  \filldraw[ipe pen fat]
    (240, 616)
     -- (232, 592)
     -- cycle;
  \filldraw[ipe pen fat]
    (240, 632)
     -- (244, 660)
     -- cycle;
  \pic
     at (280, 604) {ipe disk};
  \filldraw[ipe pen fat]
    (284, 596)
     -- (292, 608);
\end{tikzpicture}}
\caption{A graph $G$, a $\Delta$-nested sequence of cycles ${\cal C}=[C_1,\ldots, C_4]$ that is $1$-tight around the vertex $v$ (depicted in red).}
\end{figure}
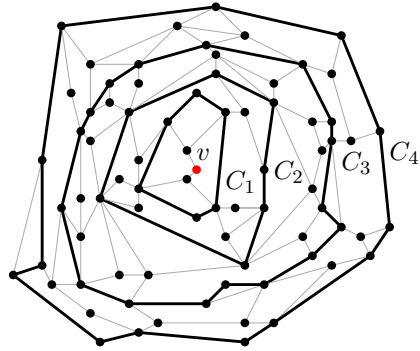

\paragraph{Bridges.}
Let $\Sigma$ be a surface and $\Delta$ be an open disk of $\Sigma$.
Let $G$ be a graph embedded in $\Sigma$ and $L$ be a $\Delta$-avoiding linkage of $G$.
We call a  connected component $B$ of $L\setminus\Delta$ a {\em $\Delta$-bridge} of $L$
if $B\cap T(L)=\emptyset$ and $B\subsetneq \bd(\Delta)$ (see~\autoref{fig_bridgescrossings} for an illustration).
Observe that every $\Delta$-bridge of $L$ is a subpath of a path of $L$.
The {\em endpoints} of a $\Delta$-bridge $B$ of $L$ are its (two) vertices that are incident to exactly one edge in $B$.
We denote by ${\cal B}_{\Delta}(L)$ the set of all $\Delta$-bridges of $L$ and use ${\sf bridges}_{\Delta}(L)$ to denote $|{\cal B}_{\Delta}(L)|$.

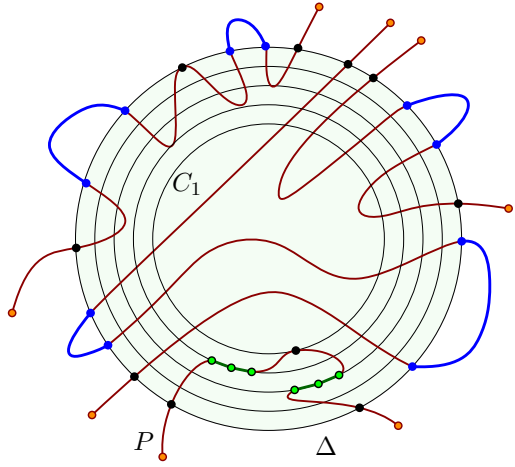
\begin{figure}[ht]
\centering
\scalebox{0.9}{
\begin{tikzpicture}[ipe stylesheet]
  \filldraw[fill=lightgreen, ipe opacity 10]
    (256, 640) circle[radius=80];
   \draw
    (256, 640) circle[radius=80];
  \draw
    (256, 640) circle[radius=64];
  \draw
    (256, 640) circle[radius=72];
  \draw[darkred, ipe pen heavier]
    (266.804, 576.918)
     .. controls (253.14, 568.057) and (291.422, 577.316) .. (309.756, 561.965);
  \draw[darkred, ipe pen heavier]
    (212.142, 548.942)
     .. controls (210.278, 567.132) and (223.023, 589.402) .. (232.473, 589.182);
  \draw[darkred, ipe pen heavier]
    (149.741, 609.038)
     .. controls (156.636, 634.926) and (166.437, 634.383) .. (176.295, 636.193)
     .. controls (186.153, 638.003) and (196.068, 642.165) .. (196.622, 647.126)
     .. controls (197.176, 652.086) and (188.369, 657.845) .. (180.41, 663.252);
  \draw[blue, ipe pen fat]
    (180.41, 663.252)
     .. controls (172.451, 668.658) and (165.34, 673.712) .. (165.195, 679.093)
     .. controls (165.049, 684.473) and (171.869, 690.18) .. (177.656, 693.298)
     .. controls (183.443, 696.415) and (188.197, 696.943) .. (196.54, 693.521);
  \draw[darkred, ipe pen heavier]
    (196.481, 693.455)
     .. controls (197.507, 693.125) and (202.062, 688.779) .. (206.512, 684.511)
     .. controls (210.963, 680.243) and (215.309, 676.053) .. (216.836, 680.172)
     .. controls (218.362, 684.292) and (217.068, 696.722) .. (217.194, 703.353)
     .. controls (217.32, 709.983) and (218.864, 710.815) .. (220.26, 711.521)
     .. controls (221.655, 712.227) and (222.902, 712.806) .. (226.363, 709.77)
     .. controls (229.824, 706.733) and (235.5, 700.081) .. (239.972, 697.285)
     .. controls (244.444, 694.489) and (247.712, 695.55) .. (246.804, 700.897)
     .. controls (245.897, 706.244) and (240.815, 715.876) .. (240.013, 718.386);
  \draw[blue, ipe pen fat]
    (240.013, 718.386)
     .. controls (237.582, 728.367) and (239.432, 731.225) .. (242.852, 731.482)
     .. controls (246.273, 731.738) and (251.265, 729.393) .. (254.758, 720.459);
  \draw[darkred, ipe pen heavier]
    (254.758, 720.459)
     .. controls (255.535, 721.171) and (254.814, 715.294) .. (255.17, 710.353)
     .. controls (255.525, 705.411) and (256.957, 701.404) .. (259.996, 704.837)
     .. controls (263.034, 708.27) and (267.679, 719.143) .. (276.984, 736.885);
  \draw[darkred, ipe pen heavier]
    (319.274, 722.825)
     .. controls (266.285, 682.749) and (261.433, 667.986) .. (260.405, 661.218)
     .. controls (259.378, 654.45) and (262.174, 655.678) .. (270.164, 661.706)
     .. controls (278.153, 667.734) and (291.336, 678.562) .. (299.224, 685.217)
     .. controls (307.111, 691.873) and (309.704, 694.357) .. (314.004, 695.891);
  \draw[blue, ipe pen fat]
    (314.004, 695.891)
     .. controls (322.887, 699.28) and (333.479, 701.72) .. (337.275, 699.017)
     .. controls (341.071, 696.313) and (338.071, 688.467) .. (325.629, 679.393);
  \draw[darkred, ipe pen heavier]
    (325.629, 679.393)
     .. controls (318.4257, 675.993) and (312.2063, 672.62) .. (306.971, 669.274)
     .. controls (299.119, 664.255) and (293.538, 657.382) .. (293.321, 653.34)
     .. controls (293.105, 649.298) and (298.254, 648.087) .. (306.391, 650.174)
     .. controls (314.527, 652.262) and (325.652, 657.647) .. (355.481, 652.663);
  \draw[darkred, ipe pen heavier]
    (306.512, 730.209)
     .. controls (223.537, 648.357) and (192.746, 620.823) .. (182.237, 609.032);
  \draw[blue, ipe pen fat]
    (182.237, 609.032)
     .. controls (172.034, 596.815) and (169.442, 588.651) .. (179.22, 591.496)
     .. controls (183.2213, 592.5967) and (186.6363, 593.958) .. (189.465, 595.58);
  \draw[darkred, ipe pen heavier]
    (189.465, 595.58)
     .. controls (215.141, 617.183) and (220.986, 627.589) .. (229.844, 633.726)
     .. controls (238.703, 639.863) and (250.575, 641.731) .. (260.815, 637.321)
     .. controls (271.055, 632.91) and (279.663, 622.219) .. (293.379, 623.419)
     .. controls (307.094, 624.618) and (325.916, 637.706) .. (336.046, 638.979);
  \draw[blue, ipe pen fat]
    (336.046, 638.979)
     .. controls (346.175, 640.251) and (347.612, 629.708) .. (347.691, 621.009)
     .. controls (347.771, 612.31) and (346.493, 605.456) .. (343.333, 598.806)
     .. controls (340.172, 592.156) and (335.129, 585.71) .. (315.618, 586.655);
  \draw[darkred, ipe pen heavier]
    (315.618, 586.655)
     .. controls (311.3413, 588.5963) and (305.8457, 591.7897) .. (299.131, 596.235)
     .. controls (289.058, 602.904) and (279.355, 611.089) .. (267.447, 615.746)
     .. controls (255.54, 620.403) and (241.429, 621.533) .. (182.846, 566.434);
  \pic[draw=darkred, fill=darkorange]
     at (149.741, 609.038) {ipe fdisk};
  \pic[draw=darkred, fill=darkorange]
     at (276.984, 736.885) {ipe fdisk};
  \pic[draw=darkred, fill=darkorange]
     at (306.512, 730.209) {ipe fdisk};
  \pic[draw=darkred, fill=darkorange]
     at (319.274, 722.825) {ipe fdisk};
  \pic[draw=darkred, fill=darkorange]
     at (182.846, 566.434) {ipe fdisk};
  \pic[draw=darkred, fill=darkorange]
     at (212.142, 548.942) {ipe fdisk};
  \pic[draw=darkred, fill=darkorange]
     at (309.756, 561.965) {ipe fdisk};
  \pic[draw=darkred, fill=darkorange]
     at (355.481, 652.663) {ipe fdisk};
  \pic[blue]
     at (180.41, 663.252) {ipe disk};
  \pic[blue]
     at (196.54, 693.521) {ipe disk};
  \pic[blue]
     at (240.013, 718.386) {ipe disk};
  \pic
     at (220.26, 711.521) {ipe disk};
  \pic
     at (176.295, 636.193) {ipe disk};
  \pic[blue]
     at (254.758, 720.459) {ipe disk};
  \pic
     at (268.214, 719.638) {ipe disk};
  \pic
     at (288.93, 712.908) {ipe disk};
  \pic
     at (299.428, 707.186) {ipe disk};
  \pic[blue]
     at (313.442, 695.681) {ipe disk};
  \pic[blue]
     at (325.629, 679.393) {ipe disk};
  \pic
     at (334.636, 654.709) {ipe disk};
  \pic[blue]
     at (336.046, 638.979) {ipe disk};
  \pic[blue]
     at (315.618, 586.655) {ipe disk};
  \pic
     at (200.415, 582.465) {ipe disk};
  \pic[blue]
     at (189.465, 595.58) {ipe disk};
  \pic[blue]
     at (182.237, 609.032) {ipe disk};
  \node[ipe node]
     at (275.382, 549.91) {$\Delta$};
  \draw
    (256, 640) circle[radius=56];
  \draw
    (256, 640) circle[radius=48];
  \draw[darkgreen, ipe pen fat]
    (232.473, 589.182)
     -- (240.554, 586.172)
     -- (248.992, 584.44);
  \draw[darkred, ipe pen heavier]
    (248.992, 584.44)
     .. controls (261.95, 584.317) and (258.746, 592.079) .. (267.325, 593.355)
     .. controls (282.101, 595.123) and (291.092, 586.478) .. (285.22, 583.104);
  \draw[darkgreen, ipe pen fat]
    (285.22, 583.104)
     -- (276.673, 579.431)
     -- (266.804, 576.918);
  \pic
     at (215.714, 570.884) {ipe disk};
  \pic
     at (293.72, 569.451) {ipe disk};
  \pic
     at (267.325, 593.355) {ipe disk};
  \pic[fill=green]
     at (232.473, 589.182) {ipe fdisk};
  \pic[fill=green]
     at (240.554, 586.172) {ipe fdisk};
  \pic[fill=green]
     at (248.992, 584.44) {ipe fdisk};
  \pic[fill=green]
     at (266.804, 576.918) {ipe fdisk};
  \pic[fill=green]
     at (276.673, 579.431) {ipe fdisk};
  \pic[fill=green]
     at (285.22, 583.104) {ipe fdisk};
  \node[ipe node]
     at (216, 660) {$C_1$};
  \node[ipe node]
     at (200, 552) {$P$};
\end{tikzpicture}}
\caption{A closed disk $\Delta$ of a surface $\Sigma$, a $\Delta$-nested sequence of cycles $\mathcal{C}=[C_1,\ldots, C_5]$,
and a $\inter(\Delta)$-avoiding linkage $L$. The $\inter(\Delta)$-bridges of $L$ are depicted as blue segments of the linkage $L$ while the crossings of the path $P\in \mathcal{P}(L)$ with $C_2$ and $C_3$ are depicted in green. Note that $P$ does not cross $C_1$.}
\label{fig_bridgescrossings}
\end{figure}

 \paragraph{Crossings.}
Let ${\cal C}=[C_{1},\ldots,C_{t}], t\geq 2$ be a $\Delta$-nested sequence of cycles  of $G$.
For $i\in[t]$, we say that a path $P=[v_{0}e_{1}v_{1}\cdots v_{\ell}]$ of $L$ {\em crosses} $C_{i}$
if there exist integers $q,r$ with $0<q\leq r<\ell$ such that the subpath $P'=[v_{q}\ldots v_{r}]$ of $P$
is contained in $C_{i}$, $e_{q}$ and $e_{r+1}$ are not in $C_{i}$, and exactly one of $e_{q}$ and $e_{r+1}$ is in $D_{i}$.
In this case, we say that $P$ crosses $C_{i}$ at $P'$.
See~\autoref{fig_bridgescrossings} for an illustration.
We also define ${\sf cross}_{\cal C}(L)$ to be the total number of crossings of $L$ with ${\cal C}$.
More formally,
$${\sf cross}_{\cal C}(L)=|\{P'\mid \exists P\in {\cal P}(L) \mbox{ and }\exists C\in{\cal C}\mbox{ such that } P \mbox{ crosses $C$ at }P'\}|.$$

\paragraph{BC-minimal linkages.}
Let $G$ be a graph embedded on $\Sigma$,  let $\Delta$ be an open disk of $\Sigma$,
let $v$ be a vertex in $\Delta\cap V(G)$, and
let ${\cal C}$ be a $\Delta$-nested sequence of cycles of $G$ that is $r$-tight around $v$.
We say that a $\Delta$-avoiding $r$-scattered linkage $L$ of $G$ is \emph{BC-minimal around $v$}, if
$v\in V(L)$ and for every $\Delta$-avoiding $r$-scattered linkage $L'$ of $G$ such that $v\in V(L')$ and $L\equiv L'$, it holds that ${\sf cross}_{\cal C}(L)\leq {\sf cross}_{\cal C}(L')$ and ${\sf bridges}_{\Delta}(L)\leq {\sf bridges}_{\Delta}(L')$.\medskip

We now state the following results that intuitively says that given a graph $G$ that is embedded on a fixed surface, a ``big enough'' nested sequence ${\cal C}$ of cycles of $G$, and an $r$-scattered linkage $L$ that contains a vertex $v$ that is isolated from ${\cal C}$ and has minimal number of bridges and crossings, there is an $r$-scattered linkage $L'$ of $G\setminus v$ that is equivalent to $L$.

\begin{lemma}\label{reducedlinkage}
There exists a function $\newfun{reducedfun}:\mathbb{N}^{3}\to \mathbb{N}$ such that for every $r,k,g\in\mathbb{N}$,
if $\Sigma$ is a surface of genus $g$, 
$G$ is a graph embedded on $\Sigma$,
$\Delta$ is an open disk of $\Sigma$,
$v$ is a vertex in $\Delta\cap V(G)$,
${\cal C}$ is a $\Delta$-nested sequence of  cycles  of $G$, where $|{\cal C}|\geq\funref{reducedfun}(r,k,g)$, that is $r$-tight around $v$, and
$L$ is a $\Delta$-avoiding $r$-scattered linkage of $G$ of size $k$ that is BC-minimal around $v$,
then there is a $\Delta$-avoiding $r$-scattered linkage $L'$ of $G\setminus v$ such that $L\equiv L'$,  $L\setminus \Delta=L' \setminus \Delta$, and $L'\subseteq L\cup\cupall {\cal C}$.
\end{lemma}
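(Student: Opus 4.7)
The plan is to proceed by a dichotomy on the number of $\Delta$-bridges of $L$, with threshold $\tau := |\mathcal{C}| - \lfloor k/2 \rfloor - 1$. In the \emph{few bridges} regime (${\sf bridges}_\Delta(L) < \tau$), since each bridge is a subpath of a path of $L$ with both endpoints on $\bd(\Delta)$ and $L$ has only $2k$ terminals outside $\Delta$, the intersection $L\cap \Delta$ has fewer than $|\mathcal{C}|$ connected components that we need to reroute. Using the nested family $\mathcal{C}$, which is $r$-tight around $v$, each such component can be rerouted along an appropriate cycle of $\mathcal{C}$, staying within $L\cup \cupall\mathcal{C}$ and avoiding $v$. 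The $r$-tightness of $\mathcal{C}$ guarantees that consecutive cycles are at distance at least $r$ in $G$, so by placing the rerouted pieces on cycles of $\mathcal{C}$ sufficiently far apart (one cycle per component, leaving $r$-layer gaps between consecutive chosen cycles), one preserves the $r$-scatteredness of the resulting linkage $L'$. This is the content alluded to in the overview as \autoref{fewbridges}.

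The main work is the \emph{many bridges} regime (${\sf bridges}_\Delta(L) \geq \tau$), where I derive a contradiction to the BC-minimality of $L$. The key combinatorial gadget is a $\Delta$-\emph{rainbow}: a family of pairwise homotopic $\Delta$-bridges of $L$ in $\Sigma\setminus\Delta$. Since the number of free homotopy classes of simple closed curves on a surface of Euler genus $g$ is linear in $g$ (the bound I invoke from \autoref{jfopdsn}), and since $L$ has only $2k$ terminals, a pigeonhole argument yields a rainbow $\mathcal{B}$ whose two marginal bridges bound an open disk $\Delta^\ast\subseteq\Sigma\setminus\Delta$ that contains all other bridges of $\mathcal{B}$ and no terminal of $L$; this is where \autoref{rainbow} is used. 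Inside $\Delta^\ast$, the bridges of $\mathcal{B}$ together with matching arcs taken from the cycles of $\mathcal{C}$ assemble into a nested sequence of cycles that isolates a ``central'' bridge $B^\star$ of $\mathcal{B}$. Applying Mazoit's surface version of the Unique Linkage Theorem (\autoref{maz}), whose threshold on nested cycles is single-exponential in $k+g$, produces a linkage $L^\ast$ equivalent to $L$ that avoids $B^\star$. Since $L^\ast$ has either strictly fewer bridges or strictly fewer crossings than $L$ with respect to $\mathcal{C}$, this contradicts BC-minimality.

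To make this last argument compatible with $r$-scatteredness, I will generalize the edge-contraction trick of Kawarabayashi and Kobayashi~\cite{KawarabayashiK08thei} from $r=1$ to arbitrary $r\in\mathbb{N}$: roughly, I subdivide each path of $L$ and then selectively contract short subpaths so that distance-$\leq r$ constraints in the original graph translate into ``endpoints lie in the same contracted vertex'' constraints in the modified graph, to which the ordinary ($0$-scattered) Mazoit theorem can be applied. This introduces a multiplicative factor of $r$ in the threshold but preserves the single-exponential dependence on $k+g$, yielding the claimed bound $\funref{reducedfun}(r,k,g) = r\cdot 2^{{\cal O}(k+g)}$. The main obstacle I anticipate is two-fold: (a) the topological bookkeeping required to glue rainbow bridges with arcs of cycles of $\mathcal{C}$ into a \emph{genuinely nested} family of cycles to which Mazoit's theorem applies — this requires careful tracking of how bridges exit $\Delta$ through $\bd(\Delta)$ and how their endpoints interleave with the concentric cycles of $\mathcal{C}$ inside $\Delta$; and (b) verifying that the contraction trick preserves both the rainbow structure and the BC-minimality setup, so that after contraction the conclusion of Mazoit's theorem translates back into an $r$-scattered linkage of $G$ equivalent to $L$ that strictly improves the $({\sf bridges}_\Delta,{\sf cross}_{\mathcal{C}})$ lexicographic invariant.
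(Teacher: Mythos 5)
Your proposal follows essentially the same route as the paper: a dichotomy on ${\sf bridges}_\Delta(L)$ with the few-bridges case handled by rerouting through $\mathcal{C}$ (\autoref{fewbridges}), and the many-bridges case handled by pigeonholing over the $O(g)$ homotopy classes (\autoref{rainbow}, via \autoref{jfopdsn}) and the $2k$ terminals to extract a large clear rainbow, building a nested cycle family around a central bridge, applying Mazoit's theorem (\autoref{maz}) after a Kawarabayashi--Kobayashi-style contraction to handle $r$-scatteredness, and contradicting BC-minimality. The two steps you flag as anticipated obstacles (gluing rainbow bridges with arcs of $\mathcal{C}$ into a genuinely nested family, and the $r$-scattered contraction/expansion bookkeeping) are precisely what the paper delegates to \autoref{japanese}, so your plan matches the paper's proof in both structure and level of detail.
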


The proof of \autoref{reducedlinkage} is postponed to~\autoref{subsec_proofofreducedlinkage}.
We now show how to use  \autoref{reducedlinkage} to prove the following result.

\begin{lemma}\label{maintheorem}
	There is a function $\newfun{mainfun}:\mathbb{N}^{3}\to \mathbb{N}$ such that for every $r,k,g\in \mathbb{N}$ if $\Sigma$ is a surface of genus $g$, $G$ is a graph embedded on $\Sigma$, $\Delta$ is an open disk of $\Sigma$, $L$ is a $\Delta$-avoiding $r$-scattered linkage of $G$ of size at most $k$, ${\cal C}$ is a $\Delta$-nested sequence of cycles of size $\funref{mainfun}(r,k,g)$,  $v$ is a vertex of $G$ that is isolated in $G$ by ${\cal C}$, then there is a $\Delta$-avoiding $r$-scattered linkage $L'$ of $G\setminus v$ that is equivalent to $L$ such that $L'\subseteq L\cup\cupall {\cal C}$. Moreover, it holds that $\funref{mainfun}(r,k,g)=r\cdot 2^{{\cal O}(k+g)}$.
\end{lemma}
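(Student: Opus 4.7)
Since $L$ is $\Delta$-avoiding, if $v \notin V(L)$ we may take $L':=L$ and be done, so assume $v\in V(L)$. The plan is to (i)~extract from $\mathcal{C}$ a subsequence $\mathcal{C}'\subseteq \mathcal{C}$ of length $\funref{reducedfun}(r,k,g)$ that is $r$-tight around $v$, (ii)~find a BC-minimal linkage $L^*$ equivalent to $L$ that contains $v$ and satisfies $L^*\subseteq L\cup\cupall\mathcal{C}$, and (iii)~invoke~\autoref{reducedlinkage} on $L^*$ and $\mathcal{C}'$ to obtain the desired $L'\subseteq L^*\cup\cupall\mathcal{C}'\subseteq L\cup\cupall\mathcal{C}$.

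For step (i), I would construct $\mathcal{C}'$ by a greedy innermost-to-outermost sweep: set $C'_0:=\{v\}$ and, having picked $C'_{j-1}$, take $C'_j$ to be the cycle of $\mathcal{C}$ outside $C'_{j-1}$ of largest index such that every cycle of $G$ contained in the annular region bounded by $C'_{j-1}$ and $C'_j$ has a vertex within graph-distance $r$ from $C'_{j-1}$. A standard argument --- using the BFS layering in $G$ around $C'_{j-1}$ together with the vertex-disjointness of the nested cycles of $\mathcal{C}$ --- bounds the number of $\mathcal{C}$-cycles absorbed between two consecutive picks of $\mathcal{C}'$ by a function depending only on $r$. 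Setting $\funref{mainfun}(r,k,g)$ to be this factor times $\funref{reducedfun}(r,k,g)$ then guarantees $|\mathcal{C}'|\geq \funref{reducedfun}(r,k,g)$, and since $\funref{reducedfun}(r,k,g)=r\cdot 2^{\mathcal{O}(k+g)}$ by~\autoref{reducedlinkage}, the advertised bound $\funref{mainfun}(r,k,g)=r\cdot 2^{\mathcal{O}(k+g)}$ is preserved.

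For steps (ii)--(iii), let $\mathcal{F}$ be the family of $\Delta$-avoiding $r$-scattered linkages $\tilde{L}$ of $G$ with $\tilde{L}\equiv L$, $v\in V(\tilde{L})$ and $\tilde{L}\subseteq L\cup\cupall\mathcal{C}$; this family is non-empty (it contains $L$) and finite. Pick $L^*\in\mathcal{F}$ that lexicographically minimizes the pair $(\mathrm{bridges}_\Delta(\tilde{L}),\mathrm{cross}_{\mathcal{C}'}(\tilde{L}))$. Applying~\autoref{reducedlinkage} to $L^*$, $\mathcal{C}'$ and $v$ then yields a $\Delta$-avoiding $r$-scattered linkage $L'$ of $G\setminus v$ that is equivalent to $L$ and satisfies $L'\subseteq L^*\cup\cupall\mathcal{C}'\subseteq L\cup\cupall\mathcal{C}$, as required.

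The delicate point, which will be the main obstacle, is that~\autoref{reducedlinkage} requires BC-minimality over \emph{all} equivalent $r$-scattered linkages of $G$ containing $v$, whereas $L^*$ is only minimal within $\mathcal{F}$. To bridge this gap, I would appeal to the structural content of~\autoref{reducedlinkage}'s own conclusion: any BC-reducing surgery used in its proof --- namely the rainbow-collapse routine built on top of~\cite{Mazoit13asin} and sketched in~\autoref{sec_irrfriendlybdgenus} --- outputs a linkage that is still a subgraph of the input linkage together with the $r$-tight cycles, and so remains inside $L^*\cup\cupall\mathcal{C}'\subseteq L\cup\cupall\mathcal{C}$ and hence in $\mathcal{F}$. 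Thus any hypothetical unrestricted strict improvement of $L^*$ would already lie in $\mathcal{F}$, contradicting the choice of $L^*$. Verifying this closure of $\mathcal{F}$ under the internal rerouting of~\autoref{reducedlinkage} --- essentially a bookkeeping inspection of the proof in~\autoref{sec_irrfriendlybdgenus} --- is where the technical weight of the argument will sit.
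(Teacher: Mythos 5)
Your skeleton is the same as the paper's: assume $v\in V(L)$, pass from the isolating sequence to an $r$-tight sequence of length $\funref{reducedfun}(r,k,g)$, take a BC-minimal equivalent linkage through $v$, and apply \autoref{reducedlinkage}. However, your step (i) has a genuine gap. You insist that the tight sequence $\mathcal{C}'$ be a \emph{subsequence} of $\mathcal{C}$, while $r$-tightness quantifies over \emph{all} cycles of $G$ contained in the annulus between consecutive chosen cycles, not only over cycles of $\mathcal{C}$. Already between two consecutive cycles of $\mathcal{C}$ there may be a cycle of $G$ (e.g.\ one attached by a long pendant path inside that annulus) at distance more than $r$ from the inner cycle; then your greedy has no admissible candidate at all and stalls, so no subsequence of $\mathcal{C}$ is $r$-tight. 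Moreover, even when candidates exist, the number of $\mathcal{C}$-cycles absorbed in one step is not bounded by any function of $r$: unboundedly many pairwise disjoint nested cycles can all pass within distance $r$ of $C'_{j-1}$ (pinch them near a common point), so the claimed bound $|\mathcal{C}'|\geq |\mathcal{C}|/f(r)$ does not follow from the ``BFS layering plus disjointness'' argument you invoke. The paper does not attempt this: it sets $\funref{mainfun}(r,k,g)=r\cdot\funref{reducedfun}(r,k,g)$ and extracts an $r$-tight sequence built from arbitrary cycles of $G$ whose outer disk lies in $\Delta$, without requiring its cycles to come from $\mathcal{C}$.

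Your treatment of BC-minimality is also not sound. \autoref{reducedlinkage} (via \autoref{japanese}, which is imported from the Kawarabayashi--Kobayashi proof and not reproved in this paper) needs minimality of ${\sf cross}$ and ${\sf bridges}$ among \emph{all} $\Delta$-avoiding $r$-scattered linkages of $G$ equivalent to $L$ and containing $v$; that is precisely the definition of BC-minimal around $v$. Your $L^*$ is only minimal within the restricted family $\mathcal{F}$, and the proposed repair is circular: the observation that the reroutings performed \emph{inside} the proof of \autoref{reducedlinkage} stay within $L^*\cup\cupall\mathcal{C}'$ says nothing about an arbitrary equivalent linkage of $G$ with fewer bridges or crossings, which may use edges entirely outside $L\cup\cupall\mathcal{C}$; hence ``any unrestricted improvement already lies in $\mathcal{F}$'' does not follow, and the needed inspection of \autoref{japanese} is not even available here since that statement is only cited. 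The paper sidesteps this by taking $\tilde{L}$ BC-minimal among all equivalent linkages containing $v$ and applying \autoref{reducedlinkage} directly; note that its own proof does not actually track the containment $L'\subseteq L\cup\cupall\mathcal{C}$ (which is never used later, e.g.\ in the proof of \autoref{amultheah}), so your attempt to secure that containment by restricting the minimization is exactly what creates the gap.
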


\begin{proof}
	Let $r,k,g\in \mathbb{N}$. We set $\funref{mainfun}(r,k,g)=r\cdot \funref{reducedfun}(r,k,g)$.
	Let $\Sigma$ be a surface of genus $g$, $G$ be a graph embedded in $\Sigma$, $\Delta$ be an open disk of $\Sigma$, $L$ be a $\Delta$-avoiding $r$-scattered linkage of $G$ of size at most $k$, and $v$ be a vertex of $G$ that is $(\funref{mainfun}(r,k,g),\Delta)$-isolated in $G$.
	We assume that $v\in V(L)$, since otherwise the theorem holds trivially.

	Since $v$ is $(\funref{mainfun}(r,k,g),\Delta)$-isolated in $G$ and $\funref{mainfun}(r,k,g)=r\cdot \funref{reducedfun}(r,k,g)$, there is a nested sequence ${\cal C}$ of cycles of $G$ of size at least $\funref{reducedfun}(r,k,g)$ that is $r$-tight around $v$ and whose outer disk $\Delta'$ is a subset of $\Delta$.
	Among all $\Delta$-avoiding $r$-scattered linkages of $G$ that are equivalent to $L$ and contain $v$, let $\tilde{L}$ be the one that minimizes the quantities ${\sf cross}_{\cal C}(\tilde{L})$ and ${\sf bridges}_{\Delta}(\tilde{L})$.
	Observe that, since $\Delta'\subseteq \Delta$, $\tilde{L}$ is also $\Delta'$-avoiding.
	By \autoref{reducedlinkage}, there is a $\Delta'$-avoiding $r$-scattered linkage $\tilde{L}'$ of $G\setminus v$ that is equivalent to $\tilde{L}$ and moreover $\tilde{L}\setminus \Delta'=\tilde{L}' \setminus \Delta'$.
	Notice that since $\tilde{L}$ is $\Delta$-avoiding, $\Delta'\subseteq \Delta$, and $\tilde{L}\setminus \Delta'=\tilde{L}' \setminus \Delta'$, it follows that $\tilde{L}'$ is also $\Delta$-avoiding. Therefore, $L':=\tilde{L}'$ is the claimed linkage.
\end{proof}

We next show how to prove~\autoref{amultheah} using~\autoref{maintheorem}.
Before this, we introduce some additional definitions concerning {\sl walls}.

\paragraph{Walls.}
Let  $k,r\in\mathbb{N}.$ The
\emph{$(k\times r)$-grid} is the
graph whose vertex set is $[k]\times[r]$ and two vertices $(i,j)$ and $(i',j')$ are adjacent if and only if $|i-i'|+|j-j'|=1.$
An  \emph{elementary $r$-wall}, for some odd integer $r\geq 3,$ is the graph obtained from a
$(2 r\times r)$-grid
with vertices $(x,y)
	\in[2r]\times[r],$
after the removal of the
``vertical'' edges $\{(x,y),(x,y+1)\}$ for odd $x+y,$ and then the removal of
all vertices of degree one.
Notice that, as $r\geq 3,$  an elementary $r$-wall is a planar graph
that has a unique (up to topological isomorphism) embedding in the plane
such that all its finite faces are incident to exactly six
edges.
The {\em perimeter} of an elementary $r$-wall is the cycle bounding its infinite face.

An {\em $r$-wall} is any graph $W$ obtained from an elementary $r$-wall $\bar{W}$
after subdividing 
edges\footnote{
Given an edge $e=\{u,v\}\in E(G),$ we define the {\em subdivision} of $e$ to be the operation of deleting $e,$ adding a new vertex $w$ and making it adjacent to $u$ and $v.$}.
The {\em perimeter} of $W$, denoted by $\perim(W)$, is the cycle of $W$ whose non-subdivision vertices are the vertices of the perimeter of $\overline{W}$. 

Given an elementary $r$-wall $\bar{W},$ some odd $i\in \{1,3,\ldots,2r-1\},$ and $i'=(i+1)/2,$
the {\em $i'$-th  vertical path} of $\bar{W}$  is the one whose
vertices, in order of appearance, are $(i,1),(i,2),(i+1,2),(i+1,3),
	(i,3),(i,4),(i+1,4),(i+1,5),
	(i,5),\ldots,(i,r-2),(i,r-1),(i+1,r-1),(i+1,r).$
Also, given some $j\in[2,r-1]$ the {\em $j$-th horizontal path} of $\bar{W}$
is the one whose
vertices, in order of appearance, are $(1,j),(2,j),\ldots,(2r,j).$
A \emph{vertical} (resp. \emph{horizontal}) path of $W$ is one
that is a subdivision of a  vertical (resp. horizontal) path of $\bar{W}.$
Notice that the perimeter of an $r$-wall $W$
is uniquely defined regardless of the choice of the elementary $r$-wall $\bar{W}.$
A {\em subwall} of $W$ is any subgraph $W'$ of  $W$
that is an $r'$-wall, with $r' \leq r,$ and such the vertical (resp. horizontal) paths of $W'$ are subpaths of the
	{vertical} (resp. {horizontal}) paths of $W.$

Let an odd integer $r\geq 3.$
Let $W$ be an $r$-wall of a graph $G$ and $K'$ be the connected component of $G\setminus \perim(W)$ that contains $W\setminus \perim(W)$.
The {\em compass} of $W$, denoted by ${\sf Compass}(W)$, is the graph $G[V(K')\cup V(\perim(W))]$. Observe that $W$ is a subgraph of ${\sf Compass}(W)$ and ${\sf Compass}(W)$ is connected.

The {\em layers} of an $r$-wall $W$  are recursively defined as follows.
The first layer of $W$ is its perimeter. For $i=2,\ldots,(r-1)/2,$ the $i$-th layer of $W$ is the $(i-1)$-th layer of the subwall $W'$ obtained from $W$ after removing from $W$ its perimeter and all occurring vertices of degree one. Notice that each $(2r+1)$-wall has $r$ layers.
The {\em central  vertices} of $W$ are the two branch vertices of $W$ that do not belong to any of its layers and that are connected by a path of $W$ that does not intersect any layer.
\medskip

We will use the following relation between the treewidth of a graph $G$ embedded on a surface of fixed genus and a wall of $G$, derived from \cite[Theorem 4.12]{DemaineFHT05subex}.

\begin{proposition}\label{fnsdjkfnsdjk}
There is a function $\newfun{fun_treewidth}:\mathbb{N}^2\to\mathbb{N}$ such that for every $r,g\in\mathbb{N}$,
if $G$ is a graph embedded on a surface $\Sigma$ of genus $g$ and $\tw(G)> \funref{fun_treewidth}(r,g)$, then $G$ contains an $r$-wall as a subgraph.
Moreover, $\funref{fun_treewidth}(r,g) = \mathcal{O}(r\cdot g)$.
\end{proposition}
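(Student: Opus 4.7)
The plan is to derive this proposition directly from the linear excluded-grid theorem for bounded-genus graphs. Specifically, \cite[Theorem 4.12]{DemaineFHT05subex} asserts the existence of a universal constant $c$ such that every graph $G$ embedded on a surface $\Sigma$ of Euler genus $g$ with $\tw(G)>c\cdot(g+1)\cdot r$ contains a $(2r\times 2r)$-grid as a minor. Applying this theorem to the input graph immediately produces a large grid minor of $G$, which will serve as the starting structural object.

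From this grid minor, I would then extract an $r$-wall subgraph using the standard conversion between grids and walls. Recall that an elementary $r$-wall is essentially obtained from the $(2r\times r)$-grid by deleting every second vertical edge and removing the resulting degree-one vertices. Consequently, given a $(2r\times 2r)$-grid minor of $G$ with prescribed branch sets, one may choose, for each pair of branch sets playing the role of adjacent grid vertices in the wall pattern, a single edge of $G$ connecting them, together with paths inside the branch sets, so as to realize a subdivision of the elementary $r$-wall inside $G$. This yields an $r$-wall as a subgraph of $G$, using only a constant-factor blow-up over the dimension of the grid minor.

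The main (and essentially only) obstacle is accounting for the multiplicative constants introduced both by the excluded-grid theorem and by the grid-to-wall extraction, so that the final bound remains linear in $r\cdot g$. Since both steps produce only constant-factor overheads, the overall function certifying the treewidth threshold satisfies $\funref{fun_treewidth}(r,g)=\mathcal{O}(r\cdot g)$, matching the claimed asymptotic. I do not expect any genuinely new combinatorial argument here; the entire content of the proposition is the combination of \cite[Theorem 4.12]{DemaineFHT05subex} with the well-known grid-versus-wall correspondence, packaged in the precise wall-subgraph form required by the rest of \autoref{sec_irrfriendlybdgenus}.
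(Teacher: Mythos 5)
Your proposal is correct and matches what the paper does: \autoref{fnsdjkfnsdjk} is stated without proof, attributed precisely to \cite[Theorem 4.12]{DemaineFHT05subex}, i.e., the linear (in $r\cdot g$) excluded-grid bound for graphs of bounded Euler genus, combined with the standard conversion of a grid minor into a wall subgraph (a wall has maximum degree $3$, so a wall minor yields a wall subdivision), losing only constant factors.
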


\begin{proof}[Proof of~\autoref{amultheah}]
We set $m=\sqrt{2k+1}\cdot (2\cdot \funref{mainfun}(r,k,g) +1)$ and $\funref{wtfun}=\funref{fun_treewidth}(g,m)$.
Let $G$ be a graph embedded on a surface $\Sigma$ of genus $g$ and let 
$L$ be an $r$-scattered linkage of $G$ of size at most $k$.
Also, suppose that $\tw(G)\geq \funref{wtfun}(r,k,g)$.
Since $\tw(G)\geq \funref{wtfun}(r,k,g)$ and $\funref{wtfun}(r,k,g) =\funref{fun_treewidth}(g,m)$,
by~\autoref{fnsdjkfnsdjk} we have that $G$ contains an $m$-wall $W$ as a subgraph.
Also, since $L$ is an $r$-scattered linkage of $G$ of size at most $k$ and $m=\sqrt{2k+1}\cdot (2\cdot \funref{mainfun}(r,k,g) +1)$, there is a subwall $W'$ of $W$ of height $2\cdot \funref{mainfun}(r,k,g) +1$ such that $W$ is embedded in a closed disk $\Delta$ of $\Sigma$ and ${\sf Compass}(W')$ does not contain any terminal of $L$.
Therefore, $L$ is $\Delta$-avoiding.
Let ${\cal C}$ be the collection of the layers of $W'$, let $v$ be a central vertex of $W'$, and observe that $|{\cal C}|=\funref{mainfun}(r,k,g)$ and $v$ is isolated in $G$ by ${\cal C}$.
By~\autoref{maintheorem}, there is a $\Delta$-avoiding $r$-scattered linkage $L'$ of $G\setminus v$ that is equivalent to $L$.
Moreover, since $\funref{mainfun}(r,k,g)=r\cdot 2^{{\cal O}(k+g)}$, we also have that $\funref{wtfun}(r,k,g)=r\cdot 2^{{\cal O}(k+g)}$.
\end{proof}

\subsection{Proof of~\autoref{reducedlinkage}}\label{subsec_proofofreducedlinkage}

In order to show~\autoref{reducedlinkage}, we first observe that if the given linkage has few bridges, then
we can find an equivalent linkage that avoids an isolated vertex $v$.
This is formulated in the following lemma
that can be derived from the proof of \cite[Theorem 7]{KawarabayashiK12alin}.
We prove it here for completeness.

\begin{lemma}\label{fewbridges}
Let $\ell, r, k\in\mathbb{N}$.
Let $\Delta$ be a closed annulus.
If $G$ is a partially $\Delta$-embedded graph,
$v$ is a vertex of $G$,
${\cal C}$ is a $\Delta$-nested sequence of  cycles  of $G$ of size at least $\ell$ that is $r$-tight around $v$, and
$L$ is a $\Delta$-avoiding $r$-scattered linkage of $G$ of size at most $k$ that is BC-minimal around $v$ and ${\sf bridges}_{\Delta}(L)\leq\ell -\lfloor k/2\rfloor-1$,
then there is a $\Delta$-avoiding $r$-scattered linkage $L'$ of $G\setminus v$ that is equivalent to $L$,
$L\setminus \Delta=L' \setminus \Delta$, and $L'\subseteq L\cup\cupall {\cal C}$.
\end{lemma}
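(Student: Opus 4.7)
The proof has a counting step followed by a topological rerouting. Since $L$ is $\Delta$-avoiding, every connected component of $L\cap\inter(\Delta)$ is a subpath of some $P\in{\cal P}(L)$ whose endpoints lie on $\bd(\Delta)$; call these components the \emph{strands} of $L$ and denote their number by $s$. For each path $P\in{\cal P}(L)$ with $m_P\ge 1$ crossings of $\Delta$, the complement $P\setminus\inter(\Delta)$ has $m_P+1$ components, two of which carry the terminals of $P$ and the remaining $m_P-1$ of which are $\Delta$-bridges. Summing over the $a\le k$ paths that meet $\Delta$ yields the identity $s={\sf bridges}_\Delta(L)+a$. Combining this with the hypothesis ${\sf bridges}_\Delta(L)\le\ell-\lfloor k/2\rfloor-1$ and exploiting BC-minimality to control $a$ (a path with $m_P\ge 2$ has to ``pair up'' its strands via a matching bridge, which is what realises the $\lfloor k/2\rfloor$ saving), one derives the key inequality $s\le\ell-1$.

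\textbf{Detour and rerouting.} Since ${\cal C}$ provides $\ell$ nested cycles around $v$ while $L$ has at most $\ell-1$ strands inside $\Delta$, a pigeonhole argument produces an index $j^\ast\in[\ell]$ and an arc $\alpha\subseteq C_{j^\ast}$ such that (i) the two endpoints of $\alpha$ lie on the strand $P_v'$ of $L$ containing $v$, (ii) the subpath of $P_v'$ between these endpoints lies entirely in the disk bounded by $C_{j^\ast}$ and contains $v$, and (iii) $\alpha$ is disjoint from every strand of $L$ other than $P_v'$. I then replace the said subpath of $P_v'$ by $\alpha$ and leave the rest of $L$ intact; call the result $L'$. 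By construction $L'\subseteq L\cup\cupall{\cal C}$, $L'\equiv L$, $L\setminus\Delta=L'\setminus\Delta$, and $v\notin V(L')$, as required.

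\textbf{$r$-scatteredness and main obstacle.} It remains to verify that $L'$ is $r$-scattered. Any violation would produce a vertex of $\alpha$ lying within $G$-distance $r$ of some vertex of $L'\setminus P_v'$; shortcutting $C_{j^\ast}$ through such a short path and recombining with portions of the original cycles of ${\cal C}$ would yield a cycle strictly inside the shell adjacent to $C_{j^\ast}$ that violates the $r$-tightness of ${\cal C}$ around $v$, a contradiction. The hardest step is the counting $s\le\ell-1$: the raw identity $s={\sf bridges}_\Delta(L)+a$ only produces $s\le\ell+\lceil k/2\rceil-1$, so BC-minimality has to be used in a non-trivial way to rule out the excess strands (any configuration witnessing $s\ge\ell$ must admit a bridge- or crossing-reducing swap, which contradicts the hypothesis that $L$ was chosen BC-minimal around $v$). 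A secondary technical subtlety is the existence of the arc $\alpha$ on the correct ``side'' of $v$, i.e., the side whose disk is free of terminals; this relies on the fact that the shells on both sides of $v$ are deep and clean enough, which is precisely what the $r$-tightness hypothesis supplies.
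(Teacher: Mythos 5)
There is a genuine gap at both of the places you yourself flag as delicate. First, the counting. The paper's proof closes the arithmetic immediately: the number of components of $L\cap \inter(\Delta)$ equals ${\sf bridges}_{\Delta}(L)+|\mathcal{P}(L)|$, and it bounds $|\mathcal{P}(L)|\leq \lfloor k/2\rfloor$ (reading the size bound $k$ as counting terminals, so at most $\lfloor k/2\rfloor$ paths), which together with ${\sf bridges}_{\Delta}(L)\leq \ell-\lfloor k/2\rfloor-1$ gives at most $\ell-1$ components with no appeal to BC-minimality at all. You instead bound the number of paths meeting $\Delta$ by $k$, obtain only $s\leq \ell+\lceil k/2\rceil-1$, and then assert that BC-minimality ``rules out the excess strands'' because any configuration with $s\geq\ell$ admits a bridge- or crossing-reducing swap. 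That assertion is exactly the missing content: BC-minimality only compares $L$ to equivalent $\Delta$-avoiding $r$-scattered linkages through $v$, and it does not by itself limit how many paths of $L$ enter $\Delta$ or produce the claimed swap; no construction of such a swap is given, and your parenthetical ``pair up its strands via a matching bridge'' does not yield any saving, since $s={\sf bridges}_{\Delta}(L)+a$ is an exact identity whatever the pairing.

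Second, the rerouting. You modify only the strand $P_v'$ containing $v$, and you justify the existence of the index $j^{\ast}$ and the clean arc $\alpha\subseteq C_{j^{\ast}}$ by pigeonhole on ``$\ell$ cycles versus at most $\ell-1$ strands''. This pigeonhole is invalid: a single strand other than $P_v'$ may dive inside the innermost disk and hence intersect \emph{every} cycle of ${\cal C}$, and it can meet $C_{j^{\ast}}$ on both arcs determined by the two chosen endpoints of $P_v'$, so no cycle need offer an arc disjoint from the other strands; your $L'$ would then fail even to be vertex-disjoint. This is precisely why the paper's (admittedly terse) argument shortcuts \emph{every} component of $L\cap\inter(\Delta)$ through the cycles of ${\cal C}$ --- the whole point of the bound ``at most $\ell-1$ components versus $\ell$ cycles'' is that each component can be assigned its own cycle level. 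Finally, your $r$-scatteredness argument, deriving a contradiction with $r$-tightness from a short path between $\alpha$ and another strand, does not produce the forbidden cycle in $D_i\setminus D_{i-1}$ required by the definition of $r$-tightness; as written it is an assertion, not a derivation. So both the key inequality $s\leq\ell-1$ and the construction of $L'$ need to be redone along the lines of the paper's global shortcutting.
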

 
\begin{proof}
Since $L$ has size at most $k$, $\mathcal{P}(L)$ has size at most $\lfloor k/2\rfloor$.
Also, since $L$ is $\Delta$-avoiding, for every
$P\in\mathcal{P}(L)$, the number of components in $P\cap\inter(\Delta)$ is equal to ${\sf bridges}_{\Delta}(P)+1$.
Therefore, the number of components in $L\cap \inter(\Delta)$ is equal to ${\sf bridges}_{\Delta}(L)+|\mathcal{P}(L)|$.
Since ${\sf bridges}_{\Delta}(L)\leq\ell -\lfloor k/2\rfloor-1$ and $|\mathcal{P}(L)|\leq \lfloor k/2\rfloor$,
we have that there exist at most $\ell-1$ components in $L\cap {\sf int}(\Delta)$.
Now, it is easy to see that the fact that ${\cal C}$ is a $\Delta$-nested sequence of cycles of $G$ of size at least $\ell$ implies that there is a $\Delta$-avoiding $r$-scattered linkage $L'$ of $G\setminus v$ that is equivalent to $L$ and $L\setminus \Delta=L' \setminus \Delta$ (intuitively, $L'$ can be obtained by shortcuting every component of
$L\cap \inter(\Delta)$ to pass through the cycles of ${\cal C}$).
\end{proof}

In the rest of this section, our goal is to argue that one can always reduce the number of bridges.
We start with some additional definitions.

\paragraph{Rainbows.}
Let $\Sigma$ be a surface and let $\Delta$ be an open disk 
of $\Sigma$. We set $\overline{\Delta}=\Sigma\setminus\Delta$. 
Let also $G$ be a $\overline{\Delta}$-embedded  1-regular graph
such that $V(G)\subseteq {\bf bd}(\Delta)$ and $\cupall E(G)\cap {\bf bd}(\Delta)=\emptyset$.
We call $G$ a {\em $\Delta$-outer matching}.
Let $C$ be the embedded cycle whose vertices are the vertices 
of $G$ and whose  edges are the connected components of $ {\bf bd}(\Delta)\setminus V(G)$.
We denote $G^+=G\cup C$ and observe that $G^+$ is a $\Sigma$-embedded 
3-regular multigraph.

The {\em facets} of $G$ are the connected components 
of $\Sigma\setminus G^+$ that are different than $\Delta$.
We call a facet of $G$ a {\em bar} if it is homeomorphic to 
an open disk $D$ whose boundary is the union of two edges of $G$, two edges of $C$,
and the 4 endpoints of those edges.  We denote by ${\sf Bars}(G)$ the set of all bars of $G$.
Given a $b\in {\sf Bars}(G)$, we denote by ${\sf bd\text{-}edges}(b)$ the set of edges of $G$ that are contained in the boundary of $b$ and observe that for every $b\in {\sf Bars}(G)$, $|{\sf bd\text{-}edges}(b)|=2$.


Given two edges $e_{1}, e_{2}\in E(G)$, we say that they are {\em neighboring}, if $e_{1}=e_{2}$ or there is a $b\in {\sf Bars}(G)$ such that ${\sf bd\text{-}edges}(b)=\{e_{1},e_{2}\}$.
We consider the transitive closure of 
the neighboring relation and let ${\cal E}(G)$ be the partition of $E(G)$ defined be the resulting 
equivalence relation. We call ${\cal E}(G)$ the {\em homotopy-partition} of $E(G)$ and each $E\in {\cal E}(G)$ a {\em $\Delta$-rainbow} of $G$.
We say that a $\Delta$-rainbow $E$ of $G$ is {\em trivial} if $|E|=1$.
See~\autoref{figure_rainbows} for an illustration of the above notions.

\begin{figure}[ht]
\centering
\scalebox{0.55}{
\begin{tikzpicture}[ipe stylesheet]
  \filldraw[ipe dash dashed, fill=firebrick1, ipe opacity 10]
    (334.478, 606.911)
     .. controls (316.907, 619.278) and (305.113, 623.9045) .. (294.7623, 627.3873)
     .. controls (284.4117, 630.87) and (275.5043, 633.209) .. (263.7945, 635.2577)
     .. controls (252.0847, 637.3063) and (237.5723, 639.0647) .. (223.4757, 639.1613)
     .. controls (209.379, 639.258) and (195.698, 637.693) .. (178.0868, 633.2813)
     .. controls (160.4755, 628.8695) and (138.934, 621.611) .. (115.824, 602.251)
     .. controls (112.064, 598.153) and (104.416, 590.601) .. (99.2271, 578.98)
     .. controls (107.07, 592.081) and (116.3685, 598.9995) .. (125.7084, 604.7706)
     .. controls (135.0483, 610.5417) and (144.4297, 615.1653) .. (154.53, 618.7835)
     .. controls (164.6303, 622.4017) and (175.4497, 625.0143) .. (187.9758, 627.0637)
     .. controls (200.502, 629.113) and (214.735, 630.599) .. (229.3545, 630.2527)
     .. controls (243.974, 629.9063) and (258.98, 627.7277) .. (270.0745, 625.7035)
     .. controls (281.169, 623.6793) and (288.352, 621.8097) .. (294.2178, 619.986)
     .. controls (300.0837, 618.1623) and (304.6323, 616.3847) .. (310.1767, 614.1723)
     .. controls (315.721, 611.96) and (322.261, 609.313) .. (328.0707, 605.973)
     .. controls (333.8803, 602.633) and (338.9597, 598.6) .. (343.1618, 594.705)
     .. controls (347.364, 590.81) and (350.689, 587.053) .. (354.485, 581.442)
     .. controls (354.485, 581.442) and (351.4685, 587.885) .. (348.2635, 592.6853)
     .. controls (345.0585, 597.4855) and (341.665, 600.643) .. (334.478, 606.911)
     -- cycle;
     \draw[draw=black!30!white,ipe dash dashed]
    (334.478, 606.911)
     .. controls (316.907, 619.278) and (305.113, 623.9045) .. (294.7623, 627.3873)
     .. controls (284.4117, 630.87) and (275.5043, 633.209) .. (263.7945, 635.2577)
     .. controls (252.0847, 637.3063) and (237.5723, 639.0647) .. (223.4757, 639.1613)
     .. controls (209.379, 639.258) and (195.698, 637.693) .. (178.0868, 633.2813)
     .. controls (160.4755, 628.8695) and (138.934, 621.611) .. (115.824, 602.251)
     .. controls (112.064, 598.153) and (104.416, 590.601) .. (99.2271, 578.98)
     .. controls (107.07, 592.081) and (116.3685, 598.9995) .. (125.7084, 604.7706)
     .. controls (135.0483, 610.5417) and (144.4297, 615.1653) .. (154.53, 618.7835)
     .. controls (164.6303, 622.4017) and (175.4497, 625.0143) .. (187.9758, 627.0637)
     .. controls (200.502, 629.113) and (214.735, 630.599) .. (229.3545, 630.2527)
     .. controls (243.974, 629.9063) and (258.98, 627.7277) .. (270.0745, 625.7035)
     .. controls (281.169, 623.6793) and (288.352, 621.8097) .. (294.2178, 619.986)
     .. controls (300.0837, 618.1623) and (304.6323, 616.3847) .. (310.1767, 614.1723)
     .. controls (315.721, 611.96) and (322.261, 609.313) .. (328.0707, 605.973)
     .. controls (333.8803, 602.633) and (338.9597, 598.6) .. (343.1618, 594.705)
     .. controls (347.364, 590.81) and (350.689, 587.053) .. (354.485, 581.442)
     .. controls (354.485, 581.442) and (351.4685, 587.885) .. (348.2635, 592.6853)
     .. controls (345.0585, 597.4855) and (341.665, 600.643) .. (334.478, 606.911)
     -- cycle;
      \filldraw[ipe dash dashed, fill=deepskyblue, ipe opacity 10]
    (93.9309, 559.803)
     .. controls (98.0465, 568.275) and (100.9678, 571.19) .. (104.0362, 574.3855)
     .. controls (107.1047, 577.581) and (110.3203, 581.057) .. (113.4665, 583.8377)
     .. controls (116.6127, 586.6183) and (119.6893, 588.7037) .. (122.6805, 590.6032)
     .. controls (125.6717, 592.5027) and (128.5773, 594.2163) .. (131.892, 595.963)
     .. controls (135.2067, 597.7097) and (138.9303, 599.4893) .. (142.4587, 601.0242)
     .. controls (145.987, 602.559) and (149.32, 603.849) .. (153.5662, 605.2173)
     .. controls (157.8123, 606.5857) and (162.9717, 608.0323) .. (169.4878, 609.6005)
     .. controls (176.004, 611.1687) and (183.877, 612.8583) .. (192.8782, 614.232)
     .. controls (201.8793, 615.6057) and (212.0087, 616.6633) .. (221.1595, 616.841)
     .. controls (230.3103, 617.0187) and (238.4827, 616.3163) .. (244.7787, 615.7247)
     .. controls (251.0747, 615.133) and (255.4943, 614.652) .. (261.8657, 613.5995)
     .. controls (268.237, 612.547) and (276.56, 610.923) .. (283.0567, 609.3967)
     .. controls (289.5533, 607.8703) and (294.2237, 606.4417) .. (299.6227, 604.6325)
     .. controls (305.0217, 602.8233) and (311.1493, 600.6337) .. (317.0272, 597.9683)
     .. controls (322.905, 595.303) and (328.533, 592.162) .. (332.301, 589.8357)
     .. controls (336.069, 587.5093) and (337.977, 585.9977) .. (341.3208, 582.9202)
     .. controls (344.6647, 579.8427) and (349.4443, 575.1993) .. (352.7054, 570.7369)
     .. controls (355.9665, 566.2745) and (357.709, 561.993) .. (359.043, 548.473)
     .. controls (360.542, 556.739) and (359.561, 568.182) .. (354.485, 581.442)
     .. controls (350.689, 587.053) and (347.364, 590.81) .. (343.1618, 594.705)
     .. controls (338.9597, 598.6) and (333.8803, 602.633) .. (328.0707, 605.973)
     .. controls (322.261, 609.313) and (315.721, 611.96) .. (310.1767, 614.1723)
     .. controls (304.6323, 616.3847) and (300.0837, 618.1623) .. (294.2178, 619.986)
     .. controls (288.352, 621.8097) and (281.169, 623.6793) .. (270.0745, 625.7035)
     .. controls (258.98, 627.7277) and (243.974, 629.9063) .. (229.3545, 630.2527)
     .. controls (214.735, 630.599) and (200.502, 629.113) .. (187.9758, 627.0637)
     .. controls (175.4497, 625.0143) and (164.6303, 622.4017) .. (154.53, 618.7835)
     .. controls (144.4297, 615.1653) and (135.0483, 610.5417) .. (125.7084, 604.7706)
     .. controls (116.3685, 598.9995) and (107.07, 592.081) .. (99.2274, 578.98)
     .. controls (95.5675, 571.656) and (94.4119, 563.085) .. (93.9309, 559.803)
     -- cycle;
  \draw[draw=black!30!white,ipe dash dashed]
    (93.9309, 559.803)
     .. controls (98.0465, 568.275) and (100.9678, 571.19) .. (104.0362, 574.3855)
     .. controls (107.1047, 577.581) and (110.3203, 581.057) .. (113.4665, 583.8377)
     .. controls (116.6127, 586.6183) and (119.6893, 588.7037) .. (122.6805, 590.6032)
     .. controls (125.6717, 592.5027) and (128.5773, 594.2163) .. (131.892, 595.963)
     .. controls (135.2067, 597.7097) and (138.9303, 599.4893) .. (142.4587, 601.0242)
     .. controls (145.987, 602.559) and (149.32, 603.849) .. (153.5662, 605.2173)
     .. controls (157.8123, 606.5857) and (162.9717, 608.0323) .. (169.4878, 609.6005)
     .. controls (176.004, 611.1687) and (183.877, 612.8583) .. (192.8782, 614.232)
     .. controls (201.8793, 615.6057) and (212.0087, 616.6633) .. (221.1595, 616.841)
     .. controls (230.3103, 617.0187) and (238.4827, 616.3163) .. (244.7787, 615.7247)
     .. controls (251.0747, 615.133) and (255.4943, 614.652) .. (261.8657, 613.5995)
     .. controls (268.237, 612.547) and (276.56, 610.923) .. (283.0567, 609.3967)
     .. controls (289.5533, 607.8703) and (294.2237, 606.4417) .. (299.6227, 604.6325)
     .. controls (305.0217, 602.8233) and (311.1493, 600.6337) .. (317.0272, 597.9683)
     .. controls (322.905, 595.303) and (328.533, 592.162) .. (332.301, 589.8357)
     .. controls (336.069, 587.5093) and (337.977, 585.9977) .. (341.3208, 582.9202)
     .. controls (344.6647, 579.8427) and (349.4443, 575.1993) .. (352.7054, 570.7369)
     .. controls (355.9665, 566.2745) and (357.709, 561.993) .. (359.043, 548.473)
     .. controls (360.542, 556.739) and (359.561, 568.182) .. (354.485, 581.442)
     .. controls (350.689, 587.053) and (347.364, 590.81) .. (343.1618, 594.705)
     .. controls (338.9597, 598.6) and (333.8803, 602.633) .. (328.0707, 605.973)
     .. controls (322.261, 609.313) and (315.721, 611.96) .. (310.1767, 614.1723)
     .. controls (304.6323, 616.3847) and (300.0837, 618.1623) .. (294.2178, 619.986)
     .. controls (288.352, 621.8097) and (281.169, 623.6793) .. (270.0745, 625.7035)
     .. controls (258.98, 627.7277) and (243.974, 629.9063) .. (229.3545, 630.2527)
     .. controls (214.735, 630.599) and (200.502, 629.113) .. (187.9758, 627.0637)
     .. controls (175.4497, 625.0143) and (164.6303, 622.4017) .. (154.53, 618.7835)
     .. controls (144.4297, 615.1653) and (135.0483, 610.5417) .. (125.7084, 604.7706)
     .. controls (116.3685, 598.9995) and (107.07, 592.081) .. (99.2274, 578.98)
     .. controls (95.5675, 571.656) and (94.4119, 563.085) .. (93.9309, 559.803)
     -- cycle;
  \filldraw[draw=darkgray, fill=papayawhip, ipe opacity 30]
    (316.3498, 618.5817)
     .. controls (293.0807, 630.8217) and (260.7333, 639.8123) .. (227.7943, 640.2692)
     .. controls (194.8553, 640.726) and (161.3247, 632.649) .. (136.4742, 617.824)
     .. controls (111.6237, 602.999) and (95.4534, 581.426) .. (94.0334, 559.3673)
     .. controls (92.6134, 537.3087) and (105.9437, 514.7643) .. (129.7674, 498.7132)
     .. controls (153.591, 482.662) and (187.908, 473.104) .. (226.6032, 473.5157)
     .. controls (265.2983, 473.9273) and (308.3717, 484.3087) .. (332.6677, 502.9448)
     .. controls (356.9637, 521.581) and (362.4823, 548.472) .. (358.1462, 569.6622)
     .. controls (353.81, 590.8523) and (339.619, 606.3417) .. cycle;
  \draw[darkgray]
    (156.081, 558.31)
     .. controls (203.467, 533.038) and (250.0397, 533.3397) .. (295.799, 559.215);
  \filldraw[draw=darkgray, fill=white]
    (161.967, 555.96)
     .. controls (205.7203, 578.4427) and (248.888, 578.7547) .. (291.47, 556.896)
     .. controls (265.083, 540.657) and (216.614, 528.702) .. (161.691, 555.471)
     -- cycle;
  \filldraw[draw=darkgreen, fill=palegreen]
    (200.1697, 502.6265)
     .. controls (200.0917, 515.1593) and (211.8903, 527.2457) .. (224.4978, 527.635)
     .. controls (237.1053, 528.0243) and (250.5217, 516.7167) .. (250.5997, 504.1838)
     .. controls (250.6777, 491.651) and (237.4173, 477.893) .. (224.8098, 477.5037)
     .. controls (212.2023, 477.1143) and (200.2477, 490.0937) .. cycle;
  \draw
    (217.811, 526.345)
     .. controls (218.268, 534.92) and (218.988, 537.249) .. (219.564, 538.4483)
     .. controls (220.14, 539.6475) and (220.572, 539.717) .. (220.572, 539.788);
  \draw[draw=black!30!white,ipe dash dashed]
    (220.572, 539.788)
     .. controls (221.062, 536.963) and (221.202, 534.5105) .. (221.2508, 530.2956)
     .. controls (221.2997, 526.0807) and (221.2573, 520.1033) .. (221.1592, 515.3373)
     .. controls (221.061, 510.5713) and (220.907, 507.0167) .. (220.7785, 503.939)
     .. controls (220.65, 500.8613) and (220.547, 498.2607) .. (220.2578, 493.9643)
     .. controls (219.9685, 489.668) and (219.493, 483.676) .. (217.495, 473.557);
  \draw
    (217.516, 473.697)
     .. controls (216.9287, 474.8497) and (216.64, 476.6383) .. (216.65, 479.063);
  \draw[line cap=round]
    (99.2271, 578.98)
     -- (99.2271, 578.98);
  \filldraw[fill=firebrick1, ipe opacity 50]
    (241.806, 521.135)
     .. controls (302.468, 532.313) and (317.4165, 541.0365) .. (326.8361, 548.1121)
     .. controls (336.2557, 555.1877) and (340.1463, 560.6153) .. (342.5013, 565.506)
     .. controls (344.8563, 570.3967) and (345.6757, 574.7503) .. (345.8877, 578.7795)
     .. controls (346.0997, 582.8087) and (345.7043, 586.5133) .. (344.2274, 590.9794)
     .. controls (342.7505, 595.4455) and (340.192, 600.673) .. (334.477, 606.912)
     .. controls (346.849, 598.07) and (353.185, 584.268) .. (354.485, 581.442)
     .. controls (356.456, 574.543) and (356.5165, 570.3625) .. (355.4136, 565.3901)
     .. controls (354.3107, 560.4177) and (352.0443, 554.6533) .. (348.7903, 549.4985)
     .. controls (345.5363, 544.3437) and (341.2947, 539.7983) .. (332.2651, 533.2037)
     .. controls (323.2355, 526.609) and (309.418, 517.965) .. (250.397, 506.76)
     .. controls (248.855, 512.835) and (245.656, 517.648) .. (241.678, 521.102);
  \filldraw[fill=firebrick1, ipe opacity 50]
    (208.021, 520.131)
     .. controls (160.925, 524.591) and (138.0765, 537.583) .. (124.9386, 548.7037)
     .. controls (111.8007, 559.8243) and (108.3733, 569.0737) .. (107.9439, 577.6841)
     .. controls (107.5145, 586.2945) and (110.083, 594.266) .. (115.824, 602.251)
     .. controls (112.064, 598.153) and (104.416, 590.601) .. (99.2275, 578.98)
     .. controls (97.6636, 569.767) and (97.7865, 565.203) .. (99.1078, 560.2918)
     .. controls (100.4292, 555.3807) and (102.9491, 550.1223) .. (106.0227, 545.7595)
     .. controls (109.0963, 541.3967) and (112.7237, 537.9293) .. (117.9295, 533.9622)
     .. controls (123.1353, 529.995) and (129.9197, 525.528) .. (140.8066, 520.495)
     .. controls (151.6935, 515.462) and (166.683, 509.863) .. (200.299, 505.098)
     .. controls (200.8837, 511.1647) and (203.4577, 516.1757) .. (208.021, 520.131)
     -- cycle;
  \filldraw[fill=deepskyblue, ipe opacity 50]
    (246.182, 490.901)
     .. controls (294.988, 498.813) and (308.9975, 504.5255) .. (319.4029, 509.6293)
     .. controls (329.8083, 514.733) and (336.6097, 519.228) .. (342.1933, 524.1027)
     .. controls (347.777, 528.9773) and (352.143, 534.2317) .. (354.8973, 538.6313)
     .. controls (357.6515, 543.031) and (358.794, 546.576) .. (359.043, 548.473)
     .. controls (360.542, 556.739) and (359.561, 568.182) .. (354.485, 581.442)
     .. controls (356.456, 574.543) and (356.5165, 570.3625) .. (355.4136, 565.3901)
     .. controls (354.3107, 560.4177) and (352.0443, 554.6533) .. (348.7903, 549.4985)
     .. controls (345.5363, 544.3437) and (341.2947, 539.7983) .. (332.2651, 533.2037)
     .. controls (323.2355, 526.609) and (309.418, 517.965) .. (250.397, 506.76)
     .. controls (251.5977, 502.0067) and (250.1927, 496.7203) .. (246.182, 490.901)
     -- cycle;
  \filldraw[fill=deepskyblue, ipe opacity 50]
    (204.423, 489.536)
     .. controls (138.831, 497.936) and (116.3515, 514.689) .. (105.0887, 528.3432)
     .. controls (93.826, 541.9975) and (93.78, 552.553) .. (93.9309, 559.803)
     .. controls (94.4119, 563.085) and (95.5675, 571.656) .. (98.7363, 577.959)
     .. controls (97.369, 569.933) and (97.4919, 565.369) .. (98.8132, 560.4578)
     .. controls (100.1345, 555.5467) and (102.6542, 550.2883) .. (105.7278, 545.9255)
     .. controls (108.8013, 541.5627) and (112.4287, 538.0953) .. (117.6345, 534.1282)
     .. controls (122.8403, 530.161) and (129.6247, 525.694) .. (140.5116, 520.661)
     .. controls (151.3985, 515.628) and (166.388, 510.029) .. (200.004, 505.264)
     .. controls (200.308, 500.588) and (200.229, 495.453) .. (204.423, 489.536)
     -- cycle;
  \node[ipe node, font=\LARGE]
     at (228.149, 495.218) {$\Delta$};
  \pic
     at (208.021, 520.131) {ipe disk};
  \pic
     at (200.299, 505.098) {ipe disk};
  \pic
     at (204.423, 489.536) {ipe disk};
  \pic
     at (241.678, 521.102) {ipe disk};
  \pic
     at (250.397, 506.76) {ipe disk};
  \pic
     at (246.182, 490.901) {ipe disk};
  \pic
     at (216.65, 479.063) {ipe disk};
  \pic
     at (217.811, 526.345) {ipe disk};
  \node[ipe node, font=\LARGE]
     at (302.918, 548.249) {$G$};
  \node[ipe node, font=\Large]
     at (204.217, 531.665) {$e_1$};
  \node[ipe node, font=\Large]
     at (254.476, 529.673) {$e_2$};
  \node[ipe node, font=\Large]
     at (274.49, 518.04) {$e_3$};
  \node[ipe node, font=\Large]
     at (279.492, 488.655) {$e_4$};
\end{tikzpicture}}
\caption{A surface $\Sigma$ of Euler genus one, an open disk $\Delta$ of $\Sigma$ (depicted in green), and a $\Delta$-outer matching $G$.
The bars of $G$ are depicted in red and blue.
The edges $e_2$ and $e_3$ of $G$ and the edges $e_3$ and $e_4$ are neighboring.
The set $\{e_2,e_3,e_4\}$ is 
a $\Delta$-rainbow of $G$. The singleton $\{e_1\}$ is a  trivial $\Delta$-rainbow of $G$.
}
\label{figure_rainbows}
\end{figure}
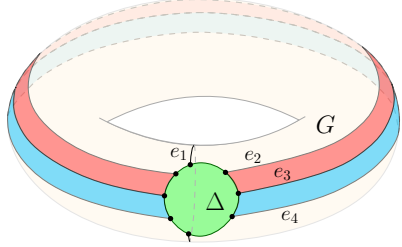

Given a non-trivial $\Delta$-rainbow $E$ of $G$, we define the {\em span} of $E$ in $\Sigma$, denoted by ${\sf Span}_{\Sigma}(E)$, to be the set $$\cupall\{b\cup \bd(b)\mid b\in {\sf Bars}(G) \text{  and }  {\sf bd\text{-}edges}(b)\subseteq E\}.$$
Notice that the span of every non-trivial $\Delta$-rainbow of $G$ is homeomorphic to a closed disk of $\overline{\Delta}$.
We call an edge of $E$ {\em peripheral} if it is a subset of the boundary of ${\sf Span}_{\Sigma}(E)$.
Observe that every non-trivial $\Delta$-rainbow of $G$ has exactly two peripheral edges.
%

The following result is \cite[Proposition 4.2.7]{MoharT01grap}.

\begin{proposition}\label{jfopdsn}
	Let $\Sigma$ be a surface, $G$ be a graph embedded in $\Sigma$, and $u,v$ be vertices of $G$ (possibly $u=v$).
	If $P_{0}, P_{1},\ldots, P_{k}$ are pairwise internally disjoint paths (or cycles) from $u$ to $v$ such that no two of them are homotopic, then
	$$k\leq \begin{cases}
	{\bf eg}(\Sigma), & \text{ if }{\bf eg}(\Sigma)\leq 1\\
	3{\bf eg}(\Sigma)-3, & \text{ if }{\bf eg}(\Sigma)\geq 2.
	\end{cases}$$
\end{proposition}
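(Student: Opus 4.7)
} The plan is to use the Euler-characteristic method applied to the topological graph $H := P_0 \cup P_1 \cup \cdots \cup P_k$, regarded as an embedded multigraph with vertex set $\{u,v\}$ (or $\{u\}$ if $u=v$) and $k+1$ edges, one for each arc $P_i$. The case $u\neq v$ is the principal one; the case $u=v$ is handled analogously by allowing loops at the unique vertex, with only minor bookkeeping changes.

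First, I would reduce to the case where the embedding of $H$ in $\Sigma$ is $2$-cell. If some face $f$ is not an open disk, then $f$ contains a non-contractible simple closed curve disjoint from $H$, so cutting along such a curve expresses $\Sigma$ as a connected sum (or identification) of surfaces of strictly smaller Euler genus on which the same collection of arcs remains pairwise disjoint and pairwise non-homotopic, allowing induction on ${\bf eg}(\Sigma)$. In the $2$-cell setting, Euler's formula reads $|V|-|E|+|F|=2-{\bf eg}(\Sigma)$, giving $F = k+1-{\bf eg}(\Sigma)$ in the case $u\neq v$, together with the handshake identity $\sum_{f}\ell(f)=2|E|=2(k+1)$.

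The key topological step is the ``bigon lemma'': any face of length two is bounded by a pair of arcs $P_i,P_j$ whose union cobounds a closed disk, so $P_i$ and $P_j$ are homotopic rel $\{u,v\}$, contradicting the hypothesis. Since $u\neq v$ also rules out length-one faces (there are no loops), every face of the $2$-cell embedding has boundary length at least $3$. Combined with the handshake identity, this yields $3F\leq 2(k+1)$, which together with Euler's formula gives an inequality of the form $k\leq c\cdot{\bf eg}(\Sigma)-c'$. The small-genus cases ${\bf eg}(\Sigma)\leq 1$ are verified separately: on the sphere, any two disjoint arcs with common endpoints cobound a disk and are therefore homotopic, giving $k\leq 0$; on the projective plane there are exactly two homotopy classes of arcs between two fixed points, giving $k\leq 1$.

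The main obstacle I anticipate is sharpening the constant from the easy bound $3\,{\bf eg}(\Sigma)-1$ produced by the naive face-length count to the claimed $3\,{\bf eg}(\Sigma)-3$ for ${\bf eg}(\Sigma)\geq 2$. This last step would require a more delicate analysis of the combinatorial structure of the cut surface $\Sigma\setminus H$, either via a pants-decomposition-style amalgamation of triangular faces that share an arc, or by iterating the bigon/triangle elimination moves that are standard in the surface-topology literature; this is precisely the argument carried out in Mohar and Thomassen's book, so in practice I would simply cite it rather than reprove it here.
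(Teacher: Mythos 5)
The first thing to note is that the paper contains no proof of Proposition~\ref{jfopdsn} at all: it is imported verbatim as \cite[Proposition 4.2.7]{MoharT01grap}. Your proposal is the standard Euler-characteristic/bigon sketch for such statements, but taken as a self-contained argument it has two genuine gaps. First, your reduction to a $2$-cell embedding by cutting along an essential curve in a non-disk face and capping is not sound as stated: capping kills elements of the fundamental group, so arcs that are non-homotopic in $\Sigma$ may become homotopic in the smaller surface (e.g.\ two internally disjoint arcs whose union is a non-separating curve on the torus have an annulus as their unique face; cutting along its core and capping yields the sphere, where the two arcs are homotopic), and then the induction hypothesis simply does not apply. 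You cannot sidestep this by working with the non-cellular embedding directly either, because Euler's inequality still gives $|F|\ge k+1-{\bf eg}(\Sigma)$ but the claim ``every face has degree at least $3$'' can fail for a degree-$2$ face that is not a disk, which is exactly the case the bigon lemma does not cover. Second, even in the cellular case your count $3|F|\le 2(k+1)$ only yields $k\le 3\,{\bf eg}(\Sigma)-1$, and you explicitly defer the sharpening to $3\,{\bf eg}(\Sigma)-3$ to the literature.

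Since the sharpening you propose to cite is precisely \cite[Proposition 4.2.7]{MoharT01grap}, i.e.\ the statement being proved, your proposal in the end reduces to the same citation the paper makes, plus a partial sketch whose reduction step is flawed. If you want a genuine proof rather than a citation, you need either the more careful face analysis of Mohar and Thomassen (which handles non-disk faces without the cut-and-cap induction) or an argument that repairs the induction, for instance by observing that two internally disjoint arcs are homotopic rel endpoints iff their union bounds a disk and then controlling how such disks interact with the capping curves; as written, neither is supplied. For the purposes of this paper, simply citing \cite{MoharT01grap}, as the authors do, is the cleanest course.
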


We now use \autoref{jfopdsn} in order to prove the following result, which provides a bound on the size of the homotopy-partition of a given $\Delta$-outer matching. This gives an upper bound on the number of different $\Delta$-rainbows of an open disk $\Delta$ of a surface $\Sigma$ as a function of ${\bf eg}(\Sigma)$.

\begin{lemma}\label{rainbow}
	Let $g\in \mathbb{N}$, $\Sigma$ be a surface of genus $g$, $\Delta$ be an open disk 
	of $\Sigma$, $M$ be a  $\Delta$-outer matching, and ${\cal E}(M)$ be the homotopy-partition of $M$. It holds that 
	$|{\cal E}(M)|\leq 3g-2$.
\end{lemma}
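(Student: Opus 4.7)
The plan is to choose one peripheral representative per rainbow, reduce to a sub-matching with no bars, and then apply \autoref{jfopdsn} after collapsing the boundary of $\Delta$ to a point.

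First I would pick, from each non-trivial rainbow $E$, one of its two peripheral edges, and from each trivial rainbow its unique edge, yielding a sub-matching $M^{\star}\subseteq M$ with $|E(M^{\star})|=|\mathcal{E}(M)|$. The first key step is to show that $M^{\star}$ has only trivial rainbows, i.e., no two of its edges bound a bar of $M^{\star}$. Suppose, toward a contradiction, that $e,e'\in E(M^{\star})$, coming from distinct rainbows $E,E'$ of $M$, bound a bar $b^{\star}$ in $M^{\star}$; then $b^{\star}$ is a closed disk with boundary $e$, $e'$, and two arcs of $\bd(\Delta)$. If some edge $f\in E(M)$ lies in the interior of $b^{\star}$, then $f$ belongs to some rainbow $F$ whose span is a closed disk containing $f$. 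Since spans are bounded by their two peripheral edges, and no edge of $M$ crosses any other edge of $M$ or $\bd(\Delta)$, both peripheral edges of $F$ must also lie in $b^{\star}$, contradicting the fact that our chosen representative for $F$ lies in $M^{\star}$ (and hence outside the interior of $b^{\star}$, by assumption). Therefore the interior of $b^{\star}$ contains no edge of $M$, so $b^{\star}$ is itself a bar of $M$, forcing $e$ and $e'$ to be neighboring in $M$, a contradiction.

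Next, I would fix a basepoint $u\in\bd(\Delta)\setminus V(M^{\star})$ and, for each $e\in E(M^{\star})$, form a cycle $\gamma_{e}$ through $u$ by concatenating $e$ with two arcs of $\bd(\Delta)$ joining $u$ to the endpoints of $e$ (choosing a consistent orientation). After a small push of the $\bd(\Delta)$-portions into $\overline{\Delta}$ these cycles are pairwise internally disjoint. The central technical claim is that the $\gamma_{e}$'s are pairwise non-homotopic (as loops based at $u$) in $\Sigma$: if $\gamma_{e}$ and $\gamma_{e'}$ were homotopic, then the annular/disk region cobordism between them would, after trimming against $\bd(\Delta)$, be a closed disk in $\overline{\Delta}$ bounded by $e$, $e'$, and two arcs of $\bd(\Delta)$. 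Repeating the span analysis from the previous paragraph applied to this disk shows that it contains no edges of $M$ in its interior, hence it is a bar of $M^{\star}$, contradicting the no-bar property just established.

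Finally, applying \autoref{jfopdsn} to the family $\{\gamma_{e}\}_{e\in E(M^{\star})}$ at $u=v$ gives $|\mathcal{E}(M)|-1\leq 3g-3$ for $g\geq 2$, hence $|\mathcal{E}(M)|\leq 3g-2$ (the small-genus cases follow analogously from the corresponding clauses of \autoref{jfopdsn}). The main obstacle is the non-homotopy claim: trivial/contractible representatives make the free-homotopy viewpoint fail (two disjoint end-disks in separate regions give contractible loops), so the argument must exploit based homotopy together with the disk-decomposition via the no-bar property of $M^{\star}$ rather than merely counting free-homotopy classes.
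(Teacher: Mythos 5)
Your strategy is essentially the paper's: pick one representative edge per rainbow, turn each representative into a loop through a common point (the paper contracts the closed disk to a point; you route along $\bd(\Delta)$ to a basepoint $u$), argue that these loops are pairwise non-homotopic, and invoke \autoref{jfopdsn}. The genuine gap is your central non-homotopy claim, and it is not a repairable technicality: the implication ``$\gamma_e$ and $\gamma_{e'}$ based-homotopic $\Rightarrow$ there is a closed disk in $\overline{\Delta}$ bounded by $e$, $e'$ and two arcs of $\bd(\Delta)$'' is false. Take $k\geq 3$ edges $e_1,\dots,e_k$ with endpoints in non-interleaved cyclic order $a_1,b_1,a_2,b_2,\dots,a_k,b_k$ on $\bd(\Delta)$, where each $e_i$ runs just outside $\Delta$ alongside the arc $a_ib_i$, cutting off a small ``lobe''. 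No facet is a bar: each lobe is bounded by one edge of $G$ and one edge of $C$, and the remaining facet has all $k$ edges of $G$ on its boundary. Hence all rainbows are trivial, $M^{\star}=M$, and your no-bar property holds; yet every loop $\gamma_{e_i}$ is contractible, so all of them lie in one \emph{based} homotopy class, while no two of the $e_i$ cobound a disk of the claimed form (the region between consecutive lobes contains further edges, and the big facet is not such a disk). Your closing remark that based homotopy plus the no-bar property rescues the argument therefore does not hold: two contractible based loops are still based-homotopic, and \autoref{jfopdsn} only bounds families of \emph{pairwise non-homotopic} curves, so it yields no bound here. Note also that this is exactly the step the paper itself does not prove (the ``Observe that\dots'' sentence in its proof asserts that representatives of distinct rainbows become non-homotopic after contraction); the same lobe configuration shows that representatives of distinct rainbows can in fact be homotopic, and indeed that $|\mathcal{E}(M)|=k$ in that configuration, so no argument along these lines can succeed from the stated hypotheses alone — the difficulty you ran into is the real one, and your proposal does not close it.

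Two secondary points. First, the small perturbation making the loops internally disjoint must push the $\bd(\Delta)$-portions into $\Delta$, not into $\overline{\Delta}$: pushed into $\overline{\Delta}$, the arc of $\gamma_e$ sweeping past an attachment point of another representative $e'$ necessarily crosses $e'$. Second, in your no-bar claim the contradiction as written only covers the case where the edge $f$ inside $b^{\star}$ belongs to a rainbow other than $E$ and $E'$; when $f$ lies in the rainbow of $e$ (whose chosen peripheral representative may be the one on the far side of $e$), the argument needs an extra step. Finally, even granting non-homotopy, \autoref{jfopdsn} gives $|\mathcal{E}(M)|\leq g+1$ when $g\leq 1$, which is weaker than $3g-2$, so the small-genus cases do not ``follow analogously''.
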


\begin{proof}
Let ${\cal E}(M)=\{E_{1}, \ldots, E_{r}\}$ be the homotopy-partition of $E(M)$. We will prove that $r\leq 3g-3$.
For every $i\in [r]$, let $e_{i}$ be an edge in $E_{i}$ and $E=\bigcup_{i\in [r]}e_{i}$.
Let $x$ be a point of $\Delta$ and $f$ be a homomorphism from $\Sigma$ to $\Sigma$ that maps every point of $\Delta\cup \bd(\Delta)$ to $x$ and leaves everything else untouched.
Observe that since $\Delta$ is an open disk, then ${\bf eg}(f(\Sigma))={\bf eg}(\Sigma)=g$ and if $M'$ is
the graph obtained from $M$ after identifying all vertices of $M$, then $M'$ is  a $f(\Sigma)$-embedded graph (whose edges are all loops).
Since $M$ is a $\Delta$-outer matching, $f(E)$ is a set of pairwise non-crossing loops that are incident to $x$.
Observe that for every $e,e'\in E$, there is an $i\in[r]$ such that $e,e'\in E_i$ if and only if the loops $f(e)$ and $f(e')$ are  non-homotopic.
This means that $r$ is equal to the number of non-homotopic elements of $f(E)$, that, by \autoref{jfopdsn}, are at most $3g-2$.
Therefore, $r\leq 3g-2$.
\end{proof}

Next, we define rainbows in linkages.
Here, in this context, the $\Delta$-outer matching will correspond to the set of $\Delta$-bridges of a given linkage $L$ (in fact, to the contraction of bridges to single edges).

\paragraph{Rainbows in linkages.}
Let $L$ be a $\Delta$-avoiding linkage of a graph $G$ embedded in a surface $\Sigma$, where $\Delta$ is an open disk of $\Sigma$.
We define the {\em bridge representative} $H$ of $L$ to be the graph obtained from $\cupall {\cal B}_{\Delta}(L)$ after dissolving every internal vertex of every $\Delta$-bridge of $L$.
%
Observe that $H$ is a $\Delta$-outer matching.
This observation will allow us to refer to  {\sl homotopy-partitions} and {\sl $\Delta$-rainbows} of the bridge representative of a linkage.

Let ${\cal B}$ be a subset of ${\cal B}_{\Delta}(L)$ and $H$ be the bridge representative of $L$.
We set $E_{{\cal B}}$ to be the set of edges of $H$ that correspond to the $\Delta$-bridges in ${\cal B}$.
We say that ${\cal B}$ is a {\em $\Delta$-rainbow of $L$} if $E_{\cal B}$ is a non-trivial $\Delta$-rainbow of $H$.
Moreover, if $E_{\cal B}$ is a non-trivial $\Delta$-rainbow of $H$ and $e_{1},e_{2}$ are the peripheral edges of $E_{\cal B}$, then we call the corresponding bridges $B_{1},B_{2}$ {\em peripheral bridges} of the $\Delta$-rainbow ${\cal B}$ of $L$.
If ${\cal B}$ is a $\Delta$-rainbow  of $L$ such that $|{\cal B}|\geq 3$ and $B_1,B_2$ are its peripheral bridges, we denote by $\Delta_{\cal B}$ the (unique) connected component of $\Sigma\setminus (\Delta \cup B_{1}\cup B_{2})$ that intersects ${\cal B}$.
For example, in~\autoref{figure_rainbows}, if the ``yellow-pink'' $\Delta$-rainbow $E$ is equal to $E_{\mathcal{B}}$, for some $\mathcal{B}\subseteq \mathcal{B}_\Delta (L)$ of some linkage $L$,
then $\Delta_\mathcal{B}$ corresponds to the open disk ``cropped'' by the peripheral edges of $E$, i.e., the union of the yellow and the pink open disk together with the blue edge that is incident to both of them. 
We say that ${\cal B}$ is {\em clear} if $\Delta_{\cal B}\cap T(L)=\emptyset$.
\smallskip

Let $G$ be a graph, $H$ be a subgraph of $G$, and $F\subseteq E(H)$. Given a graph $J\subseteq G/F$, we say that $H$ is an {\em $F$-expansion} of $J$ if $J$ is obtained from $H$ by contracting all edges in $F$.

%
\medskip

The following result can be derived from the proof of \cite[Theorem 7]{KawarabayashiK12alin}.
It intuitively states that in the presence of a large enough $\Delta$-rainbow of $L$,
one can either find an equivalent linkage with less $\Delta$-bridges or a minor of $G$ that has the following properties: 1) it contains a sequence of nested cycles that isolate a bridge of $L$ and 2) every linkage of this minor can be ``expanded'' to an $r$-scattered linkage of $G$.
The two latter properties will allow us to ``shift'' from $r$-scattered linkages to $0$-scattered linkages and apply the result of Mazoit~\cite{Mazoit13asin} (see~\autoref{maz}) to reroute any given linkage away from the isolated bridge of $L$.
This rerouting allows us to obtain again a linkage with less bridges.

\begin{proposition}\label{japanese}
There exists a function $\newfun{bridgeredfun}:\mathbb{N}^{2}\to \mathbb{N}$ such that for every $r,\ell\in\mathbb{N}$,
if $G$ is a partially $\Delta$-embedded graph,
$v$ is a vertex of $G$,
${\cal C}$ is a $\Delta$-nested sequence of  cycles  of $G$ of size at least $\funref{bridgeredfun}(r,\ell)$ that is $r$-tight around $v$, and
$L$ is a $\Delta$-avoiding $r$-scattered linkage of $G$ such that 
\begin{itemize}
	\item $v\in V(L)$,		
	\item for every $\Delta$-avoiding $r$-scattered linkage $L'$ of $G$ such that $v\in V(L')$ and $L\equiv L'$, it holds that ${\sf cross}_{\cal C}(L)\leq {\sf cross}_{\cal C}(L')$, and
	\item there is a clear $\Delta$-rainbow of $L$ of size  at least $\funref{bridgeredfun}(r,\ell)$,
\end{itemize}
then 	
\begin{enumerate}
	\item either there is a $\Delta$-avoiding $r$-scattered linkage $L'$ of $G$ such that $v\in V(L')$, $L\equiv L'$, and ${\sf bridges}_{\Delta}(L')<{\sf bridges}_{\Delta}(L)$, or
	\item there exist
	\begin{itemize}
		\item a graph $H\subseteq \cupall {\cal C}$ and an edge-set $F\subseteq E(L)\cap \inter(\Delta)$ such that
				if $\tilde{L}$ is a linkage of $(L\cup H)/F$ 
				then the $F$-expansion $L'$ of $\tilde{L}$ is an $r$-scattered linkage of $G$, and
		\item a nested sequence of cycles ${\cal C}'$ of $(L\cup H)/F$ of size $\ell$ that  isolates a $\Delta$-bridge of $L$.
	\end{itemize}
\end{enumerate}
Moreover, it holds that $\funref{bridgeredfun}(r,\ell)={\cal O}(r\cdot \ell)$.
\end{proposition}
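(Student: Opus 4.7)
The plan is to exploit the given clear $\Delta$-rainbow $\mathcal{B}$ of $L$ of size at least $\funref{bridgeredfun}(r,\ell) = \Theta(r\cdot\ell)$. By definition all bridges in $\mathcal{B}$ are pairwise homotopic in $\Sigma \setminus \Delta$, and the closed region ${\sf Span}_{\Sigma}(E_{\mathcal{B}})$ bounded by the two peripheral bridges together with an arc of $\bd(\Delta)$ is homeomorphic to a closed disk whose interior, together with $\Delta_{\mathcal{B}}$, contains no terminal of $L$. I would first select the ``middle'' bridge $B^*$ of $\mathcal{B}$: enumerate the bridges of $\mathcal{B}$ in their natural radial order (dictated by the bars of the span) and pick the one of median index. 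My goal is then to manufacture the sequence $\mathcal{C}'$ of $\ell$ nested cycles around $B^*$ out of the remaining bridges of $\mathcal{B}$ together with arcs of the cycles in $\mathcal{C}$.

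Second, I would construct $\mathcal{C}'$ as follows. For each bridge $B \in \mathcal{B}\setminus\{B^*\}$ with endpoints $x,y$ on $\bd(\Delta)$, and for an appropriate cycle $C \in \mathcal{C}$, the path $B$ together with the unique $(x,y)$-arc of $C$ ``inside'' $\Delta_\mathcal{B}$ forms a cycle in $G$ that separates $B^*$ from both $\bd(\Delta_\mathcal{B})$ and from any outer layer already constructed. Pairing bridges of $\mathcal{B}$ with distinct cycles of $\mathcal{C}$, using the abundance supplied by $|\mathcal{C}|\geq \funref{bridgeredfun}(r,\ell)$ and the $r$-tightness of $\mathcal{C}$ around $v$, yields, after a careful alternation between the two sides of $B^*$, a nested sequence ${\cal C}'$ of $\ell$ cycles isolating $B^*$ in $L\cup H$, where $H\subseteq \cupall\mathcal{C}$ collects the arcs used.

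Third, and here lies the main obstacle, I would build the contraction set $F \subseteq E(L)\cap \inter(\Delta)$ that enforces $r$-scatteredness after expansion. Generalizing the $r=1$ trick of Kawarabayashi and Kobayashi~\cite{KawarabayashiK12alin}, for every subpath of $L$ that passes through the selected cycles of $\mathcal{C}$, I would mark a block of up to $r+1$ consecutive edges of $L$ near each crossing and add those edges to $F$. After contraction, any two vertices from distinct paths of a linkage $\tilde L$ of $(L\cup H)/F$, when the contracted edges are expanded back, correspond to vertices of $G$ at distance greater than $r$ — which is exactly $r$-scatteredness. This is precisely why the required number of cycles and rainbow size scales as $r\cdot \ell$ rather than $\ell$: between any two consecutive cycles of the constructed ${\cal C}'$, we must spend roughly $r+1$ layers of $\mathcal{C}$ to leave room for the blocks added to $F$, and $r+1$ bridges of $\mathcal{B}$ for the analogous slack on the bridge side. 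One needs to verify that $F$ is disjoint from the arcs of $\mathcal{C}$ used in $H$ and from the perimeters of the cycles of ${\cal C}'$, so that contracting $F$ preserves the nestedness of ${\cal C}'$ in $(L\cup H)/F$.

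Finally I would split into cases. If the construction above goes through for all $\ell$ layers, we land in case 2 with $H$, $F$, and ${\cal C}'$ as above. Otherwise, an obstruction arises: some bridge $B\in\mathcal{B}$ with selected cycle $C\in\mathcal{C}$ allows one to reroute the path of $L$ through $B$ along an alternative route through $C$ that uses strictly fewer bridges. Because such a reroute is confined to $\Delta \cup B \subseteq \Sigma\setminus (T(L))$, it is automatically $\Delta$-avoiding; because the edges added lie on $\cupall\mathcal{C}$, it preserves ${\sf cross}_\mathcal{C}$ (as guaranteed by the minimality hypothesis on $L$ and the $r$-tightness of $\mathcal{C}$, which prevents shortcuts that would introduce new crossings); and the $(r+1)$-slack between consecutive selected bridges/cycles guarantees $r$-scatteredness of the rerouted linkage. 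This delivers case 1. The technically delicate point throughout is the interplay between the contraction set $F$ (which must be dense enough along $L$ to force scatteredness) and the nested cycle structure of ${\cal C}'$ (which must not be collapsed by the contraction); this is what ultimately dictates the $\mathcal{O}(r\ell)$ bound on $\funref{bridgeredfun}$.
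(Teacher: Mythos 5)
First, a point of reference: the paper itself does not prove \autoref{japanese} — it is stated as derivable from the proof of \cite[Theorem 7]{KawarabayashiK12alin}, and the $r$-scattered generalization (contracting edges of $L$, extending the $r=1$ trick) is only described informally in the overview. Your outline follows exactly that intended route — median bridge $B^{*}$ of a clear $\Delta$-rainbow, nested cycles assembled from enclosing bridges together with segments of $L$ and arcs of cycles of $\mathcal{C}$, a contraction set $F$ to transfer $r$-scatteredness, and a dichotomy whose first branch is a bridge-reducing reroute — so at the level of strategy you agree with the paper; the question is whether your sketch closes the steps the citation is doing the work for, and it does not.

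The gaps sit precisely at the points the proposition exists to settle. (a) Outcome 2 demands a universal property: \emph{every} linkage $\tilde{L}$ of $(L\cup H)/F$ must $F$-expand to an $r$-scattered linkage of $G$. Your justification — contract blocks of at most $r+1$ consecutive edges of $L$ near each crossing, then assert that disjointness after contraction ``corresponds to distance greater than $r$'' — does not establish this: vertex-disjointness in the contracted graph controls nothing about distances in $G$ measured through edges outside $L\cup H$, and even inside $L\cup H$ the conclusion needs that the arcs of distinct cycles of $\mathcal{C}$ placed into $H$ are pairwise at distance greater than $r$ and that the contracted segments are chosen globally along $L$, so that expanded paths inherit separation from the $r$-scatteredness of $L$ and the $r$-tightness of $\mathcal{C}$; none of this is argued. (b) Your construction of $\mathcal{C}'$ silently assumes that the subpaths of $L$ hanging from the rainbow bridges penetrate deeply enough into $\Delta$ to reach $\ell$ suitably spaced cycles of $\mathcal{C}$ — this is exactly where the dichotomy originates, and your ``otherwise'' branch does not supply a sound mechanism for outcome 1: invoking the crossing-minimality of $L$ to claim the reroute preserves ${\sf cross}_{\cal C}$ is circular (minimality only forbids a decrease, it does not prevent your reroute from increasing crossings or from being inadmissible), and you never argue that the reroute strictly decreases ${\sf bridges}_{\Delta}$, keeps $v$ on the linkage, and stays $r$-scattered, which is what outcome 1 requires. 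A smaller but real flaw: only bridges that enclose $B^{*}$ within the span can serve as the outer portions of cycles whose disks contain $B^{*}$; your ``alternation between the two sides of $B^{*}$'' produces cycles whose disks lie on the wrong side and hence do not isolate $B^{*}$ (and on a positive-genus surface the complementary region need not even be a disk), so only about half of the rainbow is usable — harmless for the $\mathcal{O}(r\ell)$ count, but the construction as written is incorrect.
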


Before presenting the proof of~\autoref{reducedlinkage}, we state the main result of~\cite{Mazoit13asin}.

\begin{proposition}\label{maz}
There is a function $\newfun{mazoitfun}:\mathbb{N}^{2}\to \mathbb{N}$ such that for every $k,g\in \mathbb{N}$ if $\Sigma$ is a surface of genus $g$, $G$ is a graph embedded on $\Sigma$, $L$ is a linkage of $G$ of size at most $k$, $v$ is a vertex of $G$, and ${\cal C}$ is a nested sequence of cycles of $G$ of size $\funref{mazoitfun}(k,g)$ that isolates $\{v\}$, then there is a linkage $L'$ of $G\setminus v$ that is equivalent to $L$. Moreover, it holds that $\funref{mazoitfun}(k,g)=2^{{\cal O}(k+g)}$.
\end{proposition}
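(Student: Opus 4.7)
The plan is to reduce to the planar Unique Linkage Theorem with the single-exponential bound of Adler, Kawarabayashi, Kreutzer, Langer, Seymour, and Thomas~\cite{AdlerKKLST17irre}, and handle the surface via homotopy bookkeeping in the spirit of \autoref{jfopdsn}. The starting observation is that since $\mathcal{C}=[C_1,\ldots,C_m]$ isolates $\{v\}$, the closed disk $D_1$ (together with its nested neighbors $D_2,\dots,D_m$) is a planar region of $\Sigma$ containing $v$. So all the ``local'' rerouting around $v$ takes place in a planar subgraph of $G$, while the surface genus is felt only in how the rest of $L$ loops through handles/crosscaps outside $D_m$.

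First I would consider a linkage $L'$ equivalent to $L$ and minimizing a carefully chosen measure (for instance, the number of edges of $L'\cap D_m$ that are not contained in $\cupall\mathcal{C}$, broken by lexicographic counts of crossings with each $C_i$). If $v\notin V(L')$ we are done, so assume $v$ lies on some path $P$ of $L'$. The subpath of $P$ inside $D_m$ must cross many of the cycles $C_i$ to reach $v$, and by minimality it does so in a very constrained fashion: the pieces of $L'$ living in each sub-annulus $\ann(\mathcal{C},i,i+1)$ are short, nested, and (by a bramble argument analogous to \autoref{u4l49rop0r}--\autoref{ato954jgd}) only a single-exponential-in-$k$ number of them can be ``non-rerouteable''. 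The genus $g$ enters because paths of $L'$ exiting $D_m$ and re-entering it can do so along $O(g)$ many non-homotopic routes on $\Sigma\setminus D_m$; by \autoref{jfopdsn} this adds only an additive $O(g)$ term to the exponent.

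The core rerouting is a ``rooted'' planar Unique Linkage argument applied to $D_m$: contract the part of $G$ outside $D_m$ to a multigraph on $V(C_m)$ whose parallel edges encode the $O(g)$ homotopy types of exterior crossings, and invoke the single-exponential planar result on the resulting plane-embedded auxiliary graph with $k+O(g)$ pairs of terminals on the outer face. This yields an equivalent linkage in the contracted graph avoiding $v$; lifting back through the contraction gives the desired $L'$ in $G\setminus v$. Choosing $\funref{mazoitfun}(k,g)$ to accommodate the planar bound $2^{O(k)}$ times the $2^{O(g)}$ homotopy-type overhead produces the claimed $2^{O(k+g)}$.

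The main obstacle I foresee is making the rooted/boundaried planar theorem genuinely single-exponential with $k+O(g)$ roots on the outer face: the standard planar proof needs to be adapted so the ``irrelevant vertex'' produced lies in the interior $D_{m/2}$ rather than near the boundary $C_m$, and the homotopy encoding of exterior edges must be done so that an equivalent linkage in the auxiliary graph lifts to one in $G$ without inadvertently crossing handles or re-using an exterior route twice. Once that bookkeeping is in place, the bound $\funref{mazoitfun}(k,g)=2^{{\cal O}(k+g)}$ follows by adding the two exponential contributions.
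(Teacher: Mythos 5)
You should first note that the paper does not prove \autoref{maz} at all: it is quoted verbatim as the main result of Mazoit~\cite{Mazoit13asin} and used as a black box, so your sketch is being measured against a genuinely hard cited theorem rather than an in-paper argument.

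As a proof attempt, the sketch has a concrete gap at its central step, the reduction of the surface case to the planar theorem of~\cite{AdlerKKLST17irre}. You propose to contract $G\setminus D_m$ to a multigraph attached to $V(C_m)$ whose parallel edges ``encode the $O(g)$ homotopy types of exterior crossings'' and then to apply the rooted planar unique-linkage theorem with $k+O(g)$ terminals. Two things break here. First, the auxiliary graph is in general not planar: two exterior excursions of the linkage that pass through a handle can have attachment points that interleave around $C_m$, and such edges cannot be added in the outer face of a plane drawing of $D_m$ without crossings; the genus is not an additive bookkeeping term but exactly the obstruction to planarity of this gadget. Second, \autoref{jfopdsn} (and its consequence \autoref{rainbow}) only bounds the number of homotopy classes of excursions by $O(g)$, not the number of excursions: within one class there can be arbitrarily many nested excursions, so collapsing a class to a single route, or charging it a single extra terminal pair, does not faithfully represent the linkage, and your lifting step (``the $F$-expansion of the rerouted linkage is a linkage of $G$'') is unjustified once distinct excursions have been merged. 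Controlling exactly this phenomenon is where the real work lies --- note that even the paper's easier \autoref{reducedlinkage}, which assumes Mazoit's theorem, needs the whole rainbow/BC-minimality machinery and \autoref{japanese} just to bound the number of bridges before it can reroute. Without an argument that bounds or orders the exterior excursions and shows the interior rerouting is compatible with them, the proposed reduction does not go through, and the claimed $2^{O(k+g)}$ bound is not established.
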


We conclude this section with the proof of~\autoref{reducedlinkage}.

\begin{proof}[Proof of \autoref{reducedlinkage}]
	Let $r,k,g\in \mathbb{N}$.
	We set

	$$m= (k+1)\cdot \funref{bridgeredfun}(r,\funref{mazoitfun}(k,g)),$$
	$$\funref{reducedfun}(r,k,g)=3g\cdot m+\lfloor k/2\rfloor+1.$$
	
	Let $\Sigma$ be a surface of genus $g$, $\Delta$ be an open disk of $\Sigma$, $G$ be a $\Sigma$-embedded graph, and $v$ be a vertex in $\Delta\cap V(G)$.
	Also, let	${\cal C}$ be a $\Delta$-nested sequence of  cycles  of $G$ of size at least $\funref{reducedfun}(r,k,g)$ that is $r$-tight around $v$, and
	$L$ be a $\Delta$-avoiding $r$-scattered linkage of $G$ of size $k$ such that 
	\begin{itemize}
		\item[(a)] for every $\Delta$-avoiding $r$-scattered linkage $L'$ of $G$ such that $v\in V(L')$ and $L\equiv L'$, it holds that ${\sf cross}_{\cal C}(L)\leq {\sf cross}_{\cal C}(L')$ and
		\item[(b)] for every $\Delta$-avoiding $r$-scattered linkage $L'$ of $G$ such that $v\in V(L')$ and $L\equiv L'$, it holds that ${\sf bridges}_{\Delta}(L)\leq {\sf bridges}_{\Delta}(L')$.
	\end{itemize}
We assume that $v\in V(L)$, since otherwise the lemma holds for $L'=L$.

We aim to prove that ${\sf bridges}_{\Delta}(L)\leq3g m= \funref{reducedfun}(r,k,g)-\lfloor k/2\rfloor-1$,
since in this case, by \autoref{fewbridges}, we deduce that there is a $\Delta$-avoiding $r$-scattered linkage $L' $ of $G\setminus v$
such that $L'\equiv L$ and $L\setminus \Delta=L' \setminus \Delta$.
\medskip

Suppose, towards a contradiction, that ${\sf bridges}_{\Delta}(L)> 3gm$.
Let $J$ be the bridge representative of $L$ and recall that $J$ is a $\Delta$-outer matching.
Therefore, by \autoref{rainbow}, it follows that $|{\cal E}(J)|\leq 3g$ and therefore there exists an $E\in {\cal E}(J)$ such that $|E|\geq m$.
This implies the existence of a $\Delta$-rainbow of $L$ of size at least $m$.
Notice that since $m=(k+1)\cdot \funref{bridgeredfun}(r,\funref{mazoitfun}(k,g))$,
there exists a clear $\Delta$-rainbow of $L$ of size at least $\funref{bridgeredfun}(r,\funref{mazoitfun}(k,g))$.

Now, by \autoref{japanese}, one of the following holds:
\begin{enumerate}
	\item[(i)] either there is a $\Delta$-avoiding $r$-scattered linkage $L'$ of $G$ such that $v\in V(L')$, $L\equiv L'$, and ${\sf bridges}_{\Delta}(L')<{\sf bridges}_{\Delta}(L)$, or
	\item[(ii)] there exist
	\begin{itemize}
		\item a graph $H\subseteq \cupall {\cal C}$ and an edge-set $F\subseteq E(L)\cap \inter(\Delta)$ such that
		if $\tilde{L}$ is a linkge of $(L\cup H)/F$
		then the $F$-expansion $L'$ of $\tilde{L}$ is an $r$-scattered linkage of $G$, and
		\item a nested sequence of cycles ${\cal C}'$ of $(L\cup H)/F$ of size $\funref{mazoitfun}(k,g)$ that isolates a $\Delta$-bridge of $L$.
	\end{itemize}
\end{enumerate}
Observe that if (i) holds, then we arrive to a contradiction to property (b) of $L$.
In the case that (ii) holds, let $B$ be a $\Delta$-bridge of $L$ that ${\cal C}'$ isolates.
Notice that $B$ is also a $\Delta$-bridge of the linkage $L/F$.
Let $v_{B}$ be the vertex obtained by contracting every edge of $B$ and
let $L^{*}$ be the graph obtained from $L/F$ after applying the same contractions.
Observe that $L^{*}$ is equivalent to $L$.
Now, by \autoref{maz} for $L^{*}\cup H$, $L^{*}$, $v_{B}$, and ${\cal C}'$, we deduce the existence of a linkage $\tilde{L}$ of $(L^{*}\cup H)\setminus v_{B}$ that is equivalent to $L^{*}$.
Notice that $\tilde{L}$ is also a linkage of $(L\cup H)/F$ that does not intersect $B$ and is equivalent to $L$.
Consider now the $F$-expansion $L'$ of $\tilde{L}$ and observe that, since $\tilde{L}$ is equivalent to $L$, the same holds for $L'$.
Moreover, $L'$ is an $r$-scattered linkage in $G$.
The fact that $L'$ is an $r$-scattered linkage in $G$ that is equivalent to $L$, contains $v$ and does not intersect $B$, contradicts property (b) of $L$.
Therefore,
we have that
${\sf bridges}_{\Delta}(L)\leq3g m$ and this concludes the proof of the lemma.
\end{proof}

 \newcommand{\bibremark}[1]{\marginpar{\tiny\bf#1}}
  \newcommand{\biburl}[1]{\url{#1}}

%
%

\end{document}